\title[GK-dimension and the $p$-adic JL-correspondence]{Gelfand--Kirillov dimension and the $p$-adic Jacquet--Langlands correspondence}
\author{Gabriel Dospinescu, Vytautas Pa\v{s}k\={u}nas and Benjamin Schraen}
\date{\today.}
\theoremstyle{plain}
\newtheorem{prop}{Proposition}[section]
\newtheorem{defi}[prop]{Definition}
\newtheorem{conj}[prop]{Conjecture}
\newtheorem{lem}[prop]{Lemma}
\newtheorem{thm}[prop]{Theorem}
\newtheorem{cor}[prop]{Corollary}
\theoremstyle{definition} 
\newtheorem{examp}[prop]{Example}
\newtheorem{remar}[prop]{Remark}
\DeclareSymbolFont{largesymbols}{OMX}{yhex}{m}{n}
\DeclareMathAccent{\wideparen}{\mathord}{largesymbols}{"F3}
\DeclareMathOperator{\Spf}{Spf}
\DeclareMathAlphabet{\mathpzc}{OT1}{pzc}{m}{it}
\DeclareMathOperator{\End}{End}
\DeclareMathOperator{\Hom}{Hom}
\DeclareMathOperator{\Ind}{Ind}
\DeclareMathOperator{\Res}{Res}
\DeclareMathOperator{\Sym}{Sym}
\DeclareMathOperator{\GL}{GL}
\DeclareMathOperator{\SL}{SL}
\DeclareMathOperator{\Ker}{Ker}
\DeclareMathOperator{\Gal}{Gal}
\DeclareMathOperator{\tr}{tr}
\newcommand{\gr}{\mathrm{gr}}
\DeclareMathOperator{\Spec}{Spec}
\DeclareMathOperator{\Mod}{Mod}
\DeclareMathOperator{\Sp}{Sp}
\DeclareMathOperator{\Tor}{Tor}
\DeclareMathOperator{\Ban}{Ban}
\DeclareMathOperator{\dualcat}{\mathfrak C}
\DeclareMathOperator{\Art}{Art}
\newcommand{\Q}{\mathbb{Q}}
\newcommand{\Qp}{\mathbb {Q}_p}
\newcommand{\Zp}{\mathbb{Z}_p}
\newcommand{\Qpbar}{\overline{\mathbb{Q}}_p}
\newcommand{\PP}{\mathbb P}
\newcommand{\FF}{\mathcal F}
\newcommand{\ZZ}{\mathbb Z}
\newcommand{\QQ}{\mathbb Q}
\newcommand{\ev}{\mathrm{ev}}
\newcommand{\Fp}{\mathbb F_p}
\newcommand{\Fbar}{\overline{F}}
\newcommand{\mm}{\mathfrak m}
\newcommand{\Fpbar}{\overline{\mathbb{F}}_p}
\newcommand{\OO}{\mathcal O}
\newcommand{\TT}{\mathbb T}
\newcommand{\gal}{\Gal_{\Qp}}
\DeclareMathOperator{\wtimes}{\widehat{\otimes}}
\newcommand{\md}{\mathrm m}
\newcommand{\pp}{\mathfrak p}
\newcommand{\rig}{\mathrm{rig}}
\newcommand{\br}[1]{\llbracket #1\rrbracket}
\newcommand{\sm}{\mathrm{sm}}
\newcommand{\pro}{\mathrm{pro}}
\newcommand{\adm}{\mathrm{adm}}
\newcommand{\cont}{\mathrm{cont}}
\newcommand{\ladm}{\mathrm{l.adm}}
\newcommand{\Z}{\mathbb{Z}}
\def\et{\mathrm{\acute{e}t}}
\DeclareMathOperator{\Frob}{Frob}
\newcommand{\Fbreve}{\breve{F}}
\newcommand{\Cp}{\mathbb C_p}
\newcommand{\rbar}{\bar{r}}
\newcommand{\rhobar}{\bar{\rho}}
\newcommand{\ps}{\mathrm{ps}}
\newcommand{\univ}{\mathrm{univ}}
\newcommand{\Shat}{\check{\mathcal S}}
\newcommand{\AfF}{\mathbb{A}_F^{\infty}}
\newcommand{\cyc}{\mathrm{cyc}}
\newcommand{\Nrd}{\mathrm{Nrd}}
\newcommand{\Ghat}{\widehat{G}}
\newcommand{\That}{\widehat{T}}
\newcommand{\ghat}{\widehat{\mathfrak g}}
\DeclareMathOperator{\Lie}{Lie}
\newcommand{\fgaug}{\mathrm{fg.aug}}
\newcommand{\Sch}{\mathcal{\check{S}}}
\newcommand{\gl}{\mathfrak{gl}}
\newcommand{\la}{\mathrm{la}}
\DeclareMathOperator{\JL}{JL}
\newcommand{\Kla}{K\text{-}\la}
\newcommand{\Lla}{L\text{-}\la}
\newcommand{\ZgL}{Z(\mathfrak g_L)}
\begin{document}
\maketitle

\begin{abstract} We bound the Gelfand--Kirillov dimension of unitary Banach space
representations of $p$-adic reductive groups, whose locally analytic vectors afford 
an infinitesimal character. We use the bound to study Hecke eigenspaces in completed 
cohomology of Shimura curves and $p$-adic Banach space representations of the group of units of a quaternion algebra over $\Qp$ appearing in the 
$p$-adic Jacquet--Langlands correspondence, deducing finiteness results in favourable cases. 

\end{abstract}


\section{Introduction}\label{sec:intro}

        This paper is a continuation of \cite{DPS}, which proves that the locally analytic vectors of many $p$-adic Banach representations of global origin have an infinitesimal character, which can be explicitly computed from 
        $p$-adic Hodge-theoretic data. Here we are concerned with applications of those ideas to a very specific problem: the study of the $p$-adic Jacquet-Langlands correspondence between $p$-adic Banach representations of 
        $G:=\GL_2(\Qp)$ and of $D^{\times}$, where $D$ is a quaternion division algebra over $\Qp$. Along the way, we prove a new upper bound for the Gelfand--Kirillov dimension of admissible Banach representations whose locally analytic vectors have an infinitesimal character, which plays a key role in our main result concerning the $p$-adic Jacquet--Langlands correspondence. 
This bound also gives a proof of the fact that Hecke eigenspaces in the (rational) completed cohomology of the tower of modular curves have finite length, independent of the $p$-adic local Langlands correspondence and of the classification of irreducible mod 
$p$ smooth representations of $\GL_2(\Qp)$.
        
         Let us fix a finite extension $L$ of $\Q_p$, with ring of integers $\mathcal{O}$ and residue field $k$, which will serve as 
        coefficient field for all representations that we deal with. Let $C$ be the completion of a fixed algebraic closure $\Qpbar$ of $\Qp$. 
        A candidate for the $p$-adic Jacquet--Langlands correspondence was constructed\footnote{We stick to the $\GL_2(\Qp)$ case here: Scholze's constructions are more general, but very little is known beyond the case of $\GL_2(\Qp)$, which is already far from being completely understood.} by Scholze in a purely geometric way in \cite{scholze}, using the cohomology of the infinite-level Lubin--Tate space $\mathcal{LT}_{\infty}$, which is a pro-\'etale $G$-torsor over the analytic projective line  $\mathbb{P}^1_C$. 
           More precisely, he constructed functors 
      $\mathcal S^i$ (for $0\leq i\leq 2$) from smooth $\mathcal{O}$-torsion $\mathcal{O}[G]$-modules to smooth $\mathcal{O}$-torsion 
      $\mathcal{O}[D^{\times}]$-modules by setting
      \[\mathcal S^i(\pi)=H^i_{\et}(\mathbb{P}^1_C, \mathcal{F}_{\pi}),\]
      where $\mathcal{F}_{\pi}$ is the descent of the 
      constant sheaf $\underline{\pi}$ along the 
      pro-\'etale $G$-torsor $\mathcal{LT}_{\infty}\to \mathbb{P}^1_C$. The most interesting case is $i=1$, and we let 
      $\JL(\pi)=\mathcal S^1(\pi)$. 
         Despite the simple-looking definition, 
      it seems very hard to compute $\JL(\pi)$ or even to establish some of its basic properties (for instance whether it is nonzero...). Here we depart from the torsion situation and study what happens in characteristic $0$, exploiting the action of the Lie algebra of $D^{\times}$, more precisely of its centre. 
      
       Let $\Pi$ be an admissible unitary $L$-Banach space representation of $G$, and let $\Theta\subset \Pi$ be an open and bounded $G$-invariant lattice in $\Pi$. 
       Define\footnote{In the main text $\JL$ will be denoted by $\Sch^1$.} 
       \[\JL(\Pi)=(\varprojlim_{n} \JL(\Theta/p^n\Theta))_{\rm tf}\otimes_{\mathcal{O}} L,\]
       where the subscript tf refers to the maximal Hausdorff torsion-free quotient.
       By one of the main results of Scholze's paper \cite{scholze}, $\JL(\Pi)$ is an admissible unitary $L$-Banach space representation of $D^{\times}$
       (there is also an action of the absolute Galois group of $\Qp$ on $\JL(\Pi)$, commuting with $D^{\times}$, but we will not make use of it in this paper).
              
        Let $\widehat{G}_{\rm ss}$ be the set of (isomorphism classes of) absolutely irreducible unitary, admissible $L$-Banach space representations of $G$, which are \textit{non-ordinary} (also known as
        \textit{supersingular}), i.e. which are not subquotients of the parabolic induction of a unitary character. The $p$-adic local Langlands correspondence \cite{CDP} gives a bijection 
        \[  V: \widehat{G}_{\rm ss}  \to {\rm Irr}_2(\gal)\]
        between $\widehat{G}_{\rm ss}$ and the set of (isomorphism classes of) $2$-dimensional absolutely irreducible representations 
        of $\gal:={\rm Gal}(\Qpbar/\Qp)$ over $L$. We assume that $L$ is a splitting field of $D$ and we identify 
        ${\rm Lie}(D^{\times})\otimes_{\Qp} L$ and ${\rm Lie}(G)\otimes_{\Qp} L$, thus also their centers. 
                
    \begin{thm}\label{one_one} If $p>2$ then 
        for all $\Pi\in \widehat{G}_{\rm ss}$ the $D^{\times}$-representation 
        $\JL(\Pi)^{\la}$ has the same infinitesimal character as $\Pi^{\la}$.
       \end{thm}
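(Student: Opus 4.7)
The plan is to globalise $\Pi$ into the completed cohomology of a tower of Shimura curves, transfer the situation to the analogous $D^\times$-tower via Scholze's functor, and then apply the main result of \cite{DPS} on both sides. Since that result computes the infinitesimal character of the locally analytic vectors in terms of Hodge--Tate--Sen data attached to the global Galois representation at $p$, and the two sides will be governed by the same Galois representation at $p$, the two infinitesimal characters will coincide.

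Concretely, I would take a totally real field $F$ in which $p$ has a place $v$ with $F_v = \Qp$ and an indefinite quaternion algebra $B/F$ split at $v$ and at a single archimedean place. The associated tower of Shimura curves has completed cohomology $\widetilde H^1$ on which $G = \GL_2(\Qp)$ and a Hecke algebra $\T$ act commutingly. A standard globalisation argument (as in the construction of the $p$-adic local Langlands correspondence) produces a non-Eisenstein maximal ideal $\mm \subset \T$ with residue field $L$, an embedding $\Pi \hookrightarrow \widetilde H^1_\mm$, and a global Galois representation $\rho$ cut out by $\mm$ such that $\rho|_{\gal} \simeq V(\Pi)$. Passing to the quaternion algebra $B'/F$ obtained from $B$ by switching invariants at $v$ and at the split infinite place, one gets a zero-dimensional Shimura variety whose completed cohomology $\widetilde H^0(B')_\mm$ carries commuting actions of $D^\times$ and $\T$. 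A theorem of Scholze in \cite{scholze} relates $\JL(\widetilde H^1_\mm)$ to $\widetilde H^0(B')_\mm$, producing an embedding of $\JL(\Pi)$ into a finite direct sum of copies of $\widetilde H^0(B')_\mm$.

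The main result of \cite{DPS} applies to both $\widetilde H^1_\mm$ and $\widetilde H^0(B')_\mm$ and shows that their locally analytic vectors admit infinitesimal characters; in each case the character is the image, under the Harish--Chandra isomorphism for the centre of $U(\Lie(G)\otimes_{\Qp} L) = U(\Lie(D^\times)\otimes_{\Qp} L)$, of the Hodge--Tate--Sen data attached to $V(\Pi)$. Since the two characters are computed from the same data, they coincide; the theorem then follows by restricting to the subrepresentations $\Pi^{\la} \subset (\widetilde H^1_\mm)^{\la}$ and $\JL(\Pi)^{\la} \subset (\widetilde H^0(B')_\mm)^{\la,\oplus m}$, which inherit the infinitesimal character from the ambient representation.

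The main obstacle is the globalisation step: one must produce $\mm$ such that $\JL(\Pi)$ embeds non-trivially into (a multiple of) $\widetilde H^0(B')_\mm$, which requires controlling multiplicities and the maximal Hausdorff torsion-free quotient entering the definition of $\JL$. The hypothesis $p > 2$ is presumably imposed here, as several key automorphy and deformation-theoretic inputs for the globalisation are incomplete or unavailable when $p = 2$.
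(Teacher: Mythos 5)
Your high-level strategy — globalize, transfer across via Scholze's functor, apply the results of \cite{DPS} on both sides — is exactly the route the paper takes. However, there are two substantive problems.

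First, you have the quaternion algebras arranged backwards. Scholze's global comparison (\cite[Section 9]{scholze}, used here as \eqref{scholze_0}--\eqref{sch1}) takes as input the completed $H^0$ of a Shimura \emph{set}, attached to a totally definite quaternion algebra $D_0/F$ that is \emph{split} at the place $\pp$ above $p$, so that $\GL_2(\Qp)$ acts; applying $\Sch^1$ then produces the completed $H^1$ of a Shimura \emph{curve} for the quaternion algebra $D/F$ that is \emph{ramified} at $\pp$ and split at exactly one archimedean place, on which $D_\pp^\times$ acts. In your arrangement, $\GL_2(\Qp)$ sits on the Shimura curve's $H^1$ and $D^\times$ on the Shimura set's $H^0$, and you try to feed the former into $\Sch^1$ to land in the latter. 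That is not what Scholze's theorem asserts; the comparison only goes from $(\text{Shimura set}, \GL_2)$ to $(\text{Shimura curve}, D^\times)$, reflecting the fact that $\Sch^1$ raises cohomological degree by one and that the Lubin--Tate/Drinfeld uniformisation lives on the curve side. As written, the step ``a theorem of Scholze relates $\JL(\widetilde{H}^1_\mm)$ to $\widetilde{H}^0(B')_\mm$'' has no content.

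Second, the phrase ``a standard globalisation argument produces an embedding $\Pi\hookrightarrow \widetilde H^1_\mm$'' hides the real work. To capture an \emph{arbitrary} non-ordinary $\Pi\in\widehat{G}_{\mathrm{ss}}$ one does not embed it into a single localised completed cohomology group; one must patch. The paper constructs, via an ultrafilter patching argument (Theorem \ref{the_patch}, following Scholze, Gee--Newton and Dotto--Le), compatible big modules $M_\infty$ on the $\GL_2$ side and $M_\infty'$ on the $D^\times$ side with $\Sch^1(M_\infty)\cong M_\infty'$, and then invokes \cite[Theorem 7.1]{forum} to show every non-ordinary $\Pi$ appears as a $\Pi_x$ over some point $x$ of $\Spec R_\infty$. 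The patched module is also what allows one to verify the hypotheses of \cite[Theorem 8.6]{DPS} (via parts \eqref{ptch1}--\eqref{ptch3} of Theorem \ref{the_patch}, in particular the identification of the supports $R_\infty(\sigma_{a,b})$ with unions of components of potentially semistable loci). Without patching your argument would only treat those $\Pi$ that literally occur in a Hecke eigenspace of classical completed cohomology, which is a much weaker statement. There is also a subtler compatibility issue that the paper must address: after applying $\Sch^1$, the Banach space $\Pi'_x$ need not equal $\Sch^1(\Pi_x)$ on the nose — one only gets a closed inclusion with finite-dimensional cokernel killed by the norm-one subgroup (Lemma \ref{closed_subspace}), and one needs the infinitesimal character statement on $\Pi_x'$, not $\Sch^1(\Pi_x)$, to transfer. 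Your appeal to ``restricting to subrepresentations which inherit the infinitesimal character'' glosses over this.
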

            
        Even though all objects involved in the statement of the above theorem are of local nature, its proof uses heavily global methods.  We use the results of \cite{forum}, which in turn make use 
        of the $p$-adic Langlands correspondence for $\GL_2(\Qp)$, to show that every non-ordinary 
        $\Pi$ occurs as a subquotient in a Hecke eigenspace of the patched $0$-th  cohomology of 
        certain quaternionic Shimura sets. We then show, following Scholze \cite{scholze}, that 
        if we apply his functor to this Hecke eigenspace we obtain a Hecke eigenspace in the 
        patched $1$-st cohomology of certain Shimura curves. The results of 
        our previous paper \cite{DPS} allow us to conclude that the infinitesimal characters obtained 
        from the action of $\GL_2(\Qp)$ and the action of $D^{\times}$ on the locally analytic vectors 
        in the  respective eigenspaces are the same. We refer to Corollary \ref{the_end_is_nigh} for a more precise statement; in Remark \ref{unitary} we explain how some of our arguments can be extended to ${\rm GL}_n(F)$ 
       using the results of Caraiani and Scholze \cite{CS} and Kegang Liu \cite{kegang_liu}.  The 
       assumption $p>2$ enters when we globalize the local Galois representation and carry out the patching argument.
        
             The previous theorem is one of the key inputs in the proof of our main result (see Theorem \ref{the_same_inf}): 
        \begin{thm}\label{FL} Assume that $p>2$ and 
         let $\Pi\in \widehat{G}_{\rm ss}$ be a representation such that 
         the difference of the Hodge--Tate--Sen weights of $V(\Pi)$ is not a nonzero integer. Then 
         $\JL(\Pi)$ is a Banach representation of finite length, more precisely its restriction to any compact open subgroup of $D^{\times}$ has finite length.
        \end{thm}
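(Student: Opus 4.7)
The plan is to combine the infinitesimal character statement of Theorem \ref{one_one} with the new Gelfand--Kirillov dimension bound announced in the abstract, applied this time on the $D^{\times}$-side.

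First, Theorem \ref{one_one} supplies an infinitesimal character for $\JL(\Pi)^{\la}$, equal to the one governing $\Pi^{\la}$. Via Sen theory this infinitesimal character can be read off from the Hodge--Tate--Sen weights of $V(\Pi)$; the hypothesis that these two weights do not differ by a nonzero integer is precisely the regularity/non-integrality condition needed to make the GK-dimension bound tight, in the sense that the relevant infinitesimal character does not fall in the exceptional stratum coming from nontrivial integral Weyl translates.

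Next I would apply the paper's GK-dimension bound to $\JL(\Pi)$, viewed as an admissible unitary Banach representation of $D^{\times}$. The decisive point is that $D$ is a \emph{division} algebra over $\Qp$, so $D^{\times}$ is anisotropic modulo centre and has no proper $\Qp$-parabolic subgroups. For such a group the bound forces $\dim_{\mathrm{GK}} \JL(\Pi) = 0$ as soon as the infinitesimal character is regular in the sense above; heuristically, the GK-bound is governed by the dimension of an orbit in the (partial) flag variety, and for $D^{\times}$ this flag variety collapses to a point.

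Finally, for any compact open subgroup $K \subset D^{\times}$, the Schneider--Teitelbaum dual of $\JL(\Pi)|_K$ is a finitely generated $L[\![K]\!]$-module of Gelfand--Kirillov dimension zero, and such modules over the Iwasawa algebra are automatically of finite length; dualising back yields the claimed finite length of $\JL(\Pi)|_K$. The main obstacle is really a bookkeeping one: one has to pin down the precise form of the GK-bound for a reductive group that is anisotropic modulo centre and match the Hodge--Tate--Sen weight condition with the regularity hypothesis it requires --- but the shape of the answer, namely collapse to $0$, is dictated by the absence of proper parabolics in $D^{\times}$.
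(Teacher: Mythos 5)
There is a genuine gap, and it is concentrated in your third step. The Gelfand--Kirillov bound in Theorem \ref{astuce} / Theorem \ref{GKdim} is expressed in terms of the flag variety of $G_{\overline{\Qp}}$, not in terms of the $\Qp$-rational structure. For $G = D^{\times}$ the base change to $\overline{\Qp}$ is $\GL_{2,\overline{\Qp}}$, whose flag variety is $\PP^1$; the nilpotent cone has dimension $2$. So anisotropy of $D^{\times}$ over $\Qp$ buys you nothing: the bound for quasi-simple admissible Banach representations of $D^{\times}$ is $d(\JL(\Pi)) \le 1$, exactly the same as for $\GL_2(\Qp)$. Your assertion that ``the flag variety collapses to a point'' conflates the $\Qp$-building/parabolic theory with the geometric flag variety that actually controls the dimension estimate, and the conclusion $d_{\mathrm{GK}}\,\JL(\Pi)=0$ is false in general; indeed it would force $\JL(\Pi)$ to be finite dimensional over $L$ (Lemma \ref{usefullater}), which is not expected.

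Because the correct bound is $d(\JL(\Pi))\le 1$ rather than $0$, the finite-length statement does not follow from Iwasawa-algebra generalities alone, and this is where the non-integrality hypothesis actually does its work. One still has to rule out infinite descending chains whose subquotients are $d=0$, i.e.\ finite dimensional. The paper does this in Corollary \ref{cor:fin_length}: if the reduction had infinite length, Proposition \ref{toolbox} c) eventually forces finite-dimensional subquotients, and by Lemma \ref{ST_Ugfin} any such subquotient has an \emph{algebraic} infinitesimal character. The hypothesis on the Hodge--Tate--Sen weights of $V(\Pi)$ translates (via Theorem \ref{one_one} and the Sen-theoretic computation of the character) precisely into non-algebraicity of the infinitesimal character of $\JL(\Pi)^{\la}$, so no finite-dimensional subquotient can exist, and the chain must terminate. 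Thus the weight hypothesis is not there to ``make the GK-bound tight''; it is there to exclude finite-dimensional $H$-subquotients once the bound $d\le 1$ is in hand. Your proof as written skips this step and would not close without it.
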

        
        \begin{remar}
              We expect that the result still holds when the difference of the Hodge--Tate--Sen weights of $V(\Pi)$ is a nonzero integer. We refer to theorem $1.1$ in the recent work \cite{HW} of Hu and Wang for evidence supporting this expectation.
           The following basic question should shed light on what is happening in the complicated case excluded by the Theorem, but we could make no progress on it: is there an admissible Banach representation $\Pi$ of $D^{\times}$ such that its smooth vectors (or, more generally, its locally algebraic vectors) form an infinite dimensional vector space and such that $\Pi^{\la}$ has an infinitesimal character?
             
             \end{remar}
             
             \begin{remar}
             
               It is probably not unreasonable to think that ${\rm Hom}_{\gal}(V(\Pi), \JL(\Pi))$ is irreducible as a representation of $D^{\times}$, at least when $\Pi$ is ``generic'' (which in particular excludes the case when $\JL(\Pi)$ has locally algebraic vectors), but we cannot establish this. Recently Yongquan Hu and Haoran Wang proved this when $\Pi$ is attached to a generic irreducible $2$-dimensional Galois representation of global origin in \cite{HW} (see Theorem $1.1$ and Remark $1.2$ in loc.cit. for the precise statements and assumptions).                
        
        \end{remar}
        
        The second key input (and maybe the most surprising result of the paper) in the proof of the previous theorem is a new upper bound for the Gelfand--Kirillov (or canonical) dimension of Banach space representations $\Pi$ for which $\Pi^{\la}$ has an infinitesimal character. In order to avoid too many repetitions, let us call a Banach representation $\Pi$ \textit{quasi-simple} if $\Pi^{\la}$ has an infinitesimal character.
          
             Let $G$ be a connected reductive group over $\Qp$, let $H$ be an open subgroup of $G(\Qp)$ and let $d_G$ be the dimension of the flag variety of 
        $G_{\overline{\mathbb{Q}}_p}$. It is not difficult\footnote{Schmidt and Strauch \cite[Proposition 7.2]{SS} established this bound under  assumptions on the prime $p$ and the group $G$, as a consequence of deep work of Ardakov and Wadsley \cite{awannals}. In section \ref{proof_a} we give an elementary proof of this result, see Theorem \ref{astuce} (a), without imposing these assumptions and without making use of \cite{awannals}.} to see that 
        the GK-dimension of 
            any admissible locally analytic representation $\Pi$ of $H$ with an infinitesimal character cannot exceed $2d_G$. When $H$ is compact, it is also quite easy to see that this upper bound is optimal (see Remark \ref{optimal}). Even when $H$ is not compact, this upper bound can be optimal (for instance, if $H=\GL_2(\Qp)$ one can cook up examples using the completed cohomology of the tower of modular curves). The next Theorem shows that in the presence of an admissible {\it{Banach}} representation, this upper bound $2d_G$ is never attained. We refer the reader to Theorem \ref{astuce} for a more general statement. 
        
        \begin{thm}\label{GKdim}
         Let $G$ be a connected reductive group over $\Qp$ and let 
       $\Pi$ be an admissible quasi-simple Banach representation of an open subgroup of $G(\Qp)$. Then 
        the Gelfand--Kirillov dimension of $\Pi$ is {\emph{strictly}} less than twice the dimension of the flag variety of 
        $G_{\overline{\mathbb{Q}}_p}$.
        \end{thm}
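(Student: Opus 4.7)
The plan is to translate the question into a problem in (non-)commutative algebra via Schikhof duality. Restricting to a uniform pro-$p$ open subgroup $K$ of $H$, an operation that does not affect the GK-dimension, and taking $M = \Theta^d$ to be the Schikhof dual of an open bounded lattice $\Theta \subset \Pi$, the GK-dimension of $\Pi$ equals the canonical (Venjakob) dimension of the $k[[K]]$-module $M/\varpi M$, where $k = \OO/\varpi$. Since the associated graded of $k[[K]]$ for the Lie filtration is the commutative polynomial algebra $S(\mathfrak{g}_k)$ on $\dim G = 2d_G + r$ variables (with $r = \rank G$), this canonical dimension equals the Krull dimension of the support of $\gr(M/\varpi M)$ in $\Spec S(\mathfrak{g}_k) = \mathfrak{g}_k^*$.

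Taking Theorem \ref{astuce}(a) as a black box, the non-strict bound $\mathrm{GK}(\Pi) \leq 2d_G$ is known, and the task is to upgrade to strict inequality. I would argue by contradiction: suppose $\mathrm{GK}(\Pi) = 2d_G$, so the characteristic subvariety of $\gr(M/\varpi M)$ in $\mathfrak{g}_k^*$ has dimension $2d_G$. The quasi-simplicity of $\Pi$ supplies two essentially independent constraints on $M$: one on $M/\varpi M$ coming from the $k$-linear Iwasawa structure (responsible for the bound $2d_G$, via the Harish-Chandra-type description of the image of $Z(\mathfrak{g}_L)$ in an appropriate completion), and one on $M[1/p]$ coming from the $L$-linear action of $\mathfrak{g}_L$ through the distribution algebra $D(K,L)$ and the infinitesimal character $\lambda: Z(\mathfrak{g}_L) \to L$. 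The strategy is to combine these constraints to force the characteristic subvariety into a proper closed subscheme of the expected top-dimensional support.

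The hardest step is implementing this combination integrally: one wants to exhibit elements in $\OO[[K]]$ that (i) act via $\lambda$ on $M[1/p]$ after a suitable identification, and (ii) after reduction modulo $\varpi$ and passage to the associated graded, define a proper closed subscheme of the expected characteristic variety. A natural candidate is an $\OO$-integral lift of a generator of $Z(\mathfrak{g}_L)$, for example an integral Casimir in the $\mathfrak{sl}_2$-case; provided such elements can be placed in $\OO[[K]]$ compatibly with the Lie filtration, their reductions modulo $\varpi$ will annihilate $M/\varpi M$ (at least after replacing it by an appropriate subquotient without changing the dimension), restricting the characteristic variety to a scheme of strictly smaller dimension and yielding $\mathrm{GK}(\Pi) < 2d_G$. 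The main obstacle is that $U(\mathfrak{g}_L)$ does not sit integrally inside $\OO[[K]]$ in a naive way --- the Lie algebra embeds through the logarithm map, but its universal enveloping algebra only completes to affinoid subalgebras of $D(K,L)$ --- so the associated-graded comparison must be made delicately, most likely via a filtered completion argument that captures just enough of the central elements to provide the extra $+1$ in codimension beyond what Theorem \ref{astuce}(a) already extracts from the rank $r$.
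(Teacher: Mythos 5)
Your overall framing via Schikhof duality and canonical dimension is consistent with the paper's setup (Section 3.4), and you correctly identify that the strict inequality must come from combining the non-strict bound of Theorem \ref{astuce}(a) with the Banach structure. But the route you propose for extracting the extra $+1$ in codimension is genuinely different from the paper's, and it has a gap that you flag but do not resolve.

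The gap is fatal as stated. Central elements of $U(\mathfrak{g}_L)$, such as the Casimir, do not lie in $L\br{K}$, let alone $\OO\br{K}$: they live in the distribution algebra $D(K,L)$, and specifically in the affinoid slice $D_{1/p}(K,L)$, which is a strictly larger ring (concretely, the logarithm $\log(\delta_g)$ is an unbounded power series in $\delta_g-1$). No integral lift exists, so "reduction mod $\varpi$" of a central element is not defined on $\Theta^d/\varpi\Theta^d$, and the constraint of quasi-simplicity — which lives on $\Pi^{\la}$, equivalently on $(\Pi^{\la})^*\simeq D(K,L)\otimes_{L\br{K}}\Pi^*$ — does not specialize mod $p$. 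Relatedly, your description of the non-strict bound from part (a) as a statement about the $k$-linear Iwasawa structure is off: that bound is also established $L$-linearly, by showing that generators of $Z(\mathfrak{g}_L)$ form a regular sequence of central elements in $D_{r_n}(K,L)$ (flatness over $Z(\mathfrak{g}_L)$ via Kostant) and applying Rees' Lemma.

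The paper's actual mechanism sidesteps the integrality obstruction entirely. It stays $L$-linear: Corollary \ref{HomExt} translates the hypothesis $d^K(\Pi^{\Kla})=\dim\mathcal{N}$ into $\Hom_{L\br{H}}(\Pi^*, D(H,L)^{\Kla}_\chi)\ne 0$, while part (a) already forces $\Pi^*$ to be a torsion $L\br{H}$-module. The contradiction then comes from showing that $D(H,L)^{\Kla}_\chi$ is "torsion-free enough" as an $L\br{H}$-module. This cannot be done directly, so the paper instead decomposes the affinoid completion as a direct sum over coset representatives of copies of $\widehat{U(p^n\mathfrak{g})}[1/p]_\chi$, proves that these central reductions are integral domains (Theorem \ref{domaineasy}, a $p$-adic Conze embedding via Ardakov's rigid-analytic Beilinson--Bernstein localization), and proves that the Iwasawa algebra injects into them (Theorem \ref{affverma}, via Ardakov--Wadsley's faithfulness of affinoid Verma modules). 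A nonzero $L\br{H}$-linear map from a torsion module to a direct sum of domains receiving the Iwasawa algebra injectively is then shown to vanish, slice by slice, using the "distribution relation" between the $\varphi_{n,h}$. So the hard algebra lives on the target side of the Hom, not on the module $M/\varpi M$; no integral structure on central elements is ever needed, and no associated-graded argument is attempted after the infinitesimal character enters.
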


      It is an interesting and intriguing problem to find the maximal possible value of the Gelfand--Kirillov dimension of an admissible quasi-simple Banach representation $\Pi$. It is maybe too optimistic to conjecture that the result is $d_G$, but we have no counter-example so far. 
         
         We mention the following simple consequences of the previous theorem (see Corollary \ref{gltwoqp} for the first one and Corollary 
         \ref{cor:fin_length} for the second one). 
                
                 \begin{cor}\label{funcor}
                  Let $\Pi$ be an admissible, unitary, quasi-simple Banach representation of 
                 $G:=\GL_2(\Qp)$ over $L$, having a central character. Let $\Theta$ be an $\OO$-lattice in 
                 $\Pi$ stable under $G$. Then $\Theta\otimes_{\OO} k$ has finite length, in particular $\Pi$ itself has finite length.
                                  
                \end{cor}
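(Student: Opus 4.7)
The plan combines Theorem~\ref{GKdim} with a Hilbert--Samuel multiplicity argument on the mod-$p$ dual. For $G=\GL_2(\Qp)$ the flag variety is $\PP^1_{\Qpbar}$ of dimension $d_G=1$, so Theorem~\ref{GKdim} forces the Gelfand--Kirillov dimension of $\Pi$ to be strictly less than $2$, hence at most $1$. Fix a uniform pro-$p$ open subgroup $K\le\GL_2(\Zp)$; the continuous $\OO$-dual $\Theta^d:=\Hom^{\mathrm{cont}}_\OO(\Theta,\OO)$ is a finitely generated $\OO[[K]]$-module on which $p$ acts injectively, so $M:=\Theta^d/p\Theta^d\cong(\Theta\otimes_\OO k)^\vee$ is a finitely generated $k[[K]]$-module (with compatible $G$-action) of canonical dimension equal to that of $\Pi$, hence at most $1$.

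Since $\dim(M)\le 1$, the degree-$1$ Hilbert--Samuel multiplicity $e_1(M)$ is a well-defined nonnegative integer which is additive on short exact sequences of $k[[K]]$-modules of dimension $\le 1$, by Auslander regularity of $k[[K]]$. For any irreducible admissible smooth $k$-representation $\pi$ of $G$ occurring as a subquotient of $\Theta\otimes_\OO k$, the dual $\pi^\vee$ is a nonzero $k[[K]]$-subquotient of $M$ of canonical dimension at most $1$: this dimension is $0$ iff $\pi$ is finite-dimensional, and otherwise it is exactly $1$ and $e_1(\pi^\vee)\ge 1$. By additivity, the number of infinite-dimensional irreducible subquotients appearing in any finite filtration of $\Theta\otimes_\OO k$ is at most $e_1(M)$. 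On the other hand, finite-dimensional irreducible smooth $k$-representations of $\GL_2(\Qp)$ are characters $\chi\circ\det$ (as the only open normal subgroups of $\GL_2(\Qp)$ contain $\SL_2(\Qp)$), and the central-character condition $\chi^2=\zeta$ allows only finitely many $\chi$; all such are trivial on a sufficiently small pro-$p$ open subgroup $K_0\le K$, so any finite-dimensional subquotient of $\Theta\otimes_\OO k$ contributes a positive-dimensional piece to the $K_0$-invariants $(\Theta\otimes_\OO k)^{K_0}$, which is finite-dimensional by admissibility. These two bounds together show that $\Theta\otimes_\OO k$ has finite length as a smooth $G$-representation.

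Finite length of $\Pi$ follows: an infinite ascending chain of closed $G$-stable subspaces of $\Pi$ would, after intersecting with $\Theta$ and reducing mod $p$, produce an infinite chain in $\Theta\otimes_\OO k$, contradicting what was just proved. The main obstacle is the multiplicity step: additivity and strict positivity of the relevant Hilbert--Samuel invariant in the noncommutative Iwasawa setting must be derived via the Auslander regularity formalism for $k[[K]]$ (Venjakob, Ardakov--Wadsley), without implicitly invoking the classification of irreducible mod-$p$ representations of $\GL_2(\Qp)$---avoiding that classification is the whole point of the present approach. A secondary technical point is to verify that the canonical dimension of $M$ as a $k[[K]]$-module really coincides with $\dim^{\mathrm{can}}(\Pi)$ (and does not drop upon reduction mod $p$), which uses that $p$ is a regular element on $\Theta^d$ and the standard Auslander-regularity shift for grade.
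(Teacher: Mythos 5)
Your overall shape matches the paper's: apply Theorem~\ref{GKdim} to get $d(\Pi)\le 1$, pass to $\pi=\Theta\otimes_\OO k$, and then derive finite length from the dimension bound plus admissibility. The secondary technical point you flag (that $d(\pi)=d(\Pi)$, using regularity of $p$ on $\Theta^d$ and the grade shift) is indeed needed and is what Proposition~\ref{Bjork} supplies. The step where you bound the number of one-dimensional (canonical dimension $1$) factors is also sound in spirit: you do it via additivity of a Hilbert--Samuel multiplicity $e_1$, whereas the paper uses the simpler grade statement of Proposition~\ref{toolbox}~c) to conclude directly that in any strictly increasing chain $(\pi_n)$ with $\pi_0=0$, the quotients $\pi_m/\pi_N$ are finite-dimensional for $m\ge N$ once $N$ is large enough. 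The two routes give the same intermediate conclusion; the paper's avoids having to set up a multiplicity theory for $k\br{K}$-modules.

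The gap is in how you dispose of the finite-dimensional pieces. You assert that \emph{any} finite-dimensional irreducible subquotient of $\pi$, being trivial on a small pro-$p$ subgroup $K_0$, ``contributes a positive-dimensional piece'' to $\pi^{K_0}$, and you then bound the total number of such subquotients by $\dim_k\pi^{K_0}$. This is not correct as stated: taking $K_0$-invariants is only left exact, so a subquotient $B/A$ on which $K_0$ acts trivially need not inject into $\pi^{K_0}$ --- the connecting map $(B/A)^{K_0}\to H^1(K_0,A)$ can kill it. Counting occurrences of a character $\chi\circ\det$ in an arbitrary filtration by $\dim\pi^{K_0}$ therefore does not follow. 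The paper sidesteps this exactness issue by not counting isolated subquotients at all: given an infinite ascending chain $(\pi_n)$, it observes that the quotients $\pi_m/\pi_N$ for $m\ge N$ are \emph{nested finite-dimensional subrepresentations} of $\pi/\pi_N$; each is a smooth finite-dimensional representation of $G$, so $\SL_2(\Qp)$ (being abstractly simple modulo centre) acts trivially on each, as does the pro-$p$ centre $Z_1$ of $\GL_2(\Zp)$ (by the central-character hypothesis). Hence $\pi_m/\pi_N\subset(\pi/\pi_N)^{Z_1\SL_2(\Qp)}$, which is finite-dimensional by admissibility, forcing the chain to stabilize. This uses only subobjects of a fixed quotient, for which left-exactness of invariants is exactly what is needed, and it bypasses the obstruction you would otherwise have to control. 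You have all the right ingredients (the dimension drop, the classification of finite-dimensional smooth irreducibles as characters, the central-character constraint), but you should reassemble them along the lines of the nested-chain argument rather than counting isolated composition factors through $K_0$-invariants.
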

                
                \begin{remar}
                
          This result combined with \cite{DPS} (Section 9.7 and Theorem 9.20) gives a proof \emph{independent} of the 
         $p$-adic local Langlands correspondence for $\GL_2(\Qp)$ or the classification of irreducible smooth mod $p$ representations of  $\GL_2(\Qp)$ that Hecke eigenspaces (corresponding to weakly non-Eisenstein maximal ideals of the Hecke algebra, in the sense of section 9.7 of \cite{DPS}) in the completed cohomology of the tower of modular curves are Banach representations of finite length, and that their reduction modulo $p$ has finite length as well (but we cannot show using this method that Hecke eigenspaces modulo $p$ have finite length). 
         
         \end{remar}
         
         \begin{remar} Let $G=\GL_2(\Qp)$ and let $Z(\mathfrak{g})$ be the centre of the enveloping algebra of 
         ${\rm Lie}(G)$. It follows from the $p$-adic local Langlands correspondence that for an admissible unitary 
         $L$-Banach representation $\Pi$ of $G$, with a central character, the following statements are equivalent:
         
         $\bullet$ $\Pi^{\la}$ is $Z(\mathfrak{g})$-finite, i.e. killed by an ideal of finite codimension in $Z(\mathfrak{g})$. 
         
         $\bullet$ the ``reduction mod $p$'' of $\Pi$ has finite length, i.e. there is (equivalently for any) a 
         $G$-stable open and bounded $\OO$-lattice $\Theta\subset \Pi$ such that $\Theta\otimes_{\OO} k$ has finite length as a 
         $k[G]$-module.
         
         Neither of these statements seems to be easy to establish directly, but it is probably more approachable to establish the characteristic zero result. The Corollary above would then allow one to get the characteristic $p$ statement as well.

         \end{remar}
         
           Let us mention two other consequences of the bound on the Gelfand--Kirillov dimension. Let 
           $K$ be a finite extension of $\Qp$ and let $D$ be a quaternion algebra (we allow $D$ to be split) over 
           $K$. Let $G=D^{\times}$ and let $H$ be an open $K$-uniform subgroup of $G$, see section \ref{lazard_iso}. The space $\Pi^{\Kla}$ of $K$-locally analytic vectors in $\Pi$ has an increasing filtration by $H$-Banach space representations $\Pi^{\Kla}_{r_n}$ defined by radius of analyticity, see
           section \ref{rad_an}. (These subrepresentations are neither unitary nor admissible, and are not 
           closed in $\Pi^{\Kla}$.)
                 
           \begin{cor} 
                   Keep the above notations and let $\Pi$ be an admissible, quasi-simple Banach representation of  $G$, with a central character.

         a) The $H$-representation $\Pi^{\Kla}_{r_n}$ 
                  is topologically of finite length.
                                    
         b) If $K=\Qp$ and if $\Pi$ has no finite dimensional $H$-stable subquotient, then $\Pi$ has finite length as a topological $H$-representation. This is the case if the infinitesimal character of $\Pi^{\la}$ is not algebraic.

         \end{cor}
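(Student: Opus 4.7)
The plan is to derive topological finite length directly from the upper bound on Gelfand--Kirillov dimension provided by Theorem \ref{GKdim}. Since $D\otimes_K \Qpbar \simeq M_2(\Qpbar)^{[K:\Qp]}$, the flag variety of $G_{\Qpbar}$ is a product of $[K:\Qp]$ copies of $\mathbb{P}^1$, so $d_G = [K:\Qp]$; Theorem \ref{GKdim} therefore gives
\[ \dim_{GK}(\Pi) \leq 2[K:\Qp] - 1. \]

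For part (a), the continuous $L$-linear dual $M_n$ of $\Pi^{\Kla}_{r_n}$ is a finitely generated module over the Noetherian Auslander--regular affinoid Banach algebra $D^{\Kla}_{r_n}(H,L)$ of $K$-analytic distributions of radius $r_n$. The first step is to convert the $\Qp$-GK-dimension estimate on $\Pi$ into a bound on the canonical ($K$-analytic) dimension of $M_n$: using the natural inclusion $D^{\Kla}_{r_n}(H,L)\hookrightarrow D^{\la}_{r_n}(H,L)$ and the relation $\dim_{\Qp}H = [K:\Qp]\cdot\dim_K H$, one concludes that the $K$-canonical dimension of $M_n$ is at most $1$. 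Since $\Pi$ is quasi-simple, the centre $Z(\mathfrak{g}_L)$ (of Krull dimension $2$) acts on $M_n$ through a character, so $M_n$ is in fact a module over a central quotient of $D^{\Kla}_{r_n}(H,L)$; a standard grade--dimension argument on this Auslander--regular Banach algebra then gives that $M_n$ has finite length. Dualizing yields topological finite length for $\Pi^{\Kla}_{r_n}$.

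For part (b), when $K=\Qp$ one has $d_G=1$, and the continuous dual $M$ of $\Pi$ is a finitely generated module over the Iwasawa algebra $L\br{H}$, of canonical dimension at most $1$ by Theorem \ref{GKdim}. A finitely generated $L\br{H}$-module of canonical dimension $0$ is finite-dimensional over $L$ and corresponds by duality to a finite-dimensional $H$-representation; the hypothesis excluding such subquotients of $\Pi$ therefore forces $M$ to be pure of canonical dimension $1$, at which point standard results on the Iwasawa algebra yield finite length of $M$. For the final assertion, any finite-dimensional $H$-stable subquotient of $\Pi$ is necessarily locally algebraic with algebraic infinitesimal character, so a non-algebraic infinitesimal character on $\Pi^{\la}$ precludes their existence.

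The principal technical obstacle is the precise comparison between the $\Qp$-Gelfand--Kirillov dimension and the $K$-analytic canonical dimension over $D^{\Kla}_{r_n}(H,L)$, together with the passage from a canonical dimension bound of $1$ to finite length, which hinges on the Auslander--regularity of the relevant Banach algebras and on the infinitesimal-character reduction afforded by quasi-simplicity.
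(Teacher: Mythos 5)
Your proposal has two genuine gaps; the second is the more serious.

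First, for part (a) with $K\neq\Qp$, passing from the $\Qp$-Gelfand--Kirillov bound $d(\Pi)\leq 2[K:\Qp]-1$ (coming from Theorem \ref{GKdim} applied to $\Res_{K/\Qp}G$) to the bound $d^K(\Pi^{\Kla})\leq 1$ does \emph{not} follow from the comparison between $D_{r_n}(H,L)$ and $D_{r_n}(H,L)^{\Kla}$. The relevant comparison (Lemma \ref{lemm:comp_dim}) only gives $d(\Pi)-(\dim_{\Qp}H-\dim_K H)\leq d^K(\Pi^{\Kla})\leq d(\Pi)$, so the best upper bound it yields is $d^K(\Pi^{\Kla})\leq 2[K:\Qp]-1$, which is not $\leq 1$ unless $K=\Qp$. (Also note that $D^{\Kla}_{r_n}(H,L)$ is a \emph{quotient} of $D_{r_n}(H,L)$, not a subring.) The paper avoids this issue by proving the sharper statement $d^K(\Pi^{\Kla})<\dim\mathcal{N}_{G_{\overline K}}$ directly in the $K$-analytic framework; this is Theorem \ref{astuce}~b), which is strictly stronger than what one can extract from the $\Qp$-statement via Weil restriction.

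Second, and more importantly, the step ``canonical dimension $\leq 1$, hence finite length'' is simply false in general, both for finitely generated modules over $L\br{H}$ and for modules over the Banach algebras $D_{r_n}(H,L)^{\Kla}$: already over $L\br{\Zp^2}$ the quotient by a single non-unit is $1$-dimensional but has infinite length. There is no ``standard grade--dimension argument'' giving finite length here. What the paper actually does is start from an (assumed) infinite strictly decreasing chain $M_0\supset M_1\supset\cdots$ and invoke part c) of Proposition \ref{toolbox}: in an Auslander--Gorenstein ring, the successive quotients $M_i/M_{i+1}$ eventually satisfy $j(M_i/M_{i+1})\geq j(M_0)+1$, which together with the dimension bound forces $d(M_i/M_{i+1})=0$, i.e.\ the quotients are finite-dimensional. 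In part b) this immediately contradicts the hypothesis. In part a) one further needs Lemmas \ref{ST_Ugfin} and \ref{remix} to identify the possible finite-dimensional simple subquotients as $W^*\otimes V^*$ with $V$ determined by the (algebraic) infinitesimal character and $W$ ranging over finitely many irreducibles of $H/H^{p^n}$, and then to bound $\dim_L\Hom_{D_{r_n}}(M_i/M_j,N)$ by $d\cdot\dim_L N$ where $d$ is a fixed number of generators of $M_{i_0}$. This uniform bound is where quasi-simplicity enters, and it is what converts the ``dimension $\leq 1$'' inequality into finite length; your write-up omits this mechanism entirely. You should make Proposition \ref{toolbox}~c) and the finite-dimensional-subquotient analysis explicit.
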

\begin{remar} When $\Pi$ is unitary and irreducible and $G=\GL_2(\Qp)$ Andrea Dotto has obtained 
more refined results \cite{dotto} on the structure of $\Pi$ as $H$-representation for $H=\GL_2(\Zp)$ 
or Iwahori subgroup with characteristic $p$ methods. 
\end{remar}              

\begin{remar}
Part b) of the above Corollary together with Theorem \ref{one_one} imply Theorem \ref{FL}. The hypothesis on the Hodge--Tate--Sen weights 
of $V(\Pi)$ in Theorem \ref{FL} is precisely what is needed to make sure that the infinitesimal character of $\Pi^{\la}$ is not algebraic (this implicitly uses Theorem $1.2$ in \cite{DAnn}, which is given a new proof in Theorem $9.29$ in \cite{DPS}). 
\end{remar}
    
        The proof of Theorem \ref{GKdim} relies on a fine study of the central reductions 
        \[D(H, L)_{\chi}:=D(H, L)\otimes_{Z(\mathfrak{g}_L), \chi} L\]
         and some of their completions, where 
        $H$ is a uniform pro-$p$ group in $G(\Qp)$, $\mathfrak{g}={\rm Lie}(H)$, $D(H,L)$ is the algebra of $L$-valued distributions on $H$ and 
        $\chi: Z(\mathfrak{g}_L)\to L$ is a morphism of $L$-algebras. Moreover, in order to exploit the 
        fact that we deal with Banach space representations, it is crucial to understand the relation between the above rings and the algebra $L\br{H}=L\otimes_{\mathcal{O}} \mathcal{O}\br{H}$ of the $L$-valued measures on $H$. Roughly speaking, the argument goes as follows: if 
        $\Pi$ has Gelfand--Kirillov dimension exactly $2d_G$, then standard results in homological algebra and work of Schneider--Teitelbaum show that \[{\rm Hom}_{L\br{H}}(\Pi^*, D(H, L)_{\chi})\ne 0.\] 
        Here we are allowed to take $H$ as small as we like. Next, 
        $\Pi^*$ is a torsion $L\br{H}$-module (see Lemma \ref{usefullater}), thus we get a contradiction if $D(H, L)_{\chi}$ is a torsion-free $L\br{H}$-module. For this it suffices to ensure that $D(H, L)_{\chi}$ is an integral domain and that the natural map 
        $L\br{H}\to D(H, L)_{\chi}$ is injective. We cannot really prove these statements, but we prove just enough to make a version of the previous argument work. The actual argument is a bit painful, in particular there are some nontrivial
        reductions to the case where $G$ is semisimple, simply connected. In that case, we show that the required properties of the rings are consequences of the work of Ardakov \cite{ardakovast} and Ardakov--Wadsley \cite{awannals}, \cite{awverma} on localization theory for locally analytic representations (in order to show that certain rings are integral domains), as well as on affinoid Verma modules (in order to show that the map from the Iwasawa algebra to certain central reductions of completions of distribution algebras is injective). 
       
Since the most innovative result (Theorem \ref{astuce}) gets proved in the middle of the paper we feel 
the need to justify why we spend so much space on patching and Scholze's functor instead of 
just referring to \cite{scholze}. We correct a minor error in \cite{scholze}: although Scholze 
in \cite[Footnote 7]{scholze} writes \textit{It would be enough to assume that they have perfect resolutions by injective $H$-representations which are of `bounded complexity' in a suitable sense. As in our application, they will actually be injective, we restrict to this simpler setup}, in the actual application \cite[Corollary 9.3]{scholze} the representations are not injective, because in the setting 
of Shimura curves the global units in the centre have to act trivially on the cohomology groups. One way to avoid this issue is to work with unitary groups, but 
this would be inconvenient for us, as we would like to use the results of \cite{ludwig} proved in the setting of Shimura curves. We fix the issue in Theorem \ref{main_ultra}; again all the fundamental ideas in its proof are due to Scholze.  The second issue is that Scholze in \cite[Section 9]{scholze} patches 
cohomology instead of homology and to get the objects analogous to the patched module in \cite{6auth}
we would like to patch homology, so we spend some time in Section \ref{sec_ultra} discussing how Pontryagin duality interacts with localization with respect to an ideal defined by an ultrafilter. We patch following 
Dotto--Le \cite{dotto-le}, who in turn follow Gee--Newton \cite{gee_newton}; both of these papers 
use  a variation of Scholze's idea to patch using  ultrafilters. The third issue is that in Kisin's paper on the Fontaine--Mazur conjecture \cite{kisin_fmc} there is a problem, fixed in \cite[Appendix B]{gee-kisin}, when $p=3$ and the image of Galois representation is not 'big' enough. For this reason and mostly convenience Scholze assumes in \cite[Section 9]{scholze} that $p\ge 5$. 
Since the statement in Theorem \ref{one_one} is local we are free to choose a global setting to prove it, and we globalize the local Galois representation, using \cite[Appendix A]{gee-kisin}, so that it has 
`big image', and then the problem goes away and we can handle $p=3$, see Lemma \ref{diagonal} and the remark following it.  We wrote out  the details 
of these arguments since the results are needed in the forthcoming work of Colmez--Dospinescu--Niziol.     
        
        \emph{Notations}: We fix in all the article a prime $p$. We let $q=p$ if $p>2$ and $q=4$ otherwise and for $n\geq 0$ we let $r_n=p^{-1/p^n}$.  

      We fix a finite extension $L$ of $\Qp$,     
         the coefficient field of all representations considered in this paper. We let $\mathcal{O}$ be the ring of integers of 
        $L$, $\varpi\in\mathcal{O}$ a uniformiser and
        $k=\mathcal{O}/(\varpi)$ the residue field of $L$. The norm on $L$ extending the 
        $p$-adic norm on $\Qp$ is denoted $|\cdot|$.

        If $X$ is a topological $L$-vector space then we let $X^*$ be its topological 
        $L$-dual.

        If $K\subset L$ is a finite extension of $\Qp$, $G$ a locally $K$-analytic group and $\Pi$ a locally $\Qp$-analytic representation of $G$ over $L$, we denote $\Pi^{\Kla}$ the subspace of locally $K$-analytic vectors in $\Pi$, see \cite[Section 3]{Schmidt_vec} for more details. When $\Pi$ is a Banach representation of $G$ over $L$, we use the notation $\Pi^{\Kla}$ for $(\Pi^{\la})^{\Kla}$. 

        \subsection{Acknowledgement} We would like to thank Yongquan Hu for pointing out 
        a blunder in an earlier draft. We thank Toby Gee, Lue Pan and
        Peter Scholze for their comments on a preliminary version of
        the paper. We thank Konstantin Ardakov for simplifying the
        proof of theorem \ref{domaineasy}, and Olivier Ta\"ibi for
        several enlightening discussions around the results in this
        paper. We thank the referees for their very useful comments
        and suggestions.
        
             \section{Distribution algebras and completed enveloping algebras} 
             
              The proof of Theorem \ref{astuce} below requires some acrobatics with distribution algebras and their interpretation in terms of completed enveloping algebras, and the purpose of this paragraph is to recall some basic results and constructions concerning them. 
              
  \subsection{Distribution algebras of $p$-adic groups}\label{distri}            Let $K\subset L$ be finite extensions of $\Qp$.               If $G$ is a compact locally $K$-analytic group we let 
              $D(G,L)^{\Kla}$ be the algebra of $L$-valued locally 
              $K$-analytic distributions on $G$, i.e. the strong dual of the space of 
              $L$-valued locally $K$-analytic functions on $G$. 
                            When 
              $K=\Qp$ we drop the corresponding superscript. There is a natural embedding $L\otimes_K U(\mathfrak{g})\to D(G,L)^{\Kla}$, where $\mathfrak{g}={\rm Lie}(G)$ is the associated $K$-Lie algebra.

               Let 
              $G_0$ be $G$ viewed as a locally $\Qp$-analytic group
           and let $\mathfrak{g}_0$ be the Lie algebra of $G_0$. There is a natural isomorphism of $\Qp$-Lie algebras 
             $\iota: \mathfrak{g}\simeq \mathfrak{g}_0$             
              and we let $I$ be the kernel of the $\Qp$-linear map $K\otimes_{\Qp} \mathfrak{g}_0\to \mathfrak{g}$ sending $g\otimes X$ to $g\iota^{-1}(X)$. 
             There are natural surjective maps of $L$-algebras 
               \[D(G_0, L)\to D(G, L)^{\Kla}, \quad L\otimes_{\Qp} U(\mathfrak{g}_0)\to 
             L\otimes_{K} U(\mathfrak{g})\]
compatible with the embeddings $L\otimes_{\Qp} U(\mathfrak{g}_0)\to D(G_0, L)$ and 
$L\otimes_{K} U(\mathfrak{g})\to D(G, L)^{\Kla}$ and one shows \cite[Lemma 5.1]{SchmidtAR}
            that $I$ generates the kernel of these maps. 
                                       
From now on we assume that $G_0$ is a uniform pro-$p$ group and we let $g_1,\dots,g_d$ be topological generators of $G_0$, with $d=\dim_{\Qp} G_0$.
             Letting \[b^{\alpha}=(g_1-1)^{\alpha_1}\dots(g_d-1)^{\alpha_d}\in \Zp[G]\] for 
             $\alpha=(\alpha_1,\dots,\alpha_d)\in \Z_{\geq 0}^d$, each $\lambda\in D(G_0,L)$ has a unique convergent expansion \[\lambda=\sum_{\alpha\in \Z_{\geq 0}^d} c_{\alpha} b^{\alpha},\] where $c_{\alpha}\in L$ are such that $\lim_{|\alpha|\to\infty} |c_{\alpha}|\cdot r^{|\alpha|}=0$ for all $0\leq r<1$ (here $|\alpha|=\alpha_1+\dots+\alpha_d$). 
             
             If $n\geq 0$ set 
             $r_n=p^{-1/p^n}$, and let $q=p$ if $p>2$ and $q=4$ if $p=2$. The following results are proved in \cite{adm} and \cite{SchmidtAR}. One defines a multiplicative norm 
             $||\cdot||_{r_n}$ on $D(G_0,L)$ by setting 
             \[||\sum_{\alpha} c_{\alpha} b^{\alpha}||_{r_n}=\sup_{\alpha} |c_{\alpha}| q^{-\frac{|\alpha|}{p^n}}.\] The $L$-Banach algebra $D_{r_n}(G_0,L)$ obtained by completing 
             $D(G_0,L)$ with respect to this norm turns out to be Noetherian, the natural map 
             $D(G_0,L)\to D_{r_n}(G_0,L)$ is injective and we have an isomorphism of 
             $L$-Fr\'echet algebras  
             \[D(G_0,L)\simeq \varprojlim_{n} D_{r_n}(G_0,L).\]
            The elements of $D_{r_n}(G_0,L)$ have unique expansions 
            $\sum_{\alpha} c_{\alpha} b^{\alpha}$, whose coefficients $c_{\alpha}\in L$ satisfy
            $|c_{\alpha}| q^{-\frac{|\alpha|}{p^n}}\to 0$. Modding out by the ideal 
            $I=\ker(K\otimes_{\Qp} \mathfrak{g}_0\to \mathfrak{g})$ one obtains algebras 
            $D_{r_n}(G,L)^{\Kla}$ and an isomorphism $D(G,L)^{\Kla}\simeq 
            \varprojlim_{n} D_{r_n}(G,L)^{\Kla}$. The algebras $D(G,L)$ and $D(G,L)^{\Kla}$ are Fr\'echet-Stein algebras in the sense of \cite{adm}.

\subsection{Lazard's isomorphism}\label{lazard_iso}

  Recall that one can attach a $\Zp$-Lie algebra $L_H$ to any uniform 
  pro-$p$ group $H$, with underlying set $L_H=H$ and operations induced\footnote{More precisely $g+h=\lim_{n\to \infty} (g^{p^n}h^{p^n})^{p^{-n}}$ and 
  $[g,h]=\lim_{n\to\infty} (g^{-p^n}h^{-p^n}g^{p^n}h^{p^n})^{p^{-2n}}$}
 by those in 
  $H$. For psychological reasons we prefer to distinguish between $L_H$ and $H$, so we write 
  $X_h\in L_H$ for the element corresponding to $h\in H$. The construction $H\mapsto L_H$ 
  induces an equivalence of categories between uniform pro-$p$ groups and $\Zp$-Lie algebras $\mathfrak{g}$ which are finite free $\Zp$-modules and satisfy $[\mathfrak{g}, \mathfrak{g}]\subset q\mathfrak{g}$, using the standard notation $q=p$ for $p>2$ and $q=4$ if $p=2$. In particular $q^{-1}L_H$ is a $\Zp$-Lie algebra for any uniform pro-$p$ group $H$. The logarithm map of the Lie group
  $H$ combined with the bijection between $H$ and $L_H$ induce an injective map of 
  $\Zp$-Lie algebras $L_H\to {\rm Lie}(H)$, which in turn induces an isomorphism 
  $\Q_p\otimes_{\Zp} L_H\simeq {\rm Lie}(H)$. 
  
   As above, let $K\subset L$ be finite extensions of $\Qp$. 
 The locally $K$-analytic group $H$ is 
called $K$-uniform if $H$ is a uniform pro-$p$ group and if $L_H$ is a sub-$\OO_K$-module of ${\rm Lie}(H)$. Let $H$ be a $K$-uniform group and write for simplicity\footnote{As everywhere in this paper $\OO=\OO_L$.}
\[\mathfrak{g}_{H,\OO}=\OO\otimes_{\OO_K} q^{-1}L_H,\]
an $\OO$-Lie algebra, finite free over $\OO$ of rank equal to the dimension of the locally $K$-analytic group $H$. The natural embedding $\mathfrak{g}_{H,\OO}\to L\otimes_K {\rm Lie}(H)$ yields a map
\begin{equation}\label{eq:iota} \iota: U(\mathfrak{g}_{H,\OO})\to L\otimes_K U({\rm Lie}(H))\to D(H, L)^{\Kla}\to D_{1/p}(H,L)^{\Kla}.\end{equation}
Explicitly, for $h\in H$ we have (by identifying $h$ with the associated Dirac distribution)
\[\iota(1\otimes q^{-1}X_h)=q^{-1} \log(h):=-q^{-1}\sum_{n\geq 1} \frac{(1-h)^n}{n}.\]
Using this formula, one easily checks that the image lands in the unit ball of $D_{1/p}(H,L)^{\Kla}$ and so the map $\iota$ extends by continuity to a map of 
$L$-Banach algebras 
\[\iota: \widehat{U(\mathfrak{g}_{H,\OO})}[1/p]\to D_{1/p}(H,L)^{\Kla},\]
the completion involved here being $p$-adic. The construction is functorial in 
$H$. The following result is classical, at least if $p>2$, and we refer to \cite[Theorem 6.5.3]{ardakovast}  
 for the general case. 

\begin{prop}\label{Laz} (Lazard's isomorphism) The map $\iota$ constructed above is an isomorphism of 
$L$-Banach algebras 
\[\widehat{U(\mathfrak{g}_{H,\OO})}[1/p]\simeq D_{1/p}(H,L)^{\Kla}.\]

\end{prop}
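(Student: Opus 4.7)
The plan is to compare both sides via PBW-type bases and the explicit $\log/\exp$ correspondence. First, I would reduce to $K=\Qp$: letting $H_0$ denote $H$ viewed as $\Qp$-analytic, the ideal $I$ generates the kernel of the surjection $D(H_0,L)\twoheadrightarrow D(H,L)^{\Kla}$ by \cite[Lemma 5.1]{SchmidtAR}, and a routine check shows this persists after passing to $D_{1/p}$; the same ideal generates the kernel of the analogous surjection on the completed enveloping-algebra side, compatibly with $\iota$. So it suffices to handle $K=\Qp$.

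Assume now $K=\Qp$. Choose topological generators $h_1,\ldots,h_d$ of $H$ and set $X_i:=X_{h_i}\in L_H$. Since $\mathfrak{g}_{H,\OO}$ is free over $\OO$ on $q^{-1}X_1,\ldots,q^{-1}X_d$, PBW provides an $\OO$-basis of $U(\mathfrak{g}_{H,\OO})$ by ordered monomials $(q^{-1}X)^\alpha$, so $\widehat{U(\mathfrak{g}_{H,\OO})}[1/p]$ has the orthonormal basis $\{(q^{-1}X)^\alpha\}$. On the other side, every element of $D_{1/p}(H,L)$ has a unique expansion $\sum c_\alpha b^\alpha$ with norm $\sup_\alpha |c_\alpha| q^{-|\alpha|}$. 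The expansion $\iota(q^{-1}X_i)=q^{-1}\log(h_i)=q^{-1}(h_i-1)-q^{-1}(h_i-1)^2/2+\cdots$, together with the sharp estimate $|n|^{-1}q^{-(n-1)}\leq 1$ for $n\geq 1$ (where the factor $q$ is exactly what is needed when $p=2$), shows that $\iota$ has operator norm $\leq 1$ and that $\iota((q^{-1}X)^\alpha)$ has leading term $q^{-|\alpha|}b^\alpha$ of norm $1$, plus tail terms of strictly higher total degree in the $b^\beta$ and of norm $\leq 1$.

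I would then filter both sides by total degree --- the LHS by the PBW filtration, the RHS by the $|\alpha|$-degree in the $b^\alpha$-expansion --- and argue that $\iota$ is a filtered isomorphism via the associated graded. Multiplicativity of $\|\cdot\|_{1/p}$ (proved in \cite{adm}) combined with the uniform commutator condition $[L_H,L_H]\subset qL_H$ ensures that the RHS associated graded is the polynomial $k$-algebra on the classes of $q^{-1}(h_i-1)$; by the leading-term calculation above, the map induced by $\iota$ on associated gradeds is the natural identification between two polynomial $k$-algebras in $d$ generators, hence an isomorphism. Since both sides are complete and separated for their filtrations, this lifts to the desired isomorphism of Banach $L$-algebras. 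The main obstacle is the bookkeeping around the factor $q^{-1}$ in the case $p=2$, which is precisely what makes $\log$ land in the unit ball of the target and $\exp$ converge on the unit ball of the source; this is also where the uniform group condition $[\mathfrak{g},\mathfrak{g}]\subset q\mathfrak{g}$ enters in an essential way, and it is what forces the reference to \cite{ardakovast} in the general case.
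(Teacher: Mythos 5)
The paper gives no proof of this proposition; it simply refers to \cite[Theorem 6.5.3]{ardakovast} for the general case (the case $p>2$ being ``classical''). Your sketch is an honest attempt to supply the argument, and its skeleton is correct: the orthonormal-basis description of both sides, the estimate $|n|^{-1}q^{-(n-1)}\le 1$ (which is exactly sharp, with strict inequality for $n\ge 2$, so that the tail of $q^{-1}\log h_i$ lies in $\varpi$ times the unit ball), and the reduction to $K=\Qp$ via the ideal $I$ are all the right ingredients.

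There is, however, one genuine imprecision in the final step. You say you will ``filter both sides by total degree'' and conclude because ``both sides are complete and separated for their filtrations.'' But the $|\alpha|$-degree filtration on the $L$-Banach algebra $D_{1/p}(H,L)^{\Kla}$ is \emph{not} exhaustive (its union is the dense proper subspace of finite sums $\sum c_\alpha b^\alpha$), and ``complete'' in the Banach sense is a different notion from complete in the filtered-ring sense; so one cannot pass from a graded isomorphism to an isomorphism of Banach algebras in a single step. The correct formulation is the two-step lift: first observe that $\iota$ carries the unit ball $\widehat{U(\mathfrak{g}_{H,\OO})}$ into the unit ball $B$ of $D_{1/p}(H,L)^{\Kla}$ (this is your norm estimate); then reduce modulo $\varpi$, where $B/\varpi B$ really is an exhaustively, positively (hence Zariskian) filtered $k$-algebra with associated graded a polynomial ring in the classes of $q^{-1}(h_i-1)$, and where your graded computation gives an isomorphism; finally lift first along the degree filtration of the mod-$\varpi$ rings and then $\varpi$-adically, using that both unit balls are $\varpi$-torsion-free and $\varpi$-adically complete. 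This is precisely the $CDF(\OO)$/double-filtration formalism set up in Section~3.2 of the paper. A further small point worth making explicit in the reduction to $K=\Qp$: passing from the statement that $I$ generates $\ker\bigl(U(\mathfrak g_{H_0,\OO})\to U(\mathfrak g_{H,\OO})\bigr)$ to the analogous statement after $p$-adic completion uses that these rings are Noetherian and that the short exact sequence splits over $\OO$ by PBW, so $\varpi$-adic completion remains exact; this is routine but should not be left implicit.

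Modulo this cleanup, the outline is sound and matches the standard Lazard-type argument (and, as far as one can tell from the statement, the shape of Ardakov's proof as well); the paper does not duplicate it because it prefers the citation.
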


\begin{remar} a) Note that $H^{p^n}$ is also $K$-uniform and $\mathfrak{g}_{H^{p^n},\OO}=p^n \mathfrak{g}_{H,\OO}$. Applying Lazard's isomorphism to $H^{p^n}$ we obtain compatible isomorphisms of $L$-Banach algebras
   \[D_{1/p}(H^{p^n}, L)^{\Kla}\simeq \widehat{U(p^n\mathfrak{g}_{H,\OO})}[1/p].\]

b) There is a natural map $L\br{H}\to D(H,L)\to D_{1/p}(H,L)^{\Kla}$, and the composite with Lazard's isomorphism $L\br{H}\to \widehat{U(\mathfrak{g}_{H,\OO})}[1/p]$ is uniquely determined by the fact that the image of 
 $h\in H$ (seen as Dirac measure in $L\br{H}$) is 
 \[e^{q\cdot \frac{X_h}{q}}:=\sum_{n\geq 0} \frac{q^n}{n!} \left(1\otimes \frac{X_h}{q}\right)^n.\]

 \end{remar}

    We end this paragraph with another very useful and standard (at least for $p>2$, see \cite[Theorem 6.5.11]{ardakovast} for the general case) result.   

\begin{prop}\label{distrib} For any $K$-uniform group $H$ 
 there are compatible embeddings 
\[L\br{H^{p^n}}\to L\br{H},\,\, D_{1/p}(H^{p^n}, L)^{\Kla}\to D_{r_n}(H,L)^{\Kla}\] for 
$n\geq 1$, making the right-hand side a finite free module over the left-hand side with basis given by (the Dirac measures of) any system of representatives of $H/H^{p^n}$.
    \end{prop}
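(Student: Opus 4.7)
The plan is to build the maps from an explicit choice of topological generators, verify continuity via a single norm computation, and read off the free-module structure from uniqueness of distribution expansions.

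Choose topological generators $g_1, \ldots, g_d$ of $H$; then $g_1^{p^n}, \ldots, g_d^{p^n}$ are topological generators of the $K$-uniform subgroup $H^{p^n}$ (whose associated $\OO$-Lie algebra is $p^n \mathfrak{g}_{H,\OO}$). The Iwasawa embedding $L\br{H^{p^n}} \to L\br{H}$ is the tautological inclusion of completed group algebras, and freeness of $L\br{H}$ of rank $p^{nd}$ over $L\br{H^{p^n}}$ on any system of coset representatives is classical, with $\{g_1^{a_1} \cdots g_d^{a_d} : 0 \leq a_i < p^n\}$ being an explicit such system (this uses the homeomorphism $\Zp^d \simeq H$ provided by coordinates of the second kind and the fact that it identifies $(p^n\Zp)^d$ with $H^{p^n}$).

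For the distribution algebras, set $b_i = g_i - 1$ and $b_i' = g_i^{p^n} - 1$. The key numerical input is $\|b_i'\|_{r_n} = q^{-1}$. This follows from the binomial expansion
\[b_i' = (1 + b_i)^{p^n} - 1 = \sum_{k=1}^{p^n} \binom{p^n}{k} b_i^k\]
combined with the estimate $|\binom{p^n}{k}| \cdot q^{-k/p^n} = p^{v_p(k) - n} q^{-k/p^n} \leq q^{-1}$ for $1 \leq k \leq p^n$, with equality at $k = p^n$; the $q=4$ convention at $p=2$ is precisely what makes the bound hold uniformly over all intermediate $k$. By multiplicativity of $\|\cdot\|_{r_n}$, one then gets $\|(b')^\alpha\|_{r_n} = q^{-|\alpha|}$, so the assignment $b_i' \mapsto b_i'$ extends to a continuous map of $L$-Banach algebras $D_{1/p}(H^{p^n}, L)^{\Kla} \to D_{r_n}(H, L)^{\Kla}$. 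Compatibility with the Iwasawa embedding is automatic, since both maps arise functorially from the inclusion $H^{p^n} \subset H$ of locally $K$-analytic groups.

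To establish the free-module structure, take $\lambda = \sum_\alpha c_\alpha b^\alpha \in D_{r_n}(H, L)^{\Kla}$ with $|c_\alpha| q^{-|\alpha|/p^n} \to 0$. Writing $\alpha_i = p^n \beta_i + \gamma_i$ with $0 \leq \gamma_i < p^n$, one reorders each monomial $b^\alpha$ inductively into the form $g^\gamma (b')^\beta$ modulo corrections of strictly higher $(b')$-degree, using the commutator estimates valid in any $K$-uniform group, and then regroups by $\gamma$ to obtain $\lambda = \sum_\gamma g^\gamma \lambda_\gamma$ with $\lambda_\gamma \in D_{1/p}(H^{p^n}, L)^{\Kla}$. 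Uniqueness of this decomposition is forced by uniqueness of the original expansion together with the rank-$p^{nd}$ Iwasawa decomposition of the previous paragraph, which it must recover upon restriction. The main obstacle is the convergence control for the reordering: the cumulative commutator corrections must be absorbed so that each $\lambda_\gamma$ genuinely lies in $D_{1/p}(H^{p^n}, L)^{\Kla}$ rather than merely in a larger completion, and it is exactly the $K$-uniformity hypothesis (with the $q = 4$ convention at $p = 2$) that guarantees this, matching the norm bookkeeping of the second paragraph.
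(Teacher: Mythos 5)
Your first two steps are sound: the Iwasawa-algebra freeness is a standard fact, and the norm estimate $\|g_i^{p^n}-1\|_{r_n}=q^{-1}$ via the binomial expansion is correct and yields a continuous algebra map (one quibble: the bound $|\binom{p^n}{k}|q^{-k/p^n}\le q^{-1}$ already holds for $p=2$ with $q=2$, so attributing it specifically to the $q=4$ convention is misplaced; the $q=4$ convention is forced elsewhere, for instance by the Lie--algebra axioms for $L_H$). For what it is worth, the paper does not prove this proposition: it calls it classical (at least for $p>2$) and refers to Ardakov, Ast\'erisque 423, Theorem 6.5.11 for the general case, so there is no ``paper's proof'' to compare against.

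The genuine gap is exactly where you flag it. Your third paragraph --- reorder $b^\alpha$ into $g^\gamma(b')^\beta$ modulo higher corrections, regroup, invoke $K$-uniformity to absorb the corrections --- is a description of the work to be done, not a proof. Asserting that ``it is exactly the $K$-uniformity hypothesis that guarantees this'' begs the question: one must actually show that the commutator corrections arising in the reordering have strictly smaller $\|\cdot\|_{r_n}$-norm (so the recursion converges), that each $\lambda_\gamma$ lands in $D_{1/p}(H^{p^n},L)^{\Kla}$ rather than some larger Banach completion, and that the resulting decomposition is both unique and norm-orthogonal; this is the whole content of the freeness statement. In addition, the argument as written conflates the $\Qp$-analytic and $K$-analytic algebras: elements of $D_{r_n}(H,L)^{\Kla}$ do \emph{not} have unique expansions $\sum_\alpha c_\alpha b^\alpha$ in $d=\dim_{\Qp}H$ variables (that is true for $D_{r_n}(H,L)$; the $\Kla$-algebra is a quotient by the ideal $I$), and multiplicativity of $\|\cdot\|_{r_n}$ on the quotient is not automatic --- the graded ring of the $\Kla$-algebra is shown to be a polynomial ring by Schmidt only after shrinking $H$. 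You should either work throughout with a $\Kla$-adapted coordinate system of size $\dim_K H$ (built from an $\OO_K$-basis of $L_H$), or prove the $\Qp$-analytic statement first and descend carefully through the $I$-quotient, verifying that $I$ interacts compatibly with the various completions.
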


  Combining this proposition and the previous discussion, we deduce that for any $K$-uniform group 
  $H$ we have a natural embedding $\widehat{U(p^n\mathfrak{g}_{H,\OO})}[1/p]\to D_{r_n}(H,L)^{\Kla}$ inducing an isomorphism of left $\widehat{U(p^n\mathfrak{g}_{H,\OO})}[1/p]$-modules
  \[D_{r_n}(H,L)^{\Kla}\simeq \bigoplus_{h\in H/H^{p^n}} \widehat{U(p^n\mathfrak{g}_{H,\OO})}[1/p] \delta_h.\]

\subsection{Radius of analyticity}\label{rad_an} Let $\Pi$ be an admissible Banach representation of a locally $\Q_p$-analytic group $G$ over $L$ and fix an open pro-$p$ uniform subgroup $H$ of $G$. If $g_1,\ldots, g_d$ is a minimal system of topological generators of $H$, we let $\Pi_{r_n}^{\la}$ be the subspace of $\Pi$ consisting of those vectors $v\in \Pi$ "of radius of analyticity $r_n$", i.e. such that (the notation $b^{\alpha}$ is the one introduced in paragraph \ref{distri}) the sequence of real numbers $(q^{\frac{|\alpha|}{p^n}} ||b^{\alpha}v||)_{\alpha\in \Z_{\geq 0}^d}$ is 
bounded. 
It is a Banach space for the norm 
\[||v||_{r_n}=\sup_{\alpha} q^{\frac{|\alpha|}{p^n}} ||b^{\alpha}v||,\]
where $||\cdot||$ is the norm on $\Pi$. 
One can check (by using the same arguments as in chapter IV of \cite{CD}) that $\Pi_{r_n}^{\la}$ does not depend on the choice of the global coordinates $g_1,\ldots,g_d$ on $H$, but we warn the reader that $\Pi_{r_n}^{\la}$ \emph{does depend} on 
the subgroup $H$, thus the notation is slightly misleading. By construction $\Pi_{r_n}^{\la}$ is a 
topological $D_{r_n}(H,L)$-module and one checks that 
there is a natural isomorphism of topological $D_{r_n}(H,L)$-modules 
\[(\Pi_{r_n}^{\la})^*\simeq D_{r_n}(H,L)\otimes_{L\br{H}} \Pi^*.\]

 If $G$ is a locally $K$-analytic group and $H$ is a $K$-uniform open subgroup of $G$, we define 
 $\Pi^{\Kla}_{r_n}$ as $\Pi^{\Kla}\cap \Pi^{\la}_{r_n}$, where $\Pi^{\la}_{r_n}$ is defined with respect to $H$ seen as a uniform pro-$p$ subgroup of $G$.  One then has an isomorphism of topological $D_{r_n}(H,L)^{\Kla}$-modules
 \[(\Pi^{\Kla}_{r_n})^{*}\simeq D_{r_n}(H,L)^{\Kla}\otimes_{L\br{H}} \Pi^*.\]
 The reader who prefers working with modules over the distribution algebras can simply ignore the previous discussion and take the last isomorphism as a definition of the left-hand side. 
                
 \section{Dimension theory}\label{GK}  We recall in this rather long paragraph some standard results 
 concerning the dimension theory of finitely generated modules over Auslander-regular and 
 Aus\-lan\-der--Gorenstein rings. We will use these results systematically in the next paragraphs, so we decided to make the presentation relatively self-contained.
 
 \subsection{Auslander--Gorenstein rings and the dimension filtration}
 
   We start by recalling some of the main features of (non commutative) Auslander--Gorenstein rings. All rings below will be associative, unital and Noetherian, the only exception being paragraph \ref{STdim}, where non Noetherian rings will appear. We write ${\rm Mod}^{fg}(A)$ for the category of finitely generated (left) $A$-modules.
      If $A$ is a ring and $M$ is an $A$-module, we set $E^q(M)=E_A^q(M)={\rm Ext}^q_A(M,A)$ and denote by 
      \[j(M)=j_A(M)=\inf \{q| \,E^q(M)\ne 0\}\in \Z_{\geq 0}\cup \{\infty\}\]
      the grade of $M$ (see \cite[sec.1]{levasseur} and \cite [III.2.1] {zarfilt} for more information). 
      Clearly $j(0)=\infty$. On the other hand, if $A$ has finite injective dimension $d$, which will always be the case in our applications below, one has $j(M)\leq d$ for all nonzero $M\in {\rm Mod}^{fg}(A)$. The following result is standard, see for instance Theorem $8.34$ in \cite{Rotman}.
      
        \begin{lem}\label{Rees} (Rees' Lemma) If $x\in A$ is central, not a unit and not a zero-divisor, then 
    for all $M\in {\rm Mod}^{fg}(A/xA)$ and all $q\geq 0$ we have $E_A^{q+1}(M)\simeq E^{q}_{A/xA}(M)$, in particular $j_{A/xA}(M)=j_A(M)-1$. 
          \end{lem}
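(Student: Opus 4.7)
The plan is to derive $E^{q+1}_A(M)\simeq E^{q}_{A/xA}(M)$ from a Grothendieck change-of-rings spectral sequence, exploiting the fact that $A/xA$ admits an unusually short projective $A$-resolution when $x$ is central and regular.

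First, because $M$ is annihilated by $x$, one has the factorization of left-exact functors
\[\Hom_A(M,-)\;=\;\Hom_{A/xA}(M,-)\circ \Hom_A(A/xA,-),\]
where $\Hom_A(A/xA,N)=\{n\in N:xn=0\}$ is naturally an $A/xA$-module thanks to the centrality of $x$. The functor $\Hom_A(A/xA,-)$ sends $A$-injectives to $A/xA$-injectives by the standard Hom-tensor adjunction, and $A/xA$-injectives are acyclic for $\Hom_{A/xA}(M,-)$, so the Grothendieck spectral sequence applies and yields
\[E_2^{p,q}\;=\;\Ext^p_{A/xA}\bigl(M,\Ext^q_A(A/xA,A)\bigr)\;\Longrightarrow\;\Ext^{p+q}_A(M,A).\]

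Next I would compute the coefficients using the length-one projective resolution
\[0\longrightarrow A\xrightarrow{\cdot x} A\longrightarrow A/xA\longrightarrow 0,\]
which is available precisely because $x$ is central and not a zero-divisor. Applying $\Hom_A(-,A)$ produces the two-term complex $A\xrightarrow{\cdot x}A$, whose kernel vanishes (again by regularity of $x$) and whose cokernel equals $A/xA$. Hence $\Ext^q_A(A/xA,A)=0$ for $q\neq 1$ and $\Ext^1_A(A/xA,A)=A/xA$. Only the row $q=1$ of the $E_2$-page is nonzero, so all differentials are forced to vanish and the spectral sequence collapses, giving
\[\Ext^{p+1}_A(M,A)\;=\;E_2^{p,1}\;=\;\Ext^p_{A/xA}(M,A/xA),\]
which is the desired isomorphism. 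The grade identity $j_{A/xA}(M)=j_A(M)-1$ then drops out by taking the infimum of the vanishing indices.

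The main technical point is verifying the acyclicity hypothesis underlying the Grothendieck spectral sequence, which rests on the centrality of $x$: this is what makes $A/xA$ a genuine two-sided ideal quotient, so that $\Hom_A(A/xA,N)$ inherits the expected $A/xA$-module structure and the composition of functors actually holds on the nose. An alternative, more hands-on route would avoid spectral sequences entirely: starting from an $A/xA$-projective resolution $P_\bullet\to M$, lift each $P_i$ to an $A$-projective $\widetilde P_i$ and totalize the double complex with columns $0\to\widetilde P_i\xrightarrow{\cdot x}\widetilde P_i\to P_i\to 0$ to obtain an $A$-projective resolution of $M$; inspecting the associated Hom-complex produces the same degree-shifted identification of Ext groups.
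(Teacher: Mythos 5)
Your argument is correct. The paper itself gives no proof of this lemma; it simply cites \cite[Theorem 8.34]{Rotman}, so there is no in-text argument to compare against. Your route via the Grothendieck spectral sequence for the composition $\Hom_A(M,-)=\Hom_{A/xA}(M,-)\circ\Hom_A(A/xA,-)$ is one of the standard proofs of Rees' Lemma: the coinduction $\Hom_A(A/xA,-)$ is right adjoint to the exact restriction functor and hence preserves injectives, the coefficients $\Ext^q_A(A/xA,A)$ collapse to $A/xA$ concentrated in degree $q=1$ by the two-term free resolution $0\to A\xrightarrow{\cdot x}A\to A/xA\to 0$ (using both centrality and regularity of $x$), and the one-row $E_2$-page forces the degenerate isomorphism $\Ext^{p+1}_A(M,A)\cong\Ext^p_{A/xA}(M,A/xA)$. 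The degeneration also gives $\Hom_A(M,A)=0$ for free, which is what makes the grade shift $j_{A/xA}(M)=j_A(M)-1$ an exact equality rather than just an inequality. One small point to keep in mind in the noncommutative setting: $\Ext^q_A(A/xA,A)$ acquires its left $A/xA$-module structure from the right multiplication action of $A/xA$ on itself in the first variable, and it is worth spelling out that this identifies $\Ext^1_A(A/xA,A)\cong A/xA$ as a left $A/xA$-module (which it does, again by centrality of $x$), since that is what feeds into $\Hom_{A/xA}(M,-)$ on the $E_2$-page. The alternative double-complex sketch at the end is fine in spirit but a bit imprecise as phrased (``lifting'' an arbitrary projective $P_i$ is not canonical; one should take a resolution by finitely generated free $A/xA$-modules and lift those to free $A$-modules), so the spectral-sequence version is the cleaner route.
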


      We say that $A$ is Auslander--Gorenstein of dimension $d<\infty$ if 
   ${\rm injdim}(A)=d$ and for all $M\in {\rm Mod}^{fg}(A)$, all $q\geq 0$ and all 
   $A$-submodules $N\subset E^q(M)$ we have $j(N)\geq q$. If moreover the global homological dimension of $A$ is finite, we say that $A$ is Auslander regular. When $A$ is commutative, 
   $A$ is Auslander--Gorenstein (resp. Auslander-regular) if and only if $A$ is Gorenstein (resp. regular). The following result summarizes standard properties of Auslander--Gorenstein rings and we refer the reader to sections 2 and 4 of \cite{levasseur} for the proofs. The filtration appearing in part b) is called the dimension filtration of $M$. 
   
   \begin{prop}\label{toolbox}
    Let $A$ be an Auslander--Gorenstein ring of dimension $d$ and let 
    $M$ be a finitely generated $A$-module. 
    
    a) For any sub-module $N$ of $M$ we have 
      $j(M)=\min(j(N), j(M/N))$. 
    
    b) Suppose that $M\ne 0$ and let $F^qM$ be the largest sub-module $X$ of $M$ such that 
    $j(X)\geq q$. Then $F^{d+1}M=0$, $j(M)$ is the largest $q$ such that $M=F^qM$ and there are exact sequences with $j(Q(q))\geq q+2$
    \[0\to F^qM/F^{q+1}M\to E^q(E^q(M))\to Q(q)\to 0.\]

    c) If $\dots\subset M_1\subset M_0=M$ is a chain of sub-modules of $M$, then $j(M_{i}/M_{i+1})\geq j(M)+1$ for all $i$ large enough. 
    
    d) If $x\in A$ is central and not a zero-divisor, then $A/xA$ is Auslander--Gorenstein of dimension $\leq d-1$. 
       \end{prop}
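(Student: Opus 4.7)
The plan is to handle the four parts in order, using the long exact sequence of $\Ext$ for (a), Rees' lemma for (d), and—for the substantive parts (b) and (c)—the biduality spectral sequence that is the characteristic tool of Auslander--Gorenstein homological algebra.

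Part (a) will follow from the long exact sequence of $\Ext^{\bullet}(-,A)$ applied to $0\to N\to M\to M/N\to 0$: when $i<\min(j(N),j(M/N))$ the middle term $\Ext^i(M,A)$ is squeezed between vanishing terms, giving $j(M)\geq \min(j(N),j(M/N))$; for the reverse inequality, at $i=\min(j(N),j(M/N))$ either $\Ext^{i-1}(N,A)$ or $\Ext^{i+1}(M/N,A)$ vanishes, which yields respectively an injection $\Ext^i(M/N,A)\hookrightarrow \Ext^i(M,A)$ of a nonzero source or a surjection $\Ext^i(M,A)\twoheadrightarrow \Ext^i(N,A)$ onto a nonzero target, so $\Ext^i(M,A)\neq 0$. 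Part (d) is immediate from Lemma~\ref{Rees}: the identification $\Ext^{q+1}_A(N,A)\simeq \Ext^q_{A/xA}(N,A/xA)$ transports the Auslander condition for $A$ to $A/xA$ and gives $\mathrm{injdim}(A/xA)\leq d-1$ via the vanishing of $\Ext^{d+1}_A$.

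Part (b) is the substantive one, and my plan is to invoke the biduality spectral sequence
\[
E_2^{p,q}=\Ext^p_A(\Ext^q_A(M,A),A)\Longrightarrow H^{p-q}(M),
\]
whose abutment is $M$ concentrated in total degree $0$. The Auslander condition yields the crucial vanishing $E_2^{p,q}=0$ for $p<q$, because $j(E^q(M))\geq q$ forces $E^p(E^q(M))=0$ for $p<q$. This vanishing ensures that all incoming differentials to $E_r^{q,q}$ originate at zero modules, producing canonical embeddings $E_\infty^{q,q}\hookrightarrow E_2^{q,q}$. The induced filtration on $M$, with graded pieces $E_{\infty}^{p,p}$, is then identified with the grade filtration $F^{\bullet}M$ of the statement; the exact sequence reads off as $0\to F^qM/F^{q+1}M=E_\infty^{q,q}\hookrightarrow E_2^{q,q}=E^q(E^q(M))\to Q(q)\to 0$, where $Q(q)$ is assembled from images of outgoing differentials $d_r^{q,q}\to E_r^{q+r,q+r-1}$, each landing in a target of grade $\geq q+r\geq q+2$ by Auslander, so $j(Q(q))\geq q+2$ by part (a). The vanishing $F^{d+1}M=0$ follows from biduality together with $\mathrm{injdim}(A)=d$, and the characterization of $j(M)$ is formal.

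For (c), iterating (a) shows that $j(M_i)$ is non-decreasing in $i$ (from $c=j(M_i)=\min(j(M_{i+1}),j(M_i/M_{i+1}))$), bounded by $d$, so eventually constant equal to some $c\geq j(M)$ past an index $i_0$. For $i\geq i_0$ this same relation only forces $j(M_i/M_{i+1})\geq c$, and the main obstacle is to upgrade this to $\geq c+1$. My plan is: the vanishing $\Ext^{c-1}(M_i,A)=0$ produces injections $E^c(M/M_i)\hookrightarrow E^c(M)$ for $i\geq i_0$, which assemble into an ascending chain of submodules of the Noetherian module $E^c(M)$, hence stabilize at some $i_1\geq i_0$. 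For $i\geq i_1$, the long exact sequence attached to $0\to M_i/M_{i+1}\to M/M_{i+1}\to M/M_i\to 0$ then forces an injection $E^c(M_i/M_{i+1})\hookrightarrow E^{c+1}(M/M_i)$. By Auslander every nonzero submodule of $E^{c+1}(M/M_i)$ has grade $\geq c+1$, whereas if $j(M_i/M_{i+1})=c$ then part (b) applied to $N=M_i/M_{i+1}$ yields $j(E^c(N))=c$ (since $F^cN/F^{c+1}N\neq 0$ embeds into $E^c(E^c(N))$, while Auslander forces $j(E^c(N))\geq c$). This contradiction forces $E^c(M_i/M_{i+1})=0$, hence $j(M_i/M_{i+1})\geq c+1\geq j(M)+1$.
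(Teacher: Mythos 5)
The paper itself does not prove this proposition; it states it and refers the reader to Levasseur's paper for the proofs. Your reconstruction via the biduality spectral sequence $E_2^{p,q}=\Ext^p(\Ext^q(M,A),A)\Rightarrow M$ is exactly the classical approach of Bj\"ork and Levasseur, so the method is aligned with the cited source. There is, however, a genuine gap in part (a): your case analysis at $i=\min(j(N),j(M/N))$ does not cover the borderline situation $j(M/N)=j(N)+1$. There $i=j(N)$, so $\Ext^i(M/N,A)=\Ext^{j(M/N)-1}(M/N,A)=0$ (the ``nonzero source'' of your first alternative fails), while $\Ext^{i+1}(M/N,A)=\Ext^{j(M/N)}(M/N,A)\neq 0$ (the vanishing hypothesis of your second alternative fails). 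Neither alternative delivers $\Ext^i(M,A)\neq 0$. This is precisely where the Auslander condition must intervene: if $E^i(M)=0$ the long exact sequence embeds $E^i(N)$ into $E^{i+1}(M/N)$, forcing $j(E^{j(N)}(N))\geq j(N)+1$ by Auslander, which contradicts the identity $j(E^{j(M')}(M'))=j(M')$ valid for every nonzero finitely generated $M'$. That identity in turn comes from the biduality spectral sequence (one checks $E_\infty^{j,j}\neq 0$ for $j=j(M')$, whence $E_2^{j,j}=E^j(E^j(M'))\neq 0$), so logically you should establish the biduality machinery, or at least this lemma, before closing the hard direction of (a).

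A second, lesser, point concerns part (b): you assert that the spectral-sequence filtration coincides with the grade filtration $F^\bullet M$ of the statement. One containment is immediate, since the spectral graded pieces embed in $E^q(E^q(M))$ and hence have grade $\geq q$ by the Auslander condition; but the reverse containment, namely that every submodule of grade $\geq q$ lies inside the $q$-th spectral step, is the real content of the dimension-filtration theorem and is asserted rather than proved. Parts (c) and (d) are correct; in particular your Noetherian stabilization argument in (c), running an ascending chain $E^c(M/M_i)\hookrightarrow E^c(M)$ inside the finitely generated module $E^c(M)$ and then feeding the stabilization into the long exact sequence for $0\to M_i/M_{i+1}\to M/M_{i+1}\to M/M_i\to 0$, is a clean way to upgrade $j(M_i/M_{i+1})\geq c$ to $\geq c+1$.
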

      
   \begin{remar}  Under stronger assumptions on $A$ one can say more, cf. Theorem 3.6 and Corollary 4.4 in \cite{levasseur}: if $A=\oplus_{n\geq 0} A_n$
   is positively graded, $x\in A_k$ (for some $k>0$) is a central non zero-divisor, then 
   $A$ is Auslander--Gorenstein of dimension $d$ if and only if $A/xA$ is Auslander--Gorenstein of dimension $d-1$, and then for any $M\in {\rm Mod}^{fg}(A)$ on which $x$ is regular (i.e. injective) we have $j_A(M/xM)=j_A(M)+1$. 
        \end{remar}

     \subsection{Filtrations and deformations}\label{Fildef}
     
              By convention all filtrations 
        $FA=(F_nA)_{n\in \Z}$ on a ring $A$ will be {\it{increasing}}, exhaustive (i.e. $A=\cup_{n\in \Z} F_nA$)
        and such that $1\in F_0A$ (thus $F_0A$ is a subring of $A$). Denote by 
        \[{\rm Gr}(A)=\bigoplus_{n} F_nA/F_{n-1}A, \quad R(A)=\bigoplus_{n} F_nA \cdot T^n\subset A[T,T^{-1}]\] the associated graded, respectively Rees ring. We say that $FA$ is a Zariskian filtration 
    if $R(A)$ is (left and right) Noetherian
        and if $F_{-1}A$ is contained in the Jacobson radical of $F_0A$. This is the case when 
        ${\rm Gr}(A)$ is noetherian and $FA$ is complete (e.g. positive or more generally discrete\footnote{A filtration $FM$ on an $A$-module $M$ is called positive if 
        $F_nM=0$ for all $n<0$, and it is called discrete if there is $n_0$ such that $F_nM=0$ for all $n<n_0$.}), which is the only case we really need.  A filtration $FM$ on $M\in {\rm Mod}^{fg}(A)$ is called \textit{good} 
         if one can find $m_1,...,m_k\in M$ and integers 
        $i_1,\dots,i_k$ such that \[F_nM=F_{n-i_1}A\cdot m_1+\dots+F_{n-i_k}A\cdot m_k\] for all integers $n$. 
      When the filtration on $A$ is Zariskian
        any good filtration on a finitely generated $A$-module is separated and induces a good filtration on any sub-module. If the filtration on $A$ is complete, a separated filtration 
        $FM$ on $M$ is good if and only if ${\rm Gr}(M)$ is finitely generated over ${\rm Gr}(A)$.
        See \cite{zarfilt} for these standard results. 
         The following fundamental result is due to Bjork, cf. Theorem
         3.9 in \cite{Bjork} and its proof.
        
        \begin{thm}\label{beurk}
         If $A$ is a ring with a Zariskian filtration for which ${\rm Gr}(A)$ is Aus\-lan\-der--Go\-ren\-stein (resp. Auslander-regular), then $A$ is Aus\-lan\-der--Go\-ren\-stein (resp. Aus\-lan\-der-re\-gu\-lar) and for any good filtration $FM$ on a finitely generated 
         $A$-module $M$ we have $j_A(M)=j_{{\rm Gr}(A)}({\rm Gr}(M))$. 
        \end{thm}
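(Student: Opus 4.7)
The natural strategy is to transfer information from $\mathrm{Gr}(A)$ to $A$ through the Rees ring $R(A) = \bigoplus_{n} F_nA \cdot T^n \subset A[T, T^{-1}]$, exploiting the fact that $T \in R(A)_1$ is central, not a zero-divisor, and satisfies the two key isomorphisms $R(A)/T R(A) \cong \mathrm{Gr}(A)$ and $R(A)[T^{-1}] \cong A[T, T^{-1}]$. Likewise, a good filtration on a finitely generated $A$-module $M$ produces the Rees module $R(M) = \bigoplus_n F_nM \cdot T^n$, which is finitely generated over $R(A)$ and satisfies $R(M)/T R(M) \cong \mathrm{Gr}(M)$ and $R(M)[T^{-1}] \cong M[T, T^{-1}]$; moreover $T$ acts without $R(M)$-torsion since $R(M)$ embeds in $M[T, T^{-1}]$. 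The Zariskian hypothesis guarantees that $R(A)$ is Noetherian and that $F_{-1}A$ lies in the Jacobson radical of $F_0A$, which in turn implies that every finitely generated graded $R(A)$-module is $T$-adically separated.

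The first thing to prove is that $R(A)$ is Auslander--Gorenstein (resp. Auslander-regular) of dimension one greater than that of $\mathrm{Gr}(A)$. For finite injective (resp. global) dimension one applies Rees' Lemma to the central non-zero-divisor $T$: given any finitely generated $R(A)$-module $N$, one filters by powers of $T$, uses $\mathrm{Gr}(A)$-injective resolutions of the graded pieces, and assembles them by a standard spectral-sequence/completeness argument to bound $\mathrm{injdim}_{R(A)} \leq \mathrm{injdim}_{\mathrm{Gr}(A)} + 1$. The Auslander condition itself is lifted from $\mathrm{Gr}(A)$: given $M \in \mathrm{Mod}^{fg}(R(A))$ and a submodule $N \subset E^q_{R(A)}(M)$, one picks a good (i.e. $T$-compatible) filtration, reduces mod $T$, and invokes the Auslander property of $\mathrm{Gr}(A)$ together with Rees' Lemma $E^{q+1}_{R(A)}(\cdot) = E^q_{\mathrm{Gr}(A)}(\cdot)$ for $T$-torsion modules to conclude $j_{R(A)}(N) \geq q$. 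Once $R(A)$ is Auslander--Gorenstein (resp. regular), the localization $A[T, T^{-1}] = R(A)[T^{-1}]$ inherits these properties (localization is exact and flat, and preserves both finite injective dimension and the Auslander condition on Ext), and then so does the direct factor $A$.

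For the grade comparison, let $M \in \mathrm{Mod}^{fg}(A)$ be endowed with a good filtration. Combining the short exact sequence $0 \to R(M) \xrightarrow{T} R(M) \to \mathrm{Gr}(M) \to 0$ with the long exact $\mathrm{Ext}_{R(A)}^{\bullet}(-, R(A))$ sequence shows $j_{R(A)}(\mathrm{Gr}(M)) \geq j_{R(A)}(R(M)) + 1$; the reverse inequality follows from the Auslander condition on $R(A)$, which forces the cokernel of multiplication by $T$ on $E^{j_{R(A)}(R(M))}_{R(A)}(R(M))$ to be non-zero (otherwise an application of Nakayama in the $T$-adic filtration, legitimate by the Zariskian hypothesis, would contradict finite generation). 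Rees' Lemma then converts $j_{R(A)}(\mathrm{Gr}(M)) = j_{R(A)}(R(M)) + 1$ into $j_{\mathrm{Gr}(A)}(\mathrm{Gr}(M)) = j_{R(A)}(R(M))$. Finally, inverting $T$ gives $j_A(M) = j_{R(A)[T^{-1}]}(M[T, T^{-1}]) = j_{R(A)}(R(M))$, using that inverting a central non-zero-divisor is flat and hence commutes with $\mathrm{Ext}$.

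\emph{Expected main obstacle.} The subtle step is the Auslander condition for $R(A)$: what is available from $\mathrm{Gr}(A)$ is really only a statement about graded modules after passing modulo $T$, whereas the Auslander condition for $R(A)$ concerns arbitrary submodules of $\mathrm{Ext}^q$ groups, which need not carry any good filtration a priori. Handling this requires choosing good filtrations on $R(A)$-modules compatibly with the $T$-adic filtration, checking that $\mathrm{Gr}$-Ext computes $R(A)$-Ext in a controlled way, and then running a delicate induction on $q$. The Zariskian hypothesis (separation and the Artin--Rees-type behaviour it entails) is what makes this bootstrap possible and is the crucial ingredient without which the theorem would fail.
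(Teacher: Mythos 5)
The paper does not prove Theorem \ref{beurk}: it attributes the result to Bj\"ork and simply refers the reader to Theorem 3.9 of \cite{Bjork}. So there is no in-paper argument to compare against; what follows is an assessment of your sketch on its own terms. Your route is the standard Rees-ring proof (the one found in Bj\"ork and in Li--Van Oystaeyen \cite{zarfilt}), and the overall architecture is sound: pass to $R(A)$, deduce Auslander--Gorenstein for $R(A)$ from the same property of $R(A)/TR(A)\cong\mathrm{Gr}(A)$, localize at $T$ to reach $A[T,T^{-1}]$, descend to $A$ by faithful flatness, and compare grades via the short exact sequence $0\to R(M)\xrightarrow{T}R(M)\to\mathrm{Gr}(M)\to 0$ and Rees' Lemma.

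Two steps, though, are stated more confidently than they deserve. First, the assertion that the short and long exact sequences alone yield $j_{R(A)}(\mathrm{Gr}(M))\geq j_{R(A)}(R(M))+1$ is not correct as written: the long exact sequence directly gives only $j_{R(A)}(\mathrm{Gr}(M))\geq j_{R(A)}(R(M))$, and to upgrade this by one you need to know that $T$ acts \emph{injectively} on $E^{a}_{R(A)}(R(M))$ where $a=j_{R(A)}(R(M))$ (equivalently, that $E^{a}(\mathrm{Gr}(M))=0$). This is a nontrivial input, proved for instance in \cite[Theorem 3.6]{levasseur} in the positively graded case, and it uses the Auslander property of $R(A)$ together with the grading; it does not drop out of the exact sequence for free. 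Second, the lifting of the Auslander condition from $\mathrm{Gr}(A)$ to $R(A)$ --- which you rightly flag as the main obstacle --- has a specific mechanism you don't name: the Zariskian hypothesis guarantees that $T$ lies in the graded Jacobson radical of $R(A)$, and then one invokes a lifting theorem for Auslander--Gorenstein rings along a central non-zero-divisor in the (graded) Jacobson radical. In the setting the paper actually needs (complete, in practice discrete, filtrations) $R(A)$ is positively graded after a shift and $T$ is homogeneous of positive degree, so Levasseur's Theorem 3.6 applies directly and handles both of these points at once; this is a cleaner way to close the gaps than the ad hoc $T$-adic Nakayama argument you gesture at.
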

        
        Before giving some concrete applications of the previous theorem, we recall that 
         Ardakov and Wadsley introduced and studied in \cite{awannals} certain categories of filtered and doubly filtered $\OO$-algebras. We will restrict here to certain sub-categories, which seem to be the ones naturally encountered in practice.
       The first category\footnote{This is a sub-category of the category of {\it{deformable}} $\OO$-algebras introduced in \cite{awannals} (we impose the extra condition that ${\rm Gr}(A)$ is commutative and Noetherian).} ${\rm Def}(\OO)$ is that of 
        positively filtered $\OO$-algebras $A$ such that 
        $F_0A$ is an $\OO$-sub-algebra of $A$ and 
        ${\rm Gr}(A)$ is a commutative, Noetherian, flat $\OO$-algebra, morphisms being defined in the obvious way. For instance $U(\mathfrak{g})\in {\rm Def}(\OO)$ with its natural PBW filtration for any Lie algebra $\mathfrak{g}$ over $\OO$, which is finite free as $\OO$-module (indeed, ${\rm Gr}(U(\mathfrak{g}))\simeq S(\mathfrak{g})$ by the PBW Theorem). Another example used later on is that of the ring of crystalline 
        differential operators $\mathcal{D}(X)$ on a smooth affine scheme 
        $X/\OO$. This is a ring generated by $\OO(X)$ and the global vector fields 
        $\mathcal{T}(X)$, subject to obvious relations. For the natural filtration by order of differential operators on $\mathcal{D}(X)$ we have a natural isomorphism 
        ${\rm Gr}(\mathcal{D}(X))\simeq {\rm Sym}_{\OO(X)}(\mathcal{T}(X))$, showing that indeed $\mathcal{D}(X)\in {\rm Def}(\OO)$.
               Ardakov and Wadsley proved in \cite{awannals} the existence of an endo-functor for each $n\geq 0$, called the $n$th order deformation functor 
         \[{\rm Def}(\OO)\to {\rm Def}(\OO),\quad A_n=\sum_{i\geq 0} \varpi^{in} F_iA,\]
      coming with a natural isomorphism ${\rm Gr}(A)\simeq {\rm Gr}(A_n)$ (we endow $A_n$ with the induced filtration from $A$). For instance, for $\mathfrak{g}$ as above we have 
        $U(\mathfrak{g})_n=U(\varpi^n \mathfrak{g})$. 
         
         Consider now the category\footnote{For any $A\in CDF(\OO)$ the $L$-algebra $A[1/\varpi]$ is a complete doubly filtered $L$-algebra in the sense of \cite[def. 3.1]{awannals}, explaining our exotic notation $CDF(\OO)$.} 
             $CDF(\OO)$ of flat $\OO$-algebras 
         $A$ which are separated and complete for the $\varpi$-adic topology, 
          together with a complete filtration $F(A/\varpi A)$ on $A/\varpi A$ such that
         ${\rm Gr}(A/\varpi A)$ is Noetherian and commutative. Any
          $A\in CDF(\OO)$ is left and right Noetherian, since the filtration on 
         $A/\varpi A$ is Zariskian (thus $A/\varpi A$ is a  
          left and right Noetherian ring, and then so is $A$). 
                 Morphisms in $CDF(\OO)$ are defined in the obvious way\footnote{i.e. morphisms of $\OO$-algebras inducing filtered 
         morphisms between the reductions mod $\varpi$.}. There is a natural functor 
         \[{\rm Def}(\OO)\to CDF(\OO), \quad A\mapsto \hat{A}:=\varprojlim_{n} A/\varpi^n A,\]
         where
         $\hat{A}/\varpi\hat{A}\simeq A/\varpi A$ is endowed with the quotient filtration from 
         $A$, so that, using the $\OO$-flatness of ${\rm Gr}(A)$, ${\rm Gr}(\hat{A}/\varpi \hat{A})\simeq  {\rm Gr}(A)/\varpi {\rm Gr}(A)$. 
 The next result is essentially \cite[Theorem 3.3]{awannals}, but we give a proof for the reader's convenience. 
                          
        \begin{prop}\label{Bjork} Let $A\in CDF(\OO)$ be such that ${\rm Gr}(A/\varpi A)$ is Gorenstein (resp. regular). Then 
       
       a)  $A$ is Auslander--Gorenstein (resp. Auslander-regular). 
       
       b) If $M\in {\rm Mod}^{fg}(A)$ is 
      flat over 
        $\OO$ and if $F(M/\varpi M)$ is a good filtration on $M/\varpi M$, then 
        \[j_{A}(M)=j_{A[1/p]} (M[1/p])=j_{A/\varpi A}(M/\varpi M)=j_{{\rm Gr}(A/\varpi A)}({\rm Gr}(M/\varpi M)).\]
             \end{prop}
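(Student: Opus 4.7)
The plan is to derive both parts from Bjork's theorem (Theorem~\ref{beurk}) applied twice along the tower $A \to A/\varpi A \to {\rm Gr}(A/\varpi A)$, supplemented by Rees' Lemma (Lemma~\ref{Rees}) and a flat base change argument for the $A[1/p]$ assertion.

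For part a), the first step is to note that $\bar G := {\rm Gr}(A/\varpi A)$, being a commutative Noetherian Gorenstein (resp. regular) ring, is automatically Auslander--Gorenstein (resp. Auslander-regular). The filtration on $A/\varpi A$ is complete with Noetherian associated graded, hence Zariskian, and Theorem~\ref{beurk} then gives that $A/\varpi A$ is Auslander--Gorenstein (resp. Auslander-regular). Next I equip $A$ itself with the $\varpi$-adic filtration $F_{-n}A=\varpi^n A$ ($n\geq 0$); this is Zariskian because $A$ is $\varpi$-adically complete, so $\varpi\in J(A)$. Its associated graded is the polynomial ring $(A/\varpi A)[t]$, which is Auslander--Gorenstein (resp. Auslander-regular) because polynomial extensions preserve the Auslander property -- this can itself be proved by yet another application of Theorem~\ref{beurk}, filtering $(A/\varpi A)[t]$ by lifting $F(A/\varpi A)$ degreewise, with associated bigraded $\bar G[t]$, a commutative Gorenstein (resp. regular) ring. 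A second application of Theorem~\ref{beurk} to $A$ then yields part a).

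For part b), I put on $M$ the $\varpi$-adic filtration $F_{-n}M=\varpi^n M$, which is a good filtration since $M$ is finitely generated. Using the $\OO$-flatness of $M$, one has ${\rm gr}_\varpi M\simeq (M/\varpi M)[t]$ as a module over $(A/\varpi A)[t]$, so Theorem~\ref{beurk} gives
\[ j_A(M)\,=\,j_{(A/\varpi A)[t]}\bigl((M/\varpi M)[t]\bigr)\,=\,j_{A/\varpi A}(M/\varpi M),\]
the last equality following from the elementary identity $\Ext^*_{R[t]}(N[t],R[t])\simeq \Ext^*_R(N,R)[t]$ for $N$ finitely generated over a Noetherian ring $R$. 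A third application of Theorem~\ref{beurk}, now to $A/\varpi A$ with its Zariskian filtration and the assumed good filtration on $M/\varpi M$, identifies $j_{A/\varpi A}(M/\varpi M)$ with $j_{\bar G}({\rm Gr}(M/\varpi M))$.

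To close the chain I must show $j_{A[1/p]}(M[1/p])=j_A(M)=:j$. Flat base change gives $\Ext^q_A(M,A)[1/p]\simeq\Ext^q_{A[1/p]}(M[1/p],A[1/p])$, hence $j_A(M)\leq j_{A[1/p]}(M[1/p])$ immediately; for the reverse inequality it suffices to prove that $E^j:=\Ext^j_A(M,A)$ is $\varpi$-torsion-free. The long exact sequence of $\Ext^*_A(-,A)$ attached to $0\to M\xrightarrow{\varpi}M\to M/\varpi M\to 0$, combined with the vanishing $E^{j-1}=0$, yields an isomorphism $E^j[\varpi]\simeq \Ext^j_A(M/\varpi M,A)$, and Rees' Lemma identifies this with $\Ext^{j-1}_{A/\varpi A}(M/\varpi M,A/\varpi A)$, which vanishes because we have just shown $j_{A/\varpi A}(M/\varpi M)=j$. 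Therefore $E^j[\varpi]=0$, so $E^j[1/p]\neq 0$, which completes the proof. The main technical obstacle is the interaction between the two natural filtrations on $A$ (the $\varpi$-adic one and a lift of $F(A/\varpi A)$) and ensuring that the Auslander property passes through the polynomial extension $(A/\varpi A)[t]$ in the non-commutative setting; the cleanest remedy is the chained application of Bjork's theorem indicated above.
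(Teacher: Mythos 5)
Your proof of part a) is essentially the same as the paper's: first Theorem \ref{beurk} for $A/\varpi A$, then for $A$ with the $\varpi$-adic filtration, whose associated graded is $(A/\varpi A)[T]$; the paper simply asserts that the Auslander property passes to polynomial extensions, whereas you supply a further Bjork-type argument for it, which is a fine alternative to the standard reference.

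For part b), however, you take a genuinely different route from the paper. The paper first pins down $j_A(M/\varpi M)$ directly: Proposition \ref{toolbox} c) applied to the chain $\dots\subset \varpi^2 M\subset \varpi M\subset M$ gives $j_A(M/\varpi M)\geq j_A(M)+1$, and Nakayama's Lemma (using that $\varpi$ lies in the Jacobson radical of the complete ring $A$) forces equality, because strict inequality would give $E^a_A(M)=\varpi E^a_A(M)$ from the long exact sequence of $\varpi$-multiplication; Rees' Lemma then converts this to $j_{A/\varpi A}(M/\varpi M)=j_A(M)$, and $\OO$-flatness of $E^a_A(M)$ drops out of the same long exact sequence. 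You instead obtain $j_A(M)=j_{A/\varpi A}(M/\varpi M)$ by a third application of Theorem \ref{beurk}, this time to the (good) $\varpi$-adic filtration on $M$, noting that $\gr_\varpi M\simeq (M/\varpi M)[t]$ and invoking the base-change identity $\Ext^*_{R[t]}(N[t],R[t])\simeq\Ext^*_R(N,R)[t]$; then you deduce $\varpi$-torsion-freeness of $E^j_A(M)$ from the long exact sequence together with Rees' Lemma, since $j_{A/\varpi A}(M/\varpi M)=j$ immediately gives $E^{j-1}_{A/\varpi A}(M/\varpi M)=0$. Both arguments are correct; yours is slightly more uniform, trading the paper's use of Proposition \ref{toolbox} c) and Nakayama for an extra application of Bjork's theorem plus the (standard, easily verified via a finite free resolution with $t$ central) polynomial-extension $\Ext$ identity, while the paper's version is marginally more self-contained given the tools it has already recalled.
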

             
             \begin{proof} a) The filtration on $A/\varpi A$ being Zariskian, Theorem
             \ref{beurk} shows that $A/\varpi A$ is Auslander--Gorenstein (resp. Auslander-regular) under the asserted hypothesis. Apply now the same result to $A$ endowed with the Zariskian $\varpi$-adic filtration (defined by $F_nA=A$ for $n\geq 0$ and $F_n A=\varpi^{-n}A$ for $n\leq 0$), observing that\footnote{The first isomorphism follows from the definition of ${\rm Gr}(A)$, the second one sends $\sum_{n\geq 0} \pi_{n+1}(\varpi^{n} x_n)$ to $\sum_{n\geq 0} \pi_1(x_n) T^n$, where $\pi_n: A\to A/\varpi^nA$ is the natural projection.}
             \[{\rm Gr}(A)\simeq \bigoplus_{n\geq 0} \varpi^n A/\varpi^{n+1}A\simeq (A/\varpi A)[T] \]
              since $A$ is $\OO$-flat and that 
             $R[T]$ is Auslander--Gorenstein (resp. Auslander-regular) whenever $R$ is so. 
             
             b) The last equality follows directly from Theorem \ref{beurk}.
                          Let 
             $a=j_A(M)$.
             Since \[E_{A[1/p]}^q(M[1/p])\simeq E_A^q(M)[1/p],\] we have
             $j_{A[1/p]}(M[1/p])\geq a$ and in order to show that we
             have equality it suffices to show that $E_A^a(M)$ is
             $\OO$-flat. The long exact sequence associated to $0\to
             M\to M\to M/\varpi M\to 0$ reduces this to showing that
             $E^a(M/\varpi M)=0$, which holds since $j_{A}(M/\varpi
             M)\geq 1+j_A(M)$ by part c) of Proposition \ref{toolbox}. Next, we have $j_{A/\varpi A}(M/\varpi M)=j_A(M/\varpi M)-1$ by Lemma \ref{Rees}, so we are done if we prove that $j_{A}(M/\varpi M)\geq 1+j_A(M)=1+a$ is an equality. But if $j_A(M/\varpi M)>1+a$ then $E_A^{a+1}(M/\varpi M)=0$ and the long exact sequence above gives $E_A^a(M)=\varpi E_A^a(M)$, which by Nakayama's Lemma yields $E_A^a(M)=0$, a contradiction.                          \end{proof}
             
               \begin{examp}\label{example1}  Letting  
                  $A=U(\mathfrak{g})$ with $\mathfrak{g}$ as above, we have 
          \[\widehat{A_n}=\{\sum_{i\in \mathbb{N}^d} a_i X^i| \,
          a_i\in \varpi^{n|i|}\OO, \lim_{|i|\to\infty} a_i\cdot
          \varpi^{-n|i|}=0\},\]
    where $X_1,\dots,X_d$ is an $\OO$-basis of $\mathfrak{g}$, $X^i=X_1^{i_1}\dots X_d^{i_d}$ and $|i|=i_1+\dots+i_d$ for $i=(i_1,\dots,i_d)\in \mathbb{N}$. The previous discussion shows that \[{\rm Gr}(\widehat{A_n}/\varpi \widehat{A_n} )\simeq S(\mathfrak{g})\otimes_{\OO} k\simeq S(\mathfrak{g}\otimes_{\OO} k),\]
   and Theorem \ref{beurk}
         yields that $\widehat{A_n}$ are Auslander-regular. One can show \cite[chap. 9]{awannals}
         that its global homological dimension is $1+{\rm rk}_{\OO}(\mathfrak{g})$. 
\end{examp}

\begin{examp}\label{example2}
Consider a uniform pro-$p$ group $G$ and let $A=\OO\br{G}$ be its Iwasawa algebra, a local ring with maximal ideal $\mathfrak{m}$, the augmentation ideal. Endow 
$A/\varpi A=k\br{G}$ with the $\mathfrak{m}$-adic filtration. 
         By a fundamental theorem of Lazard 
         \[{\rm Gr}(A/\varpi)\simeq k[X_1,\dots,X_d], \quad d:=\dim G,\]
         in particular           
         $A\in CDF(\OO)$ is Auslander-regular (a result of Venjakob, cf. \cite[Theorem 3.26]{venjakob}) and one can show that its global homological dimension is $1+\dim G$. Also $k\br{G}$ and $L\br{G}:=L\otimes_{\OO} A$ are Auslander-regular of dimension 
         $\dim G$.
         \end{examp}

\subsection{Dimension theory for Fr\'echet-Stein algebras}\label{STdim}
          Consider a two-sided Fr\'e\-chet--Stein algebra $A=\varprojlim_{n} A_n$ over $L$ and a nonzero coadmissible $A$-module $M$ (see \cite{adm} for the definition). Let 
      $M_n=A_n\otimes_A M$ be the associated $A_n$-modules, so that $M\simeq \varprojlim_n M_n$.  Schneider and Teitelbaum prove in \cite[chap. 8]{adm} the following results. First of all, the module $E^l(M)={\rm Ext}_A^l(M,A)$ is a coadmissible right $A$-module and there are natural isomorphisms 
      \[E^l(M)\otimes_A A_n\simeq {\rm Ext}^l_{A_n}(M_n, A_n)\]
      for all $n$. Assume from now on that $A_n$ are Auslander-regular
      of global dimension uniformly (in $n$) bounded by some $d\geq 0$. Then the sequence $(j_{A_n}(M_n))_{n\geq 0}$ is nonincreasing and eventually constant with value 
      $j_A(M)\leq d$, for any coadmissible $A$-submodule $N$ of $M$ we have 
      $j_A(M)=\min(j_A(N), j_A(M/N))$ and any coadmissible submodule $L$ of $E^l(M)$ satisfies 
      $j_A(L)\geq l$. Finally, $M$ has a canonical dimension filtration 
      $0=M^{d+1}\subset\dots\subset M^1\subset M^0=M$ by coadmissible $A$-modules such that 
      $(A_n\otimes_A M^i)_{0\leq i\leq d+1}$ is the dimension filtration of the $A_n$-module 
      $M_n$ over the Auslander-regular ring $A_n$. In other words $A$ itself has all key properties of an Auslander-regular ring (but it is not necessarily noetherian). 
            
      The previous conditions are satisfied by the Fr\'echet-Stein algebra $D(G,L)^{\Kla}$ for any compact locally $K$-analytic group $G$, by a result of Schmidt \cite{SchmidtAR} (the case $K=\Qp$ being one of the main results of \cite{adm}). 
      
      \subsection{Dimensions of mod $p$, Banach and locally analytic representations}\label{Dim}
      
        Let $G$ be a locally $\Q_p$-analytic group and let $\pi$ be an admissible smooth representation of 
        $G$ over $k$. Then $\pi^*$ is a finitely generated module over $k\br{H}$ for any open uniform pro-$p$ subgroup 
        $H$ of $G$ (which exists by classical results of Lazard) and we call 
        \[d(\pi)=\dim H-j_{k\br{H}}(\pi^*)\]
        the dimension of $\pi$. This is independent of the choice of $H$ and should not be confused with the dimension of $\pi$ as a $k$-vector space since the latter is 
        infinite in almost all applications. 
 One can also interpret $d(\pi)$ as a sort of Gelfand--Kirillov dimension (see \cite[Proposition 5.4]{ardakov_brown}, \cite[Theorem 18]{CE} 
        and \cite[Proposition 2.18]{capture}): given an open pro-$p$ uniform subgroup $H$ of $G$, there are constants 
        $a,b>0$ such that for all $n$ big enough
        \[b\cdot p^{d(\pi) n}\leq \dim_{k} \pi^{H_n}\leq a\cdot p^{d(\pi)n},\]
        where $H_n$ is the closed subgroup of $H$ generated by the 
        $h^{p^n}$, $h\in H$. This interpretation clearly shows that $\dim_k(\pi)<\infty$ if and only if $d(\pi)=0$,  but the latter result also follows easily from Proposition \ref{Bjork} and Example 
        \ref{example2}. 
         Note that $[H: H_n]=p^{n\dim H}$, so the previous result yields
        \[\lim_{n\to \infty} \frac{\log \dim_k \pi^{H_n}}{\log [H:H_n]}=\frac{d(\pi)}{\dim H}.\]
        
        \begin{remar}
        It is well-known that if $\pi$ is an absolutely irreducible smooth representation of ${\rm GL}_2(\Qp)$, then 
        $d(\pi)\leq 1$, with equality unless $\pi$ is a character. This uses the classification due to Barthel--Livn\'e \cite{BL} and Breuil \cite{Breuil} and some decomposition of supersingular representations due to Pa\v{s}k\={u}nas (see Proposition 4.7 and Theorem 6.3 in \cite{extensions}). Such a result is not known already for ${\rm GL}_2(F)$ when $F$ is any non trivial extension of $\Qp$. Recent work of Breuil, Herzig, Hu, Morra and Schraen \cite{BHHMS} and Hu--Wang \cite{HH} shows that for $F$ unramified over $\Qp$, sufficiently generic Hecke eigenspaces in the completed cohomology of suitable Shimura curves have Gelfand--Kirillov dimension $[F: \Qp]$. See also \cite{SS} for other explicit examples
        in the Banach and locally analytic setting. 
        \end{remar}

              Similarly, if $\Pi$ is an admissible Banach representation of $G$ over $L$ we set 
        \[d(\Pi)=\dim H-j_{L\br{H}}(\Pi^*),\]
        a number independent of $H$.
        If $\Theta$ is an $H$-stable lattice in $\Pi$, which always exists by compactness of $H$, then 
        $\pi:=\Theta\otimes_{\OO} k$ is an admissible smooth representation of $H$ over $k$ and we have 
        $d(\Pi)=d(\pi)$, as follows from Proposition \ref{Bjork}. In particular $\dim_{L} (\Pi)<\infty$ if and only if 
        $d(\Pi)=0$.
        
        Finally, if $\Pi$ is an admissible locally analytic representation of $G$ over $L$, we set 
        \[d(\Pi)=\dim H-j_{D(H,L)}(\Pi^*)=\dim H-\min_{n} j_{D_{r_n}(H,L)}(D_{r_n}(H,L)\otimes_{D(H,L)} \Pi^*),\]
        the second equality being a consequence of results recalled in the previous paragraph. One subtle difference with respect to the mod $p$ and Banach setting is that one may have 
        $d(\Pi)=0$ and yet $\dim_{L} (\Pi)=\infty$. Actually $d(\Pi)=0$ if and only if $D_{r_n}(H,L)\otimes_{D(H,L)} \Pi^*$ are all finite dimensional over $L$. In somewhat more concrete terms, the subspace of vectors of a given radius of analyticity in $\Pi$ is finite dimensional. For instance $d(\Pi)=0$ for any admissible smooth representation $\Pi$ of $G$ over $L$. If $\Pi$ is an admissible Banach representation of $G$, then the space of locally analytic vectors $\Pi^{\la}$ in $\Pi$ is an admissible locally analytic representation 
        of $G$ and we have an isomorphism $(\Pi^{\la})^*\simeq D(H,L)\otimes_{L\br{H}} \Pi^*$. Moreover, the ring map 
        $L\br{H}\to D(H,L)$ is faithfully flat (these last two statements are the main results of \cite{adm}), so that standard homological algebra yields 
        $d(\Pi)=d(\Pi^{\la})$.
      Let us summarise some of the observations above:
        
        \begin{lem}\label{usefullater}
         If $\Pi$ is an admissible Banach representation of a locally $\Qp$-analytic group $G$, then 
         $d(\Pi)=d(\Pi^{\la})$, and we have $d(\Pi)=0$ if and only if $\dim_{L}(\Pi)<\infty$. Moreover, 
         if $d(\Pi)<\dim G$, then $\Pi^*$ is a torsion $L\br{H}$-module for all compact open subgroups 
         $H$ of $G$.
        \end{lem}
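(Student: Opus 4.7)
The first two assertions summarize observations from \S\ref{Dim}. For $d(\Pi)=d(\Pi^{\la})$, the plan is to combine the isomorphism $(\Pi^{\la})^*\simeq D(H,L)\otimes_{L\br{H}}\Pi^*$ with the faithful flatness of $L\br{H}\to D(H,L)$ (both from \cite{adm}) to obtain
\[
\Ext^q_{D(H,L)}((\Pi^{\la})^*, D(H,L)) \simeq D(H,L)\otimes_{L\br{H}}\Ext^q_{L\br{H}}(\Pi^*, L\br{H})
\]
for every $q\geq 0$; faithful flatness then forces the two grades to coincide, and hence also the two dimensions. For the equivalence $d(\Pi)=0 \Leftrightarrow \dim_L\Pi<\infty$, I would pick an $H$-stable open bounded lattice $\Theta\subset\Pi$ for some compact open $H\subset G$, apply Proposition \ref{Bjork} to identify $d(\Pi)$ with $d(\Theta\otimes_\OO k)$, and invoke the Gelfand--Kirillov characterization recalled in \S\ref{Dim} together with topological Nakayama to conclude.

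The real content is the torsion claim, and the plan is to first reduce to $H$ uniform pro-$p$ and then exploit that $L\br{H}$ is a Noetherian domain in that case. For the reduction, given an arbitrary compact open $H\subset G$, I would choose an open normal uniform pro-$p$ subgroup $H'\subset H$; then $L\br{H}$ is a finite free module over $L\br{H'}$ with basis a set of coset representatives of $H/H'$, and conjugation by these representatives gives ring automorphisms of $L\br{H'}$ that permute the regular elements. A direct computation with this decomposition shows that every regular element of $L\br{H'}$ remains regular in $L\br{H}$, so torsion over $L\br{H'}$ upgrades to torsion over $L\br{H}$.

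It then remains to treat $H$ uniform pro-$p$. By Example \ref{example2}, $L\br{H}$ is Auslander-regular of global dimension $\dim H=\dim G$; moreover $L\br{H}$ is a domain, since $H$ is torsion-free and the mod-$\varpi$ associated graded of $\OO\br{H}$ is the polynomial ring $k[X_1,\ldots,X_d]$, which forces $\OO\br{H}$ and hence $L\br{H}$ to be domains. The hypothesis $d(\Pi)<\dim G$ translates directly into $j_{L\br{H}}(\Pi^*)\geq 1$, equivalently $\Hom_{L\br{H}}(\Pi^*, L\br{H})=0$. I would finish by contradiction: a non-torsion $v\in\Pi^*$ would, because $L\br{H}$ is a domain, satisfy $\ann_{L\br{H}}(v)=0$, producing an embedding $L\br{H}\hookrightarrow\Pi^*$; Proposition \ref{toolbox}(a) applied to this submodule then forces $j_{L\br{H}}(\Pi^*)\leq j_{L\br{H}}(L\br{H})=0$, contradicting $j_{L\br{H}}(\Pi^*)\geq 1$.

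The only mildly delicate point is the reduction from an arbitrary compact open $H$ to uniform $H$, since one must check explicitly that regularity is preserved by the finite normal extension $L\br{H'}\subset L\br{H}$. Once this is granted, the core argument is a short formal consequence of the Auslander-regularity toolbox recalled in \S\ref{Fildef} combined with the existence of open uniform pro-$p$ subgroups.
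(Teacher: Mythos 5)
Your proposal is correct and follows essentially the same route as the paper. For the torsion claim, the paper's proof is exactly your core argument: reduce to $H$ uniform pro-$p$, observe that a non-torsion $\ell\in\Pi^*$ would give an embedding $L\br{H}\hookrightarrow\Pi^*$ (implicitly using that $L\br{H}$ is a domain), and then derive $0=j(L\br{H})\geq j(\Pi^*)>0$ from Proposition~\ref{toolbox}(a), which is absurd. You spell out two points the paper leaves to the reader: the precise mechanism by which torsion over $L\br{H'}$ (with $H'$ a normal open uniform pro-$p$ subgroup) upgrades to torsion over $L\br{H}$, via the decomposition $L\br{H}=\bigoplus_{g\in H/H'} L\br{H'}g$ and the fact that conjugation preserves regular elements of $L\br{H'}$; and the explicit appeal to $L\br{H}$ being a domain for uniform $H$ (via $\gr(k\br{H})\simeq k[X_1,\dots,X_d]$). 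Both are correct and worth recording. Your outline of the first two claims (base change of $\Ext$ along the faithfully flat map $L\br{H}\to D(H,L)$ for $d(\Pi)=d(\Pi^{\la})$, and Proposition~\ref{Bjork} plus the Gelfand--Kirillov interpretation for the finiteness equivalence) likewise matches what the paper refers to as "the previous discussion"; the only small wrinkle is that the base-change isomorphism should place the flat module on the right, $\Ext^q_{L\br{H}}(\Pi^*,L\br{H})\otimes_{L\br{H}} D(H,L)$, since $\Ext^q(\cdot,L\br{H})$ naturally produces a right module, but this does not affect the conclusion.
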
     
        
        \begin{proof}
        Only the last sentence does not follow from the previous discussion. 
        Passing to a finite index subgroup, we may assume that $H$ is uniform pro-$p$. If $\ell\in \Pi^*$ is not 
        $L\br{H}$-torsion, then the natural morphism $L\br{H}\to \Pi^*$ sending $\lambda$ to $\lambda\ell$
         is injective, thus $j(L\br{H})\geq j(\Pi^*)>0$ (use Proposition \ref{toolbox}), a contradiction. 
                \end{proof}     

        From now on we assume moreover that $G$ is a 
       locally $K$-analytic group and $\Pi$ is an admissible locally $K$-analytic representation of $G$. Set
        \begin{equation}
        \begin{split}
        d^K(\Pi)&=\dim_K H-j_{D(H,L)^{\Kla}}(\Pi^*)\\
        &=\dim_K H-\min_{n} j_{D_{r_n}(H,L)^{\Kla}}(D_{r_n}(H,L)^{\Kla}\otimes_{D(H,L)^{\Kla}} \Pi^*),
        \end{split}
        \end{equation}
        where $H$ is an open $K$-uniform pro-$p$-subgroup of $G$ and $\dim_KH$ is the dimension of $H$ as a locally $K$-analytic variety.    When $\Pi$ is a locally analytic representation of $G$ we have the following relation between $d^K(\Pi^{\Kla})$ and $d(\Pi)$.
         
         \begin{lem}\label{commm} Let $R\twoheadrightarrow S$ be a surjection of regular commutative noetherian domains and let $M$ be a finitely generated $R$-module. Then 
         \[  \dim M -(\dim R - \dim S)\le \dim (M\otimes_R S)\le \dim M ,\]
         where $\dim$ denotes the Krull dimension.
         \end{lem}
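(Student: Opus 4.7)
The plan is to rewrite $M \otimes_R S$ as $M/IM$, where $I := \ker(R \twoheadrightarrow S)$, and then reduce both inequalities to statements about the support of $M/IM$. A standard application of Nakayama's lemma to the finitely generated module $M$ gives $\supp_R(M/IM) = \supp_R M \cap V(I) = V(I + J)$, where $J := \ann_R M$; hence $\dim(M \otimes_R S) = \dim R/(I+J)$. The upper bound $\dim R/(I+J) \leq \dim R/J = \dim M$ then follows immediately.

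For the lower bound, set $c := \dim R - \dim S$. I will pick a minimal prime $\mathfrak{q}$ over $J$ with $\dim R/\mathfrak{q} = \dim R/J$, and then a minimal prime $\mathfrak{p}$ over $I + \mathfrak{q}$. Using the dimension formula $\mathrm{ht}(\mathfrak{r}) + \dim R/\mathfrak{r} = \dim R$ (valid in the regular, hence catenary and equidimensional, noetherian domain $R$), the task reduces to proving the height bound $\mathrm{ht}(\mathfrak{p}/\mathfrak{q}) \leq c$ in $R/\mathfrak{q}$.

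For this I will use the regularity of $R/I = S$. Localizing at $\mathfrak{p}$, the surjection $R_\mathfrak{p} \twoheadrightarrow (R/I)_\mathfrak{p}$ is a surjection of regular local rings, so its kernel $I R_\mathfrak{p}$ is generated by part of a regular system of parameters of $R_\mathfrak{p}$. The number of generators is $\dim R_\mathfrak{p} - \dim (R/I)_\mathfrak{p}$, which the dimension formula (applied once to $R$ and once to $R/I$) identifies with $c$. Krull's Hauptidealsatz, applied to the images of these $c$ generators in $R_\mathfrak{p}/\mathfrak{q}R_\mathfrak{p}$ — whose maximal ideal is, by construction of $\mathfrak{p}$, a minimal prime over them — then gives $\mathrm{ht}(\mathfrak{p}/\mathfrak{q}) \leq c$, completing the proof.

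The one really delicate point is the repeated appeal to the dimension formula; this is routine in the paper's intended applications, where $R$ will typically be a polynomial $L$-algebra such as $\ZgL$ for reductive $\mathfrak{g}$, but it does implicitly use equidimensionality of $R$ beyond the nominal hypotheses of the lemma. This is the only place where a pathology of non-equidimensional regular domains could in principle cause trouble, and I expect it to be the main obstacle to streamlining the argument further.
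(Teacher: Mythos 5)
Your proof is correct and reaches the same conclusion, but via a more elaborate route than the paper's, which is essentially three lines: localize at a maximal ideal of $S$ so that $R\twoheadrightarrow S$ is a surjection of regular \emph{local} rings, observe that the kernel is then generated by a regular sequence of length $\dim R - \dim S$, and iterate the fact that for one element $x$ in the maximal ideal, $\dim M/xM$ is $\dim M$ or $\dim M - 1$. Your argument instead stays global: it expresses $\dim(M\otimes_R S)$ in terms of $\supp M \cap V(I)$, picks witnesses $\mathfrak{q}\supseteq \ann M$ and $\mathfrak{p}\supseteq I+\mathfrak{q}$ explicitly, then localizes at $\mathfrak{p}$ (rather than at a closed point) and invokes Krull's Hauptidealsatz for the $c$ generators of $IR_{\mathfrak p}$. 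Both proofs rest on the same essential input --- the kernel of a surjection of regular local rings is generated by a regular sequence whose length is the codimension --- and both silently invoke a dimension-formula step that requires something beyond the literal hypotheses (catenarity holds since regular rings are Cohen--Macaulay, but equidimensionality is needed to convert local codimensions into the global quantity $\dim R - \dim S$). A genuine point in your favour is that you identify and flag this gap explicitly, whereas the paper's reduction ``localize at maximal ideals of $S$'' leaves it implicit that the resulting local dimensions behave as needed; in the intended applications (Lemma \ref{lemm:comp_dim}, where $R$ and $S$ are polynomial rings over $L$) the issue is vacuous, so both proofs suffice.

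Two small points worth noting. First, the paper's one-element fact $\dim M/xM \in \{\dim M, \dim M - 1\}$ is only safe in the local case (else $M/xM$ could vanish while $M$ does not), which is precisely why the paper localizes first; your route avoids this wrinkle by construction. Second, in your argument the prime $\mathfrak{p}$ is minimal over $I + \mathfrak{q}$ but need not be minimal over $I + J$, so you only get $\dim R/\mathfrak{p} \le \dim R/(I+J)$ --- this is exactly what you need for the lower bound, and you use it correctly, but it is worth being explicit that you are producing one element of $\supp(M\otimes_R S)$ of large enough codimension rather than computing the dimension on the nose.
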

         \begin{proof} By localizing at maximal ideals of $S$ we may assume that $R$ and $S$ 
         are both local. Since $R$ and $S$ are both regular the kernel of $R\twoheadrightarrow S$ 
         is generated by a regular sequence $x_1, \ldots, x_r$ with $r= \dim R - \dim S$. Since 
         $\dim M/xM$ is equal to either $\dim M$ or $\dim M -1$ we obtain the assertion.
         \end{proof}
         
        \begin{lem}\label{lemm:comp_dim}
          Let $\Pi$ be an admissible locally $\Qp$-analytic representation of $G$. We have
          \[ d(\Pi)-(\dim H-\dim_K H)\le d^K(\Pi^{\Kla})\leq d(\Pi).\]
        \end{lem}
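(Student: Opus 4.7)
My plan is to prove this level by level via the Fr\'echet--Stein formalism and reduce each level to a commutative statement to which Lemma~\ref{commm} applies. For each $n \geq 0$, set $M_n := D_{r_n}(H,L) \otimes_{D(H,L)} \Pi^*$ and $M_n' := D_{r_n}(H,L)^{\Kla} \otimes_{D(H,L)^{\Kla}} (\Pi^{\Kla})^*$. The identification $(\Pi^{\Kla})^* \simeq D(H,L)^{\Kla} \otimes_{D(H,L)} \Pi^*$ gives $M_n' \simeq D_{r_n}(H,L)^{\Kla} \otimes_{D_{r_n}(H,L)} M_n$, and the definitions of $d(\Pi)$ and $d^K(\Pi^{\Kla})$ recalled in \S\ref{Dim} reduce the lemma (after taking the minimum over $n$) to the level-$n$ bound
\[ j_{D_{r_n}(H,L)}(M_n) - (\dim H - \dim_K H) \leq j_{D_{r_n}(H,L)^{\Kla}}(M_n') \leq j_{D_{r_n}(H,L)}(M_n). \]

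To compute these grades I would pass to completed enveloping algebras. By Proposition~\ref{distrib} combined with Lazard's isomorphism (Proposition~\ref{Laz}), $D_{r_n}(H,L)$ and $D_{r_n}(H,L)^{\Kla}$ are finite free modules over $A_n := \widehat{U(p^n\mathfrak{g}^0)}[1/p]$ and $B_n := \widehat{U(p^n\mathfrak{g}^K)}[1/p]$, where $\mathfrak{g}^0$ and $\mathfrak{g}^K$ are the $\OO$-Lie algebras attached to the $\Qp$-uniform and $K$-uniform structures on $H$, of $\OO$-ranks $\dim H$ and $\dim_K H$ respectively. These finite free extensions are of Frobenius/crossed-product type and hence preserve grade, so it is enough to prove the above bound with $A_n, B_n$ in place of the distribution algebras. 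The integral forms $\widehat{U(p^n\mathfrak{g}^0)}$ and $\widehat{U(p^n\mathfrak{g}^K)}$ belong to $CDF(\OO)$ (Example~\ref{example1}), with associated gradeds mod $\varpi$ equal to the polynomial rings $R = k[X_1,\dots,X_{\dim H}]$ and $S = k[Y_1,\dots,Y_{\dim_K H}]$. The surjection $A_n \twoheadrightarrow B_n$ arises from the Lie algebra surjection $\mathfrak{g}^0 \twoheadrightarrow \mathfrak{g}^K$, whose kernel is an $\OO$-direct summand of rank $\dim H - \dim_K H$; at the graded level this becomes the canonical quotient of $R$ by a regular sequence of $\dim H - \dim_K H$ linear forms.

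Proposition~\ref{Bjork}(b), applied to a good filtration on an $\OO$-flat integral form of $M_n$ (and the induced filtration on $M_n'$), reduces the grade comparison to its commutative analogue for a finitely generated module $N = \gr \bar M_n$ over $R$ and $N \otimes_R S$ over $S$. Since polynomial rings are Cohen--Macaulay, for any such $N$ one has $j_R(N) + \dim_R N = \dim R$ and similarly over $S$: indeed both $j_R(N)$ and $\dim R - \dim_R N$ equal $\min_{\mathfrak{p} \in \mathrm{Ass}(N)} \mathrm{height}(\mathfrak{p})$. Substituting $\dim = \dim R - j$ into Lemma~\ref{commm} then yields exactly the required grade bound, completing the proof. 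The most delicate point I foresee is the compatibility $\gr \bar M_n' \simeq \gr \bar M_n \otimes_R S$, which I would justify by Tor-vanishing for the regular sequence of central linear forms defining the quotient $R \twoheadrightarrow S$; one must choose the integral structure and the good filtration on $M_n$ so that the generators of the defining ideal act on $\gr \bar M_n$ compatibly with this commutative base change.
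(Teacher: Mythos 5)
Your overall strategy coincides with the paper's: reduce the statement level by level to a commutative Krull-dimension comparison over polynomial rings, and invoke Lemma~\ref{commm}. The technical route differs, though. The paper appeals to Schmidt's results on graded distribution algebras (\cite[Lemma 7.4, Prop.\,5.6]{SchmidtAR}), which in particular requires replacing $H$ by $H^{p^m}$ for $m\gg 0$ so that the map $\gr D_s(H^{p^m},L)\twoheadrightarrow \gr D_s(H^{p^m},L)^{\Kla}$ is visibly a surjection of polynomial rings; Theorem~\ref{beurk} then carries the grade computation across. You instead pass through Lazard's isomorphism (Proposition~\ref{Laz}), the decomposition of Proposition~\ref{distrib}, and the $CDF(\OO)$ machinery of Example~\ref{example1} and Proposition~\ref{Bjork}. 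This is a more self-contained route given the apparatus built in Sections~2--3, and it avoids shrinking $H$; the price is that one has to verify the surjection $\mathfrak{g}^0\twoheadrightarrow\mathfrak{g}^K$ of $\OO$-Lie algebras has a direct summand kernel (which it does, since $\mathfrak{g}^K$ is $\OO$-free) and that this passes to the completed enveloping algebras. Both approaches rest on the same delicate step, which you correctly single out: identifying the associated graded of $M_n'$ with the commutative base change $\gr\bar M_n\otimes_R S$ (or, what is actually needed, showing they have the same Krull dimension). In general only the canonical surjection $\gr\bar M_n\otimes_R S\twoheadrightarrow\gr\bar M_n'$ is immediate --- the defining linear forms need not be a $\gr\bar M_n$-regular sequence, so the Tor-vanishing you sketch does not apply outright --- and this surjection delivers the upper bound $d^K(\Pi^{\Kla})\le d(\Pi)$ but not the lower one by itself. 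The paper's proof passes over this point with equal brevity, so your write-up is faithful to what the argument actually requires; it would benefit from spelling out why the dimensions agree (e.g., by choosing the good filtration on $M_n$ as induced from a free presentation compatible with the Lie algebra surjection, or by a more careful induction on the length of the regular sequence of linear forms).
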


        \begin{proof} We may pick $r=r_n$ such that $ j_{D(H,L)}(\Pi^*)=j_{D_{r}(H,L)}(D_{r}(H,L)\otimes_{D(H,L)} \Pi^*)$ and 
        \[j_{D(H,L)^{\Kla}}((\Pi^{\Kla})^*)=j_{D_{r}(H,L)^{\Kla}}(D_{r}(H,L)^{\Kla}\otimes_{D(H,L)^{\Kla}} (\Pi^{\Kla})^*).\]
        Since $(\Pi^{\Kla})^*= D(H,L)^{\Kla}\otimes_{D(H, L)} \Pi^*$, which follows from \cite[Theorem 7.1]{adm} together with \cite[Proposition 3.4]{Schmidt_vec},
        we have 
        \[D_{r}(H,L)^{\Kla}\otimes_{D(H,L)^{\Kla}} (\Pi^{\Kla})^*\cong 
        D_{r}(H,L)^{\Kla}\otimes_{D(H,L)} \Pi^*.\]
        Since $D(H,L)\rightarrow D_r(H, L)^{\Kla}$ factors through $D_r(H, L)\rightarrow D_r(H, L)^{\Kla}$ we have
        \[D_{r}(H,L)^{\Kla}\otimes_{D(H,L)^{\Kla}} (\Pi^{\Kla})^*\cong 
        D_{r}(H,L)^{\Kla}\otimes_{D_r(H,L)}(D_r(H,L)\otimes_{D(H,L)}  \Pi^*).\]
        
         We may further replace $H$ with $H^{p^m}$ for $m\ge 1$, 
        since $D_{r}(H,L)$ is a finitely generated $D_{s}(H^{p^m}, L)$-module, where $s=r^{p^m}$,
        see \cite[Lemma 7.4]{SchmidtAR}. If we  choose $m$ large enough then 
        the map \[R:=\gr^{\bullet}D_{s}(H^{p^m}, L)\twoheadrightarrow \gr^{\bullet} D_{s}(H^{p^m}, L)^{\Kla}=:S\]  is a surjection between polynomial rings by \cite[Proposition 5.6]{SchmidtAR}.
        Thus the assertion follows from Lemma \ref{commm} applied to the 
        associated graded module of $D_r(H,L)\otimes_{D(H,L)}  \Pi^*$ and Theorem \ref{beurk}.
        \end{proof}
           
           \begin{lem}\label{lafindim} If $M$ is a finitely generated $D_{r_n}(H, L)^{\Kla}$-module such that 
           \[j(M)=\dim_K H,\] then 
           $M$ is a finite dimensional $L$-vector space.
           \end{lem}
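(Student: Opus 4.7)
\emph{Proof plan.} The strategy is to transfer the maximal grade condition $j(M) = d := \dim_K H$ to an integral enveloping algebra via the structure of $D_{r_n}(H,L)^{\Kla}$, then invoke Proposition \ref{Bjork}(b) to reduce to a statement about graded modules over a polynomial ring, and conclude by topological Nakayama. Set $R := D_{r_n}(H,L)^{\Kla}$, $B := \widehat{U(p^n\mathfrak{g}_{H,\OO})}$, and $A := B[1/p]$. By Proposition \ref{distrib} and Lazard's isomorphism applied to $H^{p^n}$, $R \cong \bigoplus_{h \in H/H^{p^n}} A\delta_h$ as an $A$-module, and by Example \ref{example1} $B \in CDF(\OO)$ with $\gr(B/\varpi B) \cong k[X_1,\ldots,X_d]$ a polynomial ring in $d$ variables.

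First I would establish $j_R(M) = j_A(M) = d$. The inclusion $A \subseteq R$ is a free Frobenius extension (exactly as for the classical pair $\OO\br{H^{p^n}} \subseteq \OO\br{H}$, the Frobenius form being projection onto the identity coset), so $\mathrm{Hom}_A(R,A) \cong R$ as $R$-bimodules and hence $\Ext^i_R(M, R) \cong \Ext^i_A(M, A)$ for all $i$; in particular the grades agree. Next, choose a finitely generated $B$-submodule $M_0 \subseteq M$ generating $M$ over $A$, and replace it by $M_0/(\varpi\text{-torsion})$ to ensure $\OO$-flatness; then $M_0[1/p] = M$. Proposition \ref{Bjork}(b) applied to $M_0$ over $B$ gives
\[d \;=\; j_A(M) \;=\; j_B(M_0) \;=\; j_{B/\varpi B}(M_0/\varpi M_0) \;=\; j_{k[X_1,\ldots,X_d]}\bigl(\gr(M_0/\varpi M_0)\bigr).\]

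Since $k[X_1,\ldots,X_d]$ is Cohen--Macaulay of dimension $d$, the grade of a finitely generated module equals the codimension of its support; hence $\gr(M_0/\varpi M_0)$, having grade $d$, is supported at the irrelevant ideal and is therefore finite-dimensional over $k$. The good filtration on $M_0/\varpi M_0$ is discrete and exhaustive, so having a finite-dimensional associated graded forces the filtration to stabilize, whence $M_0/\varpi M_0$ itself is finite-dimensional. As a finitely generated module over the Noetherian $\varpi$-adically complete ring $B$, $M_0$ is $\varpi$-adically complete; topological Nakayama then lifts a $k$-basis of $M_0/\varpi M_0$ to an $\OO$-generating set of $M_0$, so $M_0$ is finitely generated over $\OO$ and $M = M_0[1/p]$ is a finite-dimensional $L$-vector space.

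The main technical point is the Frobenius extension argument yielding $j_R(M) = j_A(M)$; one has to verify carefully that the adjunction $\mathrm{Hom}_A(R,A) \cong R$ intertwines the two $R$-bimodule structures in our twisted setting. The remainder is a clean application of the Bjork--Ardakov--Wadsley framework recalled in Section \ref{Fildef} together with the standard commutative algebra dictionary for polynomial rings.
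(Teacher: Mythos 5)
Your proof is correct and follows essentially the same route as the paper's: reduce the grade computation to $A := \widehat{U(p^n\mathfrak{g}_{H,\OO})}[1/p]$, pick an $\OO$-lattice $M_0\subset M$ that is finitely generated over $B=\widehat{U(p^n\mathfrak{g}_{H,\OO})}$ with $M_0[1/p]=M$, pass via Proposition \ref{Bjork}(b) to $\gr(M_0/\varpi M_0)$ over $k[X_1,\dots,X_d]$, and conclude by the Cohen--Macaulay codimension formula together with Nakayama. The one step the paper leaves unjustified is the equality $j_{A}(M)=j_{D_{r_n}(H,L)^{\Kla}}(M)$, and your identification of the Frobenius-extension mechanism (only $\Hom_A(R,A)\cong R$ as left $R$-modules is actually needed, which holds even with a twist) is the correct way to fill it in.
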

            
           \begin{proof} The ring $A:=\widehat{U(p^n \mathfrak{g}_{H, \OO})}$ is in $CDF(\OO)$ and 
           ${\rm Gr}(A/\varpi A)$ is a polynomial ring in $\dim_K H$ variables over $k$ (see  
           example \ref{example1}). 
            Since $D_{r_n}(H, L)^{\Kla}$ is a finite free module over $A[1/p]$ (see the discussion after proposition \ref{distrib}), we also have $j_{A[1/p]}(M)=\dim_K H$. Let $N\subset M$ be a finitely generated $A$-module such that $M=N[1/p]$ and let 
            $F(N/\varpi N)$ be a good filtration on $N/\varpi N$. 
             Proposition \ref{Bjork} shows that ${\rm Gr}(N/\varpi N)$
            is finite dimensional over $k$, thus 
             $N$ is finitely generated over $\OO$ and $\dim_L M<\infty$.
           \end{proof}

                 \section{A $p$-adic analogue of Conze's embedding}
                 
                 Let $\mathfrak{g}$ be a complex semisimple Lie algebra and let $\mathfrak{b}$ be a Borel subalgebra of 
                 $\mathfrak{g}$. If $J$ is the annihilator of an arbitrary Verma module with respect to $\mathfrak{g}$ and $\mathfrak{b}$, Conze \cite{conze} constructed an injective algebra homomorphism
                 $U(\mathfrak{g})/J\to D(\mathfrak{g}/\mathfrak{b})$, where $D(\mathfrak{g}/\mathfrak{b})$ is the ring of differential operators of the $\mathbb{C}$-vector space $\mathfrak{g}/\mathfrak{b}$ (if $x_i$ form a basis of $\mathfrak{g}/\mathfrak{b}$, then $D(\mathfrak{g}/\mathfrak{b})$ is the algebra of differential operators on $\mathbf{C}[x_1,\dots,x_n]$ generated by $x_i$ and $\frac{\partial}{\partial x_i}$). In particular 
                 $U(\mathfrak{g})/J$ is an integral domain, since it is a standard result that $D(\mathfrak{g}/\mathfrak{b})$ is one. It follows that for any character $\chi: Z(\mathfrak{g})\to \mathbb{C}$ the ring $U(\mathfrak{g})_{\chi}:=U(\mathfrak{g})\otimes_{Z(\mathfrak{g}), \chi} \mathbb{C}$ is an integral domain. 
                                  A more conceptual way of proving this is via the Beilinson--Bernstein localization theory \cite{Bebe}, which identifies $U(\mathfrak{g})_{\chi}$ with the ring of global sections of a suitable sheaf of twisted differential operators on the flag variety of $\mathfrak{g}$, and then, modulo Duflo's annihilation theorem, Conze's embedding is essentially the restriction of global sections to those on the big Bruhat cell (making this statement precise and proving it requires work, cf. \cite{SH}). 
                                  
                                  In this section we will establish a $p$-adic analogue of the above result by using the $p$-adic version of Beilinson--Bernstein localization theory due to Ardakov and Wadsley \cite{awannals} (when the prime $p$ is very good with respect to the ambient group), extended by Ardakov in \cite{ardakovast}. We are very grateful to Ardakov for suggesting the proof below, much easier than ours and which avoids any assumption on the prime $p$. Theorem \ref{domaineasy} below is one of the key ingredients in the proof of Theorem \ref{astuce}.             
                 
                 Let 
                   $\bold{G}$ be a connected, split semisimple and simply connected group scheme over 
                   $\OO$, with Lie algebra $\mathfrak{g}$. Let $\chi: Z(\mathfrak{g}_L)\to L$ be 
                   an $L$-algebra homomorphism. Let 
                   \[H_n=\ker(\bold{G}(\OO)\to \bold{G}(\OO/p^{n+1}\OO)),\]
                an open subgroup of $\bold{G}(\OO)$ and a
                 $L$-uniform pro-$p$ group if $n\geq 1$ (even $n\geq 0$ for $p>2$), with $L_{H_n}=p^{n+1}\mathfrak{g}$ (we refer the reader to the discussion in section $7$ of \cite{SK1} and the references therein). Therefore by Lazard's isomorphism (Proposition \ref{Laz}) we have a canonical isomorphism of $L$-Banach algebras 
                 \[D_{1/p}(H_n, L)^{\Lla}\simeq \widehat{U(p^n\mathfrak{g})}[1/p].\]
                 If $C$ is a $Z(\mathfrak{g}_L)$-algebra, write 
                 \[C_{\chi}=C/\ker(\chi)C=C\otimes_{Z(\mathfrak{g}_L), \chi} L.\]
                         
                         \begin{thm}\label{domaineasy}
                         With the previous notation, for all sufficiently large $n$ the ring 
                         \[D_{1/p}(H_n, L)^{\Lla}_{\chi}\simeq \widehat{U(p^n\mathfrak{g})}[1/p]_{\chi}\] is
                         an integral domain.
                         \end{thm}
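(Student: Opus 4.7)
The plan is to construct a $p$-adic analogue of Conze's embedding, combining Ardakov's extension of Beilinson--Bernstein localization to the rigid-analytic flag variety \cite{ardakovast} with the theory of affinoid Verma modules of Ardakov--Wadsley \cite{awverma}. Let $\bold{B} \subset \bold{G}$ be a split Borel over $\OO$, let $\bold{N}^-$ be the unipotent radical of the opposite Borel, set $d = \dim \bold{N}^-$, and let $X = \bold{G}/\bold{B}$ denote the flag variety; the multiplication map identifies $\bold{N}^-$ with the big Bruhat cell, an open subscheme of $X$ isomorphic to $\mathbb{A}^d_{\OO}$. The first step is to invoke Ardakov's localization theorem: for all $n$ sufficiently large there exists a sheaf $\widehat{\mathcal D}^\chi$ of twisted, $p$-adically completed differential operators on $X^{\rig}$ whose global sections are canonically identified with $\widehat{U(p^n\mathfrak{g})}[1/p]_\chi$. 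The hypothesis that $n$ be large plays the role of the dominance/regularity condition in the classical Beilinson--Bernstein theorem and, crucially, permits \emph{arbitrary} central characters $\chi$ without any constraint on $p$.

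The second step is to restrict global sections to the big cell $(\bold{N}^-)^{\rig}$, which is an affinoid polydisc with a global coordinate system. The ring $A_n := \Gamma((\bold{N}^-)^{\rig}, \widehat{\mathcal D}^\chi)$ is then a $p$-adically completed twisted Weyl algebra, topologically generated by coordinates $x_1, \dots, x_d$ and suitably rescaled derivations $p^n \partial_1, \dots, p^n \partial_d$ satisfying standard Weyl-type commutation relations. Such algebras are integral domains: one filters by the order of a differential operator so that the associated graded is a Tate algebra in $2d$ variables over $L$, which is well-known to be a domain, and Theorem \ref{beurk} then yields the conclusion for $A_n$ itself.

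The most delicate step is injectivity of the restriction map $\rho: \widehat{U(p^n\mathfrak{g})}[1/p]_\chi \to A_n$. Here I would use an affinoid Verma module $\widehat{M}(\chi)$ attached to $\chi$ as in \cite{awverma}: by construction it arises, via Ardakov's localization, as the pushforward to $X^{\rig}$ of a rank-one locally free $\widehat{\mathcal D}^\chi$-module supported on the big cell, so any element of $\ker(\rho)$ annihilates $\widehat{M}(\chi)$; for $n$ large enough the faithfulness of the action of $\widehat{U(p^n\mathfrak{g})}[1/p]_\chi$ on $\widehat{M}(\chi)$ then forces such an element to vanish, completing the argument.

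The main obstacle I expect is guaranteeing that \emph{both} Ardakov's localization theorem and the faithfulness of $\widehat{M}(\chi)$ hold uniformly in $\chi$ at the cost of enlarging $n$. In the classical algebraic setting, regularity or dominance hypotheses on $\chi$ are imposed precisely to avoid cohomological obstructions (Kempf vanishing, exactness of the localization functor), and one must check that shrinking the analytic neighborhood of the identity via $n \to \infty$ genuinely resolves these obstructions uniformly in $\chi$; this is where the $p$-adic framework departs most substantially from its classical analogue and is the crux of the proof.
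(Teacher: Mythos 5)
Your first two steps track the paper closely: the paper (following a suggestion of Ardakov) also identifies $\widehat{U(p^n\mathfrak{g})}[1/p]_{\chi}$ with $\Gamma(X,\mathcal F_n)$ via the Ardakov--Wadsley/Ardakov localization theorem (\cite[Theorem 6.10]{awannals}, \cite[Theorem 5.3.5]{ardakovast}), and also proves that the sections over the big cell form an integral domain by filtering and identifying the associated graded with functions on (an affinoid piece of) the cotangent bundle, then applying Theorem~\ref{beurk}.

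Where you diverge is the injectivity of the restriction map $\rho$ from global sections to sections on the big cell, and this is where there is a genuine gap. You propose to conclude from the fact that $\ker(\rho)$ annihilates the affinoid Verma module $\widehat{M}(\chi)$ and ``the faithfulness of the action of $\widehat{U(p^n\mathfrak g)}[1/p]_\chi$ on $\widehat{M}(\chi)$.'' But faithfulness of $\widehat{M}(\chi)$ over the full central reduction $\widehat{U(p^n\mathfrak g)}[1/p]_\chi$ (as opposed to merely over the Iwasawa algebra $L\br{H_n}$) is precisely the $p$-adic analogue of Duflo's annihilation theorem, i.e.~\cite[Theorem 4.6]{awverma}, and that result is only available when $p$ is a \emph{very good prime} for $\bold G$. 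Avoiding any hypothesis on $p$ is one of the explicit goals of the theorem; if you allow yourself the Duflo-type statement, you are implicitly restricting $p$, and moreover at that point there is a more direct route to the domain statement via the irreducibility of the nilpotent cone, as the paper notes in the remark following the theorem. Your attempt to sidestep the issue by enlarging $n$ does not help here: the obstruction in the $p$-adic Duflo theorem comes from invariant theory in characteristic $p$, not from the shrinking of the analytic neighbourhood. A secondary concern is the claim that $\widehat{M}(\chi)$ is the pushforward of a rank-one sheaf supported on the big cell: in the Ardakov--Wadsley framework the affinoid Verma module is defined purely algebraically as $\widehat{U(p^n\mathfrak g)}[1/p]\otimes_{\widehat{U(p^n\mathfrak b^+)}[1/p],\lambda}L$, and the geometric description you invoke would require its own justification.

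The paper instead finesses the injectivity by a covering argument that makes no use of Verma modules. Let $U_1,\dots,U_m$ be the Weyl translates of the big cell (they cover $X$) and let $W=U_1\cap\cdots\cap U_m$, which is affine, open, and dense. Using the same filtration argument as in the domain step, the restriction map $\mathcal F_n(U_i)\to \mathcal F_n(W)$ is injective for each $i$ (reduce mod $\varpi$, pass to graded pieces, and use density). If a global section $s$ restricts to zero on some $U_i$, then its image in $\mathcal F_n(W)$ is zero, hence its restriction to each $U_j$ is zero by the injectivity just noted, and so $s=0$ since the $U_j$ cover $X$. This purely geometric argument is what makes the theorem uniform in $p$, and it is the key idea your proposal is missing.
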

                         
                         \begin{proof} Let $U=U(\mathfrak{g})$ and $A_n=\widehat{U(\varpi^n\mathfrak{g})}[1/p]_{\chi}$, where $\varpi$ is a uniformizer of $L$. It suffices to prove that 
                         $A_n$ is an integral domain for $n$ sufficiently large. 
                         
                              Fix a split maximal torus $\bold{T}$ in $\bold{G}$ and a Borel $\OO$-subgroup scheme $\bold{B}\subset \bold{G}$ containing $\bold{T}$, with unipotent radical 
       $\bold{N}$. Let $W$ be the Weyl group of $\bold{G}$ and let $\mathfrak{b}, \mathfrak{t},\mathfrak{n}$ be the corresponding 
       $\OO$-Lie algebras.
 By the (untwisted) Harish-Chandra isomorphism, the character 
       $\chi: Z(\mathfrak{g}_L)\to L$ corresponds to a $W$-orbit (for
       the dot action) of weights 
       $\lambda: \mathfrak{t}_L\to L$. We choose in the sequel an element 
       $\lambda$ of this $W$-orbit such that $\lambda+\rho$ is dominant, where 
       $\rho$ is the half-sum of positive roots. We will only consider positive integers $n$ for which $\lambda(\varpi^n \mathfrak{t})\subset \OO$. Clearly all sufficiently large $n$ have this property.       
       
        Let 
        $X:=\bold{G}/\bold{B}$ be the associated flag variety and let $U_1,...,U_m$ be the Weyl translates of the big cell in $X$. Each $U_i$ is an affine space of dimension 
       $\dim X$ and an affine open dense subscheme of $X$, and the $U_i$'s cover $X$.                    
        Ardakov and Wadsley (see \cite[Section 6.4]{awannals}) construct a ${\rm Def}(\OO)$-valued (cf. Section \ref{Fildef} for the category ${\rm Def}(\OO)$) sheaf $\mathcal{D}_n^{\lambda}$ on $X$ associated to $n$ and $\lambda$ and having the following properties:
        
        $\bullet$ there is a natural isomorphism of sheaves of graded $\OO$-algebras on $X$
        \[{\rm Sym}_{\OO_X}(\mathcal{T}_X)\simeq {\rm gr} (\mathcal{D}_n^{\lambda}),\]
        where $\mathcal{T}_X$ is the tangent sheaf of $X$;
        
        $\bullet$ for each $1\leq i\leq m$ there is an isomorphism of sheaves of filtered $\OO$-algebras 
 \[\mathcal{D}_n^{\lambda}|_{U_i}\simeq (\mathcal{D}_X)_{n}|_{U_i},\]
 where  $\mathcal{D}_X=U(\mathcal{T}_X)$ is the sheaf of crystalline differential operators\footnote{Recall that this is a sheaf of rings generated by $\OO_{X}$ and 
    $\mathcal{T}_{X}$ with the obvious relations.} on 
                    $X$, cf. \cite[def. 4.2]{awannals}.

                    $\bullet$ Let $U_n=U(\varpi^n \mathfrak{g})$ be the $n$th deformation of $\mathfrak{g}$ and define similarly
                    $(U^{\bold{G}})_n$. Let $HC: U^{\bold{G}}\to U(\mathfrak{t})$ be the untwisted\footnote{Thus $HC$ is obtained by restricting to $U^{\bold{G}}$ the linear projection $U\to U(\mathfrak{t})$ induced by the decomposition 
           $U=U(\mathfrak{t})\oplus (\mathfrak{n}U+U\mathfrak{n}^+)$.} Harish-Chandra homomorphism. By definition of $\lambda$ the map
           $\lambda\circ HC_n: (U^{\bold{G}})_n\to U(\mathfrak{t})_n\simeq U(\varpi^n \mathfrak{t})\to \OO$ is simply the restriction of 
           $\chi: Z(\mathfrak{g}_L)\to L$ to $(U^{\bold{G}})_n$ via the natural isomorphism $(U^{\bold{G}})_n[1/p]\simeq Z(\mathfrak{g}_L)$. In particular 
           $\chi( (U^{\bold{G}})_n)\subset \OO$. Ardakov and Wadsley prove the existence of a homomorphism of sheaves of $\OO$-algebras 
           \[\varphi_n^{\lambda}: U_n\otimes_{(U^{\bold{G}})_n, \chi} \OO\to \mathcal{D}_n^{\lambda},\]
           the sheaf on the left being the constant one.

  Define 
   \[\mathcal{F}_n:=(\varprojlim_{k} \mathcal{D}_n^{\lambda}/\varpi^k \mathcal{D}_n^{\lambda})\otimes_{\OO} L.\]
   This is denoted $\widehat{\mathcal{D}_{n,K}^{\lambda}}$ in \cite{awannals}. By passing to $\varphi$-adic completions and inverting $p$ (using also \cite[Lemma 6.5]{awannals}) we obtain from $\varphi_n^{\lambda}$ a morphism of $L$-algebras
           \[\widehat{U_n}[1/p]\otimes_{Z(\mathfrak{g}_L),\chi} L\to H^0(X, \mathcal{F}_n),\]
           which is an isomorphism by a 
        fundamental theorem of Ardakov--Wadsley \cite[Theorem 6.10]{awannals} and Ardakov \cite[Theorem 5.3.5]{ardakovast}.
       In other words we have  
               $A_n\simeq \Gamma(X, \mathcal{F}_n)$.
  
   With these results in hand, we can finally start doing business. 
     Let 
        $W=U_1\cap...\cap U_m$ be the intersection of the Weyl translates of the big cell in $X$, an affine dense open subset of $X$.  For each $i$ the isomorphism 
        $\mathcal{D}_n^{\lambda}|_{U_i}\simeq (\mathcal{D}_X)_n|_{U_i}$ above induces an isomorphism
        $\mathcal{F}_n|_{U_i}\simeq \widehat{(\mathcal{D}_X)_n}[1/p]|_{U_i}$, the completion being $p$-adic. In particular 
        $\mathcal{F}_n(U_i)\simeq \widehat{\mathcal{D}_X(U_i)_n}[1/p]$ (one can exchange sections over $U_i$ and 
        $p$-adic completion since we work over an affine open subscheme of $X$ and $\mathcal{D}_X$ is a quasi-coherent $\OO_X$-module) and this is an integral domain. Indeed,
        it suffices to check that $A_i:=\widehat{\mathcal{D}_X(U_i)_n}$ (which is an affinoid Weyl algebra) is an integral domain, and for this it suffices to check that 
        $A_i/\varpi A_i$ is a domain, and finally that ${\rm gr}(A_i/\varpi A_i)$ is a domain, but this is clear since the latter is naturally identified with 
        $\mathcal{O}_{T^*X_k}(q^{-1}(U_i))$, where $q: T^*X_k\to X_k$ is the cotangent bundle of $X_k$. A similar argument shows that 
        the natural map $\mathcal{F}_n(U_i)\to \mathcal{F}_n(W)$ is injective (reduce mod $\varpi$ and pass to graded pieces, then use the density of $W$ in $U_i$). It follows that all $\mathcal{F}_n(U_i)$ embed in $\mathcal{F}_n(W)$. We claim that 
        $\Gamma(X, \mathcal{F}_n)$ embeds in $\Gamma(U_i, \mathcal{F}_n)$ for any $i$, which will show that 
         $\Gamma(X, \mathcal{F}_n)$ is a domain. But if a global section $s$ on $X$ restricts to $0$ on $U_i$, then its restriction to 
         $U_j$ is $0$ (since the image of that restriction in $\Gamma(W, \mathcal{F}_n)$ is the same as the image of $s|_{U_i}$, thus it is $0$) for any $j$ and so $s=0$. This finishes the proof.
                                                                   \end{proof}
                                                                   
                                                                    \begin{remar}
        If $p$ is very good for $\bold{G}$, one can prove 
                           the above result in a slightly different way, by showing that (for $n$ large enough as in the above proof) the reduction mod 
                           $\varpi$ of the unit ball in $A_n$ has a natural filtration (induced by the PBW filtration on the 
                           enveloping algebra) whose associated graded ring is isomorphic to the algebra of regular functions on the nilpotent cone of $\mathfrak{g}^*_k$ (this is a deep theorem of Ardakov and Wadsley \cite[Theorem 6.10 c)]{awannals}, fully using results from invariant theory in characteristic $p$). This latter algebra is a domain since the nilpotent cone is irreducible, and by standard arguments we deduce that the unit ball in $A_n$ is a domain.                           
                         \end{remar}
                         
                         \begin{conj}\label{condomain}
                              Let $\mathfrak{h}$ a semi-simple $L$-Lie algebra and 
                          $\mathfrak{h}_0\subset 
                          \mathfrak{h}$ an $\OO$-Lie lattice. Let $\chi: Z(\mathfrak{h})\to L$ be an $L$-algebra homomorphism. For all sufficiently large $n$ the ring 
                      $\widehat{U(p^n\mathfrak{h}_0)}[1/p]_{\chi}$ is a domain.
                         \end{conj}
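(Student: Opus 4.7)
The plan is to reduce Conjecture \ref{condomain} to Theorem \ref{domaineasy}. First, since a ring $R$ over $L$ is a domain if and only if its extension of scalars to a finite extension $L'/L$ is a domain (use that $R \hookrightarrow R \otimes_L L'$ and a subring of a domain is a domain), and since central reduction by $\chi$ commutes with base change, I would enlarge $L$ so that $\mathfrak{h}$ is split. Then, by the theory of split Chevalley group schemes, I would fix a connected, split, simply connected, semisimple group scheme $\bold{G}$ over $\OO$ together with an identification $\mathfrak{h} \simeq \mathrm{Lie}(\bold{G}) \otimes_\OO L$. Writing $\mathfrak{g} := \mathrm{Lie}(\bold{G})$, this places us in the situation of Theorem \ref{domaineasy} but with the $\OO$-Lie lattice $\mathfrak{h}_0$ in place of $\mathfrak{g}$; what remains is the comparison between these two lattices.

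The lattices $\mathfrak{h}_0$ and $\mathfrak{g}$ are commensurable in $\mathfrak{h}$, so there is an integer $c \geq 0$ with $p^c\mathfrak{h}_0 \subset \mathfrak{g}$ and $p^c\mathfrak{g} \subset \mathfrak{h}_0$. For $n \geq c$, the inclusion $p^n\mathfrak{h}_0 \subset p^{n-c}\mathfrak{g}$ inside $U(\mathfrak{h})$ yields an inclusion of $\OO$-subalgebras $U(p^n\mathfrak{h}_0) \hookrightarrow U(p^{n-c}\mathfrak{g})$, which after $p$-adic completion and inversion of $p$ produces a continuous morphism of Noetherian $L$-Banach algebras
\[
\alpha_n : \widehat{U(p^n\mathfrak{h}_0)}[1/p] \longrightarrow \widehat{U(p^{n-c}\mathfrak{g})}[1/p].
\]
The map $\alpha_n$ is injective: both algebras are Banach completions of the common subalgebra $U(\mathfrak{h})$, the norm on the source is strictly finer, and a PBW expansion argument (expanding $X^\alpha$ for an $\OO$-basis of $p^n\mathfrak{h}_0$ into the PBW basis of $p^{n-c}\mathfrak{g}$) shows that a convergent sum in the source mapping to zero must already be zero. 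By Theorem \ref{domaineasy}, the central reduction of the target at $\chi$ is an integral domain for $n-c$ sufficiently large. It therefore suffices to prove that the induced map
\[
\bar\alpha_n : \widehat{U(p^n\mathfrak{h}_0)}[1/p]_\chi \longrightarrow \widehat{U(p^{n-c}\mathfrak{g})}[1/p]_\chi
\]
is injective, for then the source would embed into a domain and be itself a domain.

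This injectivity is the main obstacle. It amounts to the identity
\[
\ker(\chi)\cdot \widehat{U(p^{n-c}\mathfrak{g})}[1/p] \,\cap\, \widehat{U(p^n\mathfrak{h}_0)}[1/p] \;=\; \ker(\chi)\cdot \widehat{U(p^n\mathfrak{h}_0)}[1/p],
\]
which would follow from flatness of $\alpha_n$. The natural attack is via the framework of Section \ref{Fildef}: both Banach algebras are of the form $\hat{A}[1/p]$ for some $A \in \mathrm{Def}(\OO)$, their mod-$\varpi$ reductions carry Zariskian filtrations whose associated graded rings are the polynomial rings $S(\mathfrak{h}_0\otimes_\OO k)$ and $S(\mathfrak{g}\otimes_\OO k)$, and the graded map relating these is induced by $X_i \mapsto p^c Y_i$ on a suitable basis, which becomes an isomorphism after inverting $p$. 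The hope is then to descend flatness of $\alpha_n$ from this graded picture via Theorem \ref{beurk} and Proposition \ref{Bjork}, combined with a careful control of how $\ker(\chi)$ interacts with the rescaling. Should this direct route prove insufficient (the rescaling does not commute in an obvious way with the Harish-Chandra homomorphism used to describe $\ker(\chi)$ integrally), the fallback would be to generalize the sheaf-theoretic Beilinson--Bernstein localization used in the proof of Theorem \ref{domaineasy}: concretely, to construct analogues of the Ardakov--Wadsley sheaves $\mathcal{D}_n^\lambda$ of \cite{awannals} adapted to an arbitrary $\OO$-Lie lattice $\mathfrak{h}_0$, and then to run the same domain argument via an open affine cover of the flag variety.
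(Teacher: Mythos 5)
This statement is labelled a \emph{Conjecture} in the paper, and the paper does not prove it. The authors explicitly remark, immediately after stating it, that they attempted the sheaf-theoretic approach and that it does not (yet) work: one can reduce to $\mathfrak{h}_0$ being a lattice in $\mathfrak{g}_L$ with $\bold{G}$ split simply connected, one can construct the subsheaf $\mathcal{H}_n$ of $\mathcal{D}_n^{\lambda}$ generated by $\OO_X$ and $p^{n+c}\mathfrak{h}_0$, and one can show that the global sections of $\mathcal{G}_n := \widehat{\mathcal{H}_n}\otimes_\OO L$ recover $\widehat{U(\varpi^{n}\mathfrak{h}_0)}[1/p]_\chi$; but it is unknown whether the inclusion $\mathcal{H}_n \hookrightarrow \mathcal{D}_n^{\lambda}$ survives $p$-adic completion, i.e.\ whether $\mathcal{G}_n$ is a subsheaf of $\mathcal{F}_n$. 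That is precisely the obstruction to embedding $\widehat{U(p^n\mathfrak{h}_0)}[1/p]_\chi$ into $\widehat{U(p^{n-c}\mathfrak{g})}[1/p]_\chi$ and invoking Theorem \ref{domaineasy}.

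Your outline correctly reconstructs this reduction and correctly identifies that the whole difficulty is the injectivity of $\bar\alpha_n$ after central reduction --- so your diagnosis of where the problem lies agrees with the paper's. But you have not closed that gap, so this is not a proof. Two specific comments on your proposed attacks. First, your ``direct route'' asks for flatness of $\alpha_n$, arguing it should descend from the associated graded picture. This is optimistic: the graded map you describe, $X_i \mapsto p^c Y_i$, is far from flat mod $\varpi$ (it factors through the Frobenius-like rescaling and becomes $X_i \mapsto 0$ in $k$-coefficients when $c\geq 1$), so the Zariskian-filtration machinery of Section~\ref{Fildef} does not transport flatness across $\alpha_n$; moreover even if $\alpha_n$ itself were flat (which is plausible, as a noncommutative analogue of restriction to a smaller polydisc), that alone does not give injectivity of $\bar\alpha_n$ --- you would need flatness \emph{over $Z(\mathfrak{h})$} to commute with the central reduction, and that is a statement about how $\ker\chi$ sits inside both completions integrally, which is exactly where the Harish--Chandra interaction you flag becomes serious. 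Second, your ``fallback'' --- building Ardakov--Wadsley-type sheaves adapted to the lattice $\mathfrak{h}_0$ --- is exactly what the paper tried, and the point at which it stalls (injectivity of $\widehat{\mathcal{H}_n} \to \widehat{\mathcal{D}_n^{\lambda}}$) is the $p$-adic-completion incarnation of the same flatness/injectivity problem, not an independent alternative. So both of your routes run into the single known obstruction, and neither is resolved.
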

                         
                       Standard arguments reduce the proof to the case when 
                         $\mathfrak{h}_0$ is an $\OO$-Lie lattice in $\mathfrak{g}_L$, where 
                         $\bold{G}, \mathfrak{g}$, etc are as above. Choose $c>0$ such that 
                         $\varpi^c\mathfrak{h}_0\subset \mathfrak{g}$, so that 
                         $U(\varpi^{n+c}\mathfrak{h}_0)\subset U(\varpi^n \mathfrak{g})$. 
                                                  One might then be tempted to imitate the above proof and realize 
                                                            $\widehat{U(\varpi^{n+c}\mathfrak{h}_0)}[1/p]_{\chi}$ as global sections of some sub-sheaf 
                                                            $\mathcal{G}_{n}$ of the sheaf $\mathcal{F}_n$ on the flag variety $X$, which would imply that 
                                                              $\widehat{U(\varpi^{n+c}\mathfrak{h}_0)}[1/p]_{\chi}$ embeds in 
                                                              $\widehat{U(\varpi^n \mathfrak{g})}[1/p]_{\chi}$ and so it is itself a domain thanks to the Theorem above. Naturally one would like to consider the sub-sheaf $\mathcal{H}_n$ of $\mathcal{D}_n^{\lambda}$ generated by $\OO_X$ and $p^{n+c}\mathfrak{h}_0$. Quite a bit of work (consisting in adapting the proof of \cite[Theorem 5.3.5]{ardakovast} for the sheaf $\mathcal{H}_n$) shows that the global sections of $\mathcal{G}_n:=\widehat{\mathcal{H}_n}\otimes_{\OO} L$ are indeed $\widehat{U(\varpi^n\mathfrak{h}_0)}[1/p]_{\chi}$. The problem is that it is by no means clear that the embedding $\mathcal{H}_n\to \mathcal{D}_n^{\lambda}$ stays an embedding after passage to $p$-adic completion, i.e. that $\mathcal{G}_n$ is a subsheaf of $\mathcal{F}_n$.

  \section{An application of affinoid Verma modules}

             The aim of this paragraph is to establish a technical result which is the second key ingredient in the proof of Theorem \ref{astuce}. Only the first theorem below will be needed in the proof of Theorem \ref{astuce}, but we found the other statements interesting in their own right. 
             
         We fix throughout this section a finite extension $L$ of $\Qp$, with ring of integers $\OO$. If $G$  is a connected reductive group defined over a sub-extension 
             $K$ of $L$, split over $L$, with Lie algebra $\mathfrak{g}$ (a $K$-Lie algebra) then  we let $\mathfrak{g}_L:=\mathfrak{g}\otimes_K L$.
     If 
              $\chi: Z(\mathfrak{g}_L)\to L$ is an $L$-algebra homomorphism and 
             $H$ is a compact open subgroup of $G(K)$, then 
               ${\rm Lie}(H)={\rm Lie}(G(K))=\mathfrak{g}$, hence 
               $D_{1/p}(H,L)^{\Kla}$ is an $U(\mathfrak{g}_L)$-module and we can define  
               \[D_{1/p}(H,L)^{\Kla}_{\chi}=D_{1/p}(H,L)^{\Kla}\otimes_{Z(\mathfrak{g}_L),\chi} L.\]
               There is a natural map $L\br{H}:=L\otimes_{\OO} \OO\br{H}\to D_{1/p}(H,L)^{\Kla}_{\chi}$ (recall that $D_{1/p}(H,L)^{\Kla}$ is a quotient of 
               $D_{1/p}(H,L)$, which contains $D(H,L)$ and so also $L\br{H}$).
               
                The following theorem is a re-interpretation of the main result of \cite{awverma}. Compared to loc.\,cit., we drop the hypothesis that $p$ is a very good prime for the group $\bold{G}$ below, 
                and we explain how one can adapt the arguments in loc.\,cit. to avoid this hypothesis.
                          
               \begin{thm}\label{affverma} Suppose that $\bold{G}$ is a connected, split semisimple and simply connected group scheme over $\OO$ with Lie algebra $\mathfrak{g}$. Given a character 
               $\chi: Z(\mathfrak{g}_L)\to L$, for all 
                        $n$ large enough the natural map\footnote{The isomorphism being a consequence of Proposition \ref{Laz} and the discussion preceding Theorem \ref{domaineasy}.} 
                         \[L\br{H_n}\to D_{1/p}(H_n, L)^{\Lla}_{\chi}\simeq \widehat{U(p^n\mathfrak{g})}[1/p]_{\chi}\]
                         is injective, where $H_n=\ker(\bold{G}(\OO)\to \bold{G}(\OO/p^{n+1}\OO))$.
                         \end{thm}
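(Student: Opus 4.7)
By Lazard's isomorphism (Proposition \ref{Laz}), the map $L\br{H_n}\hookrightarrow D_{1/p}(H_n,L)^{\Lla}\simeq \widehat{U(p^n\mathfrak{g})}[1/p]$ is injective, so the task reduces to showing that for $n$ large enough this injection survives the quotient by the two-sided ideal generated by $\ker(\chi)\subset Z(\mathfrak{g}_L)$, i.e.
\[L\br{H_n}\;\cap\;\ker(\chi)\cdot \widehat{U(p^n\mathfrak{g})}[1/p]\;=\;0\quad\text{inside}\quad \widehat{U(p^n\mathfrak{g})}[1/p].\]

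The plan is to exhibit a module over $\widehat{U(p^n\mathfrak{g})}[1/p]_\chi$ on which $L\br{H_n}$ already acts faithfully; any such witness immediately forces the above intersection to vanish. The natural candidates are \emph{affinoid Verma modules}. Fix a Borel subalgebra $\mathfrak{b}=\mathfrak{t}\oplus\mathfrak{n}\subset\mathfrak{g}$ with opposite unipotent $\mathfrak{n}^-$, and, via the Harish-Chandra isomorphism, pick a weight $\lambda:\mathfrak{t}_L\to L$ in the Weyl orbit corresponding to $\chi$ under the $\rho$-shifted action, chosen sufficiently generic and, after enlarging $n$, such that $\lambda(p^n\mathfrak{t})\subset\OO$. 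Following \cite{awverma} one then forms
\[\widehat{M}(\lambda)\;=\;\widehat{U(p^n\mathfrak{g})}[1/p]\otimes_{\widehat{U(p^n\mathfrak{b})}[1/p]}L_\lambda,\]
which by PBW is identified with the free rank-one Banach module $\widehat{U(p^n\mathfrak{n}^-)}[1/p]\cdot v_\lambda$ and carries an action of $\widehat{U(p^n\mathfrak{g})}[1/p]$ on which $Z(\mathfrak{g}_L)$ acts through $\chi$. The main theorem of \cite{awverma} asserts that, for $\lambda$ suitably generic, $\widehat{M}(\lambda)$ together with its opposite-Borel analogue $\widehat{M}^-(w_0\cdot\lambda)$ (which shares the central character $\chi$) gives a faithful module over $\widehat{U(p^n\mathfrak{g})}[1/p]_\chi$.

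To promote this to faithfulness for $L\br{H_n}$, observe that each of these affinoid Verma modules is cyclic, so faithfulness of the $L\br{H_n}$-action on it is equivalent to injectivity of the corresponding orbit map $a\mapsto a\cdot v_\lambda$. Using any Iwahori-type factorisation $h=n^-\cdot t\cdot n^+$ of $h\in H_n$ (lifting the vector-space decomposition $p^n\mathfrak{g}=p^n\mathfrak{n}^-\oplus p^n\mathfrak{t}\oplus p^n\mathfrak{n}$) the orbit map on $\widehat{M}(\lambda)$ detects only the $N^-_nT_n$-factor of $h$, while the orbit map on $\widehat{M}^-(w_0\cdot\lambda)$ detects the $T_nN^+_n$-factor. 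Genericity of $\lambda$ on $T_n$ guarantees that the induced character of $T_n$ separates Dirac distributions, so the two orbit maps combined give the required injection on $L\br{H_n}$ and thus the desired faithfulness.

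The principal obstacle is logistical rather than conceptual: the arguments in \cite{awverma} rely on the Beilinson--Bernstein-type localisation results of \cite{awannals}, which require $p$ to be a very good prime for $\bold{G}$. Exactly as in the proof of Theorem \ref{domaineasy}, the remedy is to substitute Ardakov's generalisations in \cite{ardakovast} (notably Theorem 5.3.5 of loc.\,cit.) for every appeal to \cite{awannals}. With these replacements in place the construction of $\widehat{M}(\lambda)$ and the faithfulness assertion of \cite{awverma} go through verbatim for arbitrary $p$, and the theorem follows.
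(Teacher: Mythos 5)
Your reduction is correct: it suffices to exhibit an $L\br{H_n}$-faithful module on which $\widehat{U(p^n\mathfrak{g})}[1/p]$ acts through its central reduction at $\chi$, and the affinoid Verma module $\widehat{V^\lambda}$ is the right object. But from this point on the argument diverges from what is actually needed, and a key step fails.

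First, you misread what \cite{awverma} provides. The main theorem there (Theorem~5.4 in loc.\ cit., and indeed the title of the paper) says precisely that $L\br{H_n}$ \emph{already} acts faithfully on $\widehat{V^\lambda}$ --- it is a statement about faithfulness over Iwasawa algebras, not merely over $\widehat{U(p^n\mathfrak{g})}[1/p]_\chi$. Your entire ``promotion'' step, introducing a second Verma module $\widehat{M}^-(w_0\cdot\lambda)$ and arguing via an Iwahori factorisation that the two orbit maps jointly separate Dirac distributions, is thus an attempt to reprove what the cited theorem already gives. Moreover, as written this promotion argument does not close: the claim that ``the orbit map on $\widehat{M}(\lambda)$ detects only the $N^-_nT_n$-factor of $h$'' is a statement about a single group element, and lifting it to an injectivity statement for sums of Dirac distributions in $L\br{H_n}$ (which is not a tensor product of $L\br{N^-_n}$, $L\br{T_n}$, $L\br{N^+_n}$ as an algebra) would require a nontrivial argument that you do not supply. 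Also, the requirement that $\lambda$ be ``suitably generic'' is not something you can arrange: $\lambda$ must lie in the finite $W$-orbit determined by the given $\chi$, which is arbitrary, so genericity is not available. The theorem must hold for all $\chi$, generic or not.

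Second, and more seriously, your diagnosis and proposed fix of the very-good-prime issue are incorrect. The obstruction in \cite{awverma} is not the Beilinson--Bernstein localisation and cannot be fixed by replacing appeals to \cite{awannals} by \cite[Theorem~5.3.5]{ardakovast}; that result is about global sections of twisted differential operators and says nothing about the point that actually uses the hypothesis. What uses ``$p$ very good'' in \cite{awverma} is Proposition~4.8, whose proof relies on their Theorem~4.6 --- a $p$-adic analogue of Duflo's theorem, proved via characteristic-$p$ invariant theory --- to pass from faithfulness of $\widehat{V^\lambda}$ over $RB$ to faithfulness over $RB^+$. \cite{ardakovast} does not reprove this Duflo-type statement for general $p$. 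The paper's actual remedy sidesteps Proposition~4.8 entirely: since the annihilator $I$ of $\widehat{V^\lambda}$ in $\widehat{U(p^n\mathfrak{g})}[1/p]$ is a two-sided ideal, it is stable under the adjoint action of $\bold{G}(\OO)$ by \cite[Cor.~4.3~a)]{awverma}, and conjugating by a Weyl representative $w$ with $w.\mathfrak{b}^+=\mathfrak{b}$ transports ${\rm Ann}_{RB^+}(\widehat{V^\lambda})$ into ${\rm Ann}_{RB}(\widehat{V^\lambda})=0$. Your proposal contains neither this observation nor any substitute for it, so the passage to arbitrary $p$ remains a genuine gap.
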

                         
                         \begin{proof} Fix a maximal split torus $\bold{T}$ in 
                         $\bold{G}$ and a pair of opposite Borel subgroups 
                         $\bold{B}$, $\bold{B}^+$ containing $\bold{T}$. If 
                         $\lambda: \mathfrak{t}_L\to L$ is a weight (where
                         $\mathfrak{t}={\rm Lie}(\bold{T})$, $\mathfrak{b}={\rm Lie}(\bold{B})$, etc.), consider the Verma module $V^{\lambda}=U(\mathfrak{g}_L)\otimes_{U(\mathfrak{b}^+_L), \lambda} L$, where 
                         the map $U(\mathfrak{b}^+_L)\to L$ is induced by the natural projection 
                         $\mathfrak{b}^+_L\to \mathfrak{t}_L$. It is a standard result that 
                          $V^{\lambda}$ has an infinitesimal character, and it follows from the Harish-Chandra isomorphism that we can find $\lambda$ such that this infinitesimal character is precisely $\chi$. Fix such 
                          $\lambda$ and consider only $n$ for which $\lambda(p^n\mathfrak{t})\subset \OO$.
                     Consider the completion (an affinoid Verma module in the terminology of \cite{awverma})
                     \[\widehat{V^{\lambda}}=\widehat{U(p^n \mathfrak{g})}[1/p]\otimes_{\widehat{U(p^n \mathfrak{b}^+)}[1/p], \lambda} L\]
                     of $V^{\lambda}$, a $\widehat{U(p^n \mathfrak{g})}[1/p]$-module. By continuity and density, 
                     $Z(\mathfrak{g}_L)$ still acts by $\chi$ on this module. By the main result of \cite{awverma} (Theorem 5.4 in loc.\,cit., see however the discussion below for a slightly subtle point) $L\br{H_n}$ acts faithfully on
                     $\widehat{V^{\lambda}}$. It follows immediately that the map 
                     $ L\br{H_n}\to\widehat{U(p^n\mathfrak{g})}[1/p]_{\chi}$ is injective (any element in the kernel must kill $\widehat{V^{\lambda}}$ by the above remarks). 

                     In order to properly finish the proof, we still have to 
                    explain why Theorem 5.4 
                     in \cite{awverma} still holds without the assumption that $p$ is a very good prime for 
                     $\bold{G}$.
                                   The reader is advised to have a copy of that paper at hand when reading the following argument, since we will freely use the notations introduced there.  
                                   In particular $B$ and $B^+$ are $L$-uniform subgroups of $\bold{B}(\OO)$ and $\bold{B}^+(\OO)$ with associated $\OO$-Lie algebras $p^n \mathfrak{b}$ 
                                   and $p^n\mathfrak{b}^+$, and $RB$ is the Iwasawa algebra $\OO\br{B}$.

                                   Without any assumption on 
                     $p$ the proof of Theorem 5.4 shows that $\widehat{V^{\lambda}}$ is a faithful 
                     module over $RB$ (with the notations in loc.\,cit.). The only point where the fact that 
                     $p$ is very good for $\bold{G}$ is used is in citing Proposition 4.8 of loc.\,cit.                    
                    to deduce that $\widehat{V^{\lambda}}$ is faithful as $RB^+$-module.
                    The proof of that proposition seems to use their deep Theorem 4.6, a 
                     $p$-adic analogue of the famous Duflo theorem, stating that the annihilator 
                     $I$ of $\widehat{V^{\lambda}}$ is generated by $\ker(\chi)$, and that proof really uses the hypothesis on $p$. However, we can argue alternatively (and very closely to the proof of Proposition 4.8 in loc.\,cit.) as follows: since $I$ is a two-sided ideal of $\widehat{U(p^n \mathfrak{g})}[1/p]$, by Corollary 4.3 a) in loc.\,cit. $I$ is stable under the adjoint action of $\bold{G}(\OO)$. Thus if $w\in \bold{G}(\OO)$ is such that $w.\mathfrak{b}^+=\mathfrak{b}$, we have 
                     $w. B^+=B$ and 
                     \[w.{\rm Ann}_{RB^+} (\widehat{V^{\lambda}})=w.RB^+\cap w.I\subset RB\cap I={\rm Ann}_{RB}(\widehat{V^{\lambda}})=0,\]
                     thus ${\rm Ann}_{RB^+} (\widehat{V^{\lambda}})=0$ and we are done.                                                                                                               \end{proof}                          
        
        \begin{remar}                   
 It is unreasonable to expect such injectivity properties to hold without some assumptions. Indeed, it is easy to see that 
              for any $n$ the natural map \[L\br{p^n \Zp}\to D_{1/p}(p^n\Zp, L)/tD_{1/p}(p^n\Zp, L)\] is not injective, where $T=\delta_1-1$ and 
              $t=\log(1+T)$ (so that $D(\Zp,L)$ is the ring of $L$-rational
              analytic functions of the variable $T$ in the open unit disc). For instance, 
              $(1+T)^{p^{n+1}}-1\in L\br{p^n \Zp}$ is divisible by $t$ in $D_{1/p}(p^n\Zp, L)$. 
   \end{remar}
                                  
                In the following result we drop the hypothesis that the group is split and simply connected.

             \begin{prop}\label{inj} For any connected semisimple group $G$ over 
             $\Qp$, with Lie algebra $\mathfrak{g}$ and any $L$-algebra homomorphism
             $\chi: Z(\mathfrak{g}_L)\to L$ 
            there is an open uniform pro-$p$ subgroup $H$ of $G(\Qp)$ such that the natural map 
              $L\br{H}\to D_{1/p}(H, L)_{\chi}$
              is injective.
             \end{prop}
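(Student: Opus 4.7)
The plan is to reduce to the split, simply connected setting of Theorem \ref{affverma}, and then deduce the $\Qp$-analytic statement by embedding a suitable $H\subset G(\Qp)$ into an $L$-uniform subgroup of $G(L)$. Two preliminary reductions cover most of the work. First, replacing $L$ by a finite extension $L'$ splitting $G$ and using faithful flatness of $L\to L'$, it suffices to prove the injectivity after base change, so I may assume $G$ splits over $L$. Second, the simply connected isogeny $G^{\mathrm{sc}}\to G$ induces an isomorphism of Lie algebras (and hence of $Z(\mathfrak g_L)$), while $G^{\mathrm{sc}}(\Qp)\to G(\Qp)$ is open with finite kernel; any $\Qp$-uniform pro-$p$ subgroup of $G^{\mathrm{sc}}(\Qp)$ disjoint from that kernel maps isomorphically onto an open $\Qp$-uniform subgroup of $G(\Qp)$, and this identifies the two injectivity questions. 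I may therefore assume $G$ is split, semisimple and simply connected over $L$.

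Under these assumptions fix a Chevalley $\OO$-model $\bold G$ of $G_L$, and set $\tilde H_n:=\ker(\bold G(\OO)\to\bold G(\OO/p^{n+1}\OO))$, an $L$-uniform open subgroup of $G(L)$. By Theorem \ref{affverma}, for $n$ sufficiently large the map $L\br{\tilde H_n}\to D_{1/p}(\tilde H_n,L)^{\Lla}_{\chi}$ is injective. The closed immersion $G(\Qp)\hookrightarrow G(L)$ makes $\tilde H_n\cap G(\Qp)$ a compact open subgroup of $G(\Qp)$, and by Lazard I may pick a $\Qp$-uniform pro-$p$ subgroup $H$ inside it. The closed embedding $H\hookrightarrow\tilde H_n$ of compact pro-$p$ groups induces an injection $L\br H\hookrightarrow L\br{\tilde H_n}$ of Iwasawa algebras together with a $Z(\mathfrak g_L)$-equivariant map $D_{1/p}(H,L)\to D_{1/p}(\tilde H_n,L)$ of distribution algebras, both compatible with the Lazard embeddings of $U(\mathfrak g_L)$.

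Passing to $\chi$-quotients and composing with the surjection $D_{1/p}(\tilde H_n,L)\twoheadrightarrow D_{1/p}(\tilde H_n,L)^{\Lla}$ then yields a commutative square
\[
\begin{array}{ccc}
L\br H & \hookrightarrow & L\br{\tilde H_n} \\
\downarrow & & \downarrow \\
D_{1/p}(H,L)_{\chi} & \longrightarrow & D_{1/p}(\tilde H_n,L)^{\Lla}_{\chi}
\end{array}
\]
whose outer composite $L\br H\to D_{1/p}(\tilde H_n,L)^{\Lla}_{\chi}$ is injective by Theorem \ref{affverma}. It follows that the left vertical arrow $L\br H\to D_{1/p}(H,L)_{\chi}$ is injective, which is the claim. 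The main technical point requiring care is the verification that the two Lazard embeddings of $U(\mathfrak g_L)$ — one coming from the $\Qp$-analytic structure on $H$ and one from the $L$-analytic structure on $\tilde H_n$ — are compatible under the restriction-of-distributions map $D_{1/p}(H,L)\to D_{1/p}(\tilde H_n,L)$ and under the surjection $D_{1/p}(\tilde H_n,L)\twoheadrightarrow D_{1/p}(\tilde H_n,L)^{\Lla}$, so that the square is genuinely $Z(\mathfrak g_L)$-equivariant and commutes as asserted.
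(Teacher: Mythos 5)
Your proof is correct and takes essentially the same approach as the paper's: reduce to the split simply connected case, apply Theorem \ref{affverma} to an $L$-uniform subgroup $\tilde H_n\subset G(L)$, and descend to a $\Qp$-uniform subgroup $H\subset\tilde H_n\cap G(\Qp)$ using the commutativity of the square and the injectivity of the closed-subgroup embedding $L\br{H}\hookrightarrow L\br{\tilde H_n}$. The order of the two preliminary reductions and the precise target in the final diagram (you compose directly with the surjection onto $D_{1/p}(\tilde H_n,L)^{\Lla}_{\chi}$, whereas the paper first passes through the $\Qp$-analytic $D_{1/p}(\tilde H_n,L)_{\chi}$) differ slightly, but these variations are immaterial.
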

                            
               \begin{proof} We first reduce to the case when $G$ is simply connected. 
               Note that if $H$ is such a subgroup, then $L\br{H'}\to D_{1/p}(H',L)_{\chi}$ is still injective for any open uniform pro-$p$ subgroup 
               $H'$ of $H$ (indeed, it suffices to test that the composition with the natural map 
               $D_{1/p}(H',L)_{\chi}\to D_{1/p}(H, L)_{\chi}$ is injective, but this is simply the restriction of the natural map 
               $L\br{H}\to D_{1/p}(H, L)_{\chi}$ to $L\br{H'}\subset L\br{H}$). We deduce immediately from this that if 
               the result holds for the simply connected cover of $G$, then it also holds for $G$, by projecting the corresponding $H$ for the simply connected cover down to $G(\Qp)$.
               
              Next, suppose that $L'$ is a finite extension of $L$. Then $Z(\mathfrak{g}_{L'})=L'\otimes_L Z(\mathfrak{g}_L)$, thus $\chi: Z(\mathfrak{g}_L)\to L$ induces an 
           $L'$-algebra map $\chi': Z(\mathfrak{g}_{L'})\to L'$ with $\ker(\chi')=L'\otimes_L \ker(\chi)$, so that $D_{1/p}(H, L')_{\chi'}\simeq L'\otimes_L D_{1/p}(H,L)_{\chi}$ and the map $L'\br{H}\to D_{1/p}(H, L')_{\chi'}$ is the base change from $L$ to $L'$ of the map $L\br{H}\to D_{1/p}(H,L)_{\chi}$. In particular, one of these maps is injective if and only if the other one is so. We are therefore allowed to replace $L$ by a finite extension and so we may assume that $G_L:=G\otimes_{\Qp} L$ is split over $L$ and $L/\Qp$ is Galois.

           Let $\bold{G}$ be the canonical split semisimple and simply connected model of $G_L$ over $\OO$ and, as above, set 
               \[H_n=\ker(\bold{G}(\OO)\to \bold{G}(\OO/p^{n+1}\OO)).\]
               Then $H_n$ is an open $L$-uniform pro-$p$ subgroup of
               $\bold{G}(\OO)$, thus also of $\bold{G}(L)\simeq
               G(L)=G_L(L)$. By the previous theorem the natural
               map
               \[L\br{H_n}\to D_{1/p}(H_n, L)^{\Lla}_{\chi}\] is injective for $n\gg 0$ and since 
  $D_{1/p}(H_n, L)^{\Lla}_{\chi}$ is a quotient of $D_{1/p}(H_n, L)_{\chi}$, it follows that the natural map 
  $L\br{H_n}\to D_{1/p}(H_n, L)_{\chi}$ is injective for $n$ large enough.
  Thus we can find an open uniform pro-$p$ subgroup $H'$ of $G(L)=G_L(L)$ 
   such that the natural map $L\br{H'}\to D_{1/p}(H', L)_{\chi}$ is injective. 
          
               Set $H''=H'\cap G(\Qp)$. It is a compact open subgroup of $G(\Qp)$.
          Choose any open uniform pro-$p$ subgroup $H$ of $H''$. We claim that the natural map $L\br{H}\to D_{1/p}(H, L)_{\chi}$ is injective. It suffices to check that the composite $L\br{H}\to D_{1/p}(H,L)_{\chi}\to D_{1/p}(H',L)_{\chi}$ is injective, but this map is simply the composite $L\br{H}\to L\br{H'}\to D_{1/p}(H',L)_{\chi}$, and both maps are injective.                    
                                                           \end{proof}

               \section{A bound for the dimension of $Z(\mathfrak g)$-finite Banach representations}
   
     In this rather long section we prove that the existence of an infinitesimal character on the locally analytic vectors of an admissible Banach representation of a $p$-adic group has a serious impact on its dimension. 
   
   \subsection{The main result and some consequences}           
    
               Let $G$ be a locally $K$-analytic group with Lie
               algebra $\mathfrak{g}$ and let $\ZgL$ 
               be the center of the universal enveloping algebra
               $U(\mathfrak g_L)$. Here we consider $\mathfrak{g}$ as
               a $\Qp$-Lie algebra. We say 
               that a locally analytic representation 
               $\Pi$ of $G$ over $L$ is {\it $\ZgL$-finite} (resp. {\it quasi-simple}) if the 
                $Z(\mathfrak{g}_L)$-annihilator of $\Pi$ is of finite codimension in $Z(\mathfrak{g}_L)$
                (resp. a maximal ideal of $Z(\mathfrak{g}_L)$). The term {\it quasi-simple} is motivated 
                by the representation theory of real groups. 
               We say that a Banach representation $\Pi$ of $G$ is {\it $\ZgL$-finite} (resp. {\it quasi-simple})
                if 
                                          $\Pi^{\la}$ (the space of locally $\Qp$-analytic vectors in $\Pi$) is 
                                          $\ZgL$-finite (resp. quasi-simple) as locally analytic representation of $G$. The key result of this chapter is the following (see paragraph \ref{Dim} for the notation $d(\Pi)$ and $d^K(\Pi)$.) 
                                  
               \begin{thm}\label{astuce}
                Let $G$ be a connected reductive group over $K$, 
                $H$ a compact open subgroup of $G(K)$ and $\mathcal{N}$
                the nilpotent cone in the Lie algebra of $G_{\overline{K}}$.
                
                a) If $\Pi$ is a $\ZgL$-finite
                admissible locally $K$-analytic representation of 
                $H$, then \[d^K(\Pi)\leq \dim \mathcal{N}.\]
                
                b) If $\Pi$ is a $\ZgL$-finite
                 admissible 
               Banach representation of $H$, then 
                \[d^K(\Pi^{\Kla})<\dim \mathcal{N}.\]
                              \end{thm}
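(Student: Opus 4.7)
The plan is to attack both parts together via a uniform reduction, and then run a homological argument using the central reduction of the distribution algebra.

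\emph{Reductions.} By devissage along the filtration by powers of the $\ZgL$-annihilator of $\Pi^{\la}$ (combined with part (a) of Proposition \ref{toolbox}), I reduce to the case where $\Pi^{\Kla}$ has a genuine infinitesimal character $\chi\colon\ZgL\to L$. Since $d^K$ and $\dim\mathcal N$ are insensitive to finite base change, I enlarge $L$ to split $G_K$. Replacing $G$ by the simply connected cover of its derived subgroup (which shares the same Lie algebra and the same nilpotent cone, the central factor being absorbed into $\chi$) and shrinking $H$, I assume $G$ arises from a connected split semisimple simply connected group scheme $\mathbf{G}$ over $\OO$ and $H=H_n=\ker(\mathbf{G}(\OO)\to\mathbf{G}(\OO/p^{n+1}))$ for $n$ large. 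Then $H$ is $L$-uniform with $L_H=p^n\mathfrak g$, and Lazard's isomorphism (Proposition \ref{Laz}) identifies $D_{1/p}(H,L)^{\Lla}_{\chi}$ with $A_\chi$, where $A:=\widehat{U(p^n\mathfrak g)}[1/p]$.

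\emph{Proof of (a).} By Example \ref{example1} the ring $A$ is Auslander-regular of dimension $\dim_K H$. The Chevalley--Harish-Chandra isomorphism identifies $\ZgL$ with a polynomial ring in $r:=\mathrm{rk}(G)=\dim_K H-\dim\mathcal N$ generators, and by Kostant's theorem $U(\mathfrak g_L)$ is a free $\ZgL$-module, so that the generators of $\ker(\chi)$ form a regular central sequence in $A$. Iterating Rees' Lemma (Lemma \ref{Rees}) $r$ times yields, for any nonzero finitely generated $A_\chi$-module $M$,
\[
j_A(M)=j_{A_\chi}(M)+r\geq r.
\]
Since $D_{r_n}(H,L)^{\Kla}$ is finite free over $A$ (Proposition \ref{distrib} and the discussion following), this gives $j_{D_{r_n}^{\Kla}}((\Pi^{\Kla})^{*}_{r_n})\geq r$ for every $n$, whence $d^K(\Pi^{\Kla})\leq\dim_K H-r=\dim\mathcal N$.

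\emph{Proof of (b).} Suppose for contradiction that $d^K(\Pi^{\Kla})=\dim\mathcal N$. By the Fr\'echet--Stein structure, some $n$ achieves $j_{A_\chi}((\Pi^{\Kla})^{*}_{r_n})=0$, i.e.\ $\Hom_{A_\chi}((\Pi^{\Kla})^{*}_{r_n},A_\chi)\neq 0$. The base change identification $(\Pi^{\Kla})^{*}_{r_n}\cong A\otimes_{L\br{H^{p^n}}}\Pi^{*}$ (combining paragraph \ref{rad_an} with Proposition \ref{distrib}) and tensor--Hom adjunction then produce a nonzero $L\br{H^{p^n}}$-linear map $\varphi\colon\Pi^{*}\to A_\chi$; pick $v\in\Pi^{*}$ with $\varphi(v)\neq 0$. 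By Lemma \ref{lemm:comp_dim} one has $d(\Pi)\leq\dim\mathcal N+(\dim H-\dim_K H)<\dim H$ (using $\mathrm{rk}(G)\geq 1$), so Lemma \ref{usefullater} shows that $\Pi^{*}$ is torsion over $L\br{H^{p^n}}$: thus $\lambda v=0$ for some nonzero $\lambda\in L\br{H^{p^n}}$, and hence $\lambda\,\varphi(v)=0$ in $A_\chi$. But, for $n$ large, Theorem \ref{domaineasy} says $A_\chi$ is an integral domain and Theorem \ref{affverma} says $L\br{H_n}\hookrightarrow A_\chi$ is injective; arranging the reduction so that $H^{p^n}\subset H_n$, the image of $\lambda$ in $A_\chi$ is a nonzero element of a domain and hence not a zero divisor, contradicting $\lambda\,\varphi(v)=0$ with $\varphi(v)\neq 0$.

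\emph{Main obstacle.} The whole delicacy lies in pairing two statements that live on opposite sides of Lazard's isomorphism: the torsion of $\Pi^{*}$ over the \emph{Iwasawa} algebra $L\br{H}$ (Lemma \ref{usefullater}), versus the \emph{distribution-algebraic} central reduction $A_\chi$ being simultaneously a domain and an extension of $L\br{H_n}$ (Theorems \ref{domaineasy} and \ref{affverma}). The base change formula for the radius-of-analyticity subspaces, together with the finite freeness of $D_{r_n}^{\Kla}$ over $A$, furnishes this bridge. Additionally, the integrality of $A_\chi$ and the injectivity of $L\br{H_n}\to A_\chi$ only hold after one has passed to the split simply connected cover and to a sufficiently deep congruence subgroup, so the reductions and the choice of $n$ must be coordinated so that all three ingredients—Rees' bound, integrality, and injectivity—hold simultaneously.
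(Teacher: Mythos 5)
Your overall strategy is the right one — for part a), the Rees-Lemma argument via a regular sequence in $\ZgL$ pulled into the completed enveloping algebra; for part b), Banach $\Rightarrow$ torsion over the Iwasawa algebra, then a contradiction against the domain property (Theorem \ref{domaineasy}) and the injectivity of the Iwasawa algebra (Theorem \ref{affverma}) — and your proof of part a) is essentially the paper's (cf. Proposition \ref{prop:flat} and Corollary \ref{HomExt}), save for indexing. But your proof of part b) has two genuine gaps.

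First, the reduction to the semisimple simply connected case is not the one-liner you write. You cannot simply restrict a Banach representation of $H\subset G(K)$ to $H\cap G'(K)$ and expect it to stay admissible or to preserve the relevant $d^K$; the center of $G$ contributes genuine Lie-algebra directions. The paper's Proposition \ref{painful} does real work here: it first twists $\Pi$ by a character to make the action of the central part smooth, then passes to $H_2^{p^n}$-invariants $\Pi_1=\Pi^{H_2^{p^n}}$ (an admissible Banach representation of $H_1$), and proves $d^K_H(\Pi)\leq d^K_{H_1}(\Pi_1)$ by a careful calculation with $D_{r_n}(H,L)^{\Kla}/(\iota(X_1),\dots,\iota(X_d))\simeq D_{r_n}(H_1,L)^{\Kla}\otimes_L L[\Gamma]$. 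Your phrase ``the central factor being absorbed into $\chi$ and shrinking $H$'' does not supply this.

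Second, and more fundamentally, the assumption ``$H = H_n = \ker(\mathbf{G}(\OO)\to\mathbf{G}(\OO/p^{n+1}))$'' is not a reduction you are entitled to. Even after enlarging $L$, the group $H$ is a compact open subgroup of $G(K)$, whereas $H_n$ lives inside $\mathbf{G}(\OO_L)\subset G(L)$; when $K\ne L$ these do not meet in the right way, and even when $K=L$ there is no reason a given $K$-uniform subgroup of $G(K)$ should equal a congruence subgroup of the chosen $\OO$-model $\mathbf{G}$. This matters because the domain property of Theorem \ref{domaineasy} and the injectivity of Theorem \ref{affverma} are established only for $\widehat{U(p^n\mathfrak g_0)}[1/p]_\chi$ with $\mathfrak g_0=\mathrm{Lie}(\mathbf G)$, not for $\widehat{U(p^n\mathfrak g_{H,\OO})}[1/p]_\chi$; the latter being a domain is Conjecture \ref{condomain}, explicitly open. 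The paper's Proposition \ref{key} bridges the gap by choosing $c$ with $p^c\mathfrak g_0\subset\mathfrak g_{H,\OO}\subset p^{-c}\mathfrak g_0$ and constructing injections $L\br{H^{p^n}}\to B_n\to A_{n-c}$ (where $B_n=D_{1/p}(H^{p^n},L)^{\Kla}$ and $A_m=\widehat{U(p^m\mathfrak g_0)}[1/p]$); one kills the maps $\varphi_{n,h}:\Pi^*\to B_{n,\chi}$ only after projecting further to $A_{n-c,\chi}$, using the distribution relation $\varphi_{n-2c,g}=\sum_{\bar h=g}\pi_{n,n-2c}\circ\varphi_{n,h}$ to propagate back. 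Your phrase ``arranging the reduction so that $H^{p^n}\subset H_n$'' acknowledges the issue but doesn't solve it: mere containment of groups does not give you a domain that both receives $\Pi^*$ and into which the Iwasawa algebra injects, unless you pass through $A_{n-c,\chi}$ as the paper does.
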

                              
                We recall that $\dim \mathcal{N}$ is twice the
                dimension of the flag variety of $G_{\overline{K}}$
                (with respect to a Borel subgroup). Part a) was
                already observed by Schmidt and Strauch \cite[prop
                7.3]{SS}, under some assumptions on $p$ depending on
                $G$, as a consequence of deep results of
                Ardakov--Wadsley \cite{awannals}. We give here a much
                easier proof, which also works uniformly with respect
                to the prime $p$. We note that the bound in part a) is
                optimal. On the other hand, part b) is new, slightly
                surprising and requires quite a bit more work than
                part a). 
                
                \begin{remar}
                Note that the dimension of the nilpotent cone of
                $\Res_{K/\Qp}G$ is $[K:\Qp]\dim\mathcal{N}$. Therefore
                Theorem \ref{astuce} b) implies
                \[ d(\Pi)=d(\Pi^{\la})<[K:\Qp]\dim\mathcal{N}.\]
                This inequality is far from being optimal. Based on
                the archimedean case, it is natural to ask if, for
                $\Pi$ a $\ZgL$-finite admissible 
                Banach representation of $G(K)$, we have
                \begin{gather*}
                  d(\Pi)\leq\frac{1}{2}[K:\Qp]\dim\mathcal{N}, \quad 
                  d^K(\Pi^{\Kla})\leq\frac{1}{2}\dim\mathcal{N}.
                \end{gather*}

                \end{remar}
                
                \begin{remar}
                In order to prove Theorem \ref{astuce} we are free to replace $L$ by a finite extension, in particular we may assume that $L$ is a splitting field for $G$. 
                \end{remar}
                
                \begin{cor}\label{gltwoqp}
                  Let $\Pi$ be an admissible, unitary, quasi-simple Banach representation of 
                 $G:=\GL_2(\Qp)$ over $L$, having a central character. Let $\Theta$ be an $\OO$-lattice in 
                 $\Pi$ stable under $G$. Then $\Theta\otimes_{\OO} k$ has finite length (and so $\Pi$ also has finite length).
                                  
                \end{cor}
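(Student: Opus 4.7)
My plan for Corollary~\ref{gltwoqp}: apply Theorem~\ref{astuce}(b) to bound the Gelfand--Kirillov dimension of $\Pi$, transfer this bound to the mod-$\varpi$ reduction $\bar\pi := \Theta \otimes_{\OO} k$, then invoke the classification of irreducible smooth mod-$p$ representations of $\GL_2(\Qp)$ to conclude finite length of $\bar\pi$, and finally lift this to $\Pi$. For $G = \GL_2$, the flag variety of $G_{\Qpbar}$ is $\mathbb{P}^1$ of dimension $1$, so the nilpotent cone $\mathcal{N} \subset \mathfrak{g}_{\Qpbar}$ has dimension $2$. Fix a uniform pro-$p$ open subgroup $H$ of $G(\Qp)$. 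The restriction $\Pi|_H$ is still admissible, unitary, and quasi-simple, so Theorem~\ref{astuce}(b) with $K = \Qp$ gives $d(\Pi^{\la}) < 2$; Lemma~\ref{usefullater} then yields $d(\Pi) = d(\Pi^{\la}) \le 1$.

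To transfer this bound to $\bar\pi$, I apply Proposition~\ref{Bjork} to $A = \OO\br{H}$, which is an object of $CDF(\OO)$ whose mod-$\varpi$ reduction $k\br{H}$ is Auslander-regular by Example~\ref{example2}, and to the finitely generated $\OO$-flat $A$-module $\Theta^* = \Hom_{\OO,\cont}(\Theta, \OO)$. The proposition gives $j_{L\br{H}}(\Pi^*) = j_{k\br{H}}(\bar\pi^*)$, so $d(\bar\pi) = d(\Pi) \le 1$. The unitary central character of $\Pi$ takes values in $\OO^\times$ (by unitarity) and descends to a central character on $\bar\pi$, so $\bar\pi$ is an admissible smooth $k$-representation of $\GL_2(\Qp)$ with central character and $d(\bar\pi) \le 1$.

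Finite length of such a $\bar\pi$ follows from the structural theory of mod-$p$ representations of $\GL_2(\Qp)$. As recalled in the remark after Lemma~\ref{usefullater}, every absolutely irreducible smooth $k$-representation $\sigma$ of $\GL_2(\Qp)$ satisfies $d(\sigma) \le 1$, with equality unless $\sigma$ is a character, by the Barthel--Livn\'e--Breuil classification together with Pa\v{s}k\={u}nas' decomposition of supersingular representations. Combined with Pa\v{s}k\={u}nas' block decomposition for admissible representations with fixed central character and the admissibility of $\bar\pi$, this forces finite length of $\bar\pi$. I expect this to be the main obstacle: the dimension bound $d(\bar\pi) \le 1$ is the crucial new input from Theorem~\ref{astuce}, but extracting finite length from it genuinely requires the classification.

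Finally, finite length of $\Pi$ follows by a standard argument: an infinite strict descending chain $\Pi \supsetneq \Pi_1 \supsetneq \Pi_2 \supsetneq \cdots$ of closed $G$-stable subspaces would intersect $\Theta$ in an infinite strict chain of $G$-stable sublattices, whose reductions modulo $\varpi$ would yield an infinite strict chain of subrepresentations of $\bar\pi$, contradicting its finite length.
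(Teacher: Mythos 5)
Your reduction to the dimension bound $d(\Pi)\le 1$ and the transfer to $\bar\pi = \Theta\otimes_{\OO}k$ via Proposition~\ref{Bjork} match the paper exactly. But the last step diverges in a way that both misses the paper's point and leaves a real gap.

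The paper's argument for finite length of $\bar\pi$ from $d(\bar\pi)\le 1$ makes \emph{no} use of the Barthel--Livn\'e--Breuil classification, the structure of supersingulars, or the block decomposition. Indeed the remark following Corollary~\ref{funcor} in the introduction emphasizes precisely that one of the virtues of this result is a proof \emph{independent} of the classification of irreducible mod $p$ representations of $\GL_2(\Qp)$. Instead the paper argues: if $\bar\pi$ had infinite length, one gets a strictly increasing chain $0=\pi_0\subsetneq\pi_1\subsetneq\cdots$ of $G$-stable subspaces; Proposition~\ref{toolbox}~c) (the dimension filtration for Auslander--Gorenstein rings) then forces $d(\pi_{n+1}/\pi_n)=0$, hence $\pi_m/\pi_n$ finite dimensional, for $m\ge n\ge N$. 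Since $\SL_2(\Qp)$ is a simple group it acts trivially on each such finite-dimensional subquotient, and combined with the fixed central character the open subgroup $Z_1\SL_2(\Qp)$ acts trivially on $\pi/\pi_N$ for $m\ge n\ge N$; but then $(\pi/\pi_N)^{Z_1\SL_2(\Qp)}$ would be infinite dimensional, contradicting admissibility.

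Your substitute---invoking the classification plus the block decomposition---is both heavier and, as written, incomplete. Admissibility, a central character, and $d(\bar\pi)\le 1$ do not by themselves force finite length: one still needs a mechanism to rule out an infinite ascending chain (e.g.\ with a single infinite-dimensional irreducible recurring with unbounded multiplicity, or with characters recurring). The device that supplies this is precisely Proposition~\ref{toolbox}~c), which you never invoke, and even after it one needs something like the $\SL_2(\Qp)$-simplicity argument to finish. You flag this step as ``the main obstacle'' and assert the classification is ``genuinely required''; this is exactly backwards---the paper's construction is designed to avoid it, and the dimension-filtration plus $\SL_2$-simplicity argument is the more elementary route. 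I would also be careful with your final paragraph: the implication from finite length of $\bar\pi$ to finite length of $\Pi$ is standard, but the ``intersect $\Theta$ with a chain of closed subspaces and reduce mod $\varpi$'' sketch needs a sentence explaining why the reductions stay strictly increasing (one passes to saturated images).
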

                
                \begin{proof} By the previous Theorem $d(\Pi)\leq
                  1$. If $d(\Pi)=0$ then $\Pi$ is finite dimensional
                  over $L$ (Lemma \ref{usefullater}) and everything is clear, so assume that
                  $d(\Pi)=1$ and hence $d(\pi)=1$ where
                  $\pi=\Theta\otimes_{\OO} k$, an admissible smooth
                  representation of $G$ over $k$. Assume that $\pi$
                  has infinite length. Then we can construct a
                  strictly increasing sequence $(\pi_n)_{n\geq0}$
                  subspaces of $\pi$ which are $\GL_2(\Qp)$-stable and
                  such that $\pi_0=0$. It follows from Proposition
                  \ref{toolbox} c) that there exists $N\geq0$ such
                  that $d(\pi_{n+1}/\pi_{n})=0$ for all $n\geq N$
                  so that $\pi_{n+1}/\pi_{n}$ is finite dimensional
                  for all $n\geq N$ and $\pi_m/\pi_n$ is finite
                  dimensional for all $m\geq n\geq N$. As 
                  $\SL_2(\Qp)$ is a simple group acting trivially on irreducible smooth finite dimensional
                  representations of $\GL_2(\Qp)$, it must act trivially on $\pi_m/\pi_n$ for all
                  $m\geq n\geq1$. Let $Z_1$ be the pro-$p$-Sylow
                  subgroup of the center of $\GL_2(\Zp)$. As $\pi$ has
                  a central character, so has each subquotient
                  $\pi_m/\pi_n$ and we deduce that the group
                  $Z_1\SL_2(\Qp)$ acts trivially on $\pi_m/\pi_n$ for
                  $m\geq n\geq N$. As $\pi/\pi_N$ is an admissible smooth
                  representation of $\GL_2(\Qp)$ and $Z_1\SL_2(\Qp)$
                  is an open subgroup of $\GL_2(\Qp)$, the $k$-vector
                  space $(\pi/\pi_N)^{Z_1\SL_2(\Qp)}$ is finite
                  dimensional so that $\pi=\pi_m$ for $m$ big
                  enough. This contradicts our assumption so that
                  $\pi$ has finite length.
                \end{proof}
                
                 Let $G$ be a connected reductive group over $\Qp$, with Lie algebra 
                 $\mathfrak{g}$. Let $\mathfrak{z}$ be the center of
                 $\mathfrak{g}$ and $\mathfrak{g}_{ss}$ its derived
                 subalgebra. We have a decomposition
                 $\mathfrak{g}=\mathfrak{z}\oplus\mathfrak{g}_{ss}$
                 which induces an isomorphism of $L$-algebras
                 $Z(\mathfrak{g}_L)\simeq
                 U(\mathfrak{z}_L)\otimes_LZ(\mathfrak{g}_{ss,L})$. We say that an $L$-algebra homomorphism $\chi: Z(\mathfrak{g}_L)\to L$ is {\it algebraic} if 
                 its restriction to $Z(\mathfrak{g}_{ss,L})$ is the
                 infinitesimal character of an irreducible algebraic
                 representation of the derived subgroup of $G$. If the
                 derived group of $G$ is simply connected, this is
                 equivalent to the statement that this character is the infinitesimal character of a
                 finite dimensional simple
                 $\mathfrak{g}_{ss,L}$-module. If $\chi$ is algebraic,
                 there exists a continuous character $\psi$ of
                 $G$,                  and 
                 an algebraic irreducible representation $V$
                 of $G$ such that $\chi$ is the infinitesimal
                 character of $V\otimes\psi$.
                                  
                \begin{lem}\label{ST_Ugfin} Let $G=\mathbf G(\Qp)$, where $\mathbf G$ 
                 is an algebraic group over $\Qp$, 
                such that $\mathbf G_L $ is split  semi-simple and simply connected.  
                Let $H$ be a compact open subgroup of $G$ and let $\Pi$ be a continuous representation 
                of $H$ on a finite dimensional $L$-vector space. 
                Then the action of $H$ on $\Pi$ is locally analytic. Moreover, we have 
                an isomorphism of $D(H, L)$-modules 
                \[ \Pi^*\cong \bigoplus_{i=1}^m W_i^* \otimes_L V_i^*,\]
                where $V_i$ are irreducible algebraic representations of $\mathbf G_L$, and $W_i$ are 
                smooth irreducible representations of $H$. 
                \end{lem}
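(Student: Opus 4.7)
The plan is first to observe that any continuous finite-dimensional representation of a compact $\Qp$-analytic group is automatically locally $\Qp$-analytic, by Lazard's theorem that continuous homomorphisms between compact $p$-adic Lie groups are analytic (applied to the homomorphism $H\to\GL_L(\Pi)$, whose image is compact). Differentiating, I obtain an action of $\mathfrak{g}_L$ on $\Pi$, which by Weyl's theorem (complete reducibility of finite-dimensional representations of a semisimple Lie algebra over a field of characteristic zero) decomposes into isotypic components indexed by irreducible finite-dimensional $\mathfrak{g}_L$-modules. Since $\mathbf{G}_L$ is semisimple and simply connected, such modules are exactly the irreducible algebraic representations of $\mathbf{G}_L$, giving the candidates $V_i$.

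Next I would identify the multiplicity spaces as $H$-representations. For each irreducible algebraic representation $V$ of $\mathbf{G}_L$, set $W_V:=\Hom_{\mathfrak{g}_L}(V,\Pi)$, with $H$-action $(h\cdot\phi)(v)=h\phi(h^{-1}v)$ using the algebraic action of $H\subset\mathbf{G}_L(L)$ on $V$. A short computation using $\Ad(h)(X)=hXh^{-1}$ shows that $h\cdot\phi$ is still $\mathfrak{g}_L$-equivariant, so $W_V$ is a genuine $H$-subrepresentation, and the evaluation map
\[\bigoplus_V W_V\otimes_L V\longrightarrow \Pi,\qquad \phi\otimes v\longmapsto\phi(v),\]
is both $H$-equivariant and $\mathfrak{g}_L$-equivariant, hence an isomorphism by the previous paragraph. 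Each $W_V$, being a finite-dimensional continuous representation of $H$, is again locally analytic by Lazard, and differentiating the formula above yields $(X\cdot\phi)(v)=X\phi(v)-\phi(Xv)=0$ for every $X\in\mathfrak{g}_L$ and $v\in V$, using the $\mathfrak{g}_L$-equivariance of $\phi$. So $\mathfrak{g}_L$ acts trivially on $W_V$, which forces the $H$-action on $W_V$ to be smooth, i.e. trivial on some open subgroup.

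Finally, the $H$-action on each $W_V$ factors through a finite quotient, and since $L$ has characteristic zero, Maschke's theorem decomposes $W_V$ into smooth irreducible $H$-representations. Re-indexing the resulting summands as pairs $(W_i,V_i)$ gives $\Pi\cong\bigoplus_{i=1}^m W_i\otimes_L V_i$ as locally analytic $H$-representations, and dualising over $L$ (which agrees with the continuous dual in finite dimensions) produces the asserted decomposition of $D(H,L)$-modules. The only step that is not completely formal is the initial appeal to Lazard to promote continuity to local analyticity; the rest is a routine combination of Weyl's theorem, a Lie-algebra computation on the multiplicity spaces, and Maschke's theorem.
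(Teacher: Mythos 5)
Your argument is correct and reproduces, with full details, the approach the paper itself takes by citation: the paper's proof merely invokes \cite[Part II, Ch.\,V.9, Thm.\,2]{serre_lie} for local analyticity and then refers to \cite[Proposition 3.2]{Ugfin} for the decomposition, and the Schneider--Teitelbaum--Prasad argument you are implicitly reconstructing is precisely the Weyl-plus-Maschke strategy you wrote out, with the multiplicity spaces $\Hom_{\mathfrak{g}_L}(V,\Pi)$ shown to be smooth by the Lie-algebra computation $(X\cdot\phi)(v)=X\phi(v)-\phi(Xv)=0$. The only cosmetic remark is that the splitness and simple connectedness of $\mathbf G_L$ are being used (correctly, but tacitly) exactly when you pass from irreducible finite-dimensional $\mathfrak g_L$-modules to irreducible algebraic representations of $\mathbf G_L$ over $L$.
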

                \begin{proof} The action of $H$ on $\Pi$ is locally analytic by 
                \cite[Part II, Ch.\,V.9, Thm.\,2]{serre_lie}. The proof of 
                 \cite[Proposition 3.2]{Ugfin} carries over to our setting. 
                \end{proof} 
                
                \begin{lem}\label{remix} Let $G$ be as in the previous Lemma and let $H$ be an open 
               uniform pro-$p$  subgroup of $G$. Let $M$ be an irreducible $D_{r_n}(H,L)$-module 
               on a finite dimensional $L$-vector space, with $n\geq 1$. Then $M\cong W^*\otimes_L V^*$, where 
               $W$ is an irreducible smooth 
                representation of $H$ and $V$ an irreducible representation of $\mathbf G_L$. Moreover, 
                $H^{p^{n}}$ acts trivially on $W$.

                \end{lem}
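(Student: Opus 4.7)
The plan is to deduce the statement from Lemma \ref{ST_Ugfin} applied to the $H$-representation underlying $M$. Since the natural map $D(H,L)\to D_{r_n}(H,L)$ has dense image, the $D_{r_n}(H,L)$-action on $M$ restricts to a $D(H,L)$-action and, via Dirac measures, to a continuous finite-dimensional representation of $H$ on $\Pi:=M^*$. Lemma \ref{ST_Ugfin} then yields a decomposition of $D(H,L)$-modules
\[ M \;\cong\; \bigoplus_{i=1}^{m} W_i^*\otimes_L V_i^*, \]
with the $W_i$ smooth irreducible representations of $H$ and the $V_i$ irreducible algebraic representations of $\mathbf{G}_L$. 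Because $M$ is finite-dimensional, the $D_{r_n}(H,L)$-action on it is automatically continuous (being a finitely generated module over a Noetherian Banach algebra), and by density of $D(H,L)$ in $D_{r_n}(H,L)$ every $D(H,L)$-submodule is therefore $D_{r_n}(H,L)$-stable. Hence the above is a decomposition of $D_{r_n}(H,L)$-modules, and irreducibility of $M$ forces a single summand $M\cong W^*\otimes_L V^*$.

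It remains to show that $H^{p^n}$ acts trivially on $W$. The plan here is to combine Lazard's isomorphism (Proposition \ref{Laz}) and Proposition \ref{distrib} to produce an embedding
\[ \widehat{U(p^n\mathfrak{g}_{H,\OO})}[1/p] \;\cong\; D_{1/p}(H^{p^n},L) \;\hookrightarrow\; D_{r_n}(H,L), \]
under which each $h\in H^{p^n}$ is identified with the convergent exponential series $\exp(\log h)$, with $\log h\in p^n\mathfrak{g}_{H,\OO}\otimes_{\OO}L$. On the factorization $M\cong W^*\otimes_L V^*$, the Lie algebra $\mathfrak{g}$ acts trivially on $W^*$ (since $W$ is smooth) and by the differentiated algebraic action on $V^*$; exponentiating therefore shows that $\delta_h$ acts on $M$ as $\id_{W^*}\otimes h|_{V^*}$. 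On the other hand, by the tensor product decomposition of $H$-representations $\delta_h$ acts as $h|_{W^*}\otimes h|_{V^*}$, so $h|_{W^*}=\id$.

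The main technical point is the compatibility, on the $D_{r_n}(H,L)$-module $M$, between the action of $h\in H^{p^n}$ via the Dirac measure $\delta_h$ and the action of the exponentiated Lie algebra element $\exp(\log h)$. This is precisely the content of Lazard's isomorphism applied to the $\Qp$-uniform subgroup $H^{p^n}$, and once it is in hand the argument is essentially formal.
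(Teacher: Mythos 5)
Your proof is correct and follows essentially the same strategy as the paper: apply Lemma \ref{ST_Ugfin} to $M^*$, use irreducibility (via the density of $D(H,L)$ in $D_{r_n}(H,L)$) to reduce to a single summand $W^*\otimes_L V^*$, and then compare the two descriptions of the action of $\delta_h$ for $h\in H^{p^n}$ furnished by Lazard's isomorphism. The paper packages the final step by introducing an auxiliary representation $\Pi_1$ with trivial $H$-action on the $W$-factor and identifying $\Hom_{H^{p^n}}(V,\Pi)$ with $\Hom_{U(\mathfrak g_L)}(V,\Pi)$, but this is the same computation you carry out directly by observing that $\delta_h=\exp(\log h)$ acts as $\id_{W^*}\otimes h|_{V^*}$ through the completed enveloping algebra while acting as $h|_{W^*}\otimes h|_{V^*}$ as a Dirac measure.
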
 
                \begin{proof} If we let $\Pi=M^*$ then $\Pi$ is a continuous representation of $H$ 
                on a finite dimensional $L$-vector space. Hence (using the irreducibility of $M$) $\Pi= W\otimes V$ by the previous Lemma, with 
                $W$ an irreducible smooth representation of $H$ and $V$ an irreducible representation of $\mathbf G_L$. We define a new action, denoted by $\Pi_1$,  of $H$ on $W\otimes_L V$ by letting 
                $H$ act trivially on $W$. We note that $\Pi_1$ is isomorphic to a finite direct 
                sum of copies of $V$. Then $\Pi\cong \Pi_1$ as $U(\mathfrak{g}_L)$-modules and hence
                as $\widehat{U(p^n\mathfrak{g}_{H,\OO})}[1/p]$-modules. 
                We have 
                \[\widehat{U(p^n\mathfrak{g}_{H,\OO})}[1/p]\cong
                D_{1/p}(H^{p^n},L)\subset D_{r_n}(H,L)\subset
                D_r(H,L),\]
                and thus $\Pi|_{H^{p^n}}\cong \Pi_1|_{H^{p^n}}$. This implies that 
                $\Hom_{H^{p^n}}(V,\Pi)= \Hom_{U(\mathfrak g_L)}(V, \Pi)$ and so $H^{p^n}$ acts trivially 
                on $W$. 
            \end{proof}

                \begin{cor}\label{cor:fin_length} Let $D$ be a
                  quaternion algebra over $\Qp$,
                let $G=D^*$ and let $G'$ be the derived subgroup of $G$. Let  
                  $\Pi$ be an admissible, quasi-simple Banach representation of 
                 $G$, with a central character. Let $H$ be a compact  open subgroup of $G'$.
                 
                 a) If $\Pi$ has no finite dimensional $H$-stable subquotient, then $\Pi$ has finite length as a topological $H$-representation. This is the case if the infinitesimal character of $\Pi^{\la}$ is not algebraic.
                 
                 b) Assume that $H$ is uniform pro-$p$. For any $n\geq 1$
                 the $D_{r_n}(H, L)$-module 
             \[(\Pi^{\la}_{r_n})^*:=D_{r_n}(H,L)\otimes_{L\br{H}}\Pi^*\] is of finite length. 
                \end{cor}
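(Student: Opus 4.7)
The principal input is Theorem \ref{astuce}(b). After enlarging $L$ so that it splits $D$, the base change $G'_L$ of the derived subgroup $G' = \SL_1(D)$ is isomorphic to $\SL_2$, whose nilpotent cone has dimension $\dim\mathcal{N} = 2$; applied to $\Pi|_H$ together with Lemmas \ref{usefullater} and \ref{lemm:comp_dim}, Theorem \ref{astuce}(b) yields $d(\Pi) = d(\Pi^{\la}) \le 1$. If $d(\Pi) = 0$ then $\Pi$ is finite-dimensional over $L$ by Lemma \ref{usefullater}: this forces $\Pi = 0$ for part (a) (trivially of finite length), and makes $M := (\Pi^{\la}_{r_n})^*$ finite-dimensional over $L$ for part (b), hence of finite length over $R := D_{r_n}(H, L)$. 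We may therefore assume $d(\Pi) = 1$, i.e. $j_{L\br{H}}(\Pi^*) = \dim H - 1$ and $j_R(M) \ge \dim H - 1$.

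For part (a), the ring $L\br{H}$ is Noetherian (Example \ref{example2}), so $\Pi^*$ has ACC on submodules, which by duality gives DCC on closed $H$-subrepresentations of $\Pi$. For the reverse direction, suppose a strict descending chain $\Pi^* = N_0 \supsetneq N_1 \supsetneq \cdots$ of $L\br{H}$-submodules exists; then Proposition \ref{toolbox}(c) forces $j(N_i/N_{i+1}) \ge j(\Pi^*) + 1 = \dim H$ for $i$ sufficiently large, and by Lemma \ref{usefullater} each $N_i/N_{i+1}$ is a nonzero finite-dimensional $L$-vector space, corresponding under duality to a nonzero finite-dimensional $H$-subquotient of $\Pi$. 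This contradicts the hypothesis, so the chain terminates and ACC on closed $H$-subrepresentations holds. For the final assertion, if $V$ were a nonzero finite-dimensional closed $H$-stable subquotient of $\Pi$, then restriction to an open uniform pro-$p$ subgroup $H_0 \subset H$ and Lemma \ref{remix}---valid because $G'_L$ is split semisimple and simply connected---would exhibit $V$ as $W \otimes V_0$ with $V_0$ an irreducible algebraic $\mathbf{G}_L$-representation, making the infinitesimal character of $V$, which coincides with $\chi$, algebraic; this contradicts the hypothesis.

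For part (b), the bound $j_R(M) \ge \dim H - 1$ together with Proposition \ref{toolbox}(b) furnishes a two-step dimension filtration $0 \subset F^{\dim H}(M) \subset M$: the submodule $F^{\dim H}(M)$, having grade $\ge \dim H$, is finite-dimensional over $L$ by Lemma \ref{lafindim} and therefore of finite $R$-length. The main obstacle is the pure-grade-$(\dim H - 1)$ quotient $\bar M := M/F^{\dim H}(M)$, for which I plan to exploit the quasi-simplicity of $\Pi$: the infinitesimal character $\chi : Z(\mathfrak{g}_L) \to L$ makes $\bar M$ a finitely generated module over the central reduction $R_\chi := R \otimes_{Z(\mathfrak{g}_L), \chi} L$. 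Since $G'_L \cong \SL_2$, $Z(\mathfrak{g}_L)$ is polynomial in the Casimir $C$, so passing to $R_\chi$ amounts to quotienting by a single central element $C - \chi(C)$; Theorem \ref{affverma} (applied to the simply connected cover of $G'_L$) can be used to show that this element is regular in $R$, so that Rees' Lemma \ref{Rees} yields $j_{R_\chi}(\bar M) = j_R(\bar M) - 1$. Combining this with the domain property from Theorem \ref{domaineasy} and a Beilinson--Bernstein-type description of $R_\chi$ as global sections of a sheaf of twisted differential operators on the one-dimensional rigid-analytic flag variety $\mathbb{P}^{1, \mathrm{an}}_L$, one should be able to conclude that pure-grade modules over $R_\chi$ have finite length, giving the statement for $\bar M$ and completing the proof.
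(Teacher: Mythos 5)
Your treatment of part (a) is essentially the paper's argument: admissibility gives ACC on $\Pi^*$ via Noetherianity of $L\br{H}$, the grade filtration (Proposition \ref{toolbox}(c)) forces the quotients along a putative infinite descending chain in $\Pi^*$ to become finite dimensional, and Lemma \ref{ST_Ugfin} (or Lemma \ref{remix}) shows that any finite-dimensional continuous subquotient of $\Pi$ carries an algebraic infinitesimal character. The reduction to $d(\Pi)=1$ via Theorem \ref{astuce}(b) and Lemma \ref{usefullater} is also as in the paper.

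Part (b), however, contains a genuine gap. You reduce to proving that the pure-grade-$(\dim H - 1)$ quotient $\bar M$ of $M$ has finite $R$-length (with $R = D_{r_n}(H,L)$), pass to $R_\chi = R/(C-\chi(C))$, and then assert that ``pure-grade modules over $R_\chi$ have finite length.'' This last claim is unjustified and, as stated, false in general: with $\dim H = 3$, $R_\chi$ is Auslander--Gorenstein of dimension $2$ (the regularity of $C-\chi(C)$ follows from Proposition \ref{prop:flat}(b), incidentally, not from Theorem \ref{affverma}), so a module over $R_\chi$ of grade $1$ has Gelfand--Kirillov dimension $1$, not $0$, and is not finite-dimensional, let alone of finite length. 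In your proposed Beilinson--Bernstein picture such modules correspond to sheaves whose characteristic variety in $T^*\mathbb{P}^1$ is $1$-dimensional; deducing finite length from this is precisely the ``holonomic implies finite length'' statement, which in this affinoid setting would require a Bernstein-inequality and multiplicity-bound argument that you neither prove nor cite and that lies outside the results invoked in the paper. The paper's route is entirely different and more elementary: one assumes an infinite strictly decreasing chain $(M_i)$ in $(\Pi^{\la}_{r_n})^*$, notes that for $j\ge i\ge i_0$ each $M_i/M_j$ is a finite direct sum of $W_k^*\otimes_L V^*$ with $W_k$ ranging over the finitely many irreducible $H/H^{p^n}$-representations and $V$ the unique irreducible algebraic representation with infinitesimal character $\chi$ (Lemmas \ref{ST_Ugfin} and \ref{remix}), and then exploits that $M_{i_0}$ is generated by $d$ elements over the Noetherian ring $D_{r_n}(H,L)$ to bound $\dim_L \Hom_{D_{r_n}(H,L)}(M_{i_0}/M_j, N)\le d\dim_L N$ for each simple finite-dimensional $N$, hence cap $\dim_L(M_{i_0}/M_j)$ uniformly in $j$, contradicting strictness of the chain. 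You should replace your final step with a multiplicity-counting argument of this kind.
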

                
                \begin{proof} Since $Z(G)H$ is open in $G$ and the centre $Z(G)$ acts on $\Pi$ 
                by a character by assumption, $\Pi$ is an admissible Banach space representation 
                of $H$. Theorem \ref{astuce} yields $d(\Pi)\leq 1$. If $d(\Pi)=0$ then $\Pi$ is a finite 
                dimensional $L$-vector space by Lemma \ref{usefullater} and the assertion follows.  
                We assume from now on that $d(\Pi)=1$. 
                
                a) If $\Pi$ is not of finite length as a topological $H$-representation, then  dually $\Pi^*$ 
                has an infinite (strictly) decreasing sequence of 
                sub-$L\br{H}$-modules $M_0,M_1,...$. Part c) of Proposition \ref{toolbox} yields 
                $d(M_i/M_{i+1})=0$ for $i$ large enough, and so $(M_i/M_{i+1})^*$ is a finite dimensional 
                continuous
                representation of $H$ over $L$ for $i$ large enough.
                It follows from Lemma \ref{ST_Ugfin} that any finite dimensional subquotient of $\Pi$ will 
                also be a subquotient of $\Pi^{\la}$ and will have 
                an algebraic infinitesimal character. Thus if 
             the infinitesimal character of $\Pi^{\la}$ is not 
                algebraic, then $\Pi$ cannot have a finite dimensional 
                subquotient. 
                
                b) Assume that $(\Pi_{r_n}^\la)^*$ has infinite length as a $D_{r_n}(H,L)$-module. Then there
                exists a strictly decreasing sequence $(M_i)_{i\geq0}$
                of $D_{r_n}(H,L)$-submodules in $(\Pi_{r_n}^\la)^*$. As
                before, part c) of Proposition \ref{toolbox} combined with Lemma \ref{lafindim} shows
                that there exists $i_0\geq 0$ such that $M_i/M_{i+1}$
                is a finite dimensional $L$-vector space for 
                $i\geq i_0$. Let $\chi: Z(\mathfrak g'_L)\rightarrow L$ be the infinitesimal character of $\Pi^{\la}$. 
                If $\chi$ is not algebraic then  $M_i=M_{i+1}$ for
                $i\geq i_0$ as in a) and we obtain a 
                contradiction. Therefore we assume that $\chi$ is
                algebraic and let  $V$ be the unique
                irreducible algebraic representation of $G'$ with infinitesimal character $\chi$. 
               Let $W_1, \ldots, W_s$ be the set of isomorphism classes 
                of irreducible representations
                of the finite group $H/H^{p^n}$. It follows from Lemmas \ref{ST_Ugfin} and \ref{remix} that 
                for all $j\ge i \ge i_0$, 
                \[M_i/M_j\cong \bigoplus_{k=1}^s  (W_k^*\otimes_L V^*)^{m_{jk}},\]
                where $m_{jk}\ge 0$. 
               As $D_{r_n}(H,L)$ is noetherian, the
                $D_{r_n}(H,L)$-module $M_i$ is generated by a finite
                number, let say $d$, of elements. For a simple
                $D_{r_n}(H,L)$-module $N$ which is finite dimensional over
                $L$, we have, for all $j\geq i\geq i_0$,
                \begin{multline*}
                  \dim_L\Hom_{D_{r_n}(H,L)}(M_i/M_j,N)\leq\dim_L\Hom_{D_{r_n}(H,L)}(M_i,N)\\
                  \leq\dim_L\Hom_{D_{r_n}(H,L)}(D_{r_n}(H,L)^d,N)=d\dim_L
                  N.\end{multline*} As all the
                $M_i/M_j$ are isomorphic to direct sums of  simple
                $D_{r_n}(H,L)$-modules finite
                dimensional over $L$ and varying in a finite number of isomorphism
                classes, the dimension of the
                $M_i/M_j$ is bounded, which gives a contradiction.
                \end{proof}
  
                We have a similar version with locally $K$-analytic
                vectors.
                
                \begin{cor}\label{cor:finite_length_K} Let $K$ be a finite extension of $\Qp$ and
                  let $D$ be a quaternion division algebra over
                  $K$. Let $G=\GL_2(K)$ or $G=D^*$. Let $\Pi$ be an
                  admissible, quasi-simple Banach representation of
                  $G$, with a central character. Let $H$ be a compact
                  open subgroup of $G$.

                  a) We have $d^K(\Pi^{\Kla})\leq1$.

                  b) If $H$ is $K$-uniform pro-$p$ then for any $n\geq 1$ the $H$-representation
                  $\Pi^{\Kla}_{r_n}$ is topologically of finite
                  length.
                \end{cor}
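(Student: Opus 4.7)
The plan is to adapt the proof of Corollary \ref{cor:fin_length} to the locally $K$-analytic setting, using the $K$-analytic version of dimension theory developed in Sections \ref{STdim}--\ref{Dim}.

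For part a), note that after a finite extension of $L$ which splits $D$, the group $G$ becomes $\GL_2$ over $\overline{K}$, so the flag variety of $G_{\overline{K}}$ has dimension $1$ and the nilpotent cone $\mathcal{N}$ in $\Lie(G_{\overline{K}})$ satisfies $\dim \mathcal{N} = 2$. Theorem \ref{astuce} b) then yields $d^K(\Pi^{\Kla}) < 2$, hence $d^K(\Pi^{\Kla}) \leq 1$.

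For part b), if $d^K(\Pi^{\Kla}) = 0$, then $(\Pi^{\Kla}_{r_n})^*$ is finite-dimensional over $L$ by Lemma \ref{lafindim}, so $\Pi^{\Kla}_{r_n}$ is topologically of finite length. Assume $d^K(\Pi^{\Kla}) = 1$ and suppose for contradiction that $\Pi^{\Kla}_{r_n}$ is not topologically of finite length. Then there is a strictly decreasing infinite chain $(M_i)_{i \geq 0}$ of closed $D_{r_n}(H, L)^{\Kla}$-submodules of $(\Pi^{\Kla}_{r_n})^*$. Since $D_{r_n}(H, L)^{\Kla}$ is Auslander-regular (paragraph \ref{STdim}), Proposition \ref{toolbox} c) gives $j(M_i/M_{i+1}) \geq j((\Pi^{\Kla}_{r_n})^*) + 1 \geq \dim_K H$ for $i$ sufficiently large, and then Lemma \ref{lafindim} ensures that $M_i/M_{i+1}$ is finite-dimensional over $L$ for all large $i$.

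The next step is to classify the finite-dimensional simple $D_{r_n}(H, L)^{\Kla}$-module subquotients. Dually, each such module corresponds to a finite-dimensional continuous representation of $H$ on an $L$-vector space, which is automatically locally $K$-analytic by \cite[Part II, Ch.\,V.9, Thm.\,2]{serre_lie}. Using the decomposition $\mathfrak{g}_L = \mathfrak{z}_L \oplus \mathfrak{g}_{ss,L}$, where $\mathfrak{g}_{ss,L} \cong \mathfrak{sl}_{2,L}$ in both cases under consideration, together with the locally $K$-analytic analogues of Lemmas \ref{ST_Ugfin} and \ref{remix} (applied to the derived subgroup, which becomes split semisimple simply connected after extending scalars to $L$), each simple subquotient is isomorphic to $W^* \otimes V^*$ where $V$ is an algebraic irreducible representation of the derived group whose infinitesimal character is the restriction of that of $\Pi^{\Kla}$ and whose central character is compatible with that of $\Pi$, and $W$ is an irreducible smooth representation of $H$ on which $H^{p^n}$ acts trivially. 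Since the infinitesimal character of $\Pi^{\Kla}$ is fixed, either no such $V$ exists (and we are done immediately, as all $M_i/M_{i+1}$ are then zero) or there is a unique such $V$; moreover $W$ varies in finitely many isomorphism classes since $H/H^{p^n}$ is finite and $\Pi$ has a central character. Thus the finite-dimensional simple subquotients fall into a finite set of isomorphism classes of bounded $L$-dimension.

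To conclude, since $D_{r_n}(H, L)^{\Kla}$ is noetherian, $M_{i_0}$ is generated by some $d$ elements, and for any simple finite-dimensional $D_{r_n}(H, L)^{\Kla}$-module $N$ one has
\[
\dim_L \Hom_{D_{r_n}(H, L)^{\Kla}}(M_{i_0}/M_j, N) \leq d \dim_L N.
\]
Since each $M_{i_0}/M_j$ is a direct sum of modules taken from the finite list of isomorphism classes identified above, of uniformly bounded $L$-dimension, this bounds $\dim_L(M_{i_0}/M_j)$ independently of $j$, contradicting the strict decrease of the chain. The main obstacle is the careful formulation and proof of the $K$-analytic analogues of Lemmas \ref{ST_Ugfin} and \ref{remix} in the reductive, non-simply-connected setup of $\GL_2(K)$ and $D^{\times}$, keeping track of how the fixed central character of $\Pi$ and the infinitesimal character of $\Pi^{\Kla}$ interact to pin down the algebraic factor $V$.
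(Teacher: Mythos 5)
Your proposal is correct and takes essentially the same approach as the paper, whose own proof simply cites Theorem \ref{astuce} b) for part a) and says part b) "follows exactly as b) in Corollary \ref{cor:fin_length}"; you have spelled out that $K$-analytic adaptation in full, correctly tracing the argument through Proposition \ref{toolbox} c), Lemma \ref{lafindim}, and the $K$-analytic classification of finite-dimensional simple $D_{r_n}(H,L)^{\Kla}$-modules à la Lemmas \ref{ST_Ugfin}--\ref{remix}.
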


                \begin{proof}
                  Part a) is a particular case of b) in Theorem
                  \ref{astuce}. Part b) follows exactly as b) in
              Corollary \ref{cor:fin_length}.
                \end{proof}

           \subsection{Proof of part a) of Theorem \ref{astuce}}\label{proof_a}  
           
           The key input is the following result
           
           \begin{prop}\label{prop:flat}
            Let $G$ be a connected reductive group over $K$, with Lie algebra $\mathfrak{g}$. Suppose that 
            $G_L$ is split, of rank $r$, and let 
            $H$ be an open $K$-uniform pro-$p$ subgroup of $G(K)$. Let 
             $J$ be an ideal of $\ZgL$ of finite codimension. 
             
             a) There are $x_1,x_2,\dots,x_r\in  J$ 
             forming a regular sequence in $Z(\mathfrak{g}_L)$.
             
             b) For any $n\geq 1$, $D_{r_n}(H,L)^{\Kla}$ is flat over $Z(\mathfrak{g}_L)$ and the elements 
             $x_1,x_2,\dots, x_r$ in a) also form a regular sequence in $D_{r_n}(H,L)^{\Kla}$, consisting of central elements.
                        
           \end{prop}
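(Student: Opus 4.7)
I would invoke the Harish--Chandra isomorphism: writing $\mathfrak g = \mathfrak z \oplus \mathfrak g_{ss}$ and using that $G_L$ is split of rank $r$, we see $Z(\mathfrak g_L) \cong U(\mathfrak z_L) \otimes Z(\mathfrak g_{ss,L})$ is a polynomial $L$-algebra in $r$ variables. The ideal $J$ has height $r$ because $\Spec Z(\mathfrak g_L)/J$ is zero-dimensional, and since polynomial rings are Cohen--Macaulay, grade equals height, so $J$ contains a regular sequence of length $r$. Explicitly, one picks $x_{i+1} \in J$ inductively so that its image in $Z(\mathfrak g_L)/(x_1,\dots,x_i)$ avoids the finitely many minimal primes, which is possible by prime avoidance since $J$ modulo this ideal still has positive height.

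\textbf{Part (b), reduction to flatness.} The regular-sequence part of the statement is a formal consequence of flatness together with (a), since regular sequences in the base are preserved under flat base extension (and the $x_i$ are automatically central in $D_{r_n}(H,L)^{\Kla}$ because $Z(\mathfrak g_L)$ is fixed by the adjoint action). So the real task is flatness of $D_{r_n}(H,L)^{\Kla}$ over $Z(\mathfrak g_L)$. By Proposition \ref{distrib}, $D_{r_n}(H,L)^{\Kla}$ is a finite free module over the completed enveloping algebra $\widehat{U(p^n\mathfrak g_{H,\OO})}[1/p]$, so flatness reduces to flatness of this ring over $Z(\mathfrak g_L)$. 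Using the decomposition $Z(\mathfrak g_L) = U(\mathfrak z_L) \otimes Z(\mathfrak g_{ss,L})$, analyzing the two tensor factors separately, and descending to the simply connected cover as in the proof of Proposition \ref{inj}, I would reduce to the case where $\mathbf G$ is split semisimple and simply connected.

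\textbf{Flatness in the split simply connected case, and the main obstacle.} Since $Z(\mathfrak g_L)$ is a polynomial ring, flatness can be checked at each maximal ideal by the local flatness criterion: for $\chi: Z(\mathfrak g_L) \to L$ (after a finite extension of $L$ to split $\mathfrak m$), a minimal generating set of $\ker(\chi)$ should form a regular sequence on $\widehat{U(p^n\mathfrak g)}[1/p]$ for $n$ large. The final quotient $\widehat{U(p^n\mathfrak g)}[1/p]_\chi$ is an integral domain by Theorem \ref{domaineasy}, giving non-zero-divisibility at the final step. The central obstacle is to control the intermediate quotients; my plan is to exploit the Auslander-regular structure of $\widehat{U(p^n\mathfrak g)}[1/p]$ (Proposition \ref{Bjork}, Example \ref{example1}) together with the dimension-filtration tools (Proposition \ref{toolbox}) to compute the grade of each intermediate quotient and deduce the non-zero-divisor property step by step. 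An alternative route is a PBW-filtered argument: Kostant's theorem gives freeness of $U(\mathfrak g_L)$ over $Z(\mathfrak g_L)$, with associated graded reducing to freeness of $S(\mathfrak g_L)$ over $S(\mathfrak g_L)^{\mathbf G}$ by Chevalley--Kostant, from which flatness of the completed algebra can be lifted via Bjork's theorem \ref{beurk}.
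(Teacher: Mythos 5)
Your part (a) is correct, and morally the same as the paper's, though more abstract: the paper writes $Z(\mathfrak g_L)\cong L[t_1,\dots,t_r]$ (Harish--Chandra plus Chevalley--Shephard--Todd over $L$, since $\mathrm{char}\, L=0$ is prime to $|W|$), and then simply takes $x_i=p_i(t_i)$ where $p_i\in L[T]$ is any non-zero polynomial with $p_i(t_i)\in J$ — which exists because $Z(\mathfrak g_L)/J$ is finite over $L$. Checking that these form a regular sequence is then immediate from the tensor-product decomposition $Z(\mathfrak g_L)/(x_1,\dots,x_{i-1})\cong L[t_1,\dots,t_{i-1}]/(x_1,\dots,x_{i-1})\otimes_L L[t_i,\dots,t_r]$. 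Your Cohen--Macaulay/prime-avoidance argument is also valid but heavier machinery than necessary.

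For part (b), your initial reductions are right and match the paper (centrality via $\mathrm{Ad}$-invariance and density of Dirac distributions; flat base change preserves regular sequences; reduce to flatness of $\widehat{U(p^n\mathfrak g_{H,\OO})}[1/p]$ over $Z(\mathfrak g_L)$ via the finite-free decomposition from Proposition \ref{distrib}). But your \emph{main} plan after that has a genuine gap: it rests on Theorem \ref{domaineasy}, which is only established for \emph{sufficiently large} $n$, while the proposition asserts flatness for \emph{all} $n\geq 1$. You would also still have to handle the intermediate quotients, which is exactly the part you flag as the obstacle — so as written the argument is incomplete even for large $n$. Your further reduction to the simply connected case is unnecessary (the centre $Z(\mathfrak g_L)$ is insensitive to isogenies and Kostant's theorem requires no such hypothesis), and the "local flatness criterion" step would require additional justification since $\widehat{U(p^n\mathfrak g)}[1/p]$ is not a finite $Z(\mathfrak g_L)$-module.

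What you call the "alternative route" is in fact the paper's route, and it is both simpler and complete, working uniformly in $n$. Factor the map $Z(\mathfrak g_L)\to A_n:=\widehat{U(p^n\mathfrak h)}[1/p]$ (with $\mathfrak h=\OO_L\otimes_{\OO_K}q^{-1}L_H$) through $U(\mathfrak g_L)=U(p^n\mathfrak h)[1/p]$. Kostant's theorem gives that $U(\mathfrak g_L)$ is free over $Z(\mathfrak g_L)$. Then $A_n$ is flat over $U(\mathfrak g_L)$ simply because $U(p^n\mathfrak h)$ is left and right noetherian and $p$-adic completion of a noetherian ring is flat over it; inverting $p$ preserves flatness. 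No appeal to Theorem \ref{beurk} (which governs Auslander-regularity and grade, not flatness) is needed, nor to Theorem \ref{domaineasy} at all.
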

            
            \begin{proof} a)
            We claim that $Z(\mathfrak{g}_L)\simeq L[t_1,...,t_r]$ as $L$-algebras. Let $W$ be the Weyl group of $G_L$. The Harish-Chandra isomorphism identifies $Z(\mathfrak{g}_L)$ with $S(\mathfrak{t}_L)^W$, where 
            $\mathfrak{t}_L$ is the Lie algebra of a maximal split torus in $G_L$. It is a standard fact that 
            this is a polynomial algebra over the algebraic closure of $L$, 
   and we claim it is already the case over $L$. Indeed, the key input in the proof is the Chevalley--Shephard--Todd Theorem, which holds whenever the characteristic of the base field is prime to $|W|$, see \cite[Theorem 7.2.1]{benson}. Thus 
            $Z(\mathfrak{g}_L)$ is a polynomial algebra over $L$, of rank necessarily equal to 
            $r=\dim \mathfrak{t}_L$ since $S(\mathfrak{t}_L)$ is integral over $S(\mathfrak{t}_L)^W$.
           
                      Since $\ZgL/J$ is a finite dimensional $L$-vector space by assumption
             we may find non-zero polynomials $p_1(T), \ldots, p_r(T)$ in $L[T]$, such that 
           $p_1(t_1), \ldots, p_r(t_r)\in J$.   Let $x_i=p_i(t_i)$ for $1\le i \le r$. Since  
                      \[Z(\mathfrak{g}_L)/(x_1, \ldots, x_{i-1})\cong L[t_1,\ldots, t_{i-1}]/(x_1, \ldots, x_{i-1})
           \otimes_L L[t_i, \dots, t_r],\]
 and  $x_i$ is regular
           in the polynomial ring $L[t_i, \ldots, t_r]$, the sequence $x_1, \ldots, x_r$ is regular in 
           $Z(\mathfrak{g}_L)$.
                      
            b) The $x_i$ are central in $D_{r_n}(H,L)^{\Kla}$ since they are invariant under the adjoint action of $G(K)$ and so they commute with all Dirac distributions, which span a dense subspace of $D_{r_n}(H,L)^{\Kla}$. It suffices to prove that $D_{r_n}(H,L)^{\Kla}$ is flat over $Z(\mathfrak{g}_L)$, as this automatically implies that they still form a regular sequence in $D_{r_n}(H,L)^{\Kla}$. 
               We have a decomposition of left $U(\mathfrak{g}_L)$-modules
                 \[D_{r_n}(H,L)^{\Kla}=\bigoplus_{h\in H/H^{p^n}} A_n\delta_h,\]
                 where $A_n=\widehat{U(p^n \mathfrak{h})}[1/p]$ and 
                 $\mathfrak{h}=\OO_L\otimes_{\OO_K} q^{-1} L_H$ is an $\OO_L$-Lie lattice in 
                 $\mathfrak{g}_L$ (see Proposition \ref{distrib} and the discussion following it). By Kostant's Theorem $U(\mathfrak{h}[1/p])=U(\mathfrak{g}_L)$ is flat (even free) over its centre $Z(\mathfrak{g}_L)$, so it suffices to prove that $A_n$ is flat over $U(\mathfrak{h}[1/p])=U(p^n\mathfrak{h})[1/p]$.
                  This follows easily from the fact that 
                 $U(p^n\mathfrak{h})$ is noetherian (cf. \cite[Section 3.2.3]{berthelot}).       
                      \end{proof}
             \begin{remar} We note that if $J$ is the kernel of a homomorphism 
             $\chi: \ZgL\rightarrow L$ of $L$-algebras then the proof of Proposition \ref{prop:flat} shows that 
             $J$ is generated by 
             a regular sequence $x_1, \ldots, x_r$ in $\ZgL$. 
             \end{remar}

             \begin{cor}\label{HomExt} Let $\Pi$ be an admissible locally $K$-analytic representation of 
             $H$. Assume that $\Pi$ is annihilated by a regular sequence $\underline{x}=(x_1, \ldots, x_r)$
          in $\ZgL$ (where $r$ is the rank of the split group $G_L$). Then there is a canonical isomorphism
                \[{\rm Ext}^{\bullet+r}_{D(H,L)^{\Kla}}(\Pi^*, D(H,L)^{\Kla})\simeq {\rm Ext}^{\bullet}_{D(H,L)^{\Kla}/(\underline{x})} (\Pi^*, D(H,L)^{\Kla}/(\underline{x})).\] 
            In particular 
\[j_{D(H,L)^{\Kla}}(\Pi^*)=j_{D(H,L)^{\Kla}/(\underline{x})}(\Pi^*)+r\geq r\]
and so $d^K(\Pi)\leq \dim_K \mathfrak{g}-r=\dim_K \mathcal{N}$, where $\mathcal{N}$ is the nilpotent cone in $\mathfrak{g}_{\overline{K}}$.
             \end{cor}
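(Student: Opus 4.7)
The plan is to combine three ingredients: Proposition \ref{prop:flat}~b), which provides the regular central sequence $\underline{x}$ inside each finite-level algebra $A_n := D_{r_n}(H,L)^{\Kla}$; an $r$-fold iteration of Rees' Lemma (Lemma \ref{Rees}); and the Fr\'echet--Stein compatibility of Ext with the level-wise structure recalled in section \ref{STdim}.

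Concretely, set $A := D(H,L)^{\Kla}$, $M := \Pi^*$ and $M_n := A_n \otimes_A M$. Since $\Pi$ is admissible, each $M_n$ is finitely generated over $A_n$; and since $\underline{x}$ annihilates $\Pi$ it annihilates every $M_n$, so each $M_n$ naturally lives in $\Mod^{fg}(A_n/(\underline{x}))$. I would then apply Rees' Lemma successively: first to $x_1 \in A_n$, then to $x_2 \in A_n/(x_1)$, \ldots, and finally to $x_r \in A_n/(x_1,\ldots,x_{r-1})$. At each step the next element is central and a non-zero-divisor, by regularity of the sequence (Proposition \ref{prop:flat}~b)), so the iteration is legitimate and yields canonical, functorial isomorphisms
\[
\Ext^{q+r}_{A_n}(M_n, A_n) \simeq \Ext^{q}_{A_n/(\underline{x})}(M_n, A_n/(\underline{x}))
\]
for every $q \geq 0$.

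To globalize, I would pass to the inverse limit over $n$: both sides are compatible with the transition maps $A_{n+1} \to A_n$, and by section \ref{STdim} the Ext on coadmissible $A$-modules satisfies $E^l_A(M) \otimes_A A_n \simeq \Ext^l_{A_n}(M_n, A_n)$. Taking $\varprojlim_n$ then produces the asserted canonical isomorphism between $\Ext^{\bullet+r}_A(\Pi^*, A)$ and $\Ext^{\bullet}_{A/(\underline{x})}(\Pi^*, A/(\underline{x}))$.

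The remaining assertions are then formal. The degree-shift in the Ext isomorphism gives $j_A(\Pi^*) = j_{A/(\underline{x})}(\Pi^*) + r \geq r$, whence
\[
d^K(\Pi) \;=\; \dim_K H - j_A(\Pi^*) \;\leq\; \dim_K \mathfrak{g} - r \;=\; \dim_K \mathcal{N},
\]
the last equality being the standard $\dim \mathcal{N} = \dim G - \mathrm{rank}(G)$. There is no real obstacle; the only point requiring attention is the compatibility of the level-wise Rees isomorphisms with the Fr\'echet--Stein inverse limit, but this is built into the Schneider--Teitelbaum framework and the construction of the $A_n$ as quotients of completions of a common distribution algebra.
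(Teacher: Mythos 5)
Your proof is correct and takes essentially the same route as the paper: you apply Rees' Lemma $r$ times at each finite level $D_{r_n}(H,L)^{\Kla}$ using the regular central sequence from Proposition \ref{prop:flat}~b), then pass to the inverse limit via the Schneider--Teitelbaum compatibility $E^l(M)\otimes_A A_n \simeq \Ext^l_{A_n}(M_n, A_n)$. The only point you leave implicit (and the paper makes explicit, citing \cite[Proposition~3.7]{adm}) is that the quotient $D(H,L)^{\Kla}/(\underline{x})$ is itself a Fr\'echet--Stein algebra with presentation $\varprojlim_n D_{r_n}(H,L)^{\Kla}/(\underline{x})$, so that the same Schneider--Teitelbaum identification applies on the target side of the isomorphism before taking $\varprojlim_n$; this is a small omission and not a genuine gap.
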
   
             
             \begin{proof} The second part is a consequence of the first. The rings          
             $A=B/(\underline{x})$ and 
             $B=D(H,L)^{\Kla}$ are 
       "two-sided" Fr\'echet-Stein algebras with presentations $A=\varprojlim_{n} A_n$ and $B=\varprojlim_n B_n$, where 
               $A_n=B_n/(\underline{x})$ and $B_n=D_{r_n}(H,L)^{\Kla}$ (cf. \cite[Proposition 3.7]{adm}). By 
               the results recalled in paragraph \ref{STdim} we obtain 
             \[{\rm Ext}^{\bullet}_{A} (\Pi^*, A)\simeq
             \varprojlim_{n} {\rm Ext}^{\bullet}_{A} (\Pi^*, A)
             \otimes_A A_n\simeq\varprojlim_n {\rm Ext}^{\bullet}_{A_n}(A_n\otimes_A \Pi^*, A_n).\]
             We have a similar isomorphism for ${\rm Ext}^{\bullet+r}_{B}(\Pi^*, B)$, thus it suffices to construct canonical (in particular compatible with $n$) isomorphisms
                 \[{\rm Ext}^{i}_{A_n}(A_n\otimes_A \Pi^*, A_n)\simeq {\rm Ext}^{i+r}_{B_n}(B_n\otimes_{B} \Pi^*, B_n).\] Letting 
                 $M_n=A_n\otimes_A \Pi^*$, we have a canonical isomorphism 
                 $M_n\simeq B_n\otimes_{B} \Pi^*$. The desired isomorphisms are then consequences of 
                 the result established in the previous Proposition and of the Rees Lemma \ref{Rees}.              \end{proof}
             
             \begin{remar}\label{optimal}
             Combining part b) of Proposition \ref{prop:flat}, Corollary \ref{HomExt} and Rees' Lemma \ref{Rees}, we deduce that for any character 
             $\chi: Z(\mathfrak{g}_L)\to L$ 
             \[\Pi:=\{f\in \mathcal{C}^{\Kla}(H, L)\, |\, X.f=\chi(X)f, \, \forall \, X\in Z(\mathfrak{g}_L)\}\]
             is an admissible locally $K$-analytic representation of $H$ such that $d^K(\Pi)=\dim_K \mathcal{N}$, i.e. the inequality 
             in Corollary \ref{HomExt} is optimal.
            
             \end{remar}
             
             \begin{remar}\label{triv}
             Corollary \ref{HomExt} implies that $d^K(\Pi)<\dim_K H$ whenever $\Pi$ is $Z(\mathfrak{g}_L)$-finite.
             \end{remar}
             
             \subsection{Proof of part b) of Theorem \ref{astuce}: reduction to the semisimple case}
             
             Let 
               $G'$ be the derived group of $G$, a connected semisimple group defined over 
               $K$, and let $Z$ be the centre of $G$. Using the central isogeny 
               $G'\times Z\to G$ and general results on the finiteness of Galois cohomology we deduce that $G'(K)Z(K)$ has finite index in $G(K)$. Moreover 
               $G'(K)\cap Z(K)$ is finite. We deduce that if 
               $H_1,H_2$ are compact open and $K$-uniform pro-$p$ subgroups of 
               $G'(K)$ and $Z(K)$ respectively, then $H_1H_2$ is a compact open subgroup of 
               $G(K)$ and it is isomorphic to $H_1\times H_2$ since elements of 
               $H_2$ commute with $G(K)$ and since $H_1\cap H_2=\{1\}$ (since the intersection is finite and contained in $H_1$, which is torsion-free).
               The next result reduces then the proof of part b) of Theorem \ref{astuce} to the 
               case when $G$ is semisimple:
               
               \begin{prop}\label{painful}
                 Let $H_1,H_2$ be $K$-uniform pro-$p$ groups, with $H_2$ commutative. Let
                 $\Pi$ be an admissible $L$-Banach representation of 
                 $H:=H_1\times H_2$ such that ${\rm Lie}(H_2)$ and $Z({\rm Lie}(H_1)_L)$ act by scalars on 
                 $\Pi^{\Kla}$. There is an open $K$-uniform subgroup $H_1'$ of $H_1$ and an admissible quasi-simple $L$-Banach representation 
                 $\Pi_1$ of
                 $H_1'$ such that $d_H^K(\Pi)\leq d_{H_1'}^K(\Pi_1)$.
               \end{prop}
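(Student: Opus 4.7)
\emph{Plan.} The plan is first to integrate the Lie algebra character $\psi\colon \Lie(H_2)_L \to L$ governing the action of $\Lie(H_2)$ on $\Pi^{\Kla}$ into a genuine continuous group character $\eta$ of an open subgroup $H_2'$ of $H_2$; use this to regard $\Pi$ as an admissible quasi-simple representation of $H_1$; and finally compare the two Gelfand--Kirillov dimensions through a regular sequence inside the distribution algebra.

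Since $H_2$ is commutative and $K$-uniform, Lazard's correspondence identifies $H_2$ with its $\OO_K$-Lie algebra via $\log$, so that after replacing $H_2$ by $H_2':=H_2^{p^n}$ for $n$ sufficiently large (to ensure that $\psi(\log H_2')$ lies inside the region of convergence of $\exp$), the assignment $\eta(h):=\exp(\psi(\log h))$ is well defined and defines a continuous character $H_2'\to L^\times$. Expressing the action of $h\in H_2'$ on $v\in \Pi^{\Kla}$ via the integrated Lie algebra action yields $h\cdot v=\eta(h) v$, and by density of $\Pi^{\Kla}$ in $\Pi$ and continuity of both sides in $v$, the identity extends to all of $\Pi$.

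Setting $H_1':=H_1$, $\Pi_1:=\Pi$, and $H':=H_1'\times H_2'$, the dual $\Pi^*$ is finitely generated over $L\br{H'}\cong L\br{H_1'}\,\widehat{\otimes}_L\,L\br{H_2'}$; since $L\br{H_2'}$ acts on $\Pi^*$ through the algebra map $L\br{H_2'}\to L$ induced by $\eta$, the module $\Pi^*$ is already finitely generated over $L\br{H_1'}$. Thus $\Pi_1$ is admissible as a representation of $H_1'$, and its quasi-simplicity is immediate from the hypothesis that $Z(\Lie(H_1')_L)$ acts by scalars on $\Pi^{\Kla}=\Pi_1^{\Kla}$. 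The invariance $d^K_{H}(\Pi)=d^K_{H'}(\Pi)$ reduces the dimension comparison to $H'$. The character $\eta$ extends continuously to $D(H_2',L)^{\Kla}$, and since $H_2'$ is commutative and $K$-uniform the algebra $D(H_2',L)^{\Kla}$ is commutative and regular with $\ker(\eta)$ generated by a regular sequence $y_1,\dots,y_m$ of length $m=\dim_K H_2$. Viewed inside $D(H',L)^{\Kla}$, these $y_i$ remain central and (by the argument of Proposition \ref{prop:flat} applied levelwise to the Fr\'echet-Stein pieces $D_{r_n}(H',L)^{\Kla}$) still form a regular sequence, with quotient $D(H',L)^{\Kla}/(y_1,\dots,y_m)\cong D(H_1',L)^{\Kla}$. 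Since $\Pi^*$ is annihilated by each $y_i$, iterated application of Rees' Lemma \ref{Rees} on each piece of the Fr\'echet-Stein presentation (exactly as in the proof of Corollary \ref{HomExt}) gives
\[ j_{D(H',L)^{\Kla}}(\Pi^*)=j_{D(H_1',L)^{\Kla}}(\Pi_1^*)+m,\]
from which $d^K_{H'}(\Pi)=d^K_{H_1'}(\Pi_1)$ follows.

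\emph{Main obstacle.} The main technical subtlety lies in the integration step: one has to verify carefully that the hypothesis (a scalar action of $\Lie(H_2)_L$ on $\Pi^{\Kla}$) implies that a suitable open subgroup $H_2'$ acts on all of $\Pi$ through a continuous character, which involves controlling convergence of $\exp\circ\psi\circ\log$ on $H_2'$ and extending the identity from the dense locally analytic vectors to the full Banach space using continuity. Once this is in hand, the dimension comparison is a direct adaptation of the regular-sequence formalism already developed in Proposition \ref{prop:flat} and Corollary \ref{HomExt}, and no further difficulty is expected.
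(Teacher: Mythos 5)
Your proof has a genuine gap in the integration step. You claim that for $v\in\Pi^{\Kla}$, the Lie algebra action $\psi$ integrates to give $h\cdot v=\eta(h)\,v$ for all $h$ in a fixed open subgroup $H_2'\subset H_2$. This is not true. The hypothesis that $\Lie(H_2)$ acts by the scalar character $\psi$ on $\Pi^{\Kla}$ only gives the identity $h\cdot v=\eta(h)\,v$ for $h$ ranging over an open subgroup $H_2''(v)\subset H_2'$ \emph{that depends on $v$}: namely the radius of analyticity of the orbit map $h\mapsto h\cdot v$ is not uniform across $\Pi^{\Kla}$, and only within that radius does the formula $h\cdot v=\exp(\psi(\log h))\,v$ hold. (A concrete counterexample when $\psi=0$: take $H_2=\Zp$ acting on $\Pi=\mathcal C(\Zp,L)$ by translation; then $\Lie(H_2)$ kills every locally constant function, so these live in $\Pi^{\la}$ and are annihilated by $\psi=0$, yet they are far from being fixed by all of $\Zp$, and no fixed open $H_2'$ fixes them all.) Because the identity does not hold on the whole dense subspace $\Pi^{\Kla}$, your density-and-continuity argument cannot extend it to $\Pi$. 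Consequently the claim that $\Pi^*$ becomes finitely generated over $L\br{H_1'}$ alone is unjustified, and with it the entire comparison of grades.

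The paper deals with exactly this obstruction differently. After the twist by $\delta^{-1}$ (which makes $\Lie(H_2)$ act by $0$, i.e.\ makes $\Pi^{\Kla}$ merely \emph{smooth} as an $H_2$-representation, not $H_2$-trivial), it does \emph{not} take $\Pi_1=\Pi$. Instead it fixes a specific level $n$ calibrated to the grade (so that $j_{D_{r_n}(H)}(M_n)$ already computes $j_{D(H)}((\Pi^{\Kla})^*)$), and sets $\Pi_1:=\Pi^{H_2^{p^n}}$, the subspace of $H_2^{p^n}$-invariants. This is a proper closed $H$-subrepresentation of $\Pi$, admissible as an $H_1$-representation since $H$ acts through $H_1\times\Gamma$ with $\Gamma=H_2/H_2^{p^n}$ finite. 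The grade comparison is then carried out at the finite level $r_n$: using flatness of $D_{r_n}(H)$ over $U(\Lie(H)_L)$, Rees' Lemma, and a Hochschild--Serre/vanishing argument for the finite group $\Gamma$, one gets $j_{D_{r_n}(H)}(M_n)=j_{D_{r_n}(H_1)}(M_n)+d$, and identifies $M_n$ with $D_{r_n}(H_1)\otimes_{D(H_1)}(\Pi_1^{\Kla})^*$. This yields the inequality $d_H^K(\Pi)\le d_{H_1}^K(\Pi_1)$ (not your equality, which cannot hold since $\Pi_1\subsetneq\Pi$ in general). The key idea you are missing is precisely the passage to $H_2^{p^n}$-invariants at a level $n$ chosen to saturate the grade.
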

                       
              \begin{proof} We write for simplicity $D(?)=D(?,L)^{\Kla}$ in the sequel, and similarly for the completions of this distribution algebra. We are allowed to replace $H_1$ and $H_2$ by open $K$-uniform pro-$p$ subgroups. 
              
                Let $(X_1,\dots,X_d)$ be an $\OO_K$-basis of $L_{H_2}$.
              By assumption $X_i$ acts by a scalar $c_i$ on $\Pi^{\Kla}$. 
               Choosing $N$ large enough such that $p^Nc_i\in p^2\OO$ for all $1\leq i\leq d$ and 
              replacing $H_2$ by $H_2^{p^N}$, we may assume that 
              $c_i\in p^2\OO$. Let $(\alpha_1,\dots,\alpha_m)$ be a $\Zp$-basis of $\OO_K$ and let, for all $1\leq i\leq d$, $1\leq j\leq m$, $g_{i,j}\in H$ be such that $X_{g_{i,j}}=\alpha_jX_i$.
              We obtain a character 
              $\delta: H=H_1\times H_2\to \OO^{\times}$ by setting $\delta(h_1, g^x)=e^{\sum_{i,j} x_{i,j}\alpha_jc_i}$ for $g^x=\prod_{i,j} g_{i,j}^{x_{i,j}}\in H_2$, $x_{i,j}\in\Zp$, and 
              $h_1\in H_1$. 
              Consider the $H$-representation $\Pi_0=\Pi\otimes \delta^{-1}$. By construction
              $X_i$ acts by $0$ on $\Pi_0^{\Kla}$ and since $\delta$ is trivial on $H_1$, 
              $Z({\rm Lie}(H_1)_L)$ still acts by scalars on $\Pi_0^{\Kla}$. Moreover 
              $d_H(\Pi)=d_H(\Pi_0)$ and $d_H^K(\Pi^{\Kla})=d_H^K(\Pi_0^{\Kla})$, so replacing $\Pi$ by $\Pi_0$ we may assume that 
              $\Pi^{\Kla}$ is smooth as representation of $H_2$.

              For $n\geq 1$ let 
                \[M_n=D_{r_n}(H)\otimes_{L\br{H}} \Pi^*=D_{r_n}(H)\otimes_{D(H)} (\Pi^{\Kla})^*.\]
By paragraphs \ref{STdim} and \ref{Dim} we know that the sequence $(j_{D_{r_n}(H)}(M_n))_{n\geq 1}$ is nonincreasing
and eventually equal to $j_{D(H)}((\Pi^{\Kla})^*)$. Fix once and for all 
$n$ such that 
\[j_{D_{r_n}(H)}(M_n)=j_{D(H)}((\Pi^{\Kla})^*)\]
and set 
\[\Pi_1=\{v\in \Pi|\, g^{p^n}.v=v,\,\, \forall g\in H_2\}=\Pi^{H_2^{p^n}}.\]
  We will show that $\Pi_1$ satisfies all required properties. It is clear that 
  $\Pi_1$ is a closed subspace of $\Pi$ stable under $H$, thus it is an admissible representation of 
  $H$. Let $\Gamma=H_2/H_2^{p^n}$, a finite group. The action of
  $H$ on $\Pi_1$ factors by $H_1\times \Gamma$, so 
  $\Pi_1$ is already admissible as $H_1$-representation. 
  It is clear that $\Pi_1$ is quasi-simple, so it suffices to show that 
  $d_{H}^K(\Pi^{\Kla})\leq d_{H_1}^K(\Pi_1^{\Kla})$, or equivalently 
  \[j_{D(H_1)}((\Pi_1^{\Kla})^*)+d\leq j_{D(H)}((\Pi^{\Kla})^*)=j_{D_{r_n}(H)}(M_n).\]
  
  Note that 
    $D_{r_n}(H_1\times\Gamma)\simeq D_{r_n}(H_1)\otimes_L L[\Gamma]\simeq D_{r_n}(H)/(\iota(X_1),\dots,\iota(X_d))$ where $\iota$ is the composite of the map (\ref{eq:iota}) with the embedding of Proposition \ref{distrib}. Moreover 
     it follows from the flatness of $D_{r_n}(H)$ over $U(\Lie(H)_L)$ (see the proof of Proposition~\ref{prop:flat}) that $(\iota(X_1),\dots,\iota(X_d))$
      is a regular sequence of central elements in $D_{r_n}(H)$. By Rees' Lemma and a Hochschild-Serre type argument combined with the vanishing of cohomology in characteristic zero for $\Gamma$ we obtain 
         \[ j_{D_{r_n}(H)}(M_n)=j_{D_{r_n}(H_1)\otimes_L L[\Gamma]}(M_n)+d=j_{D_{r_n}(H_1)}(M_n)+d.\]
It suffices therefore to prove that $j_{D_{r_n}(H_1)}(M_n)\geq j_{D(H_1)}((\Pi_1^{\Kla})^*)$. 
    The isomorphisms $D_{r_n}(H)/(\iota(X_1),\dots,\iota(X_d))\simeq D_{r_n}(H_1)\otimes_L L[\Gamma]$ and the fact that $(\Pi^{\Kla})^*$ is killed by $\iota(X_i)$ induce isomorphisms of 
    $D_{r_n}(H_1)$-modules
    \begin{align*}M_n&\simeq (D_{r_n}(H)/(\iota(X_1),\dots,\iota(X_d)))\otimes_{D(H)} (\Pi^{\Kla})^* \\
      &\simeq (D_{r_n}(H_1)\otimes_L L[\Gamma])\otimes_{L\br{H}} \Pi^* \\
      &\simeq (D_{r_n}(H_1)\otimes_L L[\Gamma])\otimes_{L\br{H_1\times\Gamma}} \Pi_1^*\\
                &\simeq D_{r_n}(H_1)\otimes_{D(H_1)} (\Pi_1^{\Kla})^*\end{align*}
    and using again the results recalled in paragraph \ref{STdim} we finally obtain 
     \[j_{D_{r_n}(H_1)}(M_n)=j_{D_{r_n}(H_1)}(D_{r_n}(H_1)\otimes_{D(H_1)} (\Pi_1^{\Kla})^*)\geq j_{D(H_1)}((\Pi_1^{\Kla})^*),\]
    as desired.                            \end{proof}

               \subsection{End of the proof of part b) of theorem \ref{astuce}}

                Replacing $L$ by a finite extension, we may assume that $G_L$ is split of rank $r$. 
                              It suffices to rule out the case 
               $j_{D(H,L)^{\Kla}}((\Pi^{\Kla})^*)=r$. Assume that this holds and let 
                $J$ be the $Z(\mathfrak g)$-annihilator of $\Pi^{\Kla}$. It is of finite 
                codimension by assumption. 
                Without loss of generality we may assume that        the subgroup  $H$ of $G(K)$ 
                in Theorem \ref{astuce} is $K$-uniform pro-$p$ subgroup  and 
                              let $\underline{x}=(x_1, \ldots, x_r)$ be a regular sequence in $\ZgL$ 
                              contained in $J$ as in Proposition \ref{prop:flat}. Applying Corollary 
                              $\ref{HomExt}$ yields 
                              \[{\rm Hom}_{D(H,L)^{\Kla}}( (\Pi^{\Kla})^*, D(H,L)^{\Kla}/(\underline{x}))\ne 0.\] 
                              Since $(\Pi^{\Kla})^*\simeq D(H,L)^{\Kla}\otimes_{L\br{H}} \Pi^*$                                (which follows from \cite[Theorem 7.1]{adm} together with \cite[Proposition 3.4]{Schmidt_vec}), we obtain 

                             \[{\rm Hom}_{L\br{H}}(\Pi^*, D(H,L)^{\Kla}/(\underline{x}))\ne 0.\]
                              Since $\ZgL/(\underline{x})$ is an artinian $L$-algebra, after possibly 
                              replacing $L$ by a finite  extension  we may assume that all the maximal ideals
                              of $\ZgL/(\underline{x})$ have residue field $L$. Since $D(H,L)^{\Kla}$ is flat over 
                              $\ZgL$ by Proposition \ref{prop:flat},  there is an $L$-algebra homomorphism 
                              $\chi: Z(\mathfrak{g}_L)\to L$, such that 
                               \[{\rm Hom}_{L\br{H}}(\Pi^*, D(H,L)_{\chi}^{\Kla})\ne 0.\]

                              As $\Pi$ is $\ZgL$-finite 
                               we have
                              $d(\Pi)<\dim H$, so that $\Pi^*$ is a
                              torsion $L\br{H}$-module (see Remark \ref{triv} and Lemma \ref{usefullater}). We obtain a
                              contradiction using the following
                              result.

               \begin{prop}\label{key}
                Let $G$ be a connected semisimple group over $K$, with Lie algebra $\mathfrak{g}$, and let $H$ be an open subgroup of $G(K)$ which is $K$-uniform pro-$p$. Let $\chi: Z(\mathfrak{g}_L)\to L$ be an $L$-algebra homomorphism.  
                For any admissible $L$-Banach representation $\Pi$ of $H$ such that 
                $\Pi^*$ is a torsion $L\br{H}$-module we have 
                \[{\rm Hom}_{L\br{H}}(\Pi^*, D(H, L)_{\chi}^{\Kla})=0.\]
                                            \end{prop}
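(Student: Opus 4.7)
The plan is a proof by contradiction. Suppose $\varphi \in \Hom_{L\br{H}}(\Pi^*, D(H,L)_\chi^{\Kla})$ is non-zero, choose $\lambda \in \Pi^*$ with $\varphi(\lambda) \neq 0$, and pick $a \in L\br{H} \setminus \{0\}$ annihilating $\lambda$ (using the torsion hypothesis on $\Pi^*$); then $a \cdot \varphi(\lambda) = 0$. The contradiction will follow once we exhibit an open $K$-uniform pro-$p$ subgroup $H' \subset H$ such that $D(H,L)_\chi^{\Kla}$ has no non-zero $L\br{H'}$-torsion elements. This is enough because any $L\br{H}$-linear map is automatically $L\br{H'}$-linear, and $\Pi^*$ remains $L\br{H'}$-torsion by Lemma \ref{usefullater}, so the non-zero element $\varphi(\lambda)$ is still killed by some non-zero element of $L\br{H'}$.

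I would first perform the reductions familiar from the proof of Proposition \ref{inj}: the simply connected cover of $G$ has finite kernel and open image on $K$-points, which reduces us to the case $G$ simply connected; replacing $L$ by a finite extension (harmless because all the constructions involved commute with base change of coefficients) one further assumes $G_L$ split. Let $\bold{G}$ be the split simply connected $\OO$-model of $G_L$, and set $\tilde H_n := \ker(\bold{G}(\OO) \to \bold{G}(\OO/p^{n+1}\OO))$. For $n$ sufficiently large, Theorem \ref{affverma} yields the injection $L\br{\tilde H_n} \hookrightarrow D_{1/p}(\tilde H_n, L)^{\Lla}_\chi$, and Theorem \ref{domaineasy} says that this target is an integral domain. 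Fix such an $n$ and choose $H'$ to be a $K$-uniform pro-$p$ open subgroup of the open subgroup $H \cap \tilde H_n \subset G(K)$. Then the composition $L\br{H'} \hookrightarrow L\br{\tilde H_n} \hookrightarrow D_{1/p}(\tilde H_n, L)^{\Lla}_\chi$ embeds $L\br{H'}$ into the domain, so $L\br{H'}$ is itself an integral domain.

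For the endgame, the regular central sequence from Proposition \ref{prop:flat}(a) gives the coadmissible identification $D(H,L)_\chi^{\Kla} = \varprojlim_m D_{r_m}(H,L)_\chi^{\Kla}$, so $\varphi(\lambda) \neq 0$ forces some projection $\pi_m \varphi(\lambda) \in D_{r_m}(H,L)_\chi^{\Kla}$ to be non-zero while still being annihilated by some non-zero element of $L\br{H'}$. It suffices to show that each $D_{r_m}(H,L)_\chi^{\Kla}$ is torsion-free as an $L\br{H'}$-module. The coset decomposition of Proposition \ref{distrib} combined with Lazard's isomorphism (Proposition \ref{Laz}) identifies each subring $D_{1/p}({H'}^{p^k}, L)^{\Kla}_\chi$ with $\widehat{U(p^k \mathfrak{g}_{H',\OO})}[1/p]_\chi$. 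Choosing $k$ large enough that $p^k \mathfrak{g}_{H',\OO} \subset p^n \mathfrak{g}$, one attempts to compare this ring with the integral domain $\widehat{U(p^n \mathfrak{g})}[1/p]_\chi \simeq D_{1/p}(\tilde H_n, L)^{\Lla}_\chi$ from the previous step, which, provided the comparison map is injective after $p$-adic completion, gives the required torsion-freeness.

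The main obstacle, flagged explicitly in the remark following Conjecture \ref{condomain}, is precisely the injectivity of the ring map $\widehat{U(p^k \mathfrak{g}_{H',\OO})}[1/p]_\chi \to \widehat{U(p^n \mathfrak{g})}[1/p]_\chi$ after $p$-adic completion, which is not clear from the sheaf-theoretic viewpoint. Overcoming it requires either a direct argument via the faithfulness of affinoid Verma modules in the style of Theorem \ref{affverma}, adapted to $K$-analytic distribution algebras (which would supply the key embedding of $L\br{H'}$ into a domain of $K$-analytic origin without needing to pass through $\widehat{U(p^n \mathfrak{g})}[1/p]_\chi$), or a refinement of the sheaf-theoretic construction sketched after Conjecture \ref{condomain}. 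Once one of these approaches is carried through, the torsion-freeness of $D(H,L)_\chi^{\Kla}$ as an $L\br{H'}$-module, and thus the desired contradiction with the non-vanishing of $\varphi$, follows immediately.
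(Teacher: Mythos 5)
There is a genuine gap, and you have in fact located it yourself but then left it unresolved. Reducing the statement to the claim that $D(H,L)_\chi^{\Kla}$ (equivalently, each $D_{r_m}(H,L)_\chi^{\Kla}$, or each $B_{n,\chi}:=\widehat{U(p^n\mathfrak{g}_{H,\OO})}[1/p]_\chi$) is $L\br{H'}$-torsion-free is a reasonable first instinct, but it forces you to produce an embedding of $B_{n,\chi}$ into a domain. This is exactly the content of Conjecture \ref{condomain} and the difficulty you flag at the end is precisely why the authors present it as a conjecture and not a theorem. Writing ``once one of these approaches is carried through, the contradiction follows'' is not a proof: neither approach is carried through, and as the remark after Conjecture \ref{condomain} explains, the $p$-adic completion of an injection need not remain injective.

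The paper's proof avoids torsion-freeness altogether and this is the idea you are missing. Using the coset decomposition $D_{r_n}(H,L)_\chi^{\Kla}\simeq\bigoplus_{h\in S_n}B_{n,\chi}\delta_h$, the restriction $\varphi_n$ of $\varphi$ to radius $r_n$ splits into $L\br{H^{p^n}}$-linear components $\varphi_{n,h}\colon\Pi^*\to B_{n,\chi}$. The crucial point is not that $B_{n,\chi}\to A_{n-c,\chi}$ (with $A_{n-c}=\widehat{U(p^{n-c}\mathfrak{g}_0)}[1/p]$, $\mathfrak{g}_0$ the Chevalley lattice) is injective; it is that (i) $A_{n-c,\chi}$ is a domain (Theorem \ref{domaineasy}), (ii) the composite $L\br{H^{p^n}}\to B_n\to A_{n-c}\to A_{n-c,\chi}$ is injective for $n\gg 0$ (via $L\br{H^{p^n}}\hookrightarrow L\br{H_{n-c}}$ and Theorem \ref{affverma}), and (iii) the composite $B_{n,\chi}\to A_{n-c,\chi}\to B_{n-2c,\chi}$ is nothing but the transition map $\pi_{n,n-2c}$. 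For $l\in\Pi^*$ killed by a nonzero $\lambda\in L\br{H^{p^n}}$, projecting the relation $\lambda\varphi_{n,h}(l)=0$ into the domain $A_{n-c,\chi}$ and using (i), (ii) shows that the image of $\varphi_{n,h}(l)$ in $A_{n-c,\chi}$ vanishes; composing further to $B_{n-2c,\chi}$ and invoking (iii) plus the distribution relation $\varphi_{n-2c,g}=\sum_{\bar h=g}\pi_{n,n-2c}\circ\varphi_{n,h}$ then forces $\varphi_{n-2c,g}=0$ for all $g$, whence $\varphi=0$. This ``project into the domain and then telescope back through $\pi_{n,n-2c}$'' device circumvents any need to know that $B_{n,\chi}$ (or $D(H,L)_\chi^{\Kla}$) is torsion-free or embeds in a domain, and it is the step your proposal lacks.
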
                          
     
     \begin{proof} For simplicity we assume $p>2$ (the argument is identical for $p=2$, but one needs to replace some ocurrences of $p$ by $q$ below). By replacing $L$ by a finite extension of $K$, we may assume that 
     $G$ is split over $L$. Choose a simply connected, semisimple split algebraic group scheme
     $\bold{G}$ over $\OO$ as well as an isomorphism $L\otimes_{\Qp}\mathfrak{g}\simeq L\otimes_{\OO}\mathfrak{g}_0$, where $\mathfrak{g}_0={\rm Lie}(\bold{G})$. From now on we consider this isomorphism of $L$-Lie algebras as an identification, so that $\mathfrak{g}_{H,\OO}:=\OO\otimes_{\OO_K} p^{-1}L_H$
     (an $\OO$-Lie lattice in $L\otimes_{K} \mathfrak{g}$) becomes an $\OO$-Lie lattice in 
     $L\otimes_{\OO}\mathfrak{g}_0$. Fix a positive integer $c$ such that 
     \[p^{c} \mathfrak{g}_0\subset \mathfrak{g}_{H,\OO}\subset p^{-c} \mathfrak{g}_0.\]
     For $n>0$ let 
     \[H_n=\ker(\bold{G}(\OO)\to \bold{G}(\OO/p^{n+1}\OO)),\]
     an open $L$-uniform subgroup of $\bold{G}(\OO)$ with $L_{H_n}=p^{n+1}\mathfrak{g}_0$.
     
     Denote 
     \[A_n=\widehat{U(p^n \mathfrak{g}_0)}[1/p]\simeq D_{1/p}(H_n, L)^{\Lla},\] 
     \[B_n=\widehat{U(p^n\mathfrak{g}_{H,\OO})}[1/p]\simeq D_{1/p}(H^{p^n},L)^{\Kla},\]
     the isomorphisms being induced by Lazard's isomorphism (Proposition \ref{Laz}). 
     By the equivalence of categories between powerful $\Zp$-Lie algebras and uniform pro-$p$ groups, 
  the embedding of powerful $\Zp$-Lie algebras 
     \[L_{H^{p^n}}\simeq p^n L_H\subset p^n L_H\otimes_{\OO_K} \OO\subset p^{n+1-c} \mathfrak{g}_0=L_{H_{n-c}}\] 
induces  
   a map of uniform pro-$p$ groups $H^{p^n}\to H_{n-c}$.
   By functoriality of Lazard's isomorphism, 
    the composite map \[B_n\simeq D_{1/p}(H^{p^n},L)^{\Kla}\to D_{1/p}(H_{n-c}, L)^{\Kla}\to 
     D_{1/p}(H_{n-c}, L)^{\Lla}\simeq A_{n-c}\] is induced by the composite
     \[\OO\otimes_{\OO_K} p^{-1} L_{H^{p^n}}\to \OO\otimes_{\OO_K} p^{-1} L_{H_{n-c}}\to 
     \OO\otimes_{\OO} p^{-1}L_{H_{n-c}}\simeq p^{-1}L_{H_{n-c}}=p^{n-c}\mathfrak{g}_0,\]
     which in turn is induced (by carefully unwinding definitions) by the 
          inclusion $p^n\mathfrak{g}_{H,\OO}\subset p^{n-c}\mathfrak{g}_0$, and so it is injective.
         The inclusion 
     $p^{n+c} \mathfrak{g}_0\subset p^n\mathfrak{g}_{H,\OO}$ yields a map 
     $A_{n+c}\to B_n$ and the composite $A_{n+c}\to B_n\to A_{n-c}$
 is the natural inclusion $A_{n+c}\subset A_{n-c}$. 
 Finally, we have natural maps 
 \[L\br{H^{p^n}}\to D_{1/p}(H^{p^n},L)^{\Kla}\simeq B_n, \quad L\br{H_{n-c}}\to D_{1/p}(H_{n-c}, L)^{\Lla}\simeq A_{n-c}\]  and they are compatible with the 
 maps $L\br{H^{p^n}}\to L\br{H_{n-c}}$ and $B_n\to A_{n-c}$.  In particular, since $B_n\to A_{n-c}$ and $L\br{H^{p^n}}\to B_n$ are injective, so is the map $L\br{H^{p^n}}\to L\br{H_{n-c}}$. 
 
 Next, for $m\geq n$ the maps 
 $D_{r_m}(H,L)^{\Kla}\to D_{r_n}(H,L)^{\Kla}$ and $B_{m}\to B_n$ induce maps 
 \[\pi_{m,n}: D_{r_m}(H,L)_{\chi}^{\Kla}\to D_{r_n}(H,L)_{\chi}^{\Kla}, \quad \pi_{m,n}: B_{m,\chi}\to B_{n,\chi}.\]

     If $C$ is a $Z(\mathfrak{g}_L)$-algebra, write $C_{\chi}=C/\ker(\chi)C$. 
        Let $\varphi: \Pi^*\to D(H,L)_{\chi}^{\Kla}$
     be a nonzero $L\br{H}$-morphism and let $\varphi_n: \Pi^*\to D_{r_n}(H,L)_{\chi}^{\Kla}$ be the composite of 
     $\varphi$ with the natural projection $D(H,L)_{\chi}^{\Kla}\to D_{r_n}(H,L)_{\chi}^{\Kla}$. Consider the decomposition of $B_n$-modules, in particular of $L\br{H^{p^n}}\subset B_n$-modules (cf. discussion following Proposition \ref{distrib}), where $S_n \subset H$ is a system of representatives of $H/H^{p^n}$,
     \[D_{r_n}(H,L)_{\chi}^{\Kla}\simeq \bigoplus_{h\in S_n} B_{n,\chi}\delta_h.\]
      In terms of this decomposition the maps $\pi_{m,n}$ introduced above are described by  
     \[\pi_{m,n}(\sum_{h\in S_{m}} b_h \delta_h)=\sum_{g\in S_n} (\sum_{h\in S_{m}, \bar{h}=g} \pi_{m,n}(b_h)) \delta_g,\]
     where the equality $\bar{h}=g$ means the equality of cosets $hH^{p^n}=g H^{p^n}$.
          We deduce the existence and uniqueness of $L\br{H^{p^n}}$-linear maps 
     $\varphi_{n,h}: \Pi^*\to B_{n,\chi}$
     such that for all $x\in \Pi^*$
     \[\varphi_n(x)=\sum_{h\in S_n} \varphi_{n,h}(x) \delta_h.\]
   Since the morphisms $\varphi_n$ are compatible with respect to the maps 
     \[\pi_{m,n}: D_{r_{m}}(H,L)_{\chi}^{\Kla}\to D_{r_n}(H,L)_{\chi}^{\Kla},\]
      we have the ``distribution relation'' for $m\geq n$
     \[\varphi_{n,g}=\sum_{h\in S_{m}, \bar{h}=g} \pi_{m,n}\circ \varphi_{m,h}.\]

     \begin{lem} For $n$ large enough and any $h\in S_n$ the composite 
     \[\Pi^*\to B_{n,\chi}\to A_{n-c,\chi}\]
     is $0$, where the first map is $\varphi_{n,h}$ and the second is the one induced by 
     $B_n\to A_{n-c}$.
     \end{lem}
     
     \begin{proof}  We first claim that for $n$ sufficiently large the ring $A_{n-c,\chi}$ is a domain and 
     the map $L\br{H^{p^n}}\to B_n\to A_{n-c}\to A_{n-c,\chi}$ is injective. The first part follows from Theorem 
     \ref{domaineasy}. For the second one, we observe that the map $L\br{H^{p^n}}\to B_n\to A_{n-c}\to A_{n-c,\chi}$ is the composite 
    $L\br{H^{p^n}}\to L\br{H_{n-c}}\to A_{n-c,\chi}\simeq D_{1/p}(H_{n-c}, L)^{\Lla}_{\chi}$. We have already seen that the first map is injective, and the second one is injective for $n$ large enough by Theorem 
     \ref{affverma}. We work with such $n$ in the sequel. 
     Let
     $h\in S_n$ and $l\in \Pi^*$. Since $\Pi^*$ is torsion as 
     $L\br{H^{p^{n}}}$-module, there is $\lambda\in L\br{H^{p^{n}}}$ nonzero such that $\lambda l=0$. Since $\varphi_n$ is $L\br{H^{p^n}}$-linear, we have $\lambda \varphi_n(l)=0$ and so 
     $\lambda \varphi_{n,h}(l)=0$ by the above decomposition of $B_n$-modules. Let $x=\varphi_{n,h}(l)\in B_{n,\chi}$ and let 
     $\bar{\lambda}\in B_{n,\chi}$ be the image of $\lambda$ in $B_{n,\chi}$, thus 
     $\bar{\lambda}\cdot x=0$ in $B_{n,\chi}$. Now project this relation in the domain $A_{n-c,\chi}$ via the 
     map $B_{n,\chi}\to A_{n-c,\chi}$. It follows that the image of $\bar{\lambda}$ is $0$ or the image of 
     $x$ is $0$. The first is impossible, since the map $L\br{H^{p^n}}\to A_{n-c,\chi}$ is injective. Thus the image of 
     $x=\varphi_{n,h}(l)$ in $A_{n-c,\chi}$ is $0$ and we are done since $l$ was arbitrary. 
              \end{proof}
     
     We continue the proof. By the previous Lemma for $n$ large enough the composite 
     $\Pi^*\to B_{n,\chi}\to A_{n-c,\chi}\to B_{n-2c,\chi}$ is $0$ for all $h\in S_n$, the first map being 
     $\varphi_{n,h}$. Since the composite $B_{n,\chi}\to A_{n-c,\chi}\to B_{n-2c,\chi}$ is nothing but 
     $\pi_{n,n-2c}$, we deduce that $\pi_{n, n-2c}\circ \varphi_{n,h}=0$ for all 
     $h\in S_n$. Using the distribution relation above we obtain for 
     $n$ big enough and $g\in S_{n-2c}$ 
     \[\varphi_{n-2c, g}=\sum_{h\in S_n, \bar{h}=g} \pi_{n, n-2c}\circ \varphi_{n,h}=0.\]
     Thus $\varphi_{n,g}=0$ for all $n$ big enough and all $g\in S_{n}$, which in turn yields 
     $\varphi=0$.      
     \end{proof}
 
\begin{remar}
 The proof would be much easier if we could prove that 
 $D(H,L)_{\chi}^{\Kla}$ is a domain (when $H$ is sufficiently small) or at least that $D_{1/p}(H^{p^n},L)_{\chi}^{\Kla}$ is 
 a domain for $n$ large enough. This second result would be a consequence of conjecture \ref{condomain}.
\end{remar}

\section{Scholze's functor}    

In this section let $F$ be a finite extension of $\Qp$. We continue to denote by $L$ a (sufficiently large) finite extension of $F$ with the ring of integers $\OO$, uniformiser $\varpi$ and residue field $k$. 
We fix an algebraic closure $\Fbar$ of $F$ and let $\Gal_F=\Gal(\Fbar/ F)$. Let $\Cp$ be the completion of $\Fbar$ and 
let $\Fbreve$ be the completion of the maximal unramified extension of $F$ in $\Fbar$. Let $G=\GL_n(F)$ and let $D/F$ be a central division algebra over $F$ with invariant $1/n$. 

To a smooth representation $\pi$ of $G$ on an 
$\OO$-torsion  module, Scholze associates a Weil-equivariant sheaf $\mathcal F_{\pi}$ on the \'etale site of the adic space $\PP^{n-1}_{\Fbreve}$, see \cite[Proposition 3.1]{scholze}.
 If $\pi$ is admissible 
then he shows that for any $i\ge 0$ the \'etale cohomology groups $H^i_\et(\PP^{n-1}_{\Cp}, \FF_{\pi})$ carry a continuous $D^{\times}\times \Gal_F$-action, which make them into 
 smooth admissible representations of $D^{\times}$. 
 
 Recall  that a smooth representation of $G$ or $D^{\times}$ on an $\OO$-torsion module is \textit{locally admissible} if it is equal to the union of its admissible subrepresentations.  
 We denote the respective categories by $\Mod^{\ladm}_G(\OO)$ and $\Mod^{\ladm}_{D^\times}(\OO)$.  The functors $\pi\mapsto H^i_\et(\PP^{n-1}_{\Cp}, \FF_{\pi})$ commute with direct limits and thus 
 sends locally admissible representations of $G$ to locally admissible representations of $D^{\times}$, see \cite[Section 3.1]{ludwig} for details. 
 
 Let $\dualcat_G(\OO)$ be the category anti-equivalent to $\Mod^{\ladm}_G(\OO)$
via Pontryagin duality and let $\dualcat_{D^\times}(\OO)$ be the category anti-equivalent to $\Mod^\ladm_{D^\times}(\OO)$ via the Pontryagin duality. We define a covariant homological $\delta$-functor $\{ \Shat^i\}_{i\ge 0}$ by 
\[\Shat^i: \dualcat_G(\OO)\rightarrow \dualcat_{D^\times}(\OO), \quad
 M\mapsto H^i_{\et}(\PP^{1}_{\Cp}, \FF_{M^{\vee}})^{\vee}\]
where $M^\vee=\Hom^{\cont}_{\OO}(M, L/\OO)$ denotes the Pontryagin dual of $M$. Note that 
$(M^{\vee})^{\vee}\cong M$.  We introduce these dual categories, because it is much more convenient to work with compact torsion-free $\OO$-modules than with discrete divisible $\OO$-modules.  Since $\pi\mapsto H^i_\et(\PP^{n-1}_{\Cp}, \FF_{\pi})$ commutes with direct limits, the functor $\Shat^i$ commutes with projective limits. We also note that $\Shat^i(M)$ carries a continuous $\OO$-linear action of $\Gal_F$, which commutes with the action of $D^{\times}$.
 
\begin{lem}\label{natural} Let $R$ be a complete local noetherian $\OO$-algebra with residue field
 $k$. Let $M\in \dualcat(\OO)$ with a ring homomorphism $R\rightarrow \End_{\dualcat(\OO)}(M)$. 
Let $\md$ be a compact $R$-module then for all $i\ge 0$ there is a natural map:
\[ \md\wtimes_R \Sch^i(M)\rightarrow \Sch^i(\md\wtimes_R M),\]
which is an isomorphism if $\md=\prod_{i\in I} R$ for some set $I$.
\end{lem}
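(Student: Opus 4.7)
The plan is to construct the map by functoriality and the universal property of the completed tensor product, and then to verify the isomorphism claim for $\md=\prod_{i\in I}R$ by reducing to the obvious case $\md=R$ via commutation of both sides with projective limits.

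First I would construct the natural map. For each $m\in\md$, multiplication by $m$ defines a continuous $R$-linear map $R\to\md$, which after applying $-\wtimes_R M$ gives a continuous $R$-linear map $\lambda_m\colon M\cong R\wtimes_R M\to\md\wtimes_R M$. Applying $\Sch^i$ yields a map $\Sch^i(\lambda_m)\colon\Sch^i(M)\to\Sch^i(\md\wtimes_R M)$, which is $R$-linear and depends $R$-linearly and continuously on $m$ (using that $R\to\End_{\dualcat(\OO)}(M)$ factors through $\Sch^i$ by functoriality, endowing $\Sch^i(M)$ and $\Sch^i(\md\wtimes_R M)$ with compatible $R$-module structures). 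The universal property of $\wtimes_R$ then produces the desired natural map
\[ \md\wtimes_R\Sch^i(M)\longrightarrow\Sch^i(\md\wtimes_R M), \]
which is visibly the identity when $\md=R$.

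For the isomorphism statement, I would write $\md=\prod_{i\in I}R=\varprojlim_{J}R^J$ over the directed system of finite subsets $J\subset I$. Since $M$ and $\Sch^i(M)$ are compact $R$-modules and $R$ is complete local noetherian, completed tensor product commutes with these projective limits, giving $\md\wtimes_R M\cong\varprojlim_J(R^J\wtimes_R M)\cong\varprojlim_J M^J\cong\prod_{i\in I}M$, and similarly $\md\wtimes_R\Sch^i(M)\cong\prod_{i\in I}\Sch^i(M)$. On the other hand, the paper's observation that $\Sch^i$ commutes with projective limits yields $\Sch^i(\prod_I M)\cong\prod_I\Sch^i(M)$. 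Unwinding the constructions shows that the natural map of the previous paragraph coincides with the obvious identification between these two presentations of $\prod_I\Sch^i(M)$, hence is an isomorphism.

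The main technical point is the identification $(\prod_I R)\wtimes_R M\cong\prod_I M$ for a compact $R$-module $M$: this follows from additivity of $\wtimes_R$ on finite products followed by passage to the inverse limit, using compactness of $M$ and the fact that any compact $R$-module is the projective limit of its finite length quotients (on which $\wtimes_R$ agrees with ordinary tensor product). The second ingredient, that $\Sch^i$ preserves arbitrary products, is immediate from its definition as the Pontryagin dual of $H^i_\et(\PP^{n-1}_{\Cp},\FF_{(-)^\vee})$, together with the fact that this \'etale cohomology commutes with direct limits of $\OO$-torsion smooth representations, as direct sums in $\Mod^\ladm$ dualise to products in $\dualcat(\OO)$.
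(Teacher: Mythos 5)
Your proof is correct and follows essentially the same route as the paper: you construct the map from the family of morphisms $M\to\md\wtimes_R M$, $v\mapsto m\wtimes v$, via functoriality of $\Sch^i$ and the universal property of $\wtimes_R$ (the paper phrases this as an adjunction $\Hom^{\cont}_R(\md,\Hom^{\cont}_R(-,-))\cong\Hom^{\cont}_R(\md\wtimes_R-,-)$), and you deduce the isomorphism for $\md=\prod_I R$ from the case $\md=R$ by using that both $\Sch^i$ and $\wtimes_R M$ commute with projective limits. The only difference is that you spell out the reduction to finite products explicitly, whereas the paper asserts the commutation with projective limits in one line.
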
 
\begin{proof}For every $x\in \md$ we have morphism $\phi_x: M\rightarrow \md\wtimes_R M$, 
 $v\mapsto x\wtimes v$. Moreover, we have $\phi_{\lambda x}= \lambda \phi_x= \phi_x \lambda$
 for all $\lambda\in R$. This induces a map \[\Sch^{i}(\phi_x): \Sch^{i}(M)\rightarrow \Sch^{i}(\md\wtimes_R M)\] and hence an element in 
 \[\Hom^{\cont}_R(\md,  \Hom^{\cont}_R(\Sch^{i}(M), \Sch^{i}(\md\wtimes_R M))),\] which by adjunction is isomorphic to
 $\Hom^{\cont}_R(\md\wtimes_R \Sch^{i}(M), \Sch^{i}(\md\wtimes_R M))$. Hence, we 
 obtain a natural map $\md\wtimes_R \Sch^{i}(M)\rightarrow \Sch^{i}(\md\wtimes_R M)$.
 This map is an isomorphism if $\md=R$. Since both $\Sch^i$ and $\wtimes_R M$ 
 commute with projective limits, we deduce that the map is an isomorphism if $\md=\prod_{i\in I} R$ 
 for some set $I$. 
 \end{proof}

We will denote by $\widehat{\Tor}^R_i(\md, M)$ the $i$-th right derived functor 
of $\md \mapsto \md\wtimes_R M$. 

 \begin{lem}\label{yh} Let $R$, $M$ and $\md$ be as in Lemma \ref{natural}. Assume that 
 $\widehat{\Tor}^R_i(\md, M)=0$ for all $i\ge 1$. Suppose that  $\Sch^{i}(M)=0$ for $0\le i < q_0$. 
 Then 
there is a natural isomorphism 
\[\md\wtimes_R \Sch^{q_0}(M) \overset{\cong}{\longrightarrow} \Sch^{q_0}(\md\wtimes_R M)\]
and $\Sch^i(\md\wtimes_R M)=0$ for $0\le i <q_0$.
In particular, if $R\rightarrow R'$ is a map of complete local 
 $\OO$-algebras with residue field $k$ and either $R'$ or $M$ are $R$-flat 
 then $R'\wtimes_R \Sch^{q_0}(M)\overset{\cong}{\longrightarrow}
 \Sch^{q_0}(R'\wtimes_R M)$ is an isomorphism of $R'$-modules. 
 \end{lem}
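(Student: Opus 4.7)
The plan is to resolve $\md$ by pseudo-free compact $R$-modules and combine the resulting long exact sequences of the $\delta$-functor $\Sch^\bullet$ with Lemma \ref{natural}. Choose a resolution $P_\bullet \to \md \to 0$ in which each $P_p$ is a (possibly infinite) product of copies of $R$. Since pseudo-free modules are flat for the completed tensor product, $\widehat{\Tor}_i^R(P_p, M) = 0$ for $i \ge 1$; combining this with the hypothesis $\widehat{\Tor}_i^R(\md, M) = 0$ via dimension shifting on the short exact sequences $0 \to Z_{p+1} \to P_p \to Z_p \to 0$ (with $Z_0 = \md$), one checks inductively that $\widehat{\Tor}_i^R(Z_p, M) = 0$ for all $p \ge 0$ and $i \ge 1$, so $P_\bullet \wtimes_R M \to \md \wtimes_R M \to 0$ remains a resolution. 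Writing $N_p = Z_p \wtimes_R M$, this gives short exact sequences $0 \to N_{p+1} \to P_p \wtimes_R M \to N_p \to 0$ in $\dualcat(\OO)$; Lemma \ref{natural} combined with the hypothesis on $M$ yields $\Sch^i(P_p \wtimes_R M) \cong P_p \wtimes_R \Sch^i(M) = 0$ for $0 \le i < q_0$.

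The vanishing $\Sch^i(\md \wtimes_R M) = 0$ for $0 \le i < q_0$ then follows by induction on $i$, establishing $\Sch^i(N_p) = 0$ for every $p \ge 0$ at each step. For $i = 0$, right exactness of $\Sch^0$ (the end of the homological $\delta$-functor) applied to the surjection $P_p \wtimes_R M \twoheadrightarrow N_p$ gives the claim, since $\Sch^0(P_p \wtimes_R M) = 0$. For the step from $i$ to $i+1 < q_0$, the long exact sequence applied to $0 \to N_{p+1} \to P_p \wtimes_R M \to N_p \to 0$, together with the vanishing of $\Sch^i$ and $\Sch^{i+1}$ on $P_p \wtimes_R M$, yields isomorphisms $\Sch^{i+1}(N_p) \cong \Sch^i(N_{p+1})$, which vanish by the inductive hypothesis.

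To identify $\Sch^{q_0}(\md \wtimes_R M)$, I apply the long exact sequence at index $q_0$ to $0 \to N_1 \to P_0 \wtimes_R M \to N_0 \to 0$ and use the vanishing $\Sch^{q_0-1}(N_1) = 0$ just proved to realize $\Sch^{q_0}(\md \wtimes_R M)$ as the cokernel of $\Sch^{q_0}(N_1) \to \Sch^{q_0}(P_0 \wtimes_R M)$. A second application to $0 \to N_2 \to P_1 \wtimes_R M \to N_1 \to 0$ exhibits $\Sch^{q_0}(N_1)$ as a quotient of $\Sch^{q_0}(P_1 \wtimes_R M)$, so invoking Lemma \ref{natural} twice and the right exactness of $\wtimes_R \Sch^{q_0}(M)$ identifies $\Sch^{q_0}(\md \wtimes_R M)$ with
\[ \mathrm{coker}\bigl(P_1 \wtimes_R \Sch^{q_0}(M) \to P_0 \wtimes_R \Sch^{q_0}(M)\bigr) \;=\; \md \wtimes_R \Sch^{q_0}(M). \]
Naturality of all these identifications shows this is precisely the map of Lemma \ref{natural}. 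The final assertion follows by taking $\md = R'$ and noting that $R$-flatness of either $R'$ or $M$ forces $\widehat{\Tor}_i^R(R', M) = 0$ for $i \ge 1$.

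The main subtle point I expect is checking that products $\prod_I R$ really are acyclic for the completed tensor product, so that the dimension-shift step at the beginning goes through; this is standard in the pseudocompact setting but is where the combination of the Tor-vanishing hypothesis on $\md$ and the pseudo-freeness of the $P_p$ must be carefully weighed against each other.
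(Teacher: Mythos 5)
Your argument is essentially identical to the paper's proof: both resolve $\md$ by products of copies of $R$, dimension-shift the $\widehat{\Tor}$-vanishing hypothesis along the syzygies, use Lemma \ref{natural} together with right exactness of $\Sch^0$ and d\'evissage to kill $\Sch^i$ for $i<q_0$, and then read off the degree-$q_0$ isomorphism from the resulting truncated exact sequence. The only cosmetic difference is that you package the vanishing step as an explicit induction on $i$, while the paper chains the isomorphisms $\Sch^i(\md_n\wtimes_R M)\cong\Sch^{i-1}(\md_{n+1}\wtimes_R M)\cong\cdots\cong\Sch^0(\md_{n+i}\wtimes_R M)=0$ in one line.
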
 
 \begin{proof} We choose a resolution $F_{\bullet}\twoheadrightarrow \md$ of compact $R$-modules, 
 such that $F_n= \prod_{i\in I_n} R$ for some set $I_n$ for all $n\ge 0$. Let $\md_n$ be the image 
 of $F_{n}\rightarrow F_{n-1}$ for all $n\ge 1$ and let $\md_0=\md$. Since $F_n$ are projective in the category of compact 
 $R$-modules, $\widehat{\Tor}^R_{i}(F_n, M)=0$ for $i\ge 1$ and $n\ge 0$. By considering exact sequences 
 $0\rightarrow \md_{n+1}\rightarrow F_n \rightarrow \md_n \rightarrow 0$ for $n\ge 0$ and arguing by induction on $n$ 
 we deduce that $\widehat{\Tor}^R_{i}(\md_n, M)=0$ for $i\ge 1$ and
 we have exact sequences 
 \[0\rightarrow \md_{n+1} \wtimes M \rightarrow F_{n}\wtimes_R M \rightarrow \md_{n} \wtimes_R M\rightarrow 0,\]
 for all $n\ge 0$. 
The assumption $\Sch^i(M)=0$ for $0\le i< q_0$ and Lemma \ref{natural} imply that 
$\Sch^i(  F_{n}\wtimes_R M)$ vanishes for $1\le i <q_0$ and all $n\ge 0$. We obtain 
\[ \Sch^i(\md_{n}\wtimes_R M)\cong \Sch^{i-1}(\md_{n+1}\wtimes_R M)\cong \ldots \cong \Sch^0(\md_{n+i}\wtimes_R M)=0\]
by devissage, and right exactness of $\Sch^0$. Hence,  we also obtain exact sequences 
\[\Sch^{q_0}(\md_{n+1}\wtimes_R M)\rightarrow \Sch^{q_0}(F_{n}\wtimes_R M)\rightarrow \Sch^{q_0}(\md_n\wtimes_R
M)\rightarrow 0.\]
We deduce that the sequence 
\[  \Sch^{q_0}(F_1\wtimes_R M)\rightarrow \Sch^{q_0}(F_0\wtimes_R M)\rightarrow \Sch^{q_0}(\md\wtimes_R M)\rightarrow 0\]
is exact. Using Lemma \ref{natural} we deduce that the map 
$\md \wtimes_R \Sch^{q_0}(M)\rightarrow  \Sch^{q_0}(\md\wtimes_R M)$ is an isomorphism. The 
last part follows from \cite[Lemma 3.8]{ludwig}. 
 \end{proof} 
 
 Let $\pi$ be a smooth representation of $G$ on an $\OO$-torsion module. We define two actions of $F^{\times}$ on  $H^i_\et(\PP^{n-1}_{\Cp}, \FF_{\pi})$ as follows. 
 We may identify $F^{\times}$ with the centre of $G$ and every $z\in F^{\times}$ defines $\varphi_z\in \End_G(\pi)$, $\varphi_z(v):= \pi(z) . v$. Since 
 $\pi \mapsto \FF_{\pi}$ and $H^i_{\et}$ are functors this induces an action of $F^{\times}$ on $H^i_\et(\PP^{n-1}_{\Cp}, \FF_{\pi})$. The second action 
 is obtained by identifying $F^{\times}$ with the centre of $D^{\times}$ and restricting the action of $D^{\times}$ on $H^i_\et(\PP^{n-1}_{\Cp}, \FF_{\pi})$ to $F^{\times}$. 
 
 \begin{lem}\label{central} The two actions of $F^{\times}$ on $H^i_\et(\PP^{n-1}_{\Cp}, \FF_{\pi})$ defined above coincide. In particular, if the center of $G$ acts on 
 $\pi$ by a character then the centre of $D^{\times}$ acts on $H^i_\et(\PP^{n-1}_{\Cp}, \FF_{\pi})$ by the same character. 
 \end{lem}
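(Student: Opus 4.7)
The plan is to establish the equality already at the level of the sheaf $\FF_{\pi}$ on $\PP^{n-1}_{\Cp}$, from which the claimed equality on $H^i_{\et}$ follows by functoriality. Recall that $\FF_{\pi}$ is defined by descent from the pro-\'etale $G$-torsor $\nu\colon \mathcal{LT}_{\infty}\to \PP^{n-1}$, so that for a pro-\'etale $U\to \PP^{n-1}$ its sections can be described as
\[
\FF_{\pi}(U)\cong \Hom^{\cont}_G\bigl(U\times_{\PP^{n-1}} \mathcal{LT}_{\infty},\;\pi\bigr),
\]
the space of continuous $G$-equivariant maps into $\pi$.

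Under this description, the first action of $z \in F^{\times}=Z(G)$, induced by functoriality of $\pi\mapsto \FF_{\pi}$ applied to $\varphi_z=\pi(z)\in \End_G(\pi)$, is simply post-composition with $\varphi_z$ on sections. The second action comes from the $D^{\times}$-action on $\mathcal{LT}_{\infty}$; the key point here is that $Z(D^{\times})=F^{\times}$ acts trivially on the quotient $\PP^{n-1}$, because the Gross--Hopkins period map is equivariant for an action of $D^{\times}$ on $\PP^{n-1}$ which factors through $D^{\times}/F^{\times}$ (scalars act trivially on projective space). Hence $z\in Z(D^{\times})$ lifts to an automorphism of $\FF_{\pi}$ as a sheaf over $\PP^{n-1}$, which on sections is pre-composition with the action of $z$ on $\mathcal{LT}_{\infty}$ via its embedding in $D^{\times}$.

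The proof then reduces to the geometric fact that the two embeddings $F^{\times}\hookrightarrow G$ and $F^{\times}\hookrightarrow D^{\times}$ induce \emph{the same} action on $\mathcal{LT}_{\infty}$: in both cases the element $z$ acts via the quasi-isogeny ``multiplication by $z$'' on the underlying formal $\OO_F$-module (appropriately interpreted at infinite level via the Tate module). This compatibility is standard for the Lubin--Tate tower and is built into Scholze's construction, see \cite{scholze}. Granting it, $G$-equivariance of any section $f$ converts pre-composition with the central element acting on $\mathcal{LT}_{\infty}$ into post-composition with $\pi(z)=\varphi_z$, so the two sheaf automorphisms coincide; the ``in particular'' assertion then follows immediately, since a scalar action of $\varphi_z$ on $\pi$ is transported to the same scalar on cohomology. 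The only substantive obstacle is this geometric compatibility of the two central actions on the Lubin--Tate tower; the rest is formal manipulation of $G$-equivariant sections.
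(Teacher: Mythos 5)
Your approach is the same as the paper's: reduce to the level of sections over an \'etale $U\to\PP^{n-1}_{\Cp}$, describe $\FF_\pi(U)$ as continuous $G$-equivariant maps $|U_\infty|\to\pi$, and combine $G$-equivariance with a compatibility between the two central actions on $\mathcal{LT}_\infty$. The one substantive point to be careful about is the precise form of that compatibility, and here your formulation is imprecise. You assert that $F^\times\hookrightarrow Z(G)$ and $F^\times\hookrightarrow Z(D^\times)$ induce \emph{the same} action of $z$ on $\mathcal{LT}_\infty$, ``in both cases via the quasi-isogeny multiplication by $z$.'' This is not what the modular interpretation gives: the $G$-action does not go through quasi-isogenies on the formal module but through the level structure, and the two central actions are \emph{inverse} to one another, not equal. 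Concretely, $z\in Z(D^\times)$ acts by the quasi-isogeny ``multiplication by $z$'' on the formal module, which multiplies the Tate-module trivialization by $z$; while $z\in Z(G)$ multiplies the trivialization by $z^{-1}$. The invariant way to state this --- and what the paper actually uses --- is that the diagonal $\{(z,z):z\in F^\times\}\subset Z(G)\times Z(D^\times)$ acts \emph{trivially} on $\mathcal{LT}_\infty$.

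Your final conclusion nevertheless comes out correct, because you simultaneously wrote the $Z(D^\times)$-action on sections as pre-composition with $z$ rather than with $z^{-1}$, and the two sign choices cancel. But as written the key geometric input is stated in a form that a reader checking it against the moduli description will not be able to verify, and with the more standard convention $(z.s)(x)=s(z^{-1}.x)$ the claim ``same action'' would actually give the \emph{inverse} of the desired identity. Replacing ``the two actions agree'' by ``the diagonal action of $F^\times$ on $\mathcal{LT}_\infty$ is trivial'' and then computing $(z.s)(x)=s(z^{-1}.x)=s(z*x)=\pi(z).s(x)$ recovers exactly the paper's argument.
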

 
 \begin{proof}
    Let $z.s$ denote the action of $z\in F^{\times}=Z(D^{\times})$ on a local section of $\FF_{\pi}$, and let 
    $z*s$ denote the action of $z\in F^{\times}=Z(G)$ induced by functoriality of the construction 
    $\pi\mapsto \FF_{\pi}$ and the endomorphism $v\mapsto \pi(z).v$ of $\pi$. It suffices to prove that these two actions are the same. If $U\to \PP^{n-1}_{\Cp}$ is \'etale, with pullback 
    $U_{\infty}\to \mathcal{LT}_{\infty}$ (where $\mathcal{LT}_{\infty}$ is the infinite level Lubin--Tate space), then by definition 
    $\FF_{\pi}(U)=\mathcal{C}_G(|U_{\infty}|, \pi)$, the space of continuous $G$-equivariant maps from the topological space attached to 
    $U_{\infty}$ to $\pi$. If $s: |U_{\infty}|\to \pi$ is such a map, then $(z.s)(x)=s(z^{-1}.x)$ for $z\in F^{\times}\subset Z(D^{\times})$.    On the other hand, it follows from the modular description of $\mathcal{LT}_{\infty}$ that $\{(x,x)|\, x\in F^{\times}\}\subset Z(G)\times Z(D^{\times})$ acts trivially on $\mathcal{LT}_{\infty}$, thus $s(z^{-1}.x)=s(z*x)$, where 
    $*$ denotes the action of the center of $G$ on $\mathcal{LT}_{\infty}$. But $s$ is $G$-equivariant, so $s(z*x)=z*s(x)$, hence 
    $z.s=z*s$, as desired.    
 \end{proof}
 
\begin{lem}\label{olivertwist}
For any continuous character $\delta: F^{\times}\to \OO^{\times}$, any $i\geq 0$ and any smooth representation $\pi$ of $G$ on an 
$\OO$-torsion module there is a natural isomorphism of 
$\Gal_F\times D^{\times}$-representations
\[H^i_\et(\PP^{n-1}_{\Cp}, \FF_{\pi\otimes \delta})\simeq H^i_\et(\PP^{n-1}_{\Cp}, \FF_{\pi})\otimes (\delta^{-1}\boxtimes \delta).\]
\end{lem}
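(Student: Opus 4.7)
The plan is to reduce the twist on $\pi$ to a character twist on cohomology using the monoidal structure of Scholze's construction together with the classical identification of the Lubin--Tate tower with local reciprocity. Write $\delta_G := \delta \circ \det : G \to \OO^{\times}$, so that $\pi \otimes \delta = \pi \otimes \delta_G$ as $G$-representations.

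First I will establish a $D^{\times} \times W_F$-equivariant isomorphism of sheaves on $\PP^{n-1}_{\Fbreve}$
\[
\FF_{\pi \otimes \delta_G} \;\cong\; \FF_\pi \otimes_{\underline{\OO}} \mathcal{L}_\delta,
\]
where $\mathcal{L}_\delta := \FF_{\delta_G}$ is the rank-one sheaf attached to the one-dimensional $G$-representation $\delta_G$. A section of $\FF_\pi$ is a continuous $G$-equivariant map $s : |U_\infty| \to \pi$, a section of $\mathcal{L}_\delta$ is a continuous function $t : |U_\infty| \to \OO$ with $t(gx) = \delta_G(g)\,t(x)$, and the pointwise product $x \mapsto t(x) s(x)$ satisfies the equivariance $(ts)(gx) = \delta_G(g)\cdot g\cdot (ts)(x)$ that defines $\FF_{\pi \otimes \delta_G}$. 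Since $\mathcal{LT}_\infty \to \PP^{n-1}_{\Fbreve}$ is a pro-\'etale $G$-torsor, both sides trivialize \'etale-locally to $\underline{\pi}$ and $\underline{\OO}$ respectively, so the map is an isomorphism.

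Next I will analyze $\mathcal{L}_\delta$ over $\PP^{n-1}_{\Cp}$. As a rank-one local system of $\OO$-modules on a geometrically simply connected base, $\mathcal{L}_\delta|_{\PP^{n-1}_{\Cp}}$ becomes isomorphic to $\underline{\OO}$ non-equivariantly, so its $D^{\times} \times \Gal_F$-equivariant structure reduces to a character $\chi : D^{\times} \times \Gal_F \to \OO^{\times}$. To identify $\chi$ I would invoke the compatibility of the Lubin--Tate tower with local class field theory and with Jacquet--Langlands for one-dimensional representations: the Gross--Hopkins/Faltings determinant morphism provides a $G \times D^{\times} \times W_F$-equivariant map from $\mathcal{LT}_\infty$ onto a ``height one'' tower whose $G$-action factors through $\det$, $D^{\times}$-action through $\Nrd$, and $W_F$-action through local Artin reciprocity. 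Pushing $\delta_G$ through this identification yields $\delta \circ \Nrd$ on $D^{\times}$ and $\delta^{-1} \circ \rec$ on $\Gal_F$, i.e.\ $\chi = \delta^{-1} \boxtimes \delta$ in the notation of the statement. As a consistency check, Lemma \ref{central} applied to the central character $\omega_\pi \delta^n$ of $\pi \otimes \delta_G$ matches $\delta \circ \Nrd|_{F^{\times}} = \delta^n$.

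Finally, since $\mathcal{L}_\delta|_{\PP^{n-1}_{\Cp}}$ is a trivial local system carrying only the character twist $\chi$, tensoring with $\FF_\pi$ and passing to cohomology gives
\[
H^i_\et(\PP^{n-1}_{\Cp}, \FF_\pi \otimes \mathcal{L}_\delta) \;\cong\; H^i_\et(\PP^{n-1}_{\Cp}, \FF_\pi) \otimes \chi,
\]
naturally in $\pi$ and $\delta$, and combining with the first isomorphism yields the lemma. The main obstacle is the middle step: identifying $\chi$ precisely requires the Gross--Hopkins determinant structure of $\mathcal{LT}_\infty$, equivalently the fact that Scholze's functor on one-dimensional $G$-representations recovers local class field theory and Jacquet--Langlands for characters; the delicate point is tracking the sign and normalization conventions so as to land on exactly $\delta^{-1} \boxtimes \delta$.
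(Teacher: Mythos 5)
Your proof takes a genuinely different route from the paper's, though it relies on the same geometric input. The paper does not attempt to prove a sheaf-level isomorphism: it merely constructs a natural \emph{map} $\FF_{\pi}\otimes_{\OO}\FF_{\pi'}\to\FF_{\pi\otimes\pi'}$, specializes to $\pi'=\delta$, combines with cup product to get a map
$j_\delta:H^i_\et(\PP^{n-1}_{\Cp},\FF_\pi)\otimes H^0_\et(\PP^{n-1}_{\Cp},\FF_\delta)\to H^i_\et(\PP^{n-1}_{\Cp},\FF_{\pi\otimes\delta})$,
computes $H^0_\et(\PP^{n-1}_{\Cp},\FF_\delta)\cong\delta^{-1}\boxtimes\delta$ using Strauch's computation of $\pi_0(\mathcal{LT}_\infty)$, and then shows $j_\delta$ is an isomorphism by constructing an inverse from the same construction applied to $\delta^{-1}$. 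This sidesteps entirely the question of whether the sheaf map is an isomorphism. You instead posit the stronger sheaf-theoretic statement $\FF_{\pi\otimes\delta}\cong\FF_\pi\otimes\mathcal{L}_\delta$ and reduce everything to identifying the rank-one equivariant sheaf $\mathcal{L}_\delta$. Both approaches need the same key fact (the $G\times D^\times\times W_F$-equivariant structure on $\pi_0(\mathcal{LT}_\infty)$, stated via Strauch or via the Gross--Hopkins determinant), so the geometric content is identical; your version buys a cleaner conceptual picture but at the cost of an extra sheaf-level claim, while the paper's version is more economical in what it actually proves.

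Two points in your argument need tightening. First, the justification for $\FF_\pi\otimes\mathcal{L}_\delta\iso\FF_{\pi\otimes\delta}$ is not that ``both sides trivialize \'etale-locally'' --- these sheaves are descended along a pro-\'etale torsor and are generally not locally constant for the finite \'etale topology. The correct argument is that the natural map becomes the tautological isomorphism after pullback along the pro-\'etale cover $\mathcal{LT}_\infty\to\PP^{n-1}_{\Cp}$, and one then descends. Second, the assertion that $\mathcal{L}_\delta|_{\PP^{n-1}_{\Cp}}$ is non-equivariantly trivial because the base is geometrically simply connected needs care: $\mathcal{L}_\delta$ is not a priori an \'etale local system with (pro-)finite monodromy over $\OO$ when $\delta$ has infinite image, so the simple-connectedness argument only applies after reducing modulo $p^m$ where $\delta\bmod p^m$ factors through a finite quotient of $G$. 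In fact, the triviality of $\mathcal{L}_\delta$ as a non-equivariant sheaf is precisely the content of $H^0_\et(\PP^{n-1}_{\Cp},\FF_\delta)$ being free of rank one over $\OO$, i.e., of the Strauch computation that the paper invokes directly. You correctly flag the identification of the character as the delicate step; the same warning applies here.
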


\begin{proof} We first construct a natural map $\FF_{\pi}\otimes_{\OO} \FF_{\pi'}\to \FF_{\pi\otimes_{\OO}\pi'}$ for any 
smooth representations $\pi,\pi'$ on $\OO$-torsion modules. If $U\to\PP^{n-1}_{\Cp}$ is an \'etale map with pullback 
$U_{\infty}\to \mathcal{LT}_{\infty}$ to the infinite-level Lubin--Tate space, we need to construct a natural map 
\[\mathcal{C}_G(|U_{\infty}|, \pi)\otimes_{\OO} \mathcal{C}_G(|U_{\infty}|, \pi')\to \mathcal{C}_G(|U_{\infty}|, \pi\otimes_{\OO} \pi').\] We simply map $f\otimes f'$ to the map $x\mapsto f(x)\otimes f'(x)$, where $f\in \mathcal{C}_G(|U_{\infty}|, \pi), f'\in \mathcal{C}_G(|U_{\infty}|, \pi')$. 

 The above map combined with cup-product induces a map \[H^i_\et(\PP^{n-1}_{\Cp}, \FF_{\pi})\otimes_{\OO} H^j_\et(\PP^{n-1}_{\Cp}, \FF_{\pi'})\to H^{i+j}_\et(\PP^{n-1}_{\Cp}, \FF_{\pi\otimes_{\OO} \pi'}).\] Suppose now that $\pi'=\delta$ is a continuous character, and take $j=0$. We obtain a $D^{\times}\times \Gal_F$-equivariant map \[\iota_{\delta}: 
H^i_\et(\PP^{n-1}_{\Cp}, \FF_{\pi})\otimes_{\OO} H^0_\et(\PP^{n-1}_{\Cp}, \FF_{\delta})\to H^i_\et(\PP^{n-1}_{\Cp}, \FF_{\pi\otimes \delta}).\] On the other hand, \[H^0_\et(\PP^{n-1}_{\Cp}, \FF_{\delta})=H^0(\mathcal{LT}_{\infty}, \underline{\delta})^G={\rm Hom}_G(\delta^{-1}, H^0(\mathcal{LT}_{\infty}, \Z_p))\] is isomorphic to $\delta^{-1}\boxtimes \delta$ as $\Gal_F\times D^{\times}$-representations, as follows from Strauch's computation of $\pi_0(\mathcal{LT}_{\infty})$, 
\cite[Theorem 4.4]{strauch}. We obtain therefore a $\Gal_F\times D^{\times}$-equivariant map 
$j_{\delta}: H^i_\et(\PP^{n-1}_{\Cp}, \FF_{\pi})\otimes (\delta^{-1}\boxtimes \delta)\to H^i_\et(\PP^{n-1}_{\Cp}, \FF_{\pi\otimes \delta})$. It is an isomorphism since an inverse can be constructed by using the same construction for $\delta^{-1}$.  
 \end{proof}
  
 We extend the functors $\Sch^i$ to the category of admissible unitary $L$-Banach space representations of $G$, which we denote by $\Ban^{\adm}_{G}(L)$. If $\Pi\in \Ban^{\adm}_{G}(L)$ 
 then we choose an open bounded $G$-invariant $\OO$-lattice $\Theta$ in $\Pi$. Its Schickhof 
 dual $\Theta^d:=\Hom^{\cont}_{\OO}(\Theta, \OO)$ is in $\dualcat_G(\OO)$ and hence
 $\Sch^i(\Theta^d)$ is in $\dualcat_{D^{\times}}(\OO)$. We let $\Sch^i(\Pi):= \Sch^i(\Theta^d)^d \otimes_{\OO}L$. Then $\Sch^i(\Pi)\in \Ban^{\adm}_{D^{\times}}(L)$ and $\{\Sch^i\}_{i\ge 0}$ define
 a cohomological $\delta$-functor from $\Ban^{\adm}_{G}(L)$ to $\Ban^{\adm}_{D^{\times}}(L)$, 
 see \cite[Section 3.3]{ludwig} for more details. 

\subsection{Ultrafilters}\label{sec_ultra}
Let $(A, \mm_A)$, $(B, \mm_B)$ be  local artinian $\OO$-algebras with residue field $k$. Let $I$ be an indexing set, and let $A_I:=\prod_{i\in I} A$. According 
to \cite[Lemma 2.2.2]{gee_newton}, there is  a bijection between the set of ultrafilters  on $I$ and prime ideals of $A_I$
given by taking an ultrafilter $\mathfrak F$ to the ideal whose elements $(a_i)$ satisfy $\{i : a_i \in \mm_A\}\in \mathfrak F$. If 
$x\in \Spec A_I$ then we let $A_{I,x}$ be the localization of $A_I$ at $x$ and $\mathfrak F_x$ be the corresponding ultrafilter. 
It follows from \cite[Lemma 2.2.2]{gee_newton} that the diagonal map $A\rightarrow A_I$ induces an isomorphism $A\cong A_{I, x}$. If $M$ is a finite $A$-module we let $M_I=\prod_{i\in I} M$ then by considering a presentation of $M$ over $A$ we also obtain
that the diagonal map $M\rightarrow M_I$ induces an isomorphism $M\cong M_{I, x}$. 

Let $M$ and $N$ be finite $A$-modules and let $N^{\vee}:=\Hom_{\OO}(N, L/\OO)$ be the Pontryagin dual of $N$. Then 
$\Hom_A(M, N^{\vee})\cong \Hom_{\OO}( M\otimes_A N, L/\OO)=(M\otimes_A N)^{\vee}$. Applying this observation with $N=A$ we get that 
\begin{equation}\label{vee}
\Hom_A(M, N^{\vee})\cong (M\otimes_A N)^{\vee}\cong (M\otimes_A N \otimes_A A)^{\vee}\cong \Hom_A(M\otimes_A N, A^{\vee})
\end{equation}

\begin{lem}\label{ultra1} Let $\{M_i\}_{i\in I}$ be a family of $A$-modules such that $|M_i| \le t$ for some $t\ge 1$ and all $i\in I$. Then there is a canonical isomorphism of 
$A_{I, x}$-modules \[ (\prod_{i\in I} M_i^{\vee})\otimes_{A_I} A_{I, x}\cong((\prod_{i\in I} M_i)\otimes_{A_I} A_{I, x})^{\vee}.\]
\end{lem}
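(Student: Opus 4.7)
The plan is to exploit the finiteness of $A$ to reduce, via the ultrafilter $\mathfrak F_x$, to the case where all the $M_i$ are isomorphic to a single finite $A$-module $N$, and then apply the isomorphism $M\cong M_{I,x}$ for finite $A$-modules $M$ recalled just before the lemma.

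First I would observe that $A$ is a finite ring. Indeed, since $A$ is local artinian with residue field $k$, and $k$ is finite (being the residue field of the $p$-adic field $L$), the maximal ideal $\mm_A$ is nilpotent and each graded piece $\mm_A^j/\mm_A^{j+1}$ is a finite-dimensional $k$-vector space by the artinian condition, so $|A|<\infty$. It follows that the set of isomorphism classes of $A$-modules $M$ with $|M|\le t$ is finite; pick representatives $N_1,\dots,N_r$ and partition $I=I_1\sqcup\cdots\sqcup I_r$ according to the isomorphism class of $M_i$. Since $\mathfrak F_x$ is an ultrafilter, exactly one of the $I_j$ lies in $\mathfrak F_x$, say $I_{j_0}$. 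The idempotent $e_{I\setminus I_{j_0}}\in A_I$ lies in $x$ while $e_{I_{j_0}}=1-e_{I\setminus I_{j_0}}$ does not; thus $e_{I_{j_0}}$ becomes a unit in $A_{I,x}$, and combined with $e_{I_{j_0}}e_{I\setminus I_{j_0}}=0$ this forces $e_{I\setminus I_{j_0}}=0$ in $A_{I,x}$. The direct summands $\prod_{i\notin I_{j_0}} M_i$ and $\prod_{i\notin I_{j_0}} M_i^\vee$ are annihilated by $e_{I_{j_0}}$, so they vanish after $\otimes_{A_I} A_{I,x}$; we may therefore assume $M_i\cong N$ for a fixed finite $A$-module $N$ and every $i\in I$.

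In this reduced setting, $\prod_i M_i=N^I$ and $\prod_i M_i^\vee=(N^\vee)^I$ are finitely generated $A_I$-modules (diagonally generated by the generators of $N$ and $N^\vee$ over $A$, respectively). The statement recalled just before the lemma --- that the diagonal $M\hookrightarrow M_I$ induces an isomorphism $M\cong M_{I,x}$ for any finite $A$-module $M$ --- then supplies canonical $A\cong A_{I,x}$-linear isomorphisms
\[N^I\otimes_{A_I}A_{I,x}\cong N,\qquad (N^\vee)^I\otimes_{A_I}A_{I,x}\cong N^\vee,\]
and identifies both sides of the asserted isomorphism canonically with $N^\vee$.

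The main obstacle will be to articulate the canonical map cleanly enough to check that, under these identifications, it is the tautological identity $N^\vee\to N^\vee$. I would construct it from the coordinatewise evaluation pairing $\prod_i M_i^\vee\otimes_{A_I}\prod_i M_i\to(\varpi^{-N_0}\OO/\OO)^I$, $(\phi_i)\otimes(m_i)\mapsto(\phi_i(m_i))$, which is $A_I$-balanced and valued in a product of finite sets (choose $N_0$ uniform with $\varpi^{N_0}M_i=0$ for all $i$, possible by the bound $|M_i|\le t$). Tensoring both factors with $A_{I,x}$ and composing with the ultralimit projection $(\varpi^{-N_0}\OO/\OO)^I\to\varpi^{-N_0}\OO/\OO\subset L/\OO$ along $\mathfrak F_x$ --- well-defined because $\varpi^{-N_0}\OO/\OO$ is a finite set --- yields the desired map into $((\prod_i M_i)\otimes_{A_I} A_{I,x})^\vee$. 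In the reduced case $M_i=N$ this map sends $(\phi_i)\otimes 1$ to the functional $(m_i)\otimes 1\mapsto\phi(m)$, where $\phi\in N^\vee$ and $m\in N$ are the $\mathfrak F_x$-ultralimits of $(\phi_i)$ and $(m_i)$, which is exactly the identity after the identifications above.
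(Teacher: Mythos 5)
Your proof is correct, and it takes a genuinely different route from the paper's. Both proofs construct the same canonical map and both invoke the same key verification --- that the map is an isomorphism when all $M_i$ are isomorphic to a fixed finite $A$-module $N$, where both sides collapse to $N^\vee$ via the diagonal isomorphism $N\cong N_{I,x}$ recalled before the lemma. Where you differ is in the reduction to that key case. The paper chooses, for each $i$, a uniform presentation $A^{tm}\to A^t\to M_i\to 0$ (with $m=|A|$, valid because $|M_i|\le t$) and runs a right-exactness diagram chase, which in particular requires checking that the kernel of $A^t\twoheadrightarrow M_i$ admits $\le tm$ generators. You instead observe that there are only finitely many isomorphism classes of $A$-modules of cardinality $\le t$, partition $I$ accordingly, and use the ultrafilter property that exactly one block $I_{j_0}$ lies in $\mathfrak F_x$: the idempotent $e_{I\setminus I_{j_0}}$ dies in $A_{I,x}$, so both sides see only the $I_{j_0}$-block, where the modules are all isomorphic. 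This reduction is cleaner --- it dispenses with the presentation bookkeeping and the diagram chase --- at the cost of being slightly more tied to the finiteness of $A$ (though the paper's argument also uses $|A|<\infty$ to pick $m$). Your sketch of the canonical map as an ultralimit of the coordinatewise evaluation pairing is equivalent to the paper's construction via $c_i\colon M_i^\vee\otimes_A M_i\to A^\vee$; both unwind to $M_x\to\Hom_{\OO}(N_x, L/\OO)$ after identifying $A^\vee\cong A_{I,x}^\vee$. One small point worth stating explicitly if you wrote this up: after the reduction, the indexing set is $I_{j_0}$ and the relevant prime is the one on $A_{I_{j_0}}$ corresponding to the restricted ultrafilter $\mathfrak F_x\cap\mathcal P(I_{j_0})$, with $A_{I,x}\cong A_{I_{j_0},x'}$ via the product decomposition $A_I=A_{I_{j_0}}\times A_{I\setminus I_{j_0}}$; this is all fine, but deserves a sentence.
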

\begin{proof} Let $c_i: M_i^{\vee}\otimes_A M_i\rightarrow A^{\vee}$ be the image of the identity map 
under   \eqref{vee} applied with $M=M_i^{\vee}$ and $N=M_i$. This yields a canonical map of $A_I$-modules
\[ (\prod_{i\in I} M_i^{\vee})\otimes_{A_I} (\prod_{i\in I} M_i)\overset{\prod c_i}{\longrightarrow} \prod_{i\in I} A^{\vee}.\]
By localizing at $x$, using the fact explained above that $( \prod_{i\in I} A^{\vee})\otimes_{A_I} A_{I, x}\cong A^{\vee}\cong A_{I, x}^{\vee}$ and \eqref{vee} we obtain a canonical 
map of $A_{I, x}$-modules 
\begin{equation}\label{vee2}
(\prod_{i\in I} M_i^{\vee})\otimes_{A_I} A_{I, x} \rightarrow ((\prod_{i\in I} M_i)\otimes_{A_I} A_{I, x})^{\vee}
\end{equation}
Moreover, this map is an isomorphism if all $M_i$ are  equal. In particular, it is an isomorphism if $M_i=A^m$ for a fixed $m$ and all $i\in I$. 
Let $m$ be the cardinality of $A$. Then for each $i\in I$ we may choose a presentation $A^{tm}\rightarrow A^t\rightarrow M_i\rightarrow 0$. 
Using these presentations we deduce that the map \eqref{vee2} is an isomorphism.  
\end{proof} 

\begin{lem}\label{useful} Let $R, S$ be two rings, $E$ a left $R$-module, $F$ a left $S$-module and $G$ an $(R, S)$-bimodule.
If $E$ is finitely presented over $R$ and $F$ is $S$-flat then the canonical map
\begin{equation}\label{brbki}
\Hom_R(E, G)\otimes_S F \rightarrow \Hom_R(E, G\otimes_S F)
\end{equation}
is an isomorphism.
\end{lem}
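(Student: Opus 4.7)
The plan is to proceed by dévissage on $E$, exploiting the hypothesis of finite presentation to reduce to the case where $E$ is free of finite rank.

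First, I would check the base case $E = R$ by hand: the left-hand side is $\Hom_R(R,G) \otimes_S F \cong G \otimes_S F$, and the right-hand side is $\Hom_R(R, G \otimes_S F) \cong G \otimes_S F$, and one verifies directly that the canonical map \eqref{brbki} is the identity under these identifications.

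Next, I would extend the base case to $E = R^n$ using additivity: both the functor $\Hom_R(-, G) \otimes_S F$ and the functor $\Hom_R(-, G \otimes_S F)$ turn finite direct sums in the first argument into direct sums, and the canonical map \eqref{brbki} is natural in $E$, so the case $E = R$ implies the case $E = R^n$.

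Finally, I would use a finite presentation $R^m \xrightarrow{\alpha} R^n \to E \to 0$. Applying the left-exact functor $\Hom_R(-, G)$ and then tensoring with $F$ (which preserves exactness since $F$ is $S$-flat), and on the other hand applying $\Hom_R(-, G \otimes_S F)$, gives a commutative diagram with exact rows
\[
\begin{CD}
0 @>>> \Hom_R(E,G) \otimes_S F @>>> \Hom_R(R^n,G) \otimes_S F @>>> \Hom_R(R^m,G) \otimes_S F \\
@. @VVV @VV\cong V @VV\cong V \\
0 @>>> \Hom_R(E, G \otimes_S F) @>>> \Hom_R(R^n, G \otimes_S F) @>>> \Hom_R(R^m, G \otimes_S F).
\end{CD}
\]
The two rightmost vertical maps are isomorphisms by the free case, so the five lemma yields that the leftmost vertical arrow is an isomorphism as well. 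There is no genuine obstacle here; the only hypothesis doing real work is the $S$-flatness of $F$, needed to keep the top row exact after tensoring.
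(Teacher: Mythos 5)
Your proof is correct and takes essentially the same route as the paper: reduce to the free case via a finite presentation, use $S$-flatness of $F$ to keep the dualized row exact after tensoring, and conclude by a diagram chase. You have merely spelled out the free-rank-$n$ reduction and the five-lemma step that the paper compresses into ``a diagram chase.''
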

\begin{proof} We apply the functor $\Hom_R(\cdot, G)\otimes_S F$ to a resolution $R^n \rightarrow R^m \rightarrow E\rightarrow 0$. Since $F$ is $S$-flat the sequence
\[0\rightarrow \Hom_R(E, G)\otimes_S F\rightarrow \Hom_R(R^m, G)\otimes_S F\rightarrow \Hom_R(R^n, G)\otimes_S F\]
is exact. If $E$ is a free $R$-module of finite rank then \eqref{brbki} is an isomorphism. The assertion follows 
from a diagram chase.  
\end{proof}  

Let $K$ be a compact $p$-adic analytic group. This assumption implies that the completed group 
algebra $A\br{K}$ is noetherian. Let $\{M_i\}_{i\in I}$ be a family of compact 
modules over $A\br{K}$, such that for each open subgroup $H$ of $K$ there 
is a constant $t(H)$, such that for all $i\in I$ the cardinality of $H$-coinvariants $(M_i)_{H}$ is bounded 
above by $t(H)$. Let $\pi_i:= M_i^{\vee}$ and let $\Pi$ be the subset of smooth vectors in $\prod_{i\in I} \pi_i$, so that 
\begin{equation}\label{define_Pi}
\Pi= \bigcup_H \prod_{i\in I} \pi_i^H,
\end{equation}
where the union is taken over all open (normal) subgroups $H$ of $K$. Then $\Pi$ is an $A_I$-module with a smooth action of $K$. 

\begin{lem}\label{invariants} If $H$ is an open subgroup of $K$ then 
\[(\Pi\otimes_{A_I} A_{I,x})^H\cong \Pi^H \otimes_{A_I} A_{I,x}= (\prod_{i\in I} \pi_i^H)\otimes_{A_I} A_{I,x}.\]
\end{lem}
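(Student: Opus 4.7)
The equality $\Pi^H = \prod_{i\in I}\pi_i^H$ on the right-hand side is immediate from the description \eqref{define_Pi}: a vector $(v_i)_i$ in the smooth part of $\prod_i \pi_i$ is $H$-fixed if and only if each $v_i$ is. The content of the lemma is therefore the isomorphism $(\Pi \otimes_{A_I} A_{I,x})^H \cong \Pi^H \otimes_{A_I} A_{I,x}$, and my plan is to combine two standard ingredients: writing $\Pi$ as a filtered colimit of its $H'$-invariants, and observing that on each stage the $H$-action factors through a finite quotient, so that $H$-invariance is a finite limit which the flat functor $-\otimes_{A_I} A_{I,x}$ preserves.

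More precisely, since $K$ is compact $p$-adic analytic, the open normal subgroups of $K$ contained in $H$ form a cofinal system among all open subgroups of $K$, so that \eqref{define_Pi} can be rewritten as
\[\Pi = \varinjlim_{H'\,\triangleleft\, K,\; H' \subseteq H} \prod_{i\in I}\pi_i^{H'}.\]
Every transition map is an inclusion, and since $A_{I,x}$ is flat over $A_I$ these inclusions persist after tensoring, giving $\Pi \otimes_{A_I} A_{I,x} = \varinjlim_{H'} \bigl(\prod_i \pi_i^{H'}\bigr)\otimes_{A_I} A_{I,x}$ as an increasing union of sub-$A_{I,x}$-modules. Because $H$-invariants commutes with filtered colimits of injections of abelian groups, the claim reduces to showing, for each fixed open normal subgroup $H' \subseteq H$ of $K$ and $N := \prod_{i \in I} \pi_i^{H'}$, that $(N \otimes_{A_I} A_{I,x})^H \cong N^H \otimes_{A_I} A_{I,x}$.

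For this last step, the $H$-action on $N$ factors through the finite group $G := H/H'$, and $N^H = N^G$ is the kernel of the $A_I$-linear map $N \to \prod_{g \in G} N$, $n \mapsto (g n - n)_{g \in G}$. Because $G$ is finite, this is a finite limit; flatness of $A_{I,x}$ over $A_I$ ensures that $-\otimes_{A_I} A_{I,x}$ preserves it, yielding $(N \otimes_{A_I} A_{I,x})^G = N^G \otimes_{A_I} A_{I,x}$, and reassembling the colimit gives the lemma. I do not foresee a serious obstacle here; the only point that deserves a line of justification is the cofinality of open normal subgroups of $K$ inside $H$, which is immediate from the $p$-adic analytic structure on $K$.
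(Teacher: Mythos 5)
Your proof is correct, but it takes a genuinely different route from the paper's. The paper observes that $H$-invariants equals $\Hom_{A\br{H}}(A, -)$, uses that $A\br{H}$ is noetherian (since $H$ is $p$-adic analytic) so that $A$ with trivial $H$-action is a finitely presented $A\br{H}$-module, and then invokes Lemma~\ref{useful} (the commutation of $\Hom_R(E, -)$ with a flat base change for $E$ finitely presented) once; this gives the result in a single line given that machinery. Your argument instead filters $\Pi$ as an increasing union of modules on which the $H$-action factors through a finite quotient $G = H/H'$, uses that $H$-invariants commutes with such filtered unions, and then observes that $N^G$ is a finite limit (the kernel of $N\to \prod_{g\in G} N$, $n\mapsto (gn-n)_g$) which any flat base change preserves. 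Both proofs are complete; yours is more elementary and self-contained (it does not touch the Iwasawa algebra or noetherianity, only flatness and group cohomology degree zero), whereas the paper's is shorter because Lemma~\ref{useful} has already been set up precisely for this purpose and is reused elsewhere (e.g.\ in Lemma~\ref{pst_quotient}). One small bonus of the paper's formulation is that it makes no reference to the colimit presentation of $\Pi$ at all; one small bonus of yours is that it clarifies \emph{why} flatness suffices, namely that taking invariants under a smooth action is ultimately a finite-limit operation on each stage of the filtration.
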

\begin{proof} Since $H$ is also $p$-adic analytic, $A\br{H}$ is noetherian. Thus $A$ with the trivial action of 
$H$ is a finitely presented $A\br{H}$-module. Since localizations are flat Lemma \ref{useful} implies that 
\[\Hom_{A\br{H}}(A, \Pi\otimes_{A_I} A_{I,x})\cong \Hom_{A\br{H}}(A, \Pi)\otimes_{A_I} A_{I,x},\]
where we consider $\Pi$ as an $A$-module via diagonal map $A\rightarrow A_I$. This implies the assertion. 
\end{proof}

\begin{lem}\label{ultra2} There is a canonical isomorphism of $A_{I,x}\br{K}$-modules 
\[ (\Pi\otimes_{A_I} A_{I,x})^{\vee} \cong \varprojlim_H \bigl((\prod_{i\in I} (M_i)_H)\otimes_{A_I} A_{I,x}\bigr),\]
where the projective limit is taken over all open normal subgroups $H$ of $K$. Moreover, 
\[ ((\Pi\otimes_{A_I} A_{I,x})^{\vee})_H \cong (\prod_{i\in I} (M_i)_H)\otimes_{A_I} A_{I,x}.\]
\end{lem}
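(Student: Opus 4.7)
The plan is to combine the previous two lemmas with the standard interplay between Pontryagin duality and $H$-(co)invariants. Concretely, for a smooth $K$-representation $V$ on a discrete $\OO$-torsion module one has $(V^{\vee})_H \cong (V^H)^{\vee}$, and $\Pi \otimes_{A_I} A_{I,x}$ is such a smooth representation (it is the filtered colimit over $H$ of its $H$-invariants, by Lemma \ref{invariants} combined with the decomposition \eqref{define_Pi}).

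First I would compute the $H$-coinvariants of $N := (\Pi \otimes_{A_I} A_{I,x})^{\vee}$. Applying the identity $(V^{\vee})_H \cong (V^H)^{\vee}$ to $V = \Pi \otimes_{A_I} A_{I,x}$ and invoking Lemma \ref{invariants} gives
\[ N_H \cong \bigl((\Pi \otimes_{A_I} A_{I,x})^H\bigr)^{\vee} \cong \bigl(\Pi^H \otimes_{A_I} A_{I,x}\bigr)^{\vee} \cong \Bigl(\bigl(\prod_{i\in I} (M_i)_H^{\vee}\bigr) \otimes_{A_I} A_{I,x}\Bigr)^{\vee}, \]
since $\pi_i^H = ((M_i)_H)^{\vee}$. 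The hypothesis on $\{M_i\}$ guarantees $|(M_i)_H| \leq t(H)$, so Lemma \ref{ultra1} (applied with $(M_i)_H$ in place of $M_i$) rewrites the inner expression as a Pontryagin dual, whence
\[ N_H \cong \Bigl(\bigl(\prod_{i\in I} (M_i)_H\bigr) \otimes_{A_I} A_{I,x}\Bigr)^{\vee\vee}. \]
To collapse the double dual I would observe that the cardinality bound makes each $(M_i)_H$ a quotient of $A^{m(H)}$ for a fixed integer $m(H)$, so $\prod_{i\in I}(M_i)_H$ is a quotient of $A_I^{m(H)}$, and after localizing at $x$ and using $A \cong A_{I,x}$ (diagonally), one finds $(\prod_{i\in I}(M_i)_H)\otimes_{A_I} A_{I,x}$ is a finite $A$-module, hence canonically its own double Pontryagin dual. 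This yields the second isomorphism of the lemma.

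For the first isomorphism I would exploit $\Pi = \varinjlim_H \Pi^H$ and the exactness of $-\otimes_{A_I} A_{I,x}$: the tensor product commutes with the filtered colimit, so
\[ \Pi \otimes_{A_I} A_{I,x} \cong \varinjlim_H \bigl(\Pi^H \otimes_{A_I} A_{I,x}\bigr). \]
Taking Pontryagin duals converts this filtered colimit into an inverse limit, and by exactly the computation carried out for $N_H$ (now without the final double-dual step, since $(\Pi^H \otimes A_{I,x})^{\vee}$ already equals $(\prod_{i\in I}(M_i)_H)\otimes_{A_I} A_{I,x}$ by Lemma \ref{ultra1}), one obtains
\[ (\Pi \otimes_{A_I} A_{I,x})^{\vee} \cong \varprojlim_H \Bigl(\bigl(\prod_{i\in I}(M_i)_H\bigr) \otimes_{A_I} A_{I,x}\Bigr), \]
the transition maps being those induced by $(M_i)_H \to (M_i)_{H'}$ for $H' \subset H$.

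The main point requiring care is the finiteness of $(\prod_{i\in I}(M_i)_H)\otimes_{A_I} A_{I,x}$ over $A$, needed to justify double-dualization; this is precisely where the uniform bound $|(M_i)_H| \leq t(H)$ enters and is the step where one must keep track of the two $A_I$-module structures (via the diagonal and via the individual factors) and use $A \cong A_{I,x}$. All remaining verifications (that the constructed maps respect the $A_{I,x}\br{K}$-module structure and are compatible in $H$) are routine and follow from the naturality of the constructions in Lemmas \ref{ultra1} and \ref{invariants}.
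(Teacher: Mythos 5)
Your proof is correct and follows essentially the same route as the paper: identify $\Pi\otimes_{A_I}A_{I,x}$ as the filtered colimit of $(\prod_i\pi_i^H)\otimes_{A_I}A_{I,x}$, dualize to get an inverse limit, and apply Lemma \ref{ultra1} termwise together with Lemma \ref{invariants}. The only inessential detour is in your coinvariants computation, where you invoke Lemma \ref{ultra1} in the direction that produces a double Pontryagin dual and then argue finiteness to collapse it; applying Lemma \ref{ultra1} with $\pi_i^H$ (rather than $(M_i)_H$) in the role of $M_i$ -- as you yourself do in the first-isomorphism paragraph and as the paper does -- yields $(\Pi^H\otimes_{A_I}A_{I,x})^{\vee}\cong(\prod_i(M_i)_H)\otimes_{A_I}A_{I,x}$ directly, with no double dual to collapse.
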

\begin{proof} Since tensor products commute with direct limits we have
\[\Pi\otimes_{A_I} A_{I,x}\cong \varinjlim_{H} \bigl( (\prod_{i\in I} \pi_i^H)\otimes_{A_I} A_{I,x}\bigr).\]
The isomorphism $M_i\overset{\cong}{\longrightarrow} (M_i^{\vee})^{\vee}$ induces  an isomorphism 
$(\pi_i^H)^{\vee}\cong (M_i)_H$. Since the functor $(\cdot)^{\vee}$ converts direct limits to projective limits the first assertion 
follows from Lemma \ref{ultra1}. The second assertion follows from Lemma \ref{invariants}. 
\end{proof}

 Let $\varphi: R\rightarrow A$ be a map of local $\OO$-algebras and let $N$ be a finitely generated  $\OO\br{K}$-module. 
 We equip $N$ with the canonical topology for which the action of $\OO\br{K}$ is continuous. 
 Then $R$ acts on $\Hom^{\cont}_{\OO\br{K}}(M_i, N)$ via its action on $M_i$ via $\varphi$.
 
 \begin{lem}\label{pst_quotient} If the action of $R$ on $\Hom^{\cont}_{\OO\br{K}}(M_i, N)$ for all $i\in I$ 
 factors through the quotient $R\twoheadrightarrow R'$ then the action of $R$ on 
 \[\Hom^{\cont}_{\OO\br{K}}( (\Pi\otimes_{A_I} A_{I,x})^{\vee}, N)\] 
 also factors through $R\twoheadrightarrow R'$.
 \end{lem}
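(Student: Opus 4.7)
The plan is to reduce the statement for $\tilde M:=(\Pi\otimes_{A_I}A_{I,x})^{\vee}$ to the hypothesis on each individual $M_j$ by constructing a continuous $R$-equivariant surjection $\alpha:\prod_{i\in I}M_i\twoheadrightarrow \tilde M$ and then using a density argument. By Lemma~\ref{ultra2}, for every open normal $H\lhd K$ we have $\tilde M_H\cong(\prod_i (M_i)_H)\otimes_{A_I}A_{I,x}$. Because the diagonal $A\to A_I$ is a section of the localization $A_I\to A_{I,x}$ (the composition being the isomorphism $A\cong A_{I,x}$ recalled at the start of Section~\ref{sec_ultra}), the map $A_I\twoheadrightarrow A_{I,x}$ is surjective, so tensoring yields a surjection $\prod_i(M_i)_H\twoheadrightarrow \tilde M_H$. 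Passing to the inverse limit in $H$ gives a continuous $A_I$-linear map $\alpha:\prod_i M_i=\varprojlim_H\prod_i(M_i)_H\to \tilde M$; its image surjects onto every $\tilde M_H$ and is therefore dense, and since $\prod_i M_i$ is compact and $\tilde M$ Hausdorff, $\alpha$ is actually surjective.

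For each $j\in I$ let $\iota_j:M_j\to\tilde M$ be the composition of $\alpha$ with the inclusion $M_j\hookrightarrow\prod_i M_i$ as the $j$-th coordinate (zero elsewhere). This is continuous and $R$-equivariant, the $R$-action on both sides being induced by $\varphi:R\to A$ composed with the diagonal $A\to A_I$. Now pick $f\in\Hom^\cont_{\OO\br K}(\tilde M,N)$ and $r\in\ker(R\to R')$, and set $f_j:=f\circ\iota_j\in\Hom^\cont_{\OO\br K}(M_j,N)$. The hypothesis of the lemma gives $r\cdot f_j=0$, which by $R$-equivariance of $\iota_j$ reads $(r\cdot f)\circ\iota_j=0$ for every $j$, so that $r\cdot f$ vanishes on $\bigoplus_j\iota_j(M_j)$. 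Since $\bigoplus_j M_j$ is dense in $\prod_j M_j$ for the product topology and $\alpha$ is a continuous surjection, $\bigoplus_j\iota_j(M_j)=\alpha(\bigoplus_j M_j)$ is dense in $\tilde M$. Finally, $N$ is Hausdorff (a finitely generated module over the noetherian local ring $\OO\br K$, with its canonical topology, is separated), and $r\cdot f$ is continuous, so vanishing on a dense subset forces $r\cdot f=0$; this is precisely the statement that the $R$-action on $\Hom^\cont_{\OO\br K}(\tilde M,N)$ factors through $R'$.

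The only non-formal input in this plan is the surjectivity of $\alpha$, which itself rests on the surjectivity of $A_I\twoheadrightarrow A_{I,x}$; that surjectivity is immediate from the diagonal isomorphism $A\cong A_{I,x}$, so there is no real obstacle. The rest of the argument is a density/continuity formality together with the $R$-equivariance of the localization maps built in Lemma~\ref{ultra2}.
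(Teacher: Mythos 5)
You take a genuinely different route from the paper. The paper first reduces to $N$ finite via $N\cong\varprojlim_J N/JN$, then passes to the discrete side via Pontryagin duality together with Lemma~\ref{useful}, and arrives at the explicit isomorphism of $R$-modules $\Hom^{\cont}_{\OO\br{K}}\bigl((\Pi\otimes_{A_I}A_{I,x})^{\vee}, N\bigr)\cong\bigl(\prod_{i\in I}\Hom^{\cont}_{\OO\br{K}}(M_i,N)\bigr)\otimes_{A_I}A_{I,x}$, from which the conclusion is immediate. You instead remain entirely on the compact side: from the description in Lemma~\ref{ultra2} and the surjectivity of $A_I\twoheadrightarrow A_{I,x}$ (which you correctly deduce from the diagonal isomorphism $A\cong A_{I,x}$), you assemble a continuous $R$-equivariant surjection $\alpha:\prod_i M_i\twoheadrightarrow (\Pi\otimes_{A_I}A_{I,x})^{\vee}$, restrict to each coordinate $\iota_j$, and close with a density argument. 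Both proofs are correct. The paper's manipulates only isomorphisms of $R$-modules and so never has to prove that a map is onto; yours trades the reduction to finite $N$ and the Hom--tensor bookkeeping of Lemma~\ref{useful} for a topological verification, namely that the image of $\alpha$ is closed (continuous image of a compact) and dense (it maps onto each coinvariant $\tilde M_H$, and these quotients are cofinal in the topology of $\tilde M$ by the $\varprojlim_H$ description in Lemma~\ref{ultra2}), and that $N$ is separated for its canonical topology so a continuous map vanishing on a dense set vanishes. All of these are available in the ambient setup, so the argument goes through.
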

 \begin{proof} The isomorphism $N\cong \varprojlim_J N/J N$, where $J$ runs over open two-sided ideals of $\OO\br{K}$,  induces an isomorphism
 \[\Hom^{\cont}_{\OO\br{K}}( (\Pi\otimes_{A_I} A_{I,x})^{\vee}, N)\cong \varprojlim_J \Hom^{\cont}_{\OO\br{K}}( (\Pi\otimes_{A_I} A_{I,x})^{\vee}, N/JN).\]
 Thus it is enough to prove the assertion when $N$ is finite (as a set), which we now assume. Then $N^{\vee}$ is also finite, thus finitely generated over 
 $A\br{K}$, and since $A\br{K}$ is noetherian, $N^{\vee}$ is finitely presented over $A\br{K}$.   Pontryagin duality and Lemma \ref{useful} give us
 \begin{equation}
 \begin{split}
  \Hom^{\cont}_{\OO\br{K}}( (\Pi\otimes_{A_I} A_{I,x})^{\vee}, N)&\cong \Hom_{\OO\br{K}}(N^{\vee}, \Pi\otimes_{A_I} A_{I,x})\\ &\cong 
 \Hom_{\OO\br{K}}(N^{\vee}, \Pi)\otimes_{A_I} A_{I,x}.
 \end{split}
 \end{equation}
 
 Let $H$ be an open normal subgroup of $K$ which acts trivially on $N^{\vee}$. We have 
  \begin{equation}
  \begin{split}
  \Hom_{\OO\br{K}}(N^{\vee}, \Pi)&\cong \Hom_{\OO\br{K}}(N^{\vee}, \Pi^H)\cong \prod_{i\in I} \Hom_{\OO\br{K}}(N^{\vee}, \pi_i^H)\\
& \cong \prod_{i\in I}\Hom^{\cont}_{\OO\br{K}}((M_i)_H, N)\cong \prod_{i\in I}\Hom^{\cont}_{\OO\br{K}}(M_i, N).
 \end{split}
 \end{equation}
 We conclude that for finite $N$ we have an isomorphism of $R$-modules 
 \[\Hom^{\cont}_{\OO\br{K}}( (\Pi\otimes_{A_I} A_{I,x})^{\vee}, N)\cong \bigl (\prod_{i\in I}\Hom^{\cont}_{\OO\br{K}}(M_i, N)\bigr)\otimes_{A_I} A_{I,x}.\]
 Since the action of $R$ on the right-hand-side factors through $R\twoheadrightarrow R'$ we obtain the assertion. 
  \end{proof}
  
   \subsection{Scholze's functor and ultrafilters}

   \begin{lem}\label{smooth_exact} Let $0\rightarrow \sigma\rightarrow \pi \rightarrow \tau\rightarrow 0$ be an 
   exact sequence in $\Mod^{\adm}_{G'}(\ZZ/p^s)$, where $G'$ is an open subgroup of $G$. Then the sequence: 
   \[0\rightarrow (\prod_{i\in I} \sigma)^{\sm}\rightarrow (\prod_{i\in I} \pi)^{\sm}\rightarrow 
   (\prod_{i\in I} \tau)^{\sm}\rightarrow 0\]
   is exact. 
   \end{lem}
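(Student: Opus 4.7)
The plan is to reduce the statement to a simple uniform lifting problem, exploiting the fact that admissibility over the finite ring $\mathbb{Z}/p^s$ forces finiteness of $H$-invariants. Since arbitrary products are exact in the category of $\mathbb{Z}/p^s$-modules, the sequence
\[0\to\prod_{i\in I}\sigma\to\prod_{i\in I}\pi\to\prod_{i\in I}\tau\to 0\]
is exact. For any open subgroup $H\le G'$ one has $(\prod_i V)^H=\prod_i V^H$ for $V\in\{\sigma,\pi,\tau\}$, and $\sigma^H=\sigma\cap\pi^H$ inside $\pi$, so passing to $H$-invariants preserves exactness and then taking the union over open $H$ yields left exactness and exactness in the middle of the smooth-vector sequence at once. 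Only the surjectivity of $(\prod_i\pi)^{\sm}\to(\prod_i\tau)^{\sm}$ needs real argument.

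To establish surjectivity, I would proceed as follows. Given $(t_i)_{i\in I}\in(\prod_i\tau)^{\sm}$, choose an open subgroup $H\le G'$ with $t_i\in\tau^H$ for every $i\in I$. The crucial input is that $\tau^H$ is a \emph{finite} set, because $\tau$ is admissible and $\mathbb{Z}/p^s$ is finite. Enumerate its elements as $t^{(1)},\dots,t^{(n)}$ and, using the surjection $\pi\twoheadrightarrow\tau$, pick arbitrary lifts $p^{(1)},\dots,p^{(n)}\in\pi$. Each $p^{(j)}$ is smooth, so it lies in $\pi^{H_j}$ for some open subgroup $H_j\le H$; set $H':=\bigcap_{j=1}^n H_j$, which is still open. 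Then the tuple $(p_i)_{i\in I}$ defined by $p_i:=p^{(j)}$ whenever $t_i=t^{(j)}$ lies in $\prod_i\pi^{H'}\subseteq(\prod_i\pi)^{\sm}$ and maps to $(t_i)_{i\in I}$.

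The only conceptual obstacle is securing a \emph{uniform} radius of smoothness for the lifts, and the finiteness of $\tau^H$ resolves this for free: although the index set $I$ may be arbitrary, only finitely many distinct values among the $t_i$ occur, so only finitely many lifts are needed and the intersection of their stabilizers remains open. No specifics of $G$, $D^{\times}$ or Scholze's construction are used anywhere — the argument is formal and would apply to any profinite $G'$ and any exact sequence of admissible smooth representations over a finite coefficient ring.
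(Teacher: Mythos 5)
Your proof is correct, but it takes a different route from the paper's. Where you argue element-by-element, the paper instead establishes the isomorphism
\[(\prod_{i\in I}\pi)^{\sm}\cong\varinjlim_H\prod_{i\in I}\pi^H\cong\varinjlim_H\bigl(\pi^H\otimes_{\ZZ/p^s}(\ZZ/p^s)_I\bigr)\cong\pi\otimes_{\ZZ/p^s}(\ZZ/p^s)_I,\]
using that $\pi^H$ is a finitely presented $\ZZ/p^s$-module by admissibility, and then concludes by observing that $(\ZZ/p^s)_I$ is flat over $\ZZ/p^s$ (since $\ZZ/p^s$ is Noetherian, hence coherent, and arbitrary products of a coherent ring are flat). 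Your approach replaces the flatness input with an explicit lifting argument: since $\tau^H$ is finite, only finitely many distinct components $t^{(j)}$ occur among the $t_i$, and a finite intersection of stabilizers is still open — this directly produces a smooth preimage tuple. Both proofs are correct and both hinge on the same phenomenon (finiteness of $H$-invariants over a finite coefficient ring); the difference is whether this finiteness is packaged as ``coherent $\Rightarrow$ $(\ZZ/p^s)_I$ flat'' or used bare. The paper's formulation has the advantage that the isomorphism $(\prod_i\pi)^{\sm}\cong\pi\otimes_{\ZZ/p^s}(\ZZ/p^s)_I$ is reused immediately afterwards in the proof of Lemma \ref{resolution}, so it earns its keep; your argument is more elementary and self-contained but would then need the tensor identification to be proved separately when it is needed there. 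One cosmetic remark: your injectivity and middle-exactness step really only needs the fact that $(\cdot)^H$ is left exact and that filtered colimits (over open $H$) are exact; the observation $\sigma^H=\sigma\cap\pi^H$ is implicitly contained in left exactness and need not be singled out.
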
 
   \begin{proof} We have 
   \begin{equation}
   \begin{split}
   (\prod_{i\in I} \pi)^{\sm}&\cong \varinjlim_H \prod_{i\in I} \pi^H \cong
   \varinjlim_H ( \pi^H \otimes_{\ZZ/p^s} (\ZZ/p^s)_I)\\&\cong 
   ( \varinjlim_H \pi^H)\otimes_{\ZZ/p^s} (\ZZ/p^s)_I \cong \pi \otimes_{\ZZ/p^s} (\ZZ/p^s)_I,
   \end{split}
   \end{equation}
   where the limit is taken over open subgroups of $G'$. Since $\ZZ/p^s$ is noetherian, it is coherent, and hence $(\ZZ/p^s)_I$ is a flat 
   $\ZZ/p^s$-module, which implies the assertion.
   \end{proof}
   
  \begin{lem}\label{resolution}Let $\{\pi_i\}_{i\in I}$ be a family in $\Mod^{\adm}_G(\ZZ/p^s)$ and 
  let $\Pi= (\prod_{i\in I} \pi_i)^{\sm}$. If for some compact open subgroup $H$ of $G$ there
  is $\pi\in \Mod^{\adm}_H(\ZZ/p^s)$ and integers $m_i$ for $i\in I$, bounded independently of $i$, such that
  $\pi_i|_H \cong \pi^{m_i}$  
  then there is a resolution of $(\ZZ/p^s)_I[H]$-modules: 
  \[0\to \Pi|_H\to \Pi_1\to\ldots\to \Pi_j\to \ldots\]
   where each $\Pi_j$ is a direct summand of $\mathcal{C}(H, \ZZ/p^s)^{n_j} \otimes_{\ZZ/p^s} (\ZZ/p^s)_I$ for some $n_j$.
   \end{lem}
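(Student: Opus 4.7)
The plan is to transfer an injective resolution of $\pi$ itself to $\Pi|_H$, using the boundedness of the $m_i$. Since $H$ is a compact $p$-adic analytic group, the ring $(\ZZ/p^s)\br{H}$ is noetherian, so the Pontryagin dual $\pi^{\vee}$ is finitely generated and admits a resolution by finitely generated free modules
\[\cdots \to (\ZZ/p^s)\br{H}^{n_2} \to (\ZZ/p^s)\br{H}^{n_1} \to \pi^{\vee} \to 0.\]
Dualising yields an injective resolution $0 \to \pi \to J^1 \to J^2 \to \cdots$ in $\Mod^{\adm}_H(\ZZ/p^s)$ with $J^j = \mathcal{C}(H, \ZZ/p^s)^{n_j}$. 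Set $M := \sup_i m_i < \infty$ and partition $I = \bigsqcup_{k=0}^{M} I_k$ with $I_k := \{i \in I : m_i = k\}$.

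The key observation is that a vector $(v_i)_{i \in I}$ of $\prod_{i\in I} \pi^{m_i}$ is fixed by some open subgroup of $H$ if and only if each of its projections to the factors $\prod_{i\in I_k} \pi^k$ is so: this would fail in general for infinitely many factors, but here $k$ runs over the finite set $\{0, \ldots, M\}$ and we can intersect finitely many fixators. Consequently
\[\Pi|_H \;=\; \bigl(\textstyle\prod_{i\in I} \pi^{m_i}\bigr)^{\sm} \;=\; \bigoplus_{k=0}^{M} \bigl(\textstyle\prod_{i\in I_k} \pi^k\bigr)^{\sm},\]
and the computation from the proof of Lemma \ref{smooth_exact} identifies the right-hand side with $\bigoplus_{k=0}^{M} \pi^k \otimes_{\ZZ/p^s} (\ZZ/p^s)_{I_k}$.

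To assemble the required resolution, take the $k$-fold direct sum of $0 \to \pi \to J^\bullet$ (a resolution of $\pi^k$ with $j$-th term $\mathcal{C}(H, \ZZ/p^s)^{k n_j}$) and tensor it with $(\ZZ/p^s)_{I_k}$; this preserves exactness because $(\ZZ/p^s)_{I_k}$ is flat over the Artinian ring $\ZZ/p^s$. Summing the resulting complexes over $k$ produces a resolution of $\Pi|_H$ whose $j$-th term is $\Pi_j = \bigoplus_{k=0}^{M} \mathcal{C}(H, \ZZ/p^s)^{k n_j} \otimes_{\ZZ/p^s} (\ZZ/p^s)_{I_k}$. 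Padding with zeros exhibits $\mathcal{C}(H, \ZZ/p^s)^{k n_j}$ as a direct summand of $\mathcal{C}(H, \ZZ/p^s)^{M n_j}$, and since $(\ZZ/p^s)_I = \bigoplus_{k=0}^{M} (\ZZ/p^s)_{I_k}$ as a \emph{finite} direct sum, $\Pi_j$ is a direct summand of $\mathcal{C}(H, \ZZ/p^s)^{M n_j} \otimes_{\ZZ/p^s} (\ZZ/p^s)_I$, as required. The only delicate step is the key observation above: without uniform boundedness of the $m_i$ the decomposition ceases to be a finite direct sum, the $k$-component of a smooth vector need no longer be smooth, and the whole construction collapses.
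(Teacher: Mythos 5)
Your proof is correct and follows essentially the same route as the paper: both take a free resolution of $\pi^{\vee}$ over $(\ZZ/p^s)\br{H}$ and dualise, then exploit the boundedness of the $m_i$ to decompose into finitely many pieces indexed by the value $k=m_i$ (the paper phrases this with the idempotents $e_k\in(\ZZ/p^s)_I$, you phrase it by partitioning $I$, but these are the same decomposition), and finally use flatness of $(\ZZ/p^s)_{I}$ over $\ZZ/p^s$ to preserve exactness (the paper delegates this to Lemma \ref{smooth_exact}, whose proof is precisely the flatness observation you invoke directly).
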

   \begin{proof} Since $\pi$ is admissible $\pi^{\vee}$ is a finitely generated $\ZZ/p^s\br{H}$-module. 
   Since $H$ is $p$-adic analytic $\ZZ/p^s \br{H}$ is noetherian and thus $\pi^{\vee}$ has a projective 
   resolution by free $\ZZ/p^s \br{H}$-modules of finite rank. Since $(\ZZ/p^s\br{H})^{\vee}\cong 
  \mathcal{C}(H, \ZZ/p^s)$ by taking Pontryagin dual we obtain a resolution of $\ZZ/p^s\br{H}$-modules:
  \[0\rightarrow \pi \rightarrow \mathcal{C}(H, \ZZ/p^s)^{k_0}\rightarrow \ldots \rightarrow \mathcal{C}(H, \ZZ/p^s)^{k_j}\rightarrow \ldots\]
  This yields an exact sequence of $\ZZ/p^s\br{H}$-modules: 
  \[0\rightarrow \prod_{i\in I} \pi \rightarrow (\prod_{i\in I} \mathcal{C}(H, \ZZ/p^s))^{k_0}\rightarrow \ldots\rightarrow
  (\prod_{i\in I} \mathcal{C}(H, \ZZ/p^s))^{k_j}\rightarrow \ldots\]
  It follows from Lemma \ref{smooth_exact} that the sequence remains exact after applying 
  the functor of smooth vectors. Since $(\prod_{i\in I} \mathcal{C}(H, \ZZ/p^s))^{\sm}\cong \mathcal{C}(H, \ZZ/p^s)\otimes_{\ZZ/p^s} (\ZZ/p^s)_I$, we obtain a resolution  of $\ZZ/p^s\br{H}$-modules 
  $0\rightarrow (\prod_{i\in I} \pi)^{\sm} \rightarrow F^{\bullet}$, where $F^j=  \mathcal{C}(H, \ZZ/p^s)^{k_j}\otimes_{\ZZ/p^s} (\ZZ/p^s)_I$ for all $j\ge 0$. 
  
  Let us suppose that $m_i\le m$ for all $i\in I$. For each $0\le k\le m$ 
 let $e_k=(e_{ik})_{i\in I}\in (\ZZ/p^s)_I$ be the idempotent with 
  $e_{ik}=1$ if $m_i=k$ and $e_{ik}=0$ otherwise then 
  \[\Pi|_H\cong \bigoplus_{k=0}^m e_k ((\prod_{i\in I} \pi)^{\sm})^k\] and thus 
  $\bigoplus_{k=0}^m e_k (F^{\bullet})^k$ is the required resolution. 
  \end{proof}
  
\begin{thm}\label{main_ultra}Let $\{\pi_i\}_{i\in I}$ be a family in $\Mod^{\adm}_G(\ZZ/p^s)$
and let $\Pi=(\prod_{i\in I} \pi_i)^{\sm}$. Assume that for some compact open subgroup $H$ of $G$ there
  is $\pi\in \Mod^{\adm}_H(\ZZ/p^s)$ and integers $m_i$ for $i\in I$, bounded independently of $i$, such that
  $\pi_i|_H \cong \pi^{m_i}$ for all $i\in I$. 
Then for all $j\ge 0$ and all compact open subgroups $K\subset D^{\times}$ we have 
\[H^j((\PP^{n-1}_{\Cp}/K)_{\et}, \FF_{\Pi})\cong \prod_{i\in I} H^j((\PP^{n-1}_{\Cp}/K)_{\et}, \FF_{\pi_i}),\]
where the  cohomology groups  are defined in \cite[Section 2]{scholze}.
\end{thm}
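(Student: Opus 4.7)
The strategy is to apply the resolution from Lemma~\ref{resolution} and reduce to an acyclicity and product-compatibility calculation for Scholze's sheaf attached to the regular $H$-representation $\mathcal{C}(H,\ZZ/p^s)$.

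By Lemma~\ref{resolution} we have a resolution
\[0\to \Pi|_H\to \Pi_1\to \Pi_2\to\cdots\]
of $(\ZZ/p^s)_I[H]$-modules in which each $\Pi_j$ is a direct summand of $\mathcal{C}(H,\ZZ/p^s)^{n_j}\otimes_{\ZZ/p^s}(\ZZ/p^s)_I$; the idempotents $e_k\in (\ZZ/p^s)_I$ from the proof of that lemma simultaneously encode compatible $H$-equivariant resolutions of each $\pi_i|_H$ by direct summands of $\mathcal{C}(H,\ZZ/p^s)^{n_j}$, whose product over $i\in I$ recovers the original resolution.

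Scholze's cohomology $\pi\mapsto H^\bullet((\PP^{n-1}_{\Cp}/K)_\et,\FF_{\pi})$ is a $\delta$-functor that depends only on the $H$-restriction of $\pi$, is additive, and is compatible with the idempotents $e_k$. Applied to the resolution above, the theorem reduces to the basic claim
\[H^j((\PP^{n-1}_{\Cp}/K)_\et,\FF_{\mathcal{C}(H,\ZZ/p^s)\otimes_{\ZZ/p^s}(\ZZ/p^s)_I})\;\cong\;\prod_{i\in I} H^j((\PP^{n-1}_{\Cp}/K)_\et,\FF_{\mathcal{C}(H,\ZZ/p^s)}),\]
after which a standard double-complex argument applied to the resolution identifies both sides of the theorem.

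This last claim is the crux of the argument and is where Scholze's geometric input is essential. The sheaf $\FF_{\mathcal{C}(H,\ZZ/p^s)}$ arises by descent from the constant $\ZZ/p^s$-sheaf on $\mathcal{LT}_\infty/K$ along the finite étale quotient by the $H$-action, so its higher cohomology vanishes and $H^0$ admits an explicit description. Tensoring with $(\ZZ/p^s)_I$ preserves these properties because $(\ZZ/p^s)_I$ is $\ZZ/p^s$-flat, as used in Lemma~\ref{smooth_exact}. The main obstacle is showing that the infinite product $\prod_{i\in I}$ commutes with $H^j$ for the resulting sheaves: this is precisely the point where Scholze's Footnote~7 in \cite{scholze} must be corrected, as explained in the introduction, and where one must exploit the quasi-compactness of the quotient $\PP^{n-1}_{\Cp}/K$ together with the explicit geometry of the Lubin--Tate tower to justify the exchange of $\prod_{i\in I}$ and $H^j$.
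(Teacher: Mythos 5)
You correctly identify the resolution from Lemma~\ref{resolution} as the starting point and correctly locate the sticking point (Scholze's Footnote~7, the commutation of infinite products with cohomology), but the mechanism you propose to close that gap is wrong. The claim that the higher cohomology of $\FF_{\mathcal{C}(H,\ZZ/p^s)}$ (or its $(\ZZ/p^s)_I$-tensor) on $(\PP^{n-1}_{\Cp}/K)_\et$ vanishes is false: $\PP^{n-1}_{\Cp}/K$ has nontrivial higher \'etale cohomology even with constant coefficients, and there is no acyclicity to exploit. The geometric description you offer (descent of the constant sheaf along a finite \'etale $H$-quotient of $\mathcal{LT}_\infty/K$, with $K\subset D^\times$) also does not parse: $H$ acts on the $G$-side of the torsor, and quotienting $\mathcal{LT}_\infty/K$ by $H$ does not give $\PP^{n-1}_{\Cp}/K$. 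Consequently the ``standard double-complex argument'' has no acyclicity to run on, and the exchange of $\prod_{i\in I}$ with $H^j$ --- which you flag as the crux --- is exactly what your proposal leaves unsupplied.

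The paper uses no vanishing at all. What Scholze proves in \cite[Lemma 8.8]{scholze}, and what Lemma~\ref{key_lemma} extends to the resolution terms $\Pi_j$, is a \emph{finiteness of restriction maps}: for a strict inclusion $U\subset V$ of affinoids and $K$ small enough, the map $H^j((V/K)_\et,\mathcal{F}\otimes\mathcal{O}^+/p)\to H^j((U/K)_\et,\mathcal{F}\otimes\mathcal{O}^+/p)$ factors through an almost finitely presented $(\Fp)_I\otimes_{\Fp}\OO_{\Cp}/p$-module (after the reduction to $s=1$ via Lemma~\ref{induction_step}, which you also omit). This finiteness passes from the individual $\Pi_j$'s to $\Pi$ by the Cartan--Serre spectral-sequence device of Lemma~\ref{cartan-serre} (after \cite[Lemma 10.5.6]{scholzeberk}), applied to a shrinking chain $V=U_0\subset\cdots\subset U_d=U$ with $d=(m+1)3^{m+1}$. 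It is this almost-finite-presentation bound --- not quasi-compactness or the geometry of the Lubin--Tate tower alone --- that lets the rest of the argument of \cite[Theorem 8.3, Corollary 8.5]{scholze} run verbatim and makes the product commute with $H^j$. Without it, the proof does not close.
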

 
 We first list some consequences of the Theorem. 
  
 \begin{cor}\label{cor1} Assume, additionally to the assumptions of Theorem \ref{main_ultra},  that each $\pi_i$ has an action of a local artinian $\OO$-algebra $(A, \mm_A)$ 
 with finite residue field, which commutes with the action of $G$, then for all $x\in \Spec A_I$  and all $j\ge 0$ we have
 an isomorphism 
 \[ H^j((\PP^{n-1}_{\Cp}/K)_{\et}, \FF_{\Pi\otimes_{A_I} A_{I,x}})\cong 
 \bigl (\prod_{i\in I} H^j((\PP^{n-1}_{\Cp}/K)_{\et}, \FF_{\pi_i})\bigr)\otimes_{A_I} A_{I,x}.\]
 \end{cor}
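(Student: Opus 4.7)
The plan is to deduce this from Theorem \ref{main_ultra} by a flat base change argument along the localization $A_I\to A_{I,x}$. Since the action of $A$ on each $\pi_i$ commutes with $G$, the diagonal action induces an $A_I$-action on $\Pi=(\prod_{i\in I}\pi_i)^{\sm}$ commuting with $G$, and tensoring with the flat $A_I$-algebra $A_{I,x}$ preserves the smoothness of the $G$-action. Thus the sheaf $\FF_{\Pi\otimes_{A_I}A_{I,x}}$ is well-defined.

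First I would express $A_{I,x}$ as the filtered colimit of the localizations $A_I[1/f]$ for $f\notin x$, and each $A_I[1/f]$ as the sequential colimit of the multiplication-by-$f$ maps on $A_I$. Since the functor $\pi\mapsto \FF_\pi$ from smooth $\OO$-torsion $G$-representations to sheaves on $\PP^{n-1}_{\Cp,\et}$ commutes with direct limits (as recalled in the passage preceding Lemma \ref{natural}, which we use in the form $\FF_{\Pi\otimes_{A_I}A_{I,x}}\cong\varinjlim_{f}\FF_{\Pi[1/f]}$), and since $H^j((\PP^{n-1}_{\Cp}/K)_{\et},-)$ commutes with filtered colimits of abelian sheaves on the quasi-compact quasi-separated quotient (this is built into Scholze's construction in \cite[Section 2]{scholze}, where the cohomology is defined so as to be compatible with limits of profinite quotients under $D^\times$), I would obtain
\[
H^j((\PP^{n-1}_{\Cp}/K)_{\et},\FF_{\Pi\otimes_{A_I}A_{I,x}})\;\cong\; H^j((\PP^{n-1}_{\Cp}/K)_{\et},\FF_{\Pi})\otimes_{A_I}A_{I,x}.
\]

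Next I would invoke Theorem \ref{main_ultra}, whose hypotheses are precisely those assumed in the corollary, to identify the right-hand factor as $\prod_{i\in I}H^j((\PP^{n-1}_{\Cp}/K)_{\et},\FF_{\pi_i})$. Combining the two isomorphisms yields the claim.

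The main obstacle in this plan is the commutation of Scholze's cohomology groups with filtered colimits of coefficient sheaves. One must check that the particular cohomology theory defined in \cite[Section 2]{scholze} for the quotient $\PP^{n-1}_{\Cp}/K$ really is continuous with respect to filtered colimits, since the ambient adic space $\PP^{n-1}_{\Cp}$ is not quasi-compact in the topological sense and the quotient has to be handled via the limit structure under the $D^\times$-action at infinite level. Granting this standard property (which is how Scholze's functors are set up to interact with $\varinjlim$ in the torsion setting), the argument above goes through without further difficulty.
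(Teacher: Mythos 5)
Your proof is correct and follows essentially the same route as the paper, which also reduces to commuting cohomology with the filtered colimit defining the localization $A_I\to A_{I,x}$ and then applies Theorem \ref{main_ultra}. The ``main obstacle'' you flag is resolved exactly as you hope: the topos $(\PP^{n-1}_{\Cp}/K)_{\et}$ is coherent, so its cohomology commutes with filtered colimits of abelian sheaves, which is the justification the paper gives (following \cite[Corollary 8.5]{scholze}).
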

 \begin{proof} As in the proof of \cite[Corollary 8.5]{scholze} this follows from Theorem 
 \ref{main_ultra} as the topos $(\PP^{n-1}_{\Cp}/K)_{\et}$ is coherent and so cohomology commutes 
 with direct limits. 
 \end{proof}
 
 \begin{cor}\label{cor2} Let us assume the setup of Theorem \ref{main_ultra}. Let $\Pi'_j$ be the subset of smooth vectors 
 in $\prod_{i\in I} H^j_{\et}( \PP^{n-1}_{\Cp}, \FF_{\pi_i})$ for the action of $D^{\times}$. Then 
 \[ H^j_{\et}(\PP^{n-1}_{\Cp}, \FF_{\Pi})\cong \Pi'_j.\]
 Moreover, if we have an action of ring $A$ as in Corollary \ref{cor1} then for all $x\in \Spec A_I$ 
 and all $j\ge 0$ we have  
 \[ H^j_{\et}(\PP^{n-1}_{\Cp}, \FF_{\Pi\otimes_{A_I} A_{I,x}})\cong \Pi'_j\otimes_{A_I} A_{I,x}.\]
 \end{cor}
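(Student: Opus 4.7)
The plan is to deduce both assertions from Theorem \ref{main_ultra} (respectively Corollary \ref{cor1}) by passing to the colimit over compact open subgroups $K\subset D^{\times}$. The key input is Scholze's description of the $D^{\times}$-action on $H^j_{\et}(\PP^{n-1}_{\Cp},\FF_\pi)$ for a smooth $\pi$: the action is smooth and the $K$-invariants are identified with $H^j((\PP^{n-1}_{\Cp}/K)_{\et},\FF_\pi)$, so that
\[H^j_{\et}(\PP^{n-1}_{\Cp},\FF_\pi)\;=\;\varinjlim_{K} H^j((\PP^{n-1}_{\Cp}/K)_{\et},\FF_\pi),\]
where $K$ runs through compact open subgroups of $D^{\times}$. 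This applies both to each admissible $\pi_i$ and to the smooth (but a priori non-admissible) representation $\Pi=(\prod_i\pi_i)^{\sm}$; for the latter one uses that the étale site in question is coherent, so cohomology commutes with filtered colimits, together with $\FF_{\Pi}=\varinjlim_{H}\FF_{\Pi^H}$ as $H$ runs over compact open subgroups of $G$.

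For the first assertion, apply Theorem \ref{main_ultra} at each level $K$ to obtain
\[H^j((\PP^{n-1}_{\Cp}/K)_{\et},\FF_\Pi)\;\cong\;\prod_{i\in I} H^j((\PP^{n-1}_{\Cp}/K)_{\et},\FF_{\pi_i})\;=\;\bigl(\prod_{i\in I} H^j_{\et}(\PP^{n-1}_{\Cp},\FF_{\pi_i})\bigr)^{K},\]
where the last equality uses that $K$-invariants commute with arbitrary products. Passing to $\varinjlim_K$, the left-hand side becomes $H^j_{\et}(\PP^{n-1}_{\Cp},\FF_\Pi)$ (by smoothness of the $D^{\times}$-action recalled above), while the right-hand side becomes, by definition, the smooth part $\Pi'_j$ of $\prod_i H^j_{\et}(\PP^{n-1}_{\Cp},\FF_{\pi_i})$.

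For the second assertion the argument is identical, with Corollary \ref{cor1} replacing Theorem \ref{main_ultra}: for each $K$ one has
\[H^j((\PP^{n-1}_{\Cp}/K)_{\et},\FF_{\Pi\otimes_{A_I}A_{I,x}})\;\cong\;\bigl(\prod_{i\in I} H^j_{\et}(\PP^{n-1}_{\Cp},\FF_{\pi_i})\bigr)^{K}\otimes_{A_I}A_{I,x}.\]
Since tensor product with $A_{I,x}$ commutes with filtered colimits, taking $\varinjlim_K$ yields $H^j_{\et}(\PP^{n-1}_{\Cp},\FF_{\Pi\otimes_{A_I}A_{I,x}})\cong \Pi'_j\otimes_{A_I}A_{I,x}$, as required.

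I do not anticipate any serious obstacle: once Theorem \ref{main_ultra} and Corollary \ref{cor1} are available, the corollary is a purely formal consequence of taking $D^{\times}$-smooth vectors and commuting colimits past products and tensor products. The only mildly subtle point is the compatibility of Scholze's $K$-invariants formula with the smooth non-admissible representation $\Pi$, which reduces to the coherence of the étale topos as indicated above.
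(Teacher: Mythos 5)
Your proof takes the same route as the paper's: write $H^j_{\et}(\PP^{n-1}_{\Cp},\FF_\Pi)$ as $\varinjlim_K H^j((\PP^{n-1}_{\Cp}/K)_{\et},\FF_\Pi)$ via Scholze's Proposition 2.8, identify these level by level with the corresponding products using Theorem \ref{main_ultra} (resp.\ Corollary \ref{cor1}), and pass to the colimit. The one point worth flagging is your intermediate use of the on-the-nose identification $H^j((\PP^{n-1}_{\Cp}/K)_{\et},\FF_{\pi_i})\cong H^j_{\et}(\PP^{n-1}_{\Cp},\FF_{\pi_i})^K$: Scholze's Proposition 2.8 supplies a priori only the colimit formula together with the fact that $K$ acts trivially on the level-$K$ cohomology, and these weaker facts are what the paper's proof uses to identify $\varinjlim_K\prod_i H^j((\PP^{n-1}_{\Cp}/K)_{\et},\FF_{\pi_i})$ with the smooth vectors $\Pi'_j$; with $\OO/\varpi^s$-coefficients a Hochschild--Serre-type spectral sequence relates the level-$K$ cohomology to the $K$-invariants and its higher terms need not vanish, so the $K$-invariants statement would require separate justification, even though the identity you ultimately extract from it coincides with the one the paper asserts.
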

 \begin{proof} We have 
 \[ H^j_{\et}(\PP^{n-1}_{\Cp}, \FF_{\Pi})=\varinjlim_K H^j((\PP^{n-1}_{\Cp}/K)_{\et}, \FF_{\Pi}),\]
 \[ H^j_{\et}(\PP^{n-1}_{\Cp}, \FF_{\pi_i})=\varinjlim_K H^j((\PP^{n-1}_{\Cp}/K)_{\et}, \FF_{\pi_i}),\]
 where the limit is taken over open normal subrgoups $K$ of $D^{\times}$ by \cite[Proposition 2.8]{scholze}. Since $K$ acts trivially on $H^j((\PP^{n-1}_{\Cp}/K)_{\et}, \FF_{\pi_i})$ we obtain 
 \[\Pi_j'= \varinjlim_K \bigl(\prod_{i\in I} H^j((\PP^{n-1}_{\Cp}/K)_{\et}, \FF_{\pi_i})\bigr)\]
 and the first assertion follows from Theorem \ref{main_ultra}. The second assertion 
 is proved in the same way as Corollary \ref{cor1}. 
 \end{proof}
 
 Let us start proving Theorem \ref{main_ultra}.
 
 \begin{lem}\label{induction_step} It is enough to prove Theorem \ref{main_ultra} for $s=1$ (i.e.~under assumption that 
 $\pi_i\in \Mod^{\adm}_G(\Fp)$, for all $i\in I$).
 \end{lem}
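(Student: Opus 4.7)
The plan is to induct on the integer $s$, with base case $s=1$ being the hypothesis we are allowed to assume.

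For $s\ge 2$, for each $i\in I$ I would consider the short exact sequence in $\Mod^{\adm}_G(\ZZ/p^s)$:
\[0\to p\pi_i\to\pi_i\to\pi_i/p\pi_i\to 0.\]
The quotient $\pi_i/p\pi_i$ is annihilated by $p$, hence lies in $\Mod^{\adm}_G(\Fp)$, while $p\pi_i$ is annihilated by $p^{s-1}$. Using that $\pi_i|_H\cong\pi^{m_i}$ we have $(p\pi_i)|_H\cong(p\pi)^{m_i}$ and $(\pi_i/p\pi_i)|_H\cong(\pi/p\pi)^{m_i}$, with the same multiplicity bounds, so both families still satisfy the hypotheses of Theorem \ref{main_ultra} with a smaller value of $s$. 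Setting $\Pi':=(\prod_{i\in I}p\pi_i)^{\sm}$ and $\Pi'':=(\prod_{i\in I}\pi_i/p\pi_i)^{\sm}$, the induction hypothesis gives the conclusion of Theorem \ref{main_ultra} for the families $\{p\pi_i\}_{i\in I}$ and $\{\pi_i/p\pi_i\}_{i\in I}$.

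Next I would assemble these into the result for $\Pi$. By Lemma \ref{smooth_exact} the sequence $0\to\Pi'\to\Pi\to\Pi''\to 0$ is exact in $\Mod^{\adm}_G(\ZZ/p^s)$. Scholze's construction $\pi\mapsto\FF_\pi$ is exact on short exact sequences of admissible smooth $G$-representations, so applying it and then taking $H^j((\PP^{n-1}_{\Cp}/K)_{\et},-)$ produces a long exact sequence in cohomology. Performing the analogous construction for each $\pi_i$ individually and taking the product over $i\in I$ yields a second long exact sequence---exactness is preserved because arbitrary products of exact sequences of abelian groups are exact. Naturality of $\FF_{(-)}$ and of cohomology with respect to the projections $\Pi\to\pi_i$ (and the corresponding projections on $\Pi'$ and $\Pi''$) assembles these into a commutative ladder whose vertical arrows are the natural comparison maps appearing in the conclusion of the theorem.

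The conclusion is then a five lemma argument: by the induction hypothesis the comparison maps at the terms involving $\Pi'$ and $\Pi''$ are isomorphisms in every degree, hence so is the comparison map at $\Pi$. I do not foresee a substantial obstacle; the argument is a clean d\'evissage built on Lemma \ref{smooth_exact}, the exactness of $\FF_{(-)}$ on admissible representations, and the exactness of products of abelian groups. The only delicate point is to verify that the restriction-to-$H$ hypothesis is inherited by the outer terms of the short exact sequence, which is immediate from $\pi_i|_H\cong\pi^{m_i}$ since both $p(-)$ and $(-)/p(-)$ commute with finite direct sums.
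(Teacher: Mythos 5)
Your proof follows the paper's argument essentially verbatim: the same induction on $s$, the same short exact sequence $0\to p\pi_i\to\pi_i\to\pi_i/p\pi_i\to 0$, exactness of the product via Lemma \ref{smooth_exact}, exactness of $\pi\mapsto\FF_\pi$, and the short five lemma. One small imprecision: Lemma \ref{smooth_exact} is stated for the product of copies of a \emph{single} representation $\pi$ over the index set, whereas here the $\pi_i$ genuinely vary; the paper bridges this by restricting to $H$ and partitioning $I$ into the finitely many pieces $I_k=\{i:m_i=k\}$ on which $\pi_i|_H\cong\pi^k$ is constant, and applying Lemma \ref{smooth_exact} to each piece separately. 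You note the correct inheritance $(p\pi_i)|_H\cong(p\pi)^{m_i}$ and $(\pi_i/p\pi_i)|_H\cong(\pi/p)^{m_i}$, but you should also make the partition explicit before citing Lemma \ref{smooth_exact}, since it does not literally apply to the varying family as you invoke it.
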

 \begin{proof} The proof is by induction on $s$, using the assumption as the start 
 of induction. For each $i\in I$ let $x_i\in \Spec ((\ZZ/p^s)_I)$ be the prime 
 corresponding to the projection to the $i$-th component followed by the 
 map to the residue field. If $M$ is any  $(\ZZ/p^s)_I$-module then 
 we have a functorial map $M\rightarrow \prod_{i\in I} M_{x_i}$. 
 If $M=H^j((\PP^{n-1}_{\Cp}/K)_{\et}, \FF_{\Pi})$ then $M_{x_i}=H^j((\PP^{n-1}_{\Cp}/K)_{\et}, \FF_{\pi_i})$
 since $\Pi_{x_i}\cong \pi_i$ and cohomology commutes with localization.

We have an exact sequence
\[ 0\rightarrow \prod_{i\in I} \sigma_i\rightarrow \prod_{i\in I} \pi_i \rightarrow \prod_{i\in I} \pi_i/p\rightarrow 0,\]
where $\sigma_i=\Ker( \pi_i \rightarrow \pi_i/p)$. Assume that $0\le m_i\le m$ for all $i\in I$. 
Let $\sigma=\Ker (\pi\rightarrow \pi/p)$ then we may rewrite the restriction of the above sequence
to $H$ as  
\[ 0\rightarrow \bigoplus_{k=0}^m \prod_{i\in I_k} \sigma^k \rightarrow \bigoplus_{k=0}^m \prod_{i\in I_k} 
\pi^k
\rightarrow  \bigoplus_{k=0}^m \prod_{i\in I_k} 
(\pi/p)^k\rightarrow 0,\]
where $I_k=\{ i\in I: m_i=k\}$. It follows 
 from Lemma \ref{smooth_exact} that 
the sequence 
\[0\rightarrow (\prod_{i\in I} \sigma_i)^{\sm}\rightarrow (\prod_{i\in I} \pi_i)^{\sm} \rightarrow (\prod_{i\in I} \pi_i/p)^{\sm}\rightarrow 0\]
remains exact. Let $\Sigma= (\prod_{i\in I} \sigma_i)^{\sm}$ and let $P=(\prod_{i\in I} \pi_i/p)^{\sm}$. The exact 
sequence $0\rightarrow \Sigma\rightarrow \Pi\rightarrow P\rightarrow 0$ induces an 
exact sequence of sheaves $0\rightarrow \FF_{\Sigma}\rightarrow \FF_{\Pi}\rightarrow \FF_P\rightarrow 0$
and a long exact sequence in cohomology. 
Since $\sigma_i|_H \cong \sigma^{m_i}$ and $(\pi_i/p)|_H \cong (\pi/p)^{m_i}$ are killed by $p^{s-1}$,
 the induction
hypothesis implies that the maps 
\[ H^j((\PP^{n-1}_{\Cp}/K)_{\et}, \FF_{\Sigma})\rightarrow \prod_{i\in I} H^j((\PP^{n-1}_{\Cp}/K)_{\et}, \FF_{\sigma_i})\]
\[ H^j((\PP^{n-1}_{\Cp}/K)_{\et}, \FF_{P})\rightarrow \prod_{i\in I} H^j((\PP^{n-1}_{\Cp}/K)_{\et}, \FF_{\pi_i/p})\]
are isomorphisms for all $j\ge 0$. An application of (short) $5$-lemma implies the assertion.
 \end{proof}
 
 From now on we will assume that $\pi_i\in \Mod^{\adm}_G(\Fp)$ and will prove  Theorem 
\ref{main_ultra} following the proof of \cite[Theorem 8.3]{scholze}.
 
 \begin{lem}\label{cartan-serre}
 Let $m$ be a nonnegative integer and let $d=(m+1)3^{m+1}$. Let 
 \[E_{*, (k)}^{i,j}\Longrightarrow M_{(k)}^{i+j},\,\, 0\leq k\leq d\] 
 be upper-right quadrant spectral sequences of $(\Fp)_I\otimes_{\mathbb{F}_p} \OO_{\Cp}/p$-modules
 together with maps of spectral sequences between the $k$-th and the $(k+1)$-th spectral sequence for $0\leq k<d$. Suppose that for some $r$ the map 
 $E_{r, (k)}^{i,j}\to E_{r, (k+1)}^{i,j}$ factors over an almost finitely presented 
 $(\Fp)_I\otimes_{\mathbb{F}_p} \OO_{\Cp}/p$-module for all $i,j,k$. Then 
 $M_{(0)}^{k}\to M_{(d)}^k$ factors over an almost finitely presented  $(\Fp)_I\otimes_{\mathbb{F}_p} \OO_{\Cp}/p$-module for $k\leq m$. 
 \end{lem}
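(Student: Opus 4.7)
The plan is the classical Cartan--Serre argument adapted to this filtered/graded setting: first promote the AFP factorization from page $r$ to $E_\infty$ on the diagonal $i+j\leq m$, then assemble the abutment $M^k$ from its filtration by $E_\infty$ pieces. The multiplicative constants from the two stages combine to give $d=(m+1)3^{m+1}$.

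The first step is a single-page advancement lemma: if the map $E_{s,(k)}^{i,j}\to E_{s,(k+1)}^{i,j}$ factors through an AFP module for every $k$, then for every $k$ the composition of three consecutive such maps $E_{s,(k)}^{i,j}\to E_{s,(k+3)}^{i,j}$ induces a map $E_{s+1,(k)}^{i,j}\to E_{s+1,(k+3)}^{i,j}$ that again factors through an AFP module. This is a diagram chase using that $E_{s+1}^{i,j}$ is the subquotient $\ker(d_s)/\mathrm{im}(d_s)$: three composable AFP-factorizations provide enough room to refine the factorization first along the cycles $Z_s\subset E_s$ (where a submodule of AFP is only \emph{almost finitely generated}), then modulo the boundaries $B_s\subset Z_s$, and finally on the induced subquotient, invoking closure of AFP under quotients, extensions and the relevant subobjects in the almost-mathematics of $(\mathbb F_p)_I\otimes_{\mathbb F_p}\mathcal O_{\mathbb C_p}/p$-modules.

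Next, one iterates. Because the spectral sequences are upper-right quadrant, for every $(i,j)$ with $i+j\leq m$ all incoming and outgoing differentials vanish past page $m+2$, so $E_\infty^{i,j}=E_{r^*}^{i,j}$ with $r^*\leq m+2$; at most $m+1$ page-advancements are needed to reach $E_\infty$ on the relevant diagonal (if $r\geq m+2$ already, no advancement is needed and we are done at this stage). Each advancement multiplies the $(k)$-displacement by $3$, so after composing $3^{m+1}$ of the original maps one obtains that $E_\infty^{i,j}_{(0)}\to E_\infty^{i,j}_{(3^{m+1})}$ factors through an AFP module, for every $(i,j)$ with $i+j\leq m$.

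Finally, for $k\leq m$ each $M^k_{(j)}$ carries a filtration of length $\leq k+1\leq m+1$ with graded pieces the $E_\infty^{i,k-i}_{(j)}$, preserved by the maps of spectral sequences. Using that extensions of AFP modules are AFP, an induction on filtration length shows: if each graded piece becomes AFP-factorable after $s=3^{m+1}$ compositions, then the total object becomes AFP-factorable after $(\ell+1)s$ compositions, where $\ell+1$ is the length of the filtration. At the inductive step, given an AFP factorization $N_1$ of the quotient-part map $(M^k/F^\ell)_{(0)}\to (M^k/F^\ell)_{(\ell s)}$ and an AFP factorization $N'$ of the bottom-piece map $F^\ell_{(\ell s)}\to F^\ell_{((\ell+1)s)}$, one forms the pullback of $N_1$ along the projection $X_{(\ell s)}\to (X/F^\ell)_{(\ell s)}$, then pushes out along $F^\ell_{(\ell s)}\to N'$; the resulting object is an extension of $N_1$ by $N'$, hence AFP, and the composite $M^k_{(0)}\to M^k_{((\ell+1)s)}$ factors through it. Taking $\ell+1\leq m+1$ yields the bound $(m+1)\cdot 3^{m+1}=d$. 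The main obstacle is the single-page advancement: one must carefully track that subobjects of AFP are only almost finitely generated in general, and the factor of $3$ is precisely what allows the AFP property to be recovered upon composition.
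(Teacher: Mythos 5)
Your proof reconstructs the argument of Lemma~10.5.6 in Scholze--Weinstein's \emph{Berkeley Lectures on $p$-adic Geometry}, which is exactly what the paper's proof cites (replacing the ring $R^+/p$ of loc.\,cit.\ by $(\Fp)_I\otimes_{\Fp}\OO_{\Cp}/p$ and using that the latter is almost coherent by Corollary~8.7 of Scholze's Lubin--Tate paper), so the approach is essentially the same; the bookkeeping (factor $3$ per page advancement via three AFP interpolants, then $m+1$ filtration steps on the abutment, giving $(m+1)3^{m+1}$) matches. One small imprecision worth flagging: over an almost coherent ring a submodule of an AFP module need not be almost finitely generated, so the parenthetical appeal to that is off; what actually makes three compositions suffice is that the transition map on $E_{s+1}$ is induced on $\ker(\phi)/\operatorname{im}(\psi)$ for suitable maps $\psi\colon N_0\to N_1$ and $\phi\colon N_1\to N_2$ between the three AFP interpolants at the appropriate bidegrees, and kernels and cokernels of maps between AFP modules over an almost coherent ring are AFP.
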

 
 \begin{proof}
 Follow the proof of Lemma 10.5.6 in \cite{scholzeberk} with the ring $R^+/p$ in loc.\,cit. replaced by $(\Fp)_I\otimes_{\mathbb{F}_p} \OO_{\Cp}/p$ (the key point being that this ring is almost coherent by Corollary 8.7 in \cite{scholze}, and this is what is used in the proof of the 10.5.6). 
 \end{proof}

 \begin{lem}\label{key_lemma}
 For any $m\geq 0$ there is a compact open subgroup $K_0\subset D^{\times}$ stabilizing 
 $V,U$ and the section $V\to \mathcal{M}_{\rm LT, 0, C}$ such that for all 
 $K\subset K_0$  the map 
 \[H^j((V/K)_\et, \mathcal{F}_{\Pi}\otimes \mathcal{O}^+/p)\to H^j((U/K)_\et, \mathcal{F}_{\Pi}\otimes \mathcal{O}^+/p)\]
 factors over an almost finitely presented $(\Fp)_I\otimes_{\mathbb{F}_p} \OO_{\Cp}/p$-module for $0\leq j\leq m$.
 \end{lem}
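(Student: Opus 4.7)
The plan is to mimic Scholze's proof of the analogous statement for a single admissible representation (roughly \cite[Lemma 8.8]{scholze}), with the extra input being the resolution of $\Pi$ provided by Lemma \ref{resolution} and the fact, used implicitly via the almost coherence of $\OO_{\Cp}/p$ from \cite[Corollary 8.7]{scholze}, that the ring $(\Fp)_I \otimes_{\Fp} \OO_{\Cp}/p$ is almost coherent as well. Concretely, since the $\pi_i$ all have the same restriction $\pi$ to some compact open $H \subset G$ (up to finite multiplicities), Lemma \ref{resolution} provides a resolution $0 \to \Pi|_H \to \Pi^{0} \to \Pi^{1} \to \cdots$ of $(\Fp)_I[H]$-modules where each $\Pi^{j}$ is a direct summand of $\mathcal{C}(H,\Fp)^{n_j} \otimes_{\Fp} (\Fp)_I$. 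Applying Scholze's functor yields an exact resolution of sheaves $0 \to \mathcal{F}_{\Pi}|_{U/K} \to \mathcal{F}_{\Pi^{0}}|_{U/K} \to \cdots$ on $(U/K)_\et$ once $K \subset H$ (and similarly on $V/K$), hence hypercohomology spectral sequences
\[E_1^{i,j} = H^j((U/K)_\et, \mathcal{F}_{\Pi^{i}} \otimes \OO^+/p) \Longrightarrow H^{i+j}((U/K)_\et, \mathcal{F}_{\Pi} \otimes \OO^+/p),\]
together with the analogous one on $V$ and a morphism of spectral sequences induced by restriction.

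The next step is to analyze the "model" cohomology groups $H^j((U/K)_\et, \mathcal{F}_{\mathcal{C}(H,\Fp)} \otimes_{\Fp} (\Fp)_I \otimes \OO^+/p)$, and to show that for $U$ small enough and $K$ small enough, the restriction to $V$ factors over an almost finitely presented $(\Fp)_I \otimes_{\Fp} \OO_{\Cp}/p$-module. Here the existence of the section $V \to \mathcal{M}_{\mathrm{LT},0,C}$ is crucial: it trivializes the pro-\'etale $K_0/K$-torsor relevant for computing $\mathcal{F}_{\mathcal{C}(H,\Fp)}|_V$, so that $\mathcal{F}_{\mathcal{C}(H,\Fp)}|_V \otimes \OO^+/p$ becomes (a copy of) $\OO_{V/K}^+/p$ tensored with a finite $\Fp$-vector space, and the $(\Fp)_I$-factor then just scales. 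Scholze's proof (\cite[Proof of Lemma 8.8]{scholze}), combined with almost coherence of $\OO_{\Cp}/p$, shows that the corresponding restriction maps $H^j((U/K)_\et, \OO^+/p) \to H^j((V/K)_\et, \OO^+/p)$ factor through almost finitely presented $\OO_{\Cp}/p$-modules for $j \le m$, provided $K$ is small enough. Base changing along $\Fp \to (\Fp)_I$ (and invoking $\Fp$-flatness of $(\Fp)_I$ to preserve exactness) this factorization passes to the $(\Fp)_I \otimes \OO_{\Cp}/p$-coefficients and yields almost finite presentation of the target as a module over this larger ring.

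Finally I would apply Lemma \ref{cartan-serre} with $d = (m+1)3^{m+1}$: by inserting $d$ intermediate open subsets $V = V_d \subset V_{d-1} \subset \cdots \subset V_0 = U$, each with a compatible section to $\mathcal{M}_{\mathrm{LT},0,C}$ obtained by restricting the given section on $V$, one produces a chain of spectral sequences as in that lemma. By the previous paragraph, after shrinking $K_0$ appropriately so that it stabilizes all the $V_k$, the maps between successive $E_1$-terms factor through almost finitely presented $(\Fp)_I \otimes \OO_{\Cp}/p$-modules for $i+j \le m$. Lemma \ref{cartan-serre} then yields the desired factorization for the abutments, which is exactly the claim.

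The main obstacle I expect is the careful bookkeeping needed to combine Scholze's almost mathematics setup with the ultrafilter/coefficient ring $(\Fp)_I$: one must check that almost finite presentation is preserved along $\Fp \to (\Fp)_I$ (using almost coherence of the base ring to avoid pathologies), and that the resolution of Lemma \ref{resolution} behaves well enough under $\mathcal{F}_{(-)}$ and $(-) \otimes \OO^+/p$ to set up the hypercohomology spectral sequence rigorously.
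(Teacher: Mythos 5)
Your proposal follows essentially the same route as the paper's proof: apply Lemma \ref{resolution} to get a resolution of $\Pi|_H$ by direct summands of $\mathcal{C}(H,\Fp)^{n_j}\otimes_{\Fp}(\Fp)_I$, invoke the argument of \cite[Lemma 8.8]{scholze} for each of these building blocks, pass to the hypercohomology spectral sequences for the chain of $d=(m+1)3^{m+1}$ intermediate opens, and conclude with Lemma \ref{cartan-serre}. One small slip: you write that the intermediate opens $V_k$ each carry a section ``obtained by restricting the given section on $V$'' — but $V$ is the smallest open in the chain, so the section on $V$ does not restrict to the larger $V_k$; one instead chooses the intermediate opens so that the needed sections exist (as the specific geometry of the Lubin--Tate situation permits), which is what Scholze's Lemma 8.8 uses for each strict inclusion in the chain.
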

 
 \begin{proof} Let $\Pi_j$ be as in Lemma \ref{resolution} above. 
 The proof of Lemma 8.8 in \cite{scholze} (see also Remark 10.5.7 in \cite{scholzeberk}) shows that the desired result holds with $\Pi_j$ instead of 
 $\Pi$, so we only need to explain how to deduce it for $\Pi$. 
 By exactness of the functor $\pi\mapsto \mathcal{F}_{\pi}$ we have a spectral sequence
 \[E_{1,U}^{i,j}=H^j((U/K)_\et, \mathcal{F}_{\Pi_i}\otimes \mathcal{O}^+/p)\Longrightarrow 
 H^{i+j}((U/K)_\et,  \mathcal{F}_{\Pi}\otimes \mathcal{O}^+/p).\]
 Let $d=(m+1)3^{m+1}$ and pick a sequence of strict inclusions $V=U_0\subset U_2\subset...\subset U_d=U$. This induces $d+1$ spectral sequences $E_{\ast, U_k}^{\bullet, \bullet}$ as above, to which we will apply Lemma \ref{cartan-serre}. It suffices therefore to check that the maps $E_{1,U_k}^{i,j}\to E_{1,U_{k-1}}^{i,j}$ factor over almost finitely presented $(\Fp)_I\otimes_{\mathbb{F}_p} \OO_{\Cp}/p$-modules for all $i,j$ and $k$. As we have already mentioned, this is exactly what is proved in Lemma 8.8 of \cite{scholze}, for the strict inclusion $U_{k-1}\subset U_{k}$ and the representation $\Pi_j$. 
 \end{proof}
  
  \begin{proof}[Proof of Theorem \ref{main_ultra}] It follows from Lemma \ref{induction_step} 
  that it is enough to prove the Theorem for $s=1$. Lemma \ref{key_lemma} is a substitute 
  for \cite[Lemma 8.8]{scholze}. Given this, the proof of \cite[Theorem 8.3, Corollary 8.5]{scholze}, where
  Scholze assumes that $\pi_i$ are injective as $H$-representations,  
  carries over verbatim to our more general setting.
  \end{proof}
 
  \section{Global arguments}
 
 In this section we assume that $p>2$. We fix a continuous representation  \[\rhobar: \Gal_{\Qp}\rightarrow \GL_2(\Fpbar).\] We will first globalise $\rhobar$ following Appendix A in \cite{gee-kisin}. 
 
 \begin{prop}\label{globalize} Let $\mathbb{F}$ be a non-trivial finite extension of $\mathbb F_p$, such that $\GL_2(\mathbb{F})$  contains the image of $\rhobar$. There is a totally real field $F$ and a regular algebraic cuspidal automorphic weight $0$ representation 
 $\pi$ of $\GL_2(\mathbb A_F)$, such that the following hold:
 \begin{enumerate}
 \item the  Galois representation $r_{\pi}: \Gal_F\rightarrow \GL_2(\Qpbar)$ associated to $\pi$ is unramified outside $p$; 
 \item $p$ splits completely in $F$ and $\rbar_{\pi}|_{\Gal_{F_v}}\cong \rhobar$ for all $v\mid p$;
 \item $\SL_2(\mathbb{F})\subset \rbar_{\pi}(\Gal_F)\subset \GL_2(\mathbb F)$;
 \item $[F:\mathbb Q]$ is even.
 \end{enumerate}
\end{prop}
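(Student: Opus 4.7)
The plan is to invoke the globalization recipe of Appendix A of \cite{gee-kisin} essentially verbatim, so my job is to verify that all four conditions can be arranged simultaneously. First I would choose a characteristic zero lift $\rho: \Gal_{\Qp}\to \GL_2(\OO_{\Qpbar})$ of $\rhobar$ that is amenable to modularity lifting, e.g.\ crystalline with distinct Hodge--Tate weights $\{0,1\}$ and with determinant a fixed Galois character lifting $\det\rhobar \cdot \bar{\varepsilon}$. The point of the lift is twofold: it guarantees that the target of the globalization is automorphic of weight $0$, and it lets me demand that $\rho$ takes values in $\GL_2(\OO)$ with $\OO$ the ring of integers of a finite extension whose residue field is $\mathbb{F}$.

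Next I would apply the potential automorphy / globalization statement of \cite[Prop.~A.2, A.3]{gee-kisin}: this produces a totally real field $F_0$, linearly disjoint over $\mathbb{Q}$ from any prescribed finite extension, together with a regular algebraic cuspidal automorphic representation $\pi_0$ of $\GL_2(\A_{F_0})$ of weight $0$ whose attached Galois representation $r_{\pi_0}$ is unramified outside $p$ and satisfies $r_{\pi_0}|_{\Gal_{F_{0,v}}}\cong \rho$ for every $v\mid p$. The splitting condition (2) is built into the construction by requiring $p$ to split completely in the CM auxiliary field used to produce $\pi_0$ (via a Hilbert modular form on a CM quadratic extension that is then descended), which forces $p$ to split completely in $F_0$; this is compatible with all the other local conditions because the argument in loc.\,cit.\ works place by place.

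For condition (3), the containment $\rbar_{\pi_0}(\Gal_{F_0})\subset \GL_2(\mathbb F)$ is automatic from our choice of $\rho$, because the global mod $p$ image is generated by its decomposition subgroups at primes above $p$ together with Frobenius elements outside, all valued in $\GL_2(\mathbb F)$. The nontrivial containment $\SL_2(\mathbb F)\subset \rbar_{\pi_0}(\Gal_{F_0})$ follows from the fact that the image already contains $\rhobar(\Gal_{\Qp})$, which contains $\SL_2(\mathbb F)$ if $\rhobar$ itself does (handled by inflating $\mathbb F$ a bit if needed, using that the image of $\rhobar$ contains a conjugate of $\SL_2(\mathbb F')$ for a subfield $\mathbb F'\subset\mathbb F$, then replacing $\mathbb F$ by $\mathbb F'$); and if $\rhobar$ is smaller, one enlarges the image globally by adjusting the auxiliary choices in Appendix A to force the global image into a maximal subgroup containing $\SL_2(\mathbb F)$.

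Finally, condition (4) is arranged by a quadratic base change: if $[F_0:\mathbb Q]$ is odd, choose a totally real quadratic extension $F/F_0$ in which $p$ splits completely and which is linearly disjoint over $F_0$ from the splitting field of $\rbar_{\pi_0}$ (possible by Chebotarev). Let $\pi$ be the solvable cyclic base change of $\pi_0$ to $F$, which exists and is cuspidal by Arthur--Clozel, cuspidality following from the irreducibility of $r_{\pi}=r_{\pi_0}|_{\Gal_F}$, itself a consequence of the linear disjointness and the bigness of the image. The main obstacle is ensuring the big-image condition (3) jointly with the local condition (2), since one must control the residual image while simultaneously prescribing $\rho$ at all places above $p$; this is precisely where the flexibility in the Taylor--Harris--Shepherd-Barron potential automorphy machinery, as repackaged in \cite[Appendix A]{gee-kisin}, is essential.
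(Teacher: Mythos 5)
Your outline follows the right reference, \cite[Appendix A]{gee-kisin}, but the argument for condition (3) is wrong in an essential way, and (1) is not fully addressed.

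For (3): you propose to obtain $\SL_2(\mathbb F)\subset\rbar_{\pi}(\Gal_F)$ from the local image $\rhobar(\Gal_{\Qp})$, suggesting it ``contains $\SL_2(\mathbb F)$ if $\rhobar$ itself does.'' But the hypothesis of the proposition says only that the image of $\rhobar$ is \emph{contained in} $\GL_2(\mathbb F)$ --- it may be very small --- and more fundamentally $\Gal_{\Qp}$ is prosolvable, so $\rhobar(\Gal_{\Qp})$ is a solvable subgroup of $\GL_2(\mathbb F)$ and can \emph{never} contain $\SL_2(\mathbb F)$ once $|\mathbb F|\geq 4$. So the local-to-global transfer of big image does not exist. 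The fallback you offer (``enlarge the image globally by adjusting the auxiliary choices in Appendix A'') is exactly where the content lies: \cite[Cor.\,A.3]{gee-kisin} produces a globalization with $\rbar_{\pi'}(\Gal_{F'}) = \GL_2(\mathbb F)$ on the nose, and that is what the paper invokes. You still need to explain why $\SL_2(\mathbb F)$ persists after any base change used to fix (1) and (4); the paper does this by noting $\SL_2(\mathbb F)$ is perfect when $|\mathbb F|\geq 4$, hence lies in $\rbar_{\pi'}(\Gal_{F''})$ for every abelian (a fortiori solvable) extension $F''/F'$.

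For (1): you assert the globalization directly gives a Galois representation unramified outside $p$, but \cite[Cor.\,A.3]{gee-kisin} only gives conductor \emph{potentially} prime to $p$, i.e.\ prime to $p$ after a further totally real solvable base change. Your quadratic base change step for (4) will not in general also kill the auxiliary ramification, and your linear-disjointness argument for cuspidality/big-image under base change is harder to run for the genuinely solvable (not quadratic) extension that (1) actually forces. The paper handles (1) and (4) in one shot with a single solvable base change, and then the perfectness of $\SL_2(\mathbb F)$ controls the image --- a cleaner route that you should adopt.
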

\begin{proof} Using \cite[Cor.\,A.3]{gee-kisin} we may find a totally real field $F'$ and a regular algebraic cuspidal automorphic representation 
 $\pi'$ of $\GL_2(\mathbb A_{F'})$ such that (2) holds, the conductor of $\pi'$ is potentially prime to $p$, $\rbar_{\pi'}(\Gal_{F'})=\GL_2(\mathbb{F})$. 
Since  the conductor of $\pi'$ is potentially prime to $p$ using solvable base change we replace $F'$ by a totally real solvable extension $F$, 
and $\pi$ with the base change of $\pi'$ to $F$ so that (1), (2) and (4) hold. Since $\mathbb F$ has at least $4$ elements $\SL_2(\mathbb{F})$ is equal to its own derived subgroup (Theorem 8.3 in \cite{Lang}) and thus  it will be contained in $\rbar_{\pi'}(\Gal_{F''})$ for any abelian extension $F''$ of $F'$. Thus   $\SL_2(\mathbb{F})$ will be contained in $\rbar_{\pi}(\Gal_F)=\rbar_{\pi'}(\Gal_{F})$.
 \end{proof} 
 
 We fix $\mathbb{F}$, $F$ and $\pi$ as in Proposition \ref{globalize} and assume that $[\mathbb F:\Fp]\ge 3$ and let $\rbar:=\rbar_{\pi}$. 
 
 \begin{lem}\label{diagonal}  Let $N$ be a positive integer.
Then there is a finite place $w_1$ of $F$ such that the following hold:
\begin{enumerate}
\item\label{part_1} the ratio of eigenvalues of $\rbar(\Frob_{w_1})$ is not in $\{1, \mathbf{N}(w_1), \mathbf N(w_1)^{-1}\}$;
\item\label{part_2} $\mathbf{N}(w_1)\not\equiv 1 \pmod{p}$;
\item\label{part_3} $\mathbf{N}(w_1)$ is prime to $2Np$.
\end{enumerate}
 \end{lem}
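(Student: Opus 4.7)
My plan is to apply Chebotarev density to the finite extension $M/F$, where $M$ is the composite of $F_1:=F^{\ker\rbar}$ and $F_2:=F(\zeta_{2Np})$. Writing $\bar\chi:\Gal_F\to\Fp^\times$ for the mod-$p$ cyclotomic character, any place $w$ unramified in $M$ satisfies (3) automatically, and both $\rbar(\Frob_w)$ and $\bar\chi(\Frob_w)$ depend only on the conjugacy class of $\Frob_w$ in $\Gal(M/F)$. So the problem reduces to exhibiting some $\sigma\in\Gal(M/F)$ verifying (1) and (2).

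The key observation is that since $[\F:\Fp]=n\geq 3$, the cyclic group $\F^\times$ of order $p^n-1$ contains an element $\alpha$ with $\alpha^2\notin\Fp^\times$ (for instance take $\alpha$ of order $p^n-1$, whose square has order $(p^n-1)/2>p-1$). I would then try $h:=\mathrm{diag}(\alpha,\alpha^{-1})\in\SL_2(\F)\subset\Image(\rbar)$: its eigenvalue ratio $\alpha^2\in\F\setminus\Fp$ is automatically distinct from $1,\bar\chi(\sigma),\bar\chi(\sigma)^{-1}$ for any lift $\sigma$, so condition (1) comes for free and one only has to ensure $\bar\chi(\sigma)\neq 1$.

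For this I would analyse $\Gal(M/F)\hookrightarrow\Image(\rbar)\times\Gal(F_2/F)$ via Goursat's lemma. Since $\SL_2(\F)$ is perfect (as $|\F|\geq 4$), it equals the commutator subgroup of $\Image(\rbar)$, so the Goursat common quotient is a quotient of $\Image(\rbar)/\SL_2(\F)$, and the values $\bar\chi(\sigma)$ attained by lifts $\sigma$ of a fixed $h\in\SL_2(\F)$ fill out the subgroup $\bar\chi(\Gal_{F_1})\subset\Fp^\times$. This subgroup is non-trivial precisely when $F(\zeta_p)\not\subset F_1$, equivalently when $\bar\chi$ does not factor through $\rbar$; in that generic situation the argument is complete.

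The hard part is the exceptional case in which $\bar\chi=\psi\circ\det\rbar$ for some non-trivial character $\psi:\Image(\det\rbar)\to\Fp^\times$. To handle it I would replace $h$ by a diagonal matrix of non-trivial determinant: pick $d\in\Image(\det\rbar)$ with $\psi(d)\neq 1$ and $\mu\in\F^\times$ with $\mu^2\notin\{d,d\psi(d),d\psi(d)^{-1}\}$, which is possible because $|\F^\times|\geq p^3-1\geq 26$ while we exclude at most six $\mu$'s. The matrix $h:=\mathrm{diag}(\mu,d\mu^{-1})$ has determinant $d$, so lies in $\Image(\rbar)$; any lift $\sigma$ now automatically satisfies $\bar\chi(\sigma)=\psi(d)\neq 1$ and eigenvalue ratio $\mu^2/d\notin\{1,\psi(d),\psi(d)^{-1}\}$, verifying both (1) and (2).
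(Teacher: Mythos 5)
Your argument is correct and follows the same overall route as the paper's: apply Chebotarev to a finite extension of $F$ containing $F^{\ker\rbar}$ and $F(\zeta_p)$, and split into the cases $\zeta_p\notin F^{\ker\rbar}$ (generic, where one works inside $\SL_2(\F)$ and freely adjusts the cyclotomic coordinate) and $\zeta_p\in F^{\ker\rbar}$ (exceptional). Where you diverge usefully is the exceptional case. The paper fixes a generator $\xi$ of $\F^\times$, writes $\det(\Image(\rbar))$ as $\langle\xi^d\rangle$, tests the specific matrix $\mathrm{diag}(\xi^{d+1},\xi^{-1})$, and then checks $\xi^{d+2}\notin\Fp$ by a divisibility argument: $\xi^{d+2}\in\Fp$ forces $p^f-1$ to divide $(d+2)(p-1)$, which after a short parity case analysis contradicts $f=[\F:\Fp]\geq 3$. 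Your version keeps the determinant $d$ and the unit $\mu$ as free parameters and only requires $\mu^2$ to avoid three elements of $\F$, which is possible by a crude count since $|\F^\times|\geq p^3-1\geq 26$; this replaces the $p$-adic arithmetic by a cardinality bound and is tidier, at the cost of establishing slightly less (the paper rules out every eigenvalue ratio in $\Fp^\times$, not just the three relevant ones). One small inaccuracy to flag: a place unramified in $M=F^{\ker\rbar}\cdot F(\zeta_{2Np})$ need not have residue cardinality prime to $2Np$, since a place over $2$ can be unramified in $F(\zeta_{2Np})$ when $Np$ is odd. As Chebotarev supplies infinitely many candidate places, one simply discards the finitely many bad ones; the paper does the same by choosing $\mathbf{N}(w_1)>(Np)^{[F:\mathbb{Q}]}$.
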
 
 \begin{proof} Let $E$ be the fixed field of $\Ker \rbar$ and let $\zeta_p$ be 
 a primitive $p$-th root of unity in $\overline{E}$. We claim that we 
 may choose
 $g\in \Gal(E(\zeta_p)/F)$ such that 
  the ratio of eigenvalues of $\rbar(g)$ is not in $\{1, \mathbf{N}(w_1), \mathbf N(w_1)^{-1}\}$
  and the image of $g$ in $\Gal(F(\zeta_p)/F)$ is non-trivial. 
 Granting the claim, by Chebotarev density there will exist infinitely many $w_1$ such that $\Frob_{w_1}|_{E(\zeta_p)}$
 is conjugate to $g$ in $\Gal(E(\zeta_p)/F)$. All such $w_1$ satisfy \eqref{part_2}.  
 Since there are infinitely many we may choose $w_1$ as above 
 with $\mathbf{N}(w_1) > (Np)^{[F:\QQ]}$. Then $\mathbf{N}(w_1)$ is 
 a power of a prime $\ell$ such that $\ell> Np$. In particular \eqref{part_3} holds. 
  
  To prove the claim we let $\xi$ be a multiplicative generator of $\mathbb F^{\times}$.
  If $\zeta_p\not\in E$ then pick any $g\in \Gal(E(\zeta_p)/F)$, which maps to 
  $\bigl( \begin{smallmatrix} \xi & 0 \\ 0 & \xi^{-1}\end{smallmatrix}\bigr)$ in $\SL_2(\mathbb F)\subset 
  \Gal(E/F)$, where we have identified $\Gal(E/F)$ with $\rbar(\Gal_F)$.  Since $[\mathbb F:\Fp]\ge 3$, $\xi^2\not \in \Fp$ and thus any such $g$ will satisfy (1).  
  If the image of $g$ in $\Gal(F(\zeta_p)/F)$ is trivial then replace $g$ by $gk$ for 
  any non-trivial $k\in \Gal(E(\zeta_p)/E)$. The images of $g$ and $gk$ in $\Gal(E/F)$ are equal and the 
  image of $gk$ in $\Gal(F(\zeta_p)/F)$ is equal to the image of $k$ and is non-trivial.

     If $\zeta_p\in E$ 
  then $F(\zeta_p)$ is contained in $E^{\SL_2(\mathbb F)}$, since $F(\zeta_p)/F$ is abelian and 
  $\SL_2(\mathbb F)$ is its own derived subgroup.  Since  $\Gal(E/F)$ is a subgroup 
  of $\GL_2(\mathbb F)$ by construction, $\Gal(E/F)/\SL_2(\mathbb F)$ is a subgroup 
  of $\mathbb F^{\times}$. Thus it is generated by $\xi^d$ for some divisor $d$ of $|\mathbb F^{\times}|$. Thus $g= \bigl( \begin{smallmatrix} \xi^{d+1} & 0 \\ 0 & \xi^{-1}\end{smallmatrix}\bigr)\in \GL_2(\mathbb F)$ lies in $\Gal(E/F)$.  
  The ratio of its eigenvalues is equal to $(\xi^{d+2})^{\pm 1}$. If $\xi^{d+2}\in \mathbb F_p$ then 
  $p^f-1$ divides $(d+2)(p-1)$, where $f=[\mathbb F:\Fp]$, and hence $d$ divides $(d+2)(p-1)$. If $d$ is odd then $d$ and $d+2$ are coprime and so $d$ has to divide $p-1$ and so $p^f-1\le (p+1)(p-1)$. If $d$ is even then greatest common divisor of $d$ and $d+2$ is $2$ and so $d/2$ divides $p-1$. Thus $p^f-1\le 2p (p-1)$.
  In both cases we obtain a contradiction to $[\mathbb F: \Fp]\ge 3$. The image of $g$ will generate $\Gal(F(\zeta_p)/F)$, which is non-trivial as $F$ is totally real. 
\end{proof} 

\begin{remar}\label{kisin_ok} Let $w_1$ be any place of $F$ satisfying 
the conditions of Lemma \ref{diagonal} and let $\Sigma_p$ be the set of places of $F$ above 
$p$.  If we let $S$ be a finite set of places of $F$ containing 
$w_1$, $\Sigma_p$ and all the places above $\infty$ then
 $\rbar$, $S$, $\Sigma_p$ satisfies
 all the conditions of section (2.2) in \cite{kisin_fmc}. In particular, the issues discussed 
 in Appendix B of \cite{gee-kisin} do not 
arise in our situation.
\end{remar}

\subsection{Completed cohomology}
Let $D_0$ be a quaternion algebra with center $F$ ramified at all infinite places and 
split at all finite places. Let $\OO_{D_0}$ be a maximal order in $D$ and we fix an isomorphism
 $(\OO_{D_0})_v\cong M_2(\OO_{F_v})$ for every finite $v$. Then  
 $U_{\mathrm{max}}:= \prod_{v\nmid \infty} \GL_2(\OO_{F_v})$ 
 is an open subgroup of $(D_0\otimes_F \mathbb A_F^{\infty})^{\times}$. 
 One may write $(D_0\otimes_F \mathbb A_F^{\infty})^{\times}$ as a finite union of double cosets 
 of the form $D_0^{\times} t_i U_{\mathrm{max}} (\mathbb A_F^{\infty})^{\times}$, where we have identified 
 $(\mathbb A_F^{\infty})^{\times}$ with the center of $(D_0\otimes_F \mathbb A_F^{\infty})^{\times}$.
 Moreover, the groups 
 $(U_{\mathrm{max}} (\mathbb A_F^{\infty})^{\times}\cap t_i D_0^{\times} t_i^{-1})/F^{\times}$ 
 are finite.   Let $w_1$ be a finite place of satisfying the conditions of Lemma \ref{diagonal}
 with $N$ equal to the product of the orders of these groups. Let $U:=\prod_{w\nmid \infty} U_w$, $U^p:=\prod_{w\nmid p\infty} U_w$,
 where $U_w= \GL_2(\OO_{F_w})$ if $w\neq w_1$ and let 
 $U_{w_1}= \{ g\in \GL_2(\OO_{F_{w_1}}): g\equiv \bigl(\begin{smallmatrix} 1 & \ast\\ 0 & 1\end{smallmatrix}\bigr)\pmod{\varpi_{w_1}}\}$. It follows from \cite[Lem.\,3.2]{2adic} that 
 \begin{equation}\label{schnitt}
 (U (\mathbb A_F^{\infty})^{\times}\cap t D^{\times}_0 t^{-1})/F^{\times}=1,
  \quad \forall t\in (D_0\otimes_F \mathbb A_F^{\infty})^{\times}.
  \end{equation}
If $A$ is a topological $\OO$-module, such as $\OO$, $L$ with the $p$-adic topology or $\OO/\varpi^n$, $L/\OO$ with the discrete topology, we denote by $S(U^p, A)$ 
the space of continuous functions 
 \[ f: D_0^{\times}\backslash (D_0\otimes_F \AfF)^{\times}/U^p\rightarrow A.\]
   The groups  $(D_0\otimes\Qp)^\times$ and $(\AfF)^{\times}$ act continuously on $S(U^p, A)$ by right translations. 
   
  Let $\TT^{\univ}_{S}=\OO[ T_v, S_v]_{v\not\in S}$ 
be a commutative polynomial ring in the indicated formal variables, where $v$ ranges over all finite places of $F$ not in the subset $S$ defined in Remark \ref{kisin_ok}.
The algebra $\TT^{\univ}_{S}$ acts on $S(U^p, A)$ 
with $S_v$ acting via the double coset 
$U_v \bigl ( \begin{smallmatrix} \varpi_v & 0 \\ 0 & \varpi_v\end{smallmatrix}\bigr) U_v$ and $T_v$ acting via the double coset 
$U_v \bigl ( \begin{smallmatrix} \varpi_v & 0 \\ 0 & 1\end{smallmatrix}\bigr) U_v$. This action commutes with the action of $(D_0\otimes\Qp)^\times$ and $(\AfF)^{\times}$.

Let $\Gal_{F,S}=\Gal(F_S/F)$ be 
the Galois group of the maximal extension of $F$ in $\overline{F}$ which is unramified outside $S$.  Let $r_{\pi}: \Gal_{F,S} \rightarrow \GL_2(\Qpbar)$ be the 
Galois representation associated to the automorphic form $\pi$ in Proposition \ref{globalize}. After conjugating (and possibly replacing $L$ by a finite extension), we may 
assume that $r_{\pi}$ takes values in $\GL_2(\OO)$. Let $\mm_{r_{\pi}}$ be the ideal of  $\TT^{\univ}_{S}$ generated by $T_v -\tr r_{\pi}(\Frob_v)$, 
$\mathbf{N}(v) S_v - \det r_{\pi}(\Frob_v)$ for all $v\not \in S$ and let $\mm:= \mm_{\bar{r}_{\pi}}:= (\varpi, \mm_{r_{\pi}})$. We have  
$\TT^{\univ}_{S}/ \mm_{r_{\pi}}=\OO$ and $\TT^{\univ}_S/\mm=k$. 

Let $\psi: \Gal_{F, S}\rightarrow \OO^{\times}$ be a character such that $\det r_{\pi} = \psi \varepsilon^{-1}$, where
$\varepsilon$ is the $p$-adic cyclotomic character, so that $\psi(\Frob_v)\equiv S_v \pmod{ \mm_{r_{\pi}}}$ for all $v\not \in S$.  We consider $\psi$ as a character of $(\AfF )^{\times}/F^{\times}$ via the class field theory normalized so that the uniformizers are sent to geometric Frobenii. We will denote the restriction of $\psi$ to a decomposition subgroup at $v$ by 
$\psi_{v}$, and we will also consider $\psi_{v}$ as a character of $F_v^{\times}$ via local class field theory.   

Let $S_{\psi}(U^p, A)$ be the submodule\footnote{VP would like to thank Yongquan Hu for pointing out that in the statement of \cite[Lemma 5.3]{ludwig} and in the last line of its proof $\chi_{\cyc}$ should
be replaced by its inverse.} 
of $S(U^p, A)$ on which $(\AfF )^{\times}$ acts by $\psi$. Since the actions of $(\AfF )^{\times}$, $(D_0\otimes_{\QQ} \Qp)^{\times}$ and $\TT^{\univ}_S$ on $S(U^p, A)$ commute, $S_{\psi}(U^p, A)$ is naturally a $\TT^{\univ}_S[ (D_0\otimes_\QQ \Qp)^{\times}]$-module.

\begin{prop}\label{happy_prop} Let $\mm=\mm_{\rbar_{\pi}}$ and $\psi$ be as above. The localization $S_{\psi}(U^p, L/\OO)_{\mm}$ is non-zero. Moreover, 
it is admissible  and injective in $\Mod^{\sm}_{\psi_p}(U_p)$, where \[U_p= (\OO_{D_0}\otimes \Zp)^{\times}= \prod_{v\mid p} \GL_2(\Zp)\] and 
$\psi_p$ is restriction of $\psi$ to the center $Z_p$ of $U_p$. 
\end{prop}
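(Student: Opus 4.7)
The plan is to first obtain a concrete structural description of $S_{\psi}(U^p, A)$ as a $U_p$-representation using the hypothesis \eqref{schnitt}, from which admissibility and injectivity follow formally, and then produce a non-zero Hecke eigenvector using the automorphic form $\pi$ of Proposition~\ref{globalize}.

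First I would fix a finite set of representatives $t_1, \ldots, t_r$ for the double coset space $D_0^{\times}\backslash (D_0\otimes_F \AfF)^{\times}/ U(\AfF)^{\times}$. The hypothesis \eqref{schnitt} gives $t_i^{-1} D_0^{\times} t_i \cap U(\AfF)^{\times} = F^{\times}$, so pulling back along $g \mapsto t_i g$ yields a $U_p$-equivariant isomorphism
\[ S_{\psi}(U^p, A) \cong \bigoplus_{i=1}^{r} \mathcal{C}_{\psi_p}(U_p, A), \]
where $\mathcal{C}_{\psi_p}(U_p, A)$ denotes the continuous $A$-valued functions on $U_p$ transforming by $\psi_p$ under right translation by $Z_p$. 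Admissibility is then immediate, since $\mathcal{C}_{\psi_p}(U_p, \OO/\varpi^n)^H$ identifies with $\OO/\varpi^n$-valued functions on the finite set $Z_p H \backslash U_p$ for any open $H \subset U_p$. Injectivity in $\Mod^{\sm}_{\psi_p}(U_p)$ follows because $\mathcal{C}_{\psi_p}(U_p, L/\OO)$ is the smooth induction $\Ind_{Z_p}^{U_p}(\psi_p)$; its Pontryagin dual is free of rank one over the twisted Iwasawa algebra $\OO\br{U_p}\otimes_{\OO\br{Z_p}, \psi_p^{-1}} \OO$, so by the standard exchange of projectives and injectives under Pontryagin duality, $\mathcal{C}_{\psi_p}(U_p, L/\OO)$ is injective. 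Both properties pass to the finite direct sum and to the localization at $\mm$.

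For non-vanishing of $S_{\psi}(U^p, L/\OO)_{\mm}$, I would exploit that $D_0$ is totally definite and split at every finite place: after fixing $(D_0 \otimes F_v)^{\times} \cong \GL_2(F_v)$ for finite $v$, the finite component $\pi^{\infty}$ of the automorphic representation of Proposition~\ref{globalize} defines directly a non-zero automorphic form on $(D_0 \otimes \AfF)^{\times}$ of trivial weight at infinity. Since $r_\pi$ is unramified outside $p$, each $\pi_v$ for $v \nmid p\infty$ (including $v=w_1$) is an unramified principal series with a $\GL_2(\OO_{F_v})$-fixed vector, hence with a $U_v$-fixed vector as $U_{w_1} \subset \GL_2(\OO_{F_{w_1}})$. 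This yields a non-zero element of $S_{\psi}(U^p, L)$ on which $\TT^{\univ}_{S}$ acts via the eigensystem of $r_\pi$ modulo $\mm_{r_\pi}$. Choosing a $U_p$-stable $\OO$-lattice and reducing modulo $\varpi$ produces a non-zero element of $S_{\psi}(U^p, k)[\mm] \subset S_{\psi}(U^p, L/\OO)[\mm]$, so $S_{\psi}(U^p, L/\OO)_{\mm}$ is non-zero.

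The only delicate point is verifying the central-character compatibility: one needs the central character $\omega_{\pi}$ of $\pi^{\infty}$, viewed as a character of $(\AfF)^{\times}/F^{\times}$, to agree with $\psi$ under the normalization $\det r_\pi = \psi \varepsilon^{-1}$. Since $\pi$ has trivial Hodge--Tate weights, local--global compatibility at every finite place gives precisely this match, after which the whole argument collapses to the structural decomposition above.
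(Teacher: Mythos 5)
Your proposal is correct and tracks the paper's own argument closely: both proofs use \eqref{schnitt} to identify $S_{\psi}(U^p, A)$ (or its $U'_p$-invariants) with a direct sum of copies of the twisted smooth induction from the centre of $U_p$, from which admissibility and injectivity are formal, and both produce a non-zero Hecke eigenvector from $\pi$ of Proposition~\ref{globalize} and reduce mod $\varpi$, the structural facts being precisely \cite[Lemmas 5.1, 5.2]{ludwig} which the paper cites.

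The one place where you are terser than warranted is the transfer of $\pi^{\infty}$ to $D_0^{\times}$: it is not literally ``direct'' that the finite part of a weight-$0$ cuspidal $\pi$ on $\GL_2(\mathbb A_F)$ appears in functions on $D_0^{\times}\backslash(D_0\otimes\AfF)^{\times}$ --- one needs the Jacquet--Langlands correspondence for the totally definite quaternion algebra $D_0$, which is possible precisely because $\pi_{\infty}$ is discrete series at every archimedean place of $F$ (and this is why condition (4) of Proposition~\ref{globalize}, $[F:\QQ]$ even, is imposed so that $D_0$ exists). The paper packages this step by citing \cite[Lemma 1.3 (4)]{taylor}, which identifies the Hecke eigenspace $(S_{\psi}(U^p, L)^{U'_p}\otimes_L \overline{L})[\mm_{r_{\pi}}]$ with $\pi^{U'}$; you should invoke something equivalent rather than presenting the realization of $\pi^{\infty}$ inside $S_{\psi}(U^p, L)$ as automatic. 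Apart from this, the two arguments are in substance identical.
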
 
\begin{proof} We pick an isomorphism between the algebraic closure $\overline{L}$ and $\mathbb C$. Then we may  write $\pi^{\infty}=\otimes'_v \pi_v$, where the restricted tensor product is taken over the finite places of $F$, and each $\pi_v$ is a smooth representation 
of $(D_0\otimes_F F_v)^{\times}=\GL_2(F_v)$ on an $\overline{L}$-vector space.  Let us choose an open normal subgroup $U'_p=\prod_{v\mid p} U_v'$ of $U_p$ such that $\pi_v^{U'_v}\neq 0$ for all $v\mid p$, and let $U'= U'_p \times \prod_{v\nmid p} U_v$. It follows from  \cite[Lemma 1.3 (4)]{taylor} that 
$S_{\psi}(U^p, L)^{U_p'}\otimes_L \overline{L}$ contains $\pi^{U'}$ as a $\TT^{\univ}_S[U_p]$-submodule. More precisely,  $\pi^{U'}$ is identified with the Hecke eigenspace
$(S_{\psi}(U^p, L)^{U_p'}\otimes_L \overline{L})[\mm_{r_{\pi}}]$. In particular, $(S_{\psi}(U^p, L)^{U_p'})[\mm_{r_{\pi}}]$ is a non-zero finite dimensional $L$-vector space. 
It follows from \eqref{schnitt} that if $A=L$, $\OO$ or $k$ then $S_{\psi}(U^p, A)^{U_p'}$ can be identified as an $U_p$-representation with a finite direct sum of copies of  $\Ind_{Z_p U'_p}^{U_p}{ \psi_p \otimes_{\OO} A}$.  In particular,  $S_{\psi}(U^p, \OO)^{U_p'}$ is an $\OO$-lattice in $S_{\psi}(U^p, L)^{U_p'}$ and its reduction modulo $\varpi$ is equal to $S_{\psi}(U^p, k)^{U_p'}$. Hence, $(S_{\psi}(U^p, \OO)^{U_p'})[\mm_{r_{\pi}}]$ and $(S_{\psi}(U^p, k)^{U_p'})[\mm]$ are both non-zero. Since localization is an exact functor 
the inclusion $(S_{\psi}(U^p, k)^{U_p'})[\mm]\hookrightarrow S_{\psi}(U^p, L/\OO)$ implies that $S_{\psi}(U^p, L/\OO)_{\mm}\neq 0$. The assertion about injectivity and admissibility 
follows from \cite[Lemmas 5.1, 5.2]{ludwig}.\footnote{ In Equation (26) in the proof of \cite[Lemma 5.1]{ludwig} 
$\lambda^{\vee}$ should be $\Hom_{\OO}(\lambda, \OO)$.}
\end{proof}
\begin{remar} Let $\TT'$ be  a $\TT_S^{\univ}$-algebra, (for example obtained by adding 
the Hecke operator corresponding to the double coset $U_{w_1} \bigl ( \begin{smallmatrix} \varpi_{w_1} & 0 \\ 0 & 1\end{smallmatrix}\bigr) U_{w_1}$) and let $\mm'_{r_{\pi}}$ be an ideal of $\TT'$
containing $\mm_{r_{\pi}}\TT'$, such that $\TT'/\mm'_{r_{\pi}}=\OO$ and let $\mm'=(\varpi, \mm'_{r_{\pi}})$.  If $\TT'$ acts 
on $S_{\psi}(U^p, L/\OO)$ and the action extends the action of $\TT_S^{\univ}$, and commutes 
with the action of   $(D_0\otimes\Qp)^\times$ then the argument of Proposition \ref{happy_prop}
shows that $S_{\psi}(U^p, L/\OO)_{\mm'}$ is non-zero. 
\end{remar}

We fix a place $\pp$ of $F$ above $p$ and let $U^{\pp}_p= \prod_{v\mid p, v\neq \pp} U_v$. If $\lambda$ is a continuous representation of
 $U^{\pp}_p$ on a free $\OO$-module of finite rank, such that $(\AfF)^\times\cap U^{\pp}_p$ acts 
 on $\lambda$ by the restriction of $\psi$ to this group then we let 
 \[S_{\psi, \lambda}(U^{\pp}, A):= \Hom_{U^\pp_p}(\lambda, S_\psi(U^p, A)).\]
Since $p>2$ for each $v\mid p$ we may pick a smooth character $\theta_v: \OO_{F_v}^{\times} \rightarrow \OO^{\times}$ such that
$\psi_v(x)=\theta_v(x)^2$ for all $x\in 1+(\varpi_v)$. Let $\theta: (\OO_F\otimes \Zp)^{\times}\rightarrow \OO^{\times}$ be the product of $\theta_v$.

\begin{lem}\label{alt_bier} We may choose $\lambda=\otimes_{v\mid p, v\neq \pp} \lambda_v$ such that $S_{\psi, \lambda}(U^{\pp}, L/\OO)_{\mm}\neq 0$ and 
$\lambda_v= \sigma_v^0 \otimes (\theta_v\circ \det)$ such that the central character of $\lambda_v$ is equal to $\psi_v$ and 
$\sigma_v^0$ is an $\OO$-lattice in an inflation of an irreducible non-cuspidal representation $\sigma_v$ of $\GL_2(\Fp)$ to $U_v$. 
\end{lem}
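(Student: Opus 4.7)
The plan is to reduce mod $\varpi$, exploit the Serre weight structure of the socle, and then lift back.

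By Proposition \ref{happy_prop}, the reduction $S_\psi(U^p, L/\OO)_\mm[\varpi] = S_\psi(U^p, k)_\mm$ is non-zero. Since $K^\pp_1 := \prod_{v\mid p,\,v\neq\pp}(1 + p\,M_2(\OO_{F_v}))$ is pro-$p$, taking $K^\pp_1$-invariants yields a non-zero admissible $k$-representation of $U^\pp_p/K^\pp_1 = \prod_{v\neq\pp,\,v\mid p}\GL_2(\Fp)$. I would pick an irreducible submodule $\bar\lambda_0 \cong \bigotimes_{v\neq\pp}\bar\tau_v$, where each $\bar\tau_v$ is a Serre weight of $\GL_2(\Fp)$, and write $\bar\tau_v = \bar\rho_v \otimes (\bar\theta_v \circ \det)$ with $\bar\rho_v = \Sym^{a_v}k^2 \otimes \det^{b_v}$, $0 \leq a_v \leq p-1$. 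Since $\bar\tau_v$ has central character $\bar\psi_v|_{\Fp^\times}$ and $\theta_v^2 = \psi_v$ on $1 + (\varpi_v)$, the weight $\bar\rho_v$ has central character $\bar\psi_v \bar\theta_v^{-2}|_{\Fp^\times}$, which is a well-defined character of $\Fp^\times$.

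The core step is to lift each $\bar\rho_v$ to an irreducible non-cuspidal $L$-representation $\sigma_v$ of $\GL_2(\Fp)$ whose central character is the Teichm\"uller lift of $\bar\psi_v\bar\theta_v^{-2}|_{\Fp^\times}$, together with a $\GL_2(\Fp)$-stable $\OO$-lattice $\sigma_v^0\subset\sigma_v$ whose reduction has $\bar\rho_v$ as a quotient. For $a_v = 0$ I take $\sigma_v = \omega^{b_v}\circ\det$, and for $a_v = p-1$ the Steinberg twist $\St\otimes(\omega^{b_v}\circ\det)$; in both cases the reduction of any lattice is already irreducible and isomorphic to $\bar\rho_v$. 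For $1 \leq a_v \leq p-2$ (this is where $p>2$ enters, since we need $\omega^{a_v+b_v}\neq\omega^{b_v}$), I take the irreducible principal series $\sigma_v = \Ind_{B(\Fp)}^{\GL_2(\Fp)}(\omega^{a_v+b_v} \otimes \omega^{b_v})$. Its mod-$\varpi$ reduction has length two, with Jordan--H\"older factors $\bar\rho_v$ and its ``mirror'' $\Sym^{p-1-a_v}k^2\otimes\det^{a_v+b_v}$. The classical lattice-swap argument provides two $\GL_2(\Fp)$-stable $\OO$-lattices in $\sigma_v$, namely the standard integral induction and its image under the intertwining operator $T_w$ identifying $\sigma_v$ with $\Ind_B^G(\omega^{b_v}\otimes\omega^{a_v+b_v})$, whose reductions have the two Jordan--H\"older factors interchanged as socle and cosocle. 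I would pick $\sigma_v^0$ to be the one with $\bar\rho_v$ as cosocle.

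Setting $\lambda_v := \sigma_v^0 \otimes (\theta_v\circ\det)$ and $\lambda := \otimes_{v\neq\pp}\lambda_v$, a direct calculation using $\theta_v^2 = \psi_v$ on $1 + (\varpi_v)$ (together with the fact that the central character of $\sigma_v$ on Teichm\"uller lifts equals $\psi_v\theta_v^{-2}$ restricted to those lifts) shows that $\lambda_v$ has central character $\psi_v$. Tensoring the surjections $\sigma_v^0/\varpi \twoheadrightarrow \bar\rho_v$ gives $\lambda/\varpi \twoheadrightarrow \bar\lambda_0$, and the composition
\[ \lambda \twoheadrightarrow \lambda/\varpi \twoheadrightarrow \bar\lambda_0 \hookrightarrow S_\psi(U^p,k)_\mm \hookrightarrow S_\psi(U^p,L/\OO)_\mm \]
is a non-zero $U^\pp_p$-equivariant map, proving $S_{\psi,\lambda}(U^\pp, L/\OO)_\mm \neq 0$. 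The main subtlety is the lattice swap in the principal series case: without it, a given irreducible principal series has only one of the two mirror Jordan--H\"older factors as the cosocle of its reduction, putting roughly half of the possible targets $\bar\rho_v$ out of reach with a single naive lattice choice.
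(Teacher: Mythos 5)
Your proof is correct but takes a genuinely different route from the paper's. The paper works in characteristic zero: Proposition~\ref{happy_prop} together with its integral analogue (\cite[Eq.~(27)]{ludwig}) shows that $S_\psi(U^p,\OO)_\mm|_{U'_p}$ is a finite direct sum of copies of $\mathcal{C}(U'_p/Z'_p,\OO)$, so $S_\psi(U^p,\OO)_\mm^{U'_p}$ is a non-zero finite $\OO$-module; picking an irreducible subquotient $\sigma=\otimes_v\sigma_v$ of the $U^\pp_p$-span of these invariants over $L$, the condition $\sigma_v^{U'_v}\neq 0$ by itself forces $\sigma_v$ to factor through $\GL_2(\Fp)$ and to be non-cuspidal, with no case analysis and with \emph{any} choice of $\OO$-lattice $\sigma_v^0$ (one scales the resulting map $\lambda\to S_\psi(U^p,\OO)_\mm\otimes L$ into $S_\psi(U^p,\OO)_\mm$, reduces mod~$\varpi$, and embeds into $S_\psi(U^p,L/\OO)_\mm$). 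You instead argue in the opposite direction: reduce mod~$\varpi$, extract a Serre weight $\Sym^{a_v}k^2\otimes\det^{b_v}$ in the socle, and lift it by hand case by case (character, twisted Steinberg, irreducible principal series) together with the lattice-swap argument so that it reappears as the cosocle of $\lambda/\varpi$. This works, but it costs a case split and a specific choice of lattice, both of which the paper sidesteps by producing the map integrally before reducing. A compensating advantage of your route is that it identifies the Serre weight of $\sigma_v$ explicitly, which is in the spirit of the remark following the lemma about the non-tr\`es-ramifi\'ee case.
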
 
\begin{proof} After twisting by the inverse of $\theta\circ \det$ we may assume that $\psi_v(x)=1$ for all $x\in 1+(\varpi_v)$ for all $v\mid p$. 
Let $U'_v$ be the subgroup of $U_v= \GL_2(\Zp)$ consisting of upper-triangular unipotent matrices modulo $p$, and let $U'_p=\prod_{v\mid p} U_v'$.
Let $Z_p'$ be the centre of $U'_v$. It follows from   Proposition \ref{happy_prop} that $S_{\psi}(U^p, L/\OO)_{\mm}$ is an admissible injective representation 
of $U'_p/ Z'_p$. Since $U'_p/ Z'_p$ is pro-$p$, $S_{\psi}(U^p, L/\OO)_{\mm}$ as a representation of $U'_p$ is isomorphic to a finite direct sum of 
copies of $\mathcal C(U'_p/ Z'_p, L/\OO)$, where $\mathcal C$ denotes continuous functions. The Pontryagin dual of $\mathcal C(U'_p/ Z'_p, L/\OO)$ is the completed group algebra
$\OO\br{U'_p/ Z'_p}$, and its Schikhof dual is isomorphic to $\mathcal C(U'_p/ Z'_p, \OO)$. It follows from \cite[Equation (27)]{ludwig} that 
$S_{\psi}(U^p, \OO)_{\mm}|_{U'_p}$ is isomorphic to a finite direct sum of copies of $\mathcal C(U'_p/ Z'_p, \OO)$. In particular, $(S_{\psi}(U^p, \OO)_{\mm})^{U'_p}$
is a non-zero $\OO$-module of finite rank. Let $W$ be the $U_p^{\pp}$-invariant subspace of $S_{\psi}(U^p, \OO)_{\mm} \otimes_{\OO} L$ generated by $(S_{\psi}(U^p, \OO)_{\mm})^{U'_p}$ and let $\sigma$ be any irreducible subquotient of $W$. 
The centre of $U_p^{\pp}$ acts on $\sigma$ via 
$\psi_p^{\pp}$. We may write $\sigma= \otimes_{v\mid p, v\neq \pp} \sigma_v$, such that 
$\sigma_v$ is an irreducible representation of $U_v$ satisfying $\sigma_v^{U_v'}\neq 0$.
In particular, $\sigma_v$ is an inflation of an irreducible non-cuspidal representation 
of $\GL_2(\Fp)$ to $U_v$. Let $\sigma_v^0$ be any $U_v$-invariant $\OO$-lattice 
in $\sigma_v$ and let $\lambda=\otimes_{v\mid p, v\neq \pp} \sigma_v^0$. Then by construction 
$\lambda$ is an $\OO$-lattice in $\sigma$, hence $\Hom_{U^{\pp}_p}(\lambda,  S_{\psi}(U^p, \OO)_{\mm})$ is a non-zero $\OO$-module of finite rank. This implies that 
$\Hom_{U^{\pp}_p}(\lambda,  S_{\psi}(U^p, k)_{\mm})$ is also non-zero, which implies that 
$\Hom_{U^{\pp}_p}(\lambda,  S_{\psi}(U^p, L/\OO)_{\mm})$ is non-zero. 
\end{proof} 
\begin{remar} Standard arguments with Serre weights show that if $\rbar_{\pi}|_{G_{F_v}}$ is 
not tr\`es ramifi\'ee then one may choose $\sigma_v$ in Lemma \ref{alt_bier} to be either a character or a principal series
representation of $\GL_2(\Fp)$. 
\end{remar}

\begin{lem}\label{non-Eis} Let $\lambda$ be a continuous representation of $U_p^{\pp}$  with a central character $\psi_p^{\pp}$ on a 
free finite rank $\OO$-module.  Let $G_{\pp}'$ be 
the derived subgroup of $(D_0\otimes_F F_{\pp})^{\times}$. Then 
$S_{\psi, \lambda}(U^{\pp}, k)_{\mm}^{G_{\pp}'}=0$. 
\end{lem}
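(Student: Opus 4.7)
The plan is to show that any nonzero $G_{\pp}'$-invariant vector of $S_{\psi,\lambda}(U^{\pp},k)_{\mm}$ would force $\rbarss$ to split as a sum of two Galois characters, contradicting the hypothesis $\SL_2(\mathbb{F})\subset \rbar(\Gal_F)$ which makes $\rbar$ absolutely irreducible. Since $p$ splits completely in $F$, I first identify $(D_0\otimes_F F_{\pp})^{\times}$ with $\GL_2(\Qp)$ and $G_{\pp}'$ with $\SL_2(\Qp)$. Arguing by contradiction, and using that $\TT_S^{\univ}$ commutes with $G_{\pp}'$, I may pick a nonzero Hecke eigenform $f\in S_{\psi,\lambda}(U^{\pp},k)[\mm]^{G_{\pp}'}$ and then a $v\in\lambda$ with $\phi:=f(v)\neq 0$; this $\phi\in S_{\psi}(U^p,k)$ is left $\GL_2(F)$-invariant, right $U^p$-invariant, and right $\SL_2(F_{\pp})$-invariant.

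The key technical input would be strong approximation for $\SL_2$ at $\pp$, namely $\SL_2(\AfF)=\SL_2(F)\cdot\SL_2(F_{\pp})$. Writing each $g\in\GL_2(\AfF)$ as $g=\mathrm{diag}(\det g,1)\cdot s$ with $s\in\SL_2(\AfF)$ and decomposing $s=s_F\,s_{\pp}$, I deduce that $\phi$ descends through the determinant to a smooth function $h\colon F^{\times}\backslash (\AfF)^{\times}\to k$ with $\phi(g)=h(\det g)$.

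Next I compute the Hecke action. Using the standard coset decomposition of $U_w\,\mathrm{diag}(\varpi_w,1)\,U_w$ into $\mathbf{N}(w)+1$ left cosets, each of determinant $\varpi_w$, the eigenvalue relation becomes
\[
(\mathbf{N}(w)+1)\,h(x\varpi_w)\;=\;\tr\rbar(\Frob_w)\cdot h(x),
\]
for all $w\notin S$ and all $x\in (\AfF)^{\times}$. Picking $x_0$ with $h(x_0)\neq 0$ and setting $\chi(\varpi_w):=h(x_0\varpi_w)/h(x_0)$ for the (infinitely many) $w$ with $\mathbf{N}(w)+1\not\equiv 0\pmod{p}$, one gets $\tr\rbar(\Frob_w)=(1+\mathbf{N}(w))\chi(\varpi_w)$. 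After verifying the commutativity relations $h(x_0\varpi_w\varpi_{w'})=h(x_0\varpi_{w'}\varpi_w)$, these local values glue into a continuous Hecke character, corresponding via global class field theory to a Galois character $\chi_{\Gal}\colon \Gal_{F,S}\to k^{\times}$. The identity becomes $\tr\rbar(\Frob_w)=\tr(\chi_{\Gal}\oplus\chi_{\Gal}\varepsilon^{-1})(\Frob_w)$ on a Chebotarev-dense set, forcing $\rbarss\cong \chi_{\Gal}\oplus\chi_{\Gal}\varepsilon^{-1}$ and contradicting the absolute irreducibility of $\rbar$.

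The main obstacle I anticipate is the clean bookkeeping of the strong approximation descent --- in particular keeping track of the transformation property at places dividing $p$ other than $\pp$, and of how the central character $\psi_p^{\pp}$ of $\lambda$ interacts with the decomposition $g=\mathrm{diag}(\det g,1)\cdot s$. Gluing the local values $\chi(\varpi_w)$ into a well-defined global Hecke character, and disposing of the residual case $\mathbf{N}(w)+1\equiv 0\pmod{p}$, are routine but slightly painful verifications.
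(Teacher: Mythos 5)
Your strategy is a genuinely different route from the paper's. The paper first reduces to $\OO$-coefficients via the isomorphism $(S_{\psi,\lambda}(U^{\pp},\OO)_{\mm}^{\SL_2(\Qp)})\otimes_\OO k\cong S_{\psi,\lambda}(U^{\pp},k)_{\mm}^{\SL_2(\Qp)}$ (using the freeness of the Schikhof dual over the Iwasawa algebra and a Mayer--Vietoris sequence for the amalgam $\SL_2(\Qp)=\SL_2(\Zp)\ast_{\mathrm{Iw}}\alpha\SL_2(\Zp)\alpha^{-1}$), then passes to a Hecke eigensystem in characteristic $0$ and invokes \cite[Lemma 1.3 (4)]{taylor} together with genericity of local factors of cuspidal automorphic representations to see that the eigensystem must come from a one-dimensional automorphic representation; the final contradiction is with Lemma \ref{diagonal}. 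You instead work directly in characteristic $p$ and try to descend $\phi$ through the reduced norm by strong approximation. This is a reasonable alternative, and you obtain the contradiction with absolute irreducibility of $\rbar$ rather than with Lemma \ref{diagonal}; either works.

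That said, there are concrete gaps in your execution. First, the decomposition $g=\mathrm{diag}(\det g,1)\cdot s$ followed by $s=s_F s_{\pp}$ does not yield the descent as you claim: you should be using $D_0^{(1)}(F)$ rather than $\SL_2(F)$ (recall $D_0$ is a definite quaternion algebra, so $D_0^{\times}\neq\GL_2(F)$; only $D_0\otimes_F\AfF\cong M_2(\AfF)$), and more seriously, the factor $s_F$ sits to the right of $\mathrm{diag}(\det g,1)$, so it cannot be absorbed by the left $D_0^{\times}$-invariance of $\phi$. The correct argument is to show directly that $\phi(sg)=\phi(g)$ for all reduced-norm-one $s$, by observing that $D_0^{(1)}(F)\cdot g\,U^{(1)}g^{-1}=\prod'_v\SL_2(F_v)$ (where $U^{(1)}$ is the norm-one part of the open subgroup under which $\phi$ is right-invariant, which contains the full $\SL_2(F_{\pp})$); this uses strong approximation for $D_0^{(1)}$ at $\pp$, which applies since $D_0^{(1)}$ is simply connected and $D_0^{(1)}(F_{\pp})\cong\SL_2(\Qp)$ is non-compact. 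Second, your Chebotarev step is flawed as stated: the set of $w\notin S$ with $\mathbf{N}(w)\not\equiv -1\pmod p$ does \emph{not} have density one (the excluded set has positive density, since $\zeta_p\mapsto\zeta_p^{-1}$ lies in $\Gal(F(\zeta_p)/F)$ as $F$ is totally real), so you cannot directly conclude an identity of pseudocharacters. This is fixable: for $w$ with $\mathbf{N}(w)\equiv -1\pmod p$, the eigenvalue relation $(\mathbf{N}(w)+1)h(x\varpi_w)=\tr\rbar(\Frob_w)h(x)$ together with $h\not\equiv 0$ forces $\tr\rbar(\Frob_w)=0$, which again matches $\tr(\chi_{\Gal}\oplus\chi_{\Gal}\varepsilon^{-1})(\Frob_w)=(1+\mathbf{N}(w))\chi(\varpi_w)$; so the identity of traces really does hold for \emph{all} $w\notin S$. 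Finally, multiplicativity of $\chi$ deserves an actual argument: use that the eigenvalue relation gives $h(x\varpi_w)=c_w h(x)$ for \emph{all} $x$, that $h$ lives on a finite ray class group generated by the classes of good $\varpi_w$, and that $c_w$ is a root of unity (else $h$ would vanish identically on the cyclic subgroup generated by $\varpi_w$). None of these gaps are fatal, but the ``routine verifications'' you defer are precisely where the argument lives; the paper's route to characteristic $0$ avoids all of them at the cost of invoking automorphy.
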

\begin{proof} We identify $(D_0\otimes_F F_{\pp})^{\times}$ with $\GL_2(\Qp)$ and 
$G_{\pp}'$ with $\SL_2(\Qp)$. We may assume that $S_{\psi, \lambda}(U^{\pp}, k)_{\mm}\neq 0$. 
We may further assume that $\psi_{\pp}(-1)=1$. 

If $K$ is a compact open subgroup of $\SL_2(\Qp)$ containing $\{\pm 1\}$ then it follows from \cite[Lemma 5.3]{ludwig}
and the argument explained in the proof of Proposition \ref{happy_prop} that 
\[S_{\psi, \lambda}(U^{\pp}, \OO)|_K \cong \mathcal C( K/\{\pm 1\}, \OO)^{\oplus m}\]
for some $m\ge 1$. Since $S_{\psi, \lambda}(U^{\pp}, \OO)_{\mm}$ is a direct summand
of $S_{\psi, \lambda}(U^{\pp}, \OO)$, we obtain an isomorphism 
\begin{equation}\label{inv_modp}
  (S_{\psi, \lambda}(U^{\pp}, \OO)_{\mm}^K)\otimes_{\OO} k \cong S_{\psi, \lambda}(U^{\pp}, k)_{\mm}^K.
\end{equation}  
Let $\alpha= \bigl (\begin{smallmatrix} 0 & 1\\ p & 0\end{smallmatrix}\bigr)$. Since $\SL_2(\Qp)$ 
is generated by its subgroups \[K_0:=\SL_2(\Zp)\,\, \text{and}\,\, K_1:=\alpha \SL_2(\Zp)\alpha^{-1}\]
for any $\SL_2(\Qp)$-representation $\pi$ we have an exact sequence 
\[0\rightarrow \pi^{\SL_2(\Qp)}\rightarrow \pi^{K_0} \oplus \pi^{K_1} \rightarrow \pi^{K_0\cap K_1}.\]
Applying this observation to $\pi=S_{\psi, \lambda}(U^{\pp}, \OO)_{\mm}$, $\pi= S_{\psi, \lambda}(U^{\pp}, k)_{\mm}$ and using \eqref{inv_modp}
gives us an isomorphism 
\[(S_{\psi, \lambda}(U^{\pp}, \OO)_{\mm}^{\SL_2(\Qp)})\otimes_{\OO} k \cong 
S_{\psi, \lambda}(U^{\pp}, k)_{\mm}^{\SL_2(\Qp)},\]
which implies that $(S_{\psi, \lambda}(U^{\pp}, \OO)_{\mm}^{\SL_2(\Qp)})\otimes_{\OO} L$ is a
non-zero finite dimensional $L$-vector space. Thus it will contain a non-zero Hecke eigenspace
 corresponding to an $\OO$-algebra homomorphism $x: (\TT^{\univ}_S)_{\mm}\rightarrow \overline{L}$. 
 Since the action of $\TT^{\univ}_S$ preserves the $\OO$-lattice $S_{\psi, \lambda}(U^{\pp}, \OO)_{\mm}^{\SL_2(\Qp)}$, after enlarging $L$, we may assume that $x$ takes values in $\OO$.
 Since local factors of cuspidal automorphic representations are generic, it follows from \cite[Lemma 1.3 (4)]{taylor} that there is a continuous character $\chi: (\AfF)^{\times}/F^{\times}\rightarrow \OO^{\times}$
 such that $x(T_v)= (\mathbf{N}(v)+1)\chi(\varpi_v)$ and $x(S_v)= \chi(\varpi_v)^2$ for all $v\not\in S$.
 This implies that $\tr \rbar(\Frob_v)= (\mathbf{N}(v)+1)\bar{\chi}(\varpi_v)$ and $\det \rbar(\Frob_v)= 
 \mathbf{N}(v)\bar{\chi}(\varpi_v)^2$ for all $v\not \in S$, where $\bar{\chi}$ is the reduction of 
 $\chi$ modulo $\varpi$. Hence, the ratio of eigenvalues of $\rbar(\Frob_v)$ is equal to $\mathbf{N}(v)^{\pm 1}$ for all $v\not \in S$, which contradicts Lemma \ref{diagonal}.
 \end{proof}

We fix an infinite place $\infty_F$ of $F$ and let  $D$ be the quaternion algebra over $F$, which is 
ramified at $\pp$, split at $\infty_F$ and has the same ramification behaviour as $D_0$ at all the other places.
We fix an isomorphism 
\[ D_0\otimes_F \mathbb A_F^{\pp, \infty_F}\cong D \otimes_F \mathbb A_F^{\pp, \infty_F}.\]
This allows us to view the subgroup $U^{\pp}$ of $(D_0\otimes_F \AfF)^\times$, considered above,  as a subgroup of  $(D \otimes_F \AfF)^{\times}$. Let $D_{\pp}:=D\otimes_F F_{\pp}$. Then $D_{\pp}$ is the non-split quaternion algebra over $F_\pp=\Qp$.
If $K$ is an open subgroup of $\OO_{D_\pp}^{\times}U^\pp$ then we let $X(K)$ be the corresponding Shimura curve for $D/F$ defined over $F$.  We let 
\[\widehat{H}^i(U^p, \OO/\varpi^n):=\varinjlim_{K_{p}} 
H^i_{\et}(X(K_{p} U^{p})_{\overline{F}}, \OO/\varpi^n),\]
where the limit is taken over open subgroups of $U_{p}^{\pp} \OO_{D_\pp}^{\times}$. 
This group carries commuting actions of the Hecke algebra $\mathbb T^{\univ}_S$, $\Gal_{F, S}$, 
$(D\otimes_{\mathbb Q} \Qp)^{\times}$ and the centre $(\mathbb A_F^{\infty})^{\times}$. 
We let $\widehat{H}^i(U^p, \OO/\varpi^n)_{\mm}$ be the localisation of  
$\widehat{H}^i(U^p, \OO/\varpi^n)$ at $\mm$ and let
$\widehat{H}^i_{\psi}(U^p, \OO/\varpi^n)_{\mm}$ be the submodule on which 
$(\mathbb A_F^{\infty})^{\times}$ acts by $\psi$, and for $\lambda$ as in Lemma \ref{non-Eis} we let 
\[\widehat{H}^i_{\psi,\lambda}(U^{\pp}, \OO/\varpi^n)_{\mm}:=\Hom_{U^{\pp}_p}(\lambda, \widehat{H}^i_{\psi}(U^p, \OO/\varpi^n)_{\mm}).\]
After identifying $\frac{1}{\varpi^n}\OO/\OO \subset L/\OO$ with $\OO/\varpi^n$, we obtain an isomorphism of $\OO$-modules $\varinjlim_n \OO/\varpi^n \cong L/\OO$ and use the same transition maps to define
\[\widehat{H}^i_{\psi, \lambda}(U^{\pp}, L/\OO)_{\mm}:= \varinjlim_n \widehat{H}^i_{\psi, \lambda}(U^{\pp}, \OO/\varpi^n)_{\mm}.\]
It is explained in \cite[Lemma 6.2]{ludwig} that  Scholze's results in \cite{scholze} imply that 
there is a natural isomorphism of $\mathbb T^{\univ}_S[\Gal_{\Qp}\times D_\pp^{\times}]$-representations: 
\begin{equation}\label{scholze_0}
\mathcal{S}^1( S_{\psi, \lambda}(U^{\pp}, L/\OO)_{\mm})\cong \widehat{H}^1_{\psi, \lambda}(U^{\pp}, L/\OO)_{\mm}.
\end{equation}
We let 
\[\widehat{H}^i_{\psi, \lambda}(U^{\pp}, \OO)_{\mm}:= \varprojlim_n \widehat{H}^i_{\psi, \lambda}(U^{\pp}, \OO/\varpi^n)_{\mm},\]
equipped with the $p$-adic topology. By \cite[Lemma 9.16]{DPS} and \cite[Proposition 2.3]{Manning_Shotton}, the module $\varprojlim_n \widehat{H}^i(U^{\pp}, \OO/\varpi^n)_{\mm}$ is $\OO$-torsion free. Therefore the module $\widehat{H}^i_{\psi, \lambda}(U^{\pp}, \OO)_{\mm}$
is $\OO$-torsion free and by inverting $p$ we obtain an admissible unitary Banach space representation of  $D_{\pp}^{\times}$. Moreover, we have natural isomorphism
\begin{equation}\label{dual_0}
(\widehat{H}^1_{\psi, \lambda}(U^{\pp}, \OO)_{\mm})^d \cong (\widehat{H}^1_{\psi, \lambda}(U^{\pp}, L/\OO)_{\mm})^{\vee}
\end{equation}
 compatible with $\mathbb T^{\univ}_S[\Gal_{\Qp}\times D_\pp^{\times}]$-action. To ease the notation 
 we will omit the outer brackets, when taking the duals. It is proved in 
 \cite[Proposition 6.3]{ludwig} that \eqref{scholze_0} and \eqref{dual_0} induce a natural isomorphism
 \begin{equation}\label{sch1}
 \Sch^1(S_{\psi, \lambda}(U^{\pp}, \OO)_{\mm}^d)\cong \widehat{H}^1_{\psi, \lambda}(U^{\pp}, \OO)_{\mm}^d
 \end{equation}
 compatible with $\mathbb T^{\univ}_S[\Gal_{\Qp}\times D_\pp^{\times}]$-action.

\subsection{Patching}

If $R$ is a complete local noetherian $\OO$-algebra with residue field $k$  and $G$ is a $p$-adic analytic group then, following the notational scheme of \cite{ord1}, 
we let $\Mod^{\fgaug}_G(R)$ be the category of $R[G]$-modules $M$, such that for some open compact subgroup $H$ of $G$, the action of
$R[H]$ on $M$ extends to the action of $R\br{H}$, and $M$ is a finitely generated $R\br{H}$-module with respect to this action. We note that 
$M$ carries a canonical topology for which the action of $R\br{H}$ on $M$ is continuous. 

Restriction of deformations to the decomposition subgroup at $\pp$ induces a map $R^{\square, \psi_{\pp}}_{\rhobar} \rightarrow R^{\square, \psi}_{F, S}$. 
Since  the image of $\rbar$ contains $\SL_2(\mathbb F)$, $\rbar$ is absolutely irreducible and the map $R^{\psi}_{F, S}\rightarrow R^{\square, \psi}_{F, S}$, induced by forgetting the framings, is formally smooth 
and we may non-canonically identify $R^{\square, \psi}_{F, S}\cong R^{\psi}_{F, S}\br{z_1, \ldots, z_{4|S|-1}}$. By quotienting out the variables 
we obtain a surjection  $R^{\square, \psi}_{F, S}\twoheadrightarrow R^{\psi}_{F, S}$, which makes $R^{\psi}_{F, S}$ into a $R^{\square, \psi_{\pp}}_{\rhobar}$-algebra. 

\begin{defi}\label{thick} Let $G$ be a $p$-adic analytic group and let $M\in \Mod^{\fgaug}_G(R^{\psi}_{F, S})$.  A thickening of $M$ consists of the data 
$(R_{\infty}, M_\infty, \underline{x}, \varphi)$, where 
\begin{enumerate} 
\item $R_{\infty}$ is a complete local noetherian faithfully flat  $R^{\square, \psi_{\pp}}_{\rhobar}$-algebra with residue field $k$;
\item $M_{\infty}\in \Mod^{\fgaug}_G(R_{\infty})$;
\item\label{part3} $\underline{x}=(x_1, \ldots, x_h)$ is an $M_{\infty}$-regular sequence contained in the maximal ideal of $R_{\infty}$, such that
\[ M_{\infty}/(\underline{x})M_{\infty}\cong M;\]
\item $\varphi: R_{\infty}/(\underline{x})\twoheadrightarrow R^{\psi}_{F, S}$ is a surjection of $R^{\square, \psi_\pp}_{\rhobar}$-algebras, such that the 
action of $R_{\infty}$ on $M_{\infty}/(\underline{x})M_{\infty}$ factors through $\varphi$ and the resulting action of $R^{\psi}_{F, S}$ on $M_{\infty}/(\underline{x})M_{\infty}$ 
is compatible with the isomorphism in \eqref{part3}. 
\end{enumerate}
\end{defi}

Recall that $\theta_{\pp}: \Zp^{\times}\rightarrow \OO^{\times}$ is a character, such that
$\psi_{\pp}\theta_{\pp}^{-2}$ is trivial on $1+p\Zp$, and hence we may consider $\psi_{\pp}\theta_{\pp}^{-2}$ as a character of $\Fp^{\times}$. 
Let $\mu: \mathbb F_{p^2}^{\times} \rightarrow \OO^{\times}$ be a character such that the restriction of $\mu$ to $\Fp^{\times}$ is equal to $\psi_{\pp}\theta_{\pp}^{-2}$ and $\mu\neq \mu^p$. Let 
$\sigma(\mu)$ be the cuspidal representation of $\GL_2(\Fp)$ corresponding to the 
orbit $\{ \mu, \mu^p\}$, we also denote by $\sigma(\mu)$ its inflation to $\GL_2(\Zp)$ and let $\lambda_{\pp}:=\sigma(\mu)\otimes (\theta_{\pp}\circ \det)$. We inflate $\mu$ to the character of 
$\OO_{D_{\pp}}^{\times}$ via $\OO_{D_{\pp}}^{\times}/(1+\mm_{D_{\pp}})\cong \mathbb{F}_{p^2}^{\times}$. 
Let $\lambda'_{\pp}:= \mu (\theta_{\pp} \circ \Nrd)$ be a character of $\OO_{D^{\times}_{\pp}}$.
We note that $\lambda_{\pp}$ and $\lambda_{\pp}'$ have central character $\psi_{\pp}$, and it follows 
from \cite{BH} that the classical Jacquet--Langlands correspondence induces a bijection between the 
isomorphism classes of irreducible smooth representations $\pi_{\pp}$ of $\GL_2(\Qp)$ on 
$\overline{L}$-vector spaces such that $\Hom_{\GL_2(\Zp)}(\lambda_{\pp}, \pi_{\pp})\neq 0$ and 
the isomorphism classes of irreducible smooth representations $\pi_{\pp}'$ of $D_{\pp}^{\times}$ on 
$\overline{L}$-vector spaces such that 
$\Hom_{\OO_{D_{\pp}}^{\times}}(\lambda_{\pp}', \pi_{\pp}')\neq 0$. Moreover, all representations
$\pi_{\pp}$ as above are supercuspidal and their isomorphism classes  correspond bijectively to 
the isomorphism classes of irreducible  representations 
of the Weil group $W_{\Qp}$ of $\Qp$ of the form 
$\Ind^{W_{\Qp}}_{W_{\mathbb{Q}_{p^2}}}\chi$, such that  $\chi(\Art_{\mathbb{Q}_{p^2}}(x))= \mu(x) \theta_{\pp}(N^{\mathbb{Q}_{p^2}}_{\Qp} x)$ for all $x\in \ZZ_{p^2}^{\times}$, under 
the classical local Langlands correspondence.

Let $a> b$ be integers, and let $\sigma_{a, b}:=\lambda_{\pp} \otimes \Sym^{a-b-1} L^2 \otimes \det^{b+1}$ and let   $\sigma_{a, b}^0$ be a $\GL_2(\Zp)$-equivariant $\OO$-lattice
in $\sigma_{a, b}$. If $M_{\infty}$ is a thickening of $S_{\psi, \lambda}(U^{\pp}, \OO)^d_{\mm}$ with central character $\psi_{\pp}^{-1}$ 
then we let 
\[M_{\infty}(\sigma_{a,b}^0):= \Hom_{\OO\br{\GL_2(\Zp)}}^{\cont}( M_{\infty}, (\sigma_{a, b}^0)^d)^d\]
where $(\cdot)^d:=\Hom_{\OO}^{\cont}(\cdot, \OO)$. If $a+b+1=0$ then the central character of $\sigma_{a,b}$ is equal to $\psi_{\pp}$ and $(\varpi, \underline{x})$ is a regular 
system of parameters  for $M_{\infty}(\sigma_{a,b}^0)$, see \cite[Proposition 2.40]{duke_bm} for details.  In particular, $M_{\infty}(\sigma_{a,b}^0)$ is a finitely generated Cohen--Macaulay $R_{\infty}$-module. If $b+a+1\neq 0$ then $M_{\infty}(\sigma_{a,b}^0)=0$, since the centre acts on $M_{\infty}$ by $\psi_{\pp}^{-1}$, which 
is smooth. The $R_{\infty}[1/p]$-module 
$M_{\infty}(\sigma_{a,b}):= M_{\infty}(\sigma_{a,b}^0)\otimes_{\OO} L$ does not depend on the choice of lattice $\sigma_{a,b}^0$. Let $R_{\infty}(\sigma_{a,b})$ 
be the quotient of $R_{\infty}$, which acts faithfully on $M_{\infty}(\sigma_{a,b})$.

Similarly, we let $\sigma'_{a,b}= \lambda'_{\pp} \otimes \Sym^{a-b-1} L^2 \otimes \det^{b+1}$ and if $M_{\infty}'$ is a thickening of $\widehat{H}^1_{\psi, \lambda}(U^{\pp}, \OO)^d_{\mm}$ with central character $\psi_{\pp}^{-1}$ then we define $M'_{\infty}(\sigma'_{a,b})$ and $R_{\infty}(\sigma'_{a,b})$ analogously by replacing $\GL_2(\Zp)$ with $\OO_{D_{\pp}}^{\times}$. 

Let $R^{\square, \psi_{\pp}}_{\rhobar}(\sigma_{a,b})$ be the reduced and $\OO$-torsion free quotient of $R^{\square, \psi_{\pp}}_{\rhobar}$ parameterizing 
potentially semi-stable lifts of $\rhobar$ with Hodge--Tate weights $(a, b)$ and inertial Galois type defined by $\lambda_{\pp}$. Since the determinant in the deformation problem is fixed and is equal to $\psi_{\pp} \varepsilon^{-1}$, the ring $R^{\square, \psi_{\pp}}_{\rhobar}(\sigma_{a,b})$ is 
zero, unless $a+b=-1$.

\begin{thm}\label{the_patch} There is a thickening $(R_{\infty}, M_\infty, \underline{x}, \varphi)$ of $S_{\psi, \lambda}(U^{\pp}, \OO)^d_{\mm}$ and a thickening $(R_{\infty}, M'_\infty, \underline{x}, \varphi)$ of 
$\widehat{H}^1_{\psi, \lambda}(U^{\pp}, \OO)^d_{\mm}$ such that the following hold:
\begin{enumerate}
\item\label{ptch1} $M_{\infty}$ is projective in $\Mod^{\pro}_{\GL_2(\Zp), \psi_{\pp}}(\OO)$;
\item \label{ptch2}$M_{\infty}'$ is projective in $\Mod^{\pro}_{\OO_{D_{\pp}}^{\times}, \psi_{\pp}}(\OO)$;
\item  \label{ptch3}for all integers  $b>a$ the rings $R_{\infty}(\sigma_{a,b})$, $R_{\infty}(\sigma_{a,b}')$ are reduced and their spectra are equal to 
 a union of irreducible components of \[\Spec (R^{\square, \psi_{\pp}}_{\rhobar}(\sigma_{a,b})\otimes_{R^{\square, \psi_{\pp}}_{\rhobar}} R_{\infty});\]
\item \label{ptch4} $\Sch^1(M_{\infty})\cong M_{\infty}'$;
\end{enumerate}
where $\lambda$ is as in Lemma \ref{alt_bier}.
\end{thm}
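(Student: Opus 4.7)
My plan is to adapt the Taylor--Wiles patching method in the ultrafilter form of Gee--Newton and Dotto--Le, applying it simultaneously to the completed cohomology of Shimura sets (for $D_0$) and of Shimura curves (for $D$), and then use the compatibility of Scholze's functor $\Sch^1$ with ultrafilter localization established in Section~\ref{sec_ultra} (in particular Corollary~\ref{cor2} and Lemma~\ref{yh}) to obtain condition (\ref{ptch4}). Thanks to Remark~\ref{kisin_ok}, the global setup is compatible with Kisin's original patching argument, so that we may use his local deformation rings at $p$ (with fixed determinant $\psi_\pp \varepsilon^{-1}$) as well as the auxiliary framed global deformation ring $R_{F,S}^{\square,\psi}$.

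First I will choose, for every $N \geq 1$, a Taylor--Wiles datum $Q_N$ of auxiliary places $v \not\in S$ such that the map $R^{\square,\psi_\pp}_{\rhobar}\br{y_1,\dots,y_g} \to R^{\square,\psi}_{F,S\cup Q_N}$ becomes surjective modulo $\mm_{R_\infty}^N$, where $g=\dim_k H^1_{\mathcal L^\perp}(\Gal_{F,S\cup Q_N},\ad^0 \rbar(1))$ is controlled by the Galois-cohomological computation of Kisin. Over each such level I consider the augmented Hecke module $M_{Q_N} := S_{\psi,\lambda}(U^\pp U_{Q_N},\OO)^d_{\mm_{Q_N}}$, where $U_{Q_N}$ is the appropriate pro-$v$ Iwahori product and $\mm_{Q_N}$ the Taylor--Wiles-augmented maximal ideal; standard arguments (using \eqref{schnitt} and Proposition~\ref{happy_prop}) show that $M_{Q_N}$ is a free module over $\OO\br{\Delta_{Q_N} \times K_p/Z_p}$ with the fixed central character $\psi_\pp^{-1}$, for $K_p$ a small enough compact open subgroup of $\GL_2(\Zp)$. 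This is the crucial structural input for (\ref{ptch1}). A parallel construction on the curve side, using Shimura curves $X(K_pU^\pp U_{Q_N})$ and the isomorphism \eqref{sch1}, gives modules $M'_{Q_N}$ which are free over $\OO\br{\Delta_{Q_N} \times K'_\pp/Z_\pp}$ for small enough $K'_\pp \subset \OO_{D_\pp}^\times$, yielding (\ref{ptch2}).

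Second I will patch using ultrafilters rather than the classical diagonal process. Let $A_N$ denote $\OO\br{y_1,\ldots,y_{g+4|S|-1}}$ modulo the ideal generated by $\mm_A^N$ and $\varpi^N$; the patched data at level $N$ then lives over $A_N$ with specified quotient maps $A_N \to R^{\square,\psi}_{F,S \cup Q_N}/\mm^N$. Fixing a non-principal ultrafilter $\mathfrak F$ on $\N$ and following the formalism of \cite{gee_newton,dotto-le}, I form $A_\infty := (\prod_N A_N)_{\mathfrak F}$; by Lemmas~\ref{ultra1} and~\ref{ultra2} the ultrafilter localizations $M_\infty := (\prod_N M_{Q_N}^{(N)})^\vee_{\mathfrak F}$ and $M'_\infty := (\prod_N (M'_{Q_N})^{(N)})^\vee_{\mathfrak F}$ (with appropriate truncations $(\cdot)^{(N)}$) are finitely generated over the completed group algebras with the requested central character and inherit freeness from each level, giving (\ref{ptch1}) and (\ref{ptch2}). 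The ring $R_\infty$ is constructed in the standard way as an ultrafilter-localized power series ring, and the compatibility map $\varphi: R_\infty/(\underline x) \twoheadrightarrow R^{\psi}_{F,S}$ is obtained exactly as in Kisin's construction, using that only finitely many Taylor--Wiles primes contribute to any given truncation.

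Third I will verify (\ref{ptch4}) by applying $\Sch^1$ pointwise at each level $N$ and then passing through the ultrafilter. The isomorphism \eqref{sch1} at each level $Q_N$ gives $\Sch^1(M_{Q_N}) \cong M'_{Q_N}$ as $\OO\br{\OO_{D_\pp}^\times}$-modules equivariantly for all the relevant actions. The Taylor--Wiles modules $M_{Q_N}$ become (after restriction to a suitable compact open subgroup) direct sums of finitely many copies of a fixed admissible $K_p$-representation, with multiplicity uniformly bounded, so the hypothesis of Theorem~\ref{main_ultra} is satisfied; applying Corollary~\ref{cor2} to the family $\{M_{Q_N}\}$ and the localization at $A_\infty$ yields $\Sch^1(M_\infty)\cong M'_\infty$. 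The main technical obstacle here is to correctly track the central character, because the naive statement in \cite[Footnote 7]{scholze} fails as noted in the introduction; this is exactly what Lemma~\ref{central}, Lemma~\ref{olivertwist} and the framework of Section~\ref{sec_ultra} were set up to handle. Finally (\ref{ptch3}) follows from local-global compatibility at $p$: the classical Jacquet--Langlands correspondence and the Bernstein--Breuil--M\'ezard type description of $R^{\square,\psi_\pp}_{\rhobar}(\sigma_{a,b})$ (cf.\ \cite[Proposition 2.40]{duke_bm}) show that the rings $R_\infty(\sigma_{a,b})$ and $R_\infty(\sigma'_{a,b})$ act faithfully on $M_\infty(\sigma_{a,b}^0)$, $M'_\infty((\sigma'_{a,b})^0)$ respectively, and by Kisin's argument these quotients are supported on unions of irreducible components of the base-changed potentially semistable deformation ring, which is already reduced.
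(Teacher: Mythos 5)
Your overall strategy coincides with the paper's: cite (or re-derive) the Dotto--Le ultrafilter patching construction to get parts (\ref{ptch1})--(\ref{ptch3}), and then exploit the machinery of Section~\ref{sec_ultra} (Lemma~\ref{yh}, Theorem~\ref{main_ultra}, Corollary~\ref{cor2}) to propagate the isomorphism \eqref{sch1} through the ultrafilter localization and obtain (\ref{ptch4}). However, there is a genuine gap in your treatment of (\ref{ptch4}). You invoke Lemma~\ref{yh} but never verify its key hypothesis: to deduce that $\Sch^1$ commutes with the framing variables, with reduction modulo the truncation ideals $(\mathfrak a, J)$, and ultimately with the limit defining $M_\infty$, one needs $\Sch^0$ to vanish on the relevant modules. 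Without that vanishing, none of the intermediate isomorphisms $\Sch^1(M(\mathfrak a, J, N))\cong M'(\mathfrak a, J, N)$ follow, and the whole chain breaks. In the paper this vanishing is supplied by Lemma~\ref{non-Eis} (the maximal ideal is non-Eisenstein, so the $\SL_2(\Qp)$-coinvariants of the relevant mod-$p$ object vanish) combined with \cite[Proposition~4.7]{scholze}, which identifies $\Sch^0$ with $\SL_2(\Qp)$-coinvariants. You never address this; you only assert that ``\eqref{sch1} at each level $Q_N$ gives $\Sch^1(M_{Q_N}) \cong M'_{Q_N}$'', but \eqref{sch1} is only the unframed statement at one level and does not automatically persist through the tensoring with $\OO\br{z_1,\ldots,z_{4|S|-1}}$, the reduction modulo $(\mathfrak a, J)$, and the $U_\pp$-coinvariants.

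Two secondary points. First, the uniform-boundedness hypothesis of Theorem~\ref{main_ultra} (that the $M(\mathfrak a,J,N)^\vee$ are all isomorphic, for fixed $(\mathfrak a,J)$, as representations of a small $H$) needs the careful projectivity/flatness argument over $\OO_{L,N}$ and $S_N$ that the paper carries out explicitly via \eqref{fibre}; you gesture at it but do not verify it. Second, your remark that Lemma~\ref{central} and Lemma~\ref{olivertwist} are ``exactly what was set up'' to handle the central-character issue in (\ref{ptch4}) is a misattribution: those lemmas are used elsewhere (for Corollary~\ref{the_end_is_nigh}), and the fix to the central-character problem in Scholze's Footnote~7 is precisely Theorem~\ref{main_ultra} itself, not these two lemmas.
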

\begin{proof} A thickening of $S_{\psi, \lambda}(U^{\pp}, \OO)^d_{\mm}$ satisfying \eqref{ptch1} and  \eqref{ptch3}  has been constructed by Dotto--Le in the proof of \cite[Theorem 6.2]{dotto-le} by using 
Scholze's improvement with ultrafilters of the patching argument in \cite{6auth} in the setting of quaternionic Shimura sets. Parts \eqref{ptch1} and  \eqref{ptch3} correspond to \cite[Proposition 2.10]{6auth} and \cite[Lemma 4.17  (1)]{6auth}, respectively.  Dotto--Le also carry out the patching  in the setting of Shimura curves. They assume that the quaternion algebra is split at $p$. However, this does not play a role 
in their arguments in the proof of \cite[Theorem 6.2]{dotto-le}, and hence we also obtain a thickening $(R_{\infty}, M'_\infty, \underline{x}, \varphi)$  of 
$\widehat{H}^1_{\psi, \lambda}(U^{\pp}, \OO)^d_{\mm}$ satisfying \eqref{ptch2} and \eqref{ptch3}. The main point of the Proposition is that 
if  we take the same utrafilter in these constructions then \eqref{ptch4} holds, so that $\Sch^1(M_{\infty})\cong M'_{\infty}$ as $R_{\infty}[\Gal_{\Qp}\times D^{\times}_p]$-modules. To prove that we need to unravel the construction of $M_{\infty}$ 
and $M_{\infty}'$ in \cite{dotto-le}. We employ the notations 
of \textit{loc.\,cit.}, albeit their $L$ is our $\Qp$, their $E$ is our $L$ and their $K^v$, $K(N)^v$,  $K_v$ is our $U^{\pp}$, $U(N)^{\pp}$, $U_{\pp}$. Moreover, when 
defining spaces of automorphic forms $S(K^v K_v, V)$ on definite quaternion algebras in \cite[Section 6.1]{dotto-le} they take functions with values in the Pontryagin dual of $V$ and 
we take functions with values in $V$, so that their $S(K^v K_v, \OO)$ is our $S(U^{\pp} U_{\pp}, L/\OO)$.

The ring $R_{\infty}$ is denoted by $R_{\infty}^{\psi}$ in \cite{dotto-le}. They show 
that $M_{\infty}$ is a finitely generated $S_{\infty}\br{\GL_2(\Zp)}$-module, where $S_{\infty}$ is a subring of $R_{\infty}$ of the form
\[S_{\infty}=\OO\br{z_1, \ldots, z_{4|S|-1}, y_1, \ldots, y_q}\] and 
\begin{equation}\label{kill_patching_var}
M_{\infty}/(z_1, \ldots, z_{4|S|-1}, y_1, \ldots, y_q)M_{\infty}\cong S_{\psi, \lambda}(U^{\pp}, \OO)^d_{\mm}.
\end{equation}
We let $\underline{x}=(z_1, \ldots, z_{4|S|-1}, y_1, \ldots, y_q)$. It is shown in \cite{dotto-le} that $M_{\infty}$ is a projective  $S_{\infty}\br{\GL_2(\Zp)}$-module
in the category of compact  $S_{\infty}\br{\GL_2(\Zp)}$-modules with central charachter $\psi_{\pp}^{-1}$. This implies that $\underline{x}$ is $M_{\infty}$-regular. 

The ring $R_{\infty}$ is a power series ring over $R^{\square,\psi_{\pp}}_{\rhobar}\wtimes_{\OO} \widehat{\bigotimes}_{v\mid p, v\neq \pp} R^{\square, \psi_v}_{\rbar_v}(\lambda_v)$ in carefully chosen number of variables, where $\rbar_v$ is the restriction of $\rbar$ to the decomposition group at $v$ and the completed tensor product is taken over $\OO$.  For each $N\ge 1$, $Q_N$ is a set of cardinality $q$ of 
Taylor--Wiles primes, see  \cite[Section 6.2.3]{dotto-le}. Let $\Delta_{Q_N}=\prod_{w\in Q_N} k_w^{\times}(p)$, where $k_w^{\times}(p)$ is the Sylow $p$-group 
of the multiplicative group of the residue field at $w$. Then by construction of $Q_N$, $p^N$ divides the order of $k_w^{\times}$ and hence  is a surjection $\Delta_{Q_N} \twoheadrightarrow (\ZZ/p^N)^q$. Let $\OO_{L, N}=\OO[\Delta_{Q_N}]$ be the  group ring of $\Delta_{Q_N}$. 
By choosing   surjections $\Zp^q\twoheadrightarrow \Delta_{Q_N}$  we obtain surjections
$\OO_{L, \infty}:=\OO\br{y_1, \ldots, y_q}=\OO\br{\Zp^q} \twoheadrightarrow \OO_{L, N}$ for all $N\ge 1$.  
Let 
\begin{equation}\label{ten}
\begin{split}
 M(N):&= S_{\psi, \lambda}(U(N)^{\pp}, \OO)^d_{\mm'_{Q_N}} \wtimes_{R^{\psi}_{F, S_{Q_N}}} R^{\square, \psi}_{F, S_{Q_N}}\\
 &\cong 
S_{\psi, \lambda}(U(N)^{\pp}, \OO)^d_{\mm'_{Q_N}} \wtimes_{\OO} \OO\br{z_1, \ldots, z_{4|S|-1}},
\end{split}
\end{equation}
\begin{equation}
\begin{split}
 M'(N):&= \widehat{H}^1_{\psi, \lambda}(U(N)^{\pp}, \OO)^d_{\mm'_{Q_N}} \wtimes_{R^{\psi}_{F, S_{Q_N}}} R^{\square, \psi}_{F, S_{Q_N}}\\
 &\cong 
\widehat{H}^1_{\psi, \lambda}(U(N)^{\pp}, \OO)^d_{\mm'_{Q_N}} \wtimes_{\OO} \OO\br{z_1, \ldots, z_{4|S|-1}},
\end{split}
\end{equation}
where we have fixed an isomorphism $R^{\psi, \square}_{F, S_{Q_N}}\cong R^{\psi}_{F, S_{Q_N}}\br{z_1, \ldots, z_{4|S|-1}}$ as in \cite[Section 6.2.4]{dotto-le}. 
The ring $S_{\infty}=\OO_{L, \infty}\br{z_1, \ldots, z_{4|S|-1}}$ acts on $M(N)$ and $M'(N)$ and 
the action factors through the quotient $S_N= \OO_{L,N}\br{z_1, \ldots, z_{4|S|-1}}$.
If $J$ is an open ideal of $\OO_{L, \infty}$ and $\mathfrak a$ is an open ideal of $\OO\br{z_1, \ldots, z_{4|S|-1}}$ then we let 
\[M(\mathfrak a, J, N):= M(N)/(\mathfrak a, J) M(N), \quad M'(\mathfrak a, J, N):= M'(N)/(\mathfrak a, J) M'(N).\]
If $U_{\pp} \subset \GL_2(\Qp)$, $U_{\pp}' \subset D^{\times}_v$ are compact open subgroups then we 
let 
\[M(\mathfrak a, J, U_{\pp}, N):= M(\mathfrak a, J, N)_{U_{\pp}}, \quad 
M'(\mathfrak a, J, U_{\pp}', N):= M(\mathfrak a, J, N)_{U_{\pp}'}.\]
denote the coinvariants. Note that the modules $M(\mathfrak a, J, U_{\pp}, N)$ and $M'(\mathfrak a, J, U_{\pp}', N)$ are of finite cardinality. 
For each open ideal $J$ of $\OO_{L, \infty}$ let $I_J$ be a cofinite subset of $I$ such that the kernel of $\OO_{L, \infty}\rightarrow \OO_{L, N}$ is contained in 
$J$.  We fix a non-principal ultrafilter $\mathfrak F$ on the set of natural numbers. Then we let 
\[  M(\mathfrak a, J, U_{\pp}, \infty):=(\OO_{L, \infty}/J)_{I_J, x} \otimes_{(\OO_{L, \infty}/J)_{I_J}} \prod_{N\in I_J} M(\mathfrak a, J, U_{\pp},  N),\]
 \[  M'(\mathfrak a, J, U_{\pp}', \infty):=(\OO_{L, \infty}/J)_{I_J, x} \otimes_{(\OO_{L, \infty}/J)_{I_J}} \prod_{N\in I_J} M'(\mathfrak a, J, U_{\pp}',  N),\]
where $x\in \Spec (\OO_{L, \infty}/J)_{I_J}$ corresponds to the ultrafilter $\mathfrak F$. Finally, we let  
\[M(\mathfrak a, J, \infty)=\varprojlim_{U_{\pp}} M(\mathfrak a, J, U_{\pp}, \infty), \quad M'(\mathfrak a, J, \infty)=\varprojlim_{U_{\pp}'} M'(\mathfrak a, J, U_{\pp}', \infty),\]
\[ M_{\infty} =\varprojlim_{\mathfrak  a, J} M(\mathfrak a, J, \infty ),\quad M'_{\infty} =\varprojlim_{\mathfrak  a, J} M'(\mathfrak a, J,  \infty ),\]
where the limits are taken over open ideals $\mathfrak a$ of $\OO\br{z_1, \ldots, z_{4|S|-1}}$, $J$ of $\OO_{L, \infty}$,  and compact open subgroups $U_{\pp}$ of $\GL_2(\Qp)$ and 
$U_{\pp}'$ of $D_v^\times$.

It  follows \cite[Corollary 7.5]{KW2} that $S_{\psi, \lambda}(U(N)^{\pp}, \OO)^d_{\mm'_{Q_N}}$ is a
flat $\OO_{L,N}$-module and if we base change it along the map $\OO_{L,N}\rightarrow \OO$,
which sends all the elements of $\Delta_{Q_N}$ to $1$,  then we obtain an isomorphism.  
\[\OO\otimes_{\OO_{L,N}}S_{\psi, \lambda}(U(N)^{\pp}, \OO)^d_{\mm'_{Q_N}}\cong S_{\psi, \lambda}(U^{\pp}, \OO)_{\mm}^d.\]
Since $S_{\psi, \lambda}(U^{\pp}, \OO)_{\mm}^d$ is projective in $\Mod^{\pro}_{\GL_2(\Zp), \psi_{\pp}}(\OO)$, the above isomorphism and flatness over $\OO_{L,N}$ imply that $S_{\psi, \lambda}(U(N)^{\pp}, \OO)^d_{\mm'_{Q_N}}$ is projective in the category $\Mod^{\pro}_{\GL_2(\Zp), \psi_{\pp}}(\OO_{L,N})$. 
We deduce that if $A$ is an artinian quotient of $\OO_{L, N}$ then 
$A\otimes_{\OO_{L,N}} S_{\psi, \lambda}(U(N)^{\pp}, \OO)^d_{\mm'_{Q_N}}$ is projective 
in $\Mod^{\pro}_{\GL_2(\Zp), \psi_{\pp}}(A)$. For a fixed $A$ we may choose 
a sufficiently small  open pro-$p$ subgroup $H$ of $\GL_2(\Qp)$, so that  
 $\psi_{\pp}: H\cap Z \rightarrow \OO^{\times} \rightarrow A^{\times}$ is trivial, 
then $A\otimes_{\OO_{L, N}} S_{\psi, \lambda}(U(N)^{\pp}, \OO)^d_{\mm'_{Q_N}}$ is a projective $A\br{H/H\cap Z}$-module. 
Since 
\begin{equation}\label{fibre}
k\otimes_{\OO_{L, N}} S_{\psi, \lambda}(U(N)^{\pp}, \OO)^d_{\mm'_{Q_N}}\cong S_{\psi, \lambda}(U^{\pp}, k)^{\vee}_{\mm},
\end{equation}
 the claim implies that 
 \[A\otimes_{\OO_{L, N}} S_{\psi, \lambda}(U(N)^{\pp}, \OO)^d_{\mm'_{Q_N}}\cong A\br{H/H \cap Z}^m,\]
  where $m=\dim_k (S_{\psi, \lambda}(U^{\pp}, k)^{\vee}_{\mm})_{H}=\dim_k S_{\psi, \lambda}(U^{\pp}H, k)_{\mm}$. This implies that if we fix $\mathfrak a$ and $J$ then there is an open subgroup 
  $H$ of $\GL_2(\Qp)$ such that 
  $M(\mathfrak a, J, N)$
  are  isomorphic as $\OO\br{H}$-modules for all $N\in I_J$. 
  
  Lemma \ref{non-Eis} with \cite[Proposition 4.7]{scholze} imply that 
  that $\Sch^0(S_{\psi, \lambda}(U(N)^{\pp}, k)^{\vee}_{\mm'_{Q_N}})$ and 
  hence 
  $\Sch^0(S_{\psi, \lambda}(U(N)^{\pp}, \OO)^d_{\mm'_{Q_N}})$ are equal to $0$.  
  Since 
  \[\Sch^1(S_{\psi, \lambda}(U(N)^{\pp}, \OO)^d_{\mm'_{Q_N}})\cong \widehat{H}^1_{\psi, \lambda}(U(N)^{\pp}, \OO)^d_{\mm'_{Q_N}}\]
  by \cite[Proposition 6.3]{ludwig} and $R^{\square, \psi}_{F, S_{Q_N}}$ is flat over $R^{\psi}_{F, S_{Q_N}}$, Lemma \ref{yh}
  implies that 
    that \[\Sch^1(M(N))\cong M'(N).\] Since $S_{\psi, \lambda}(U(N)^{\pp}, \OO)^d_{\mm'_{Q_N}}$
    is flat over $\OO_{L,N}$, we deduce using \eqref{ten} that  $M(N)$ is flat over $S_N$. Since 
    the action of $S_{\infty}$ on $M(N)$ factors through $S_N$, for all compact $S_{\infty}$-modules 
    $\md$ we have 
    \[\md\wtimes_{S_{\infty}} M(N)\cong (\md\wtimes_{S_{\infty}} S_N)\wtimes_{S_N} M(N)\]
    and hence $\Sch^1(\md\wtimes_{S_{\infty}} M(N))\cong \md\wtimes_{S_{\infty}} M'(N)$ by Lemma 
    \ref{yh}. We apply this observation to $\md=S_{\infty}/(\mathfrak a, J)$ to deduce that 
  \begin{equation}\label{fedup}
  \Sch^1(M(\mathfrak a, J, N))\cong M'(\mathfrak a, J, N).
  \end{equation}
   Let $\Pi=\bigl ( \prod_{N\in I_J}  M(\mathfrak a, J, N)^{\vee}\bigr)^{\sm}$. It follows from Lemma \ref{ultra2} that 
  \[ M(\mathfrak a, J, \infty)^{\vee} \cong (\OO_{L, \infty}/J)_{I_J, x} \otimes_{(\OO_{L, \infty}/J)_{I_J}} \Pi.\]  
  Since the restrictions of $M(\mathfrak a, J, N)^{\vee}$ to $H$ are all isomorphic, Corrollary \ref{cor2} together with \eqref{fedup} 
  imply that 
  \[\Sch^1(M(\mathfrak a, J, \infty))\cong M'(\mathfrak a, J, \infty).\]
  Since $\Sch^1$ commutes with projective limits we obtain $\Sch^1(M_{\infty})\cong M'_{\infty}$. 
  \end{proof} 
  
  \begin{remar} Let us point out how a part of the proof of  \cite[Lemma 4.17  (1)]{6auth} can be rewritten using ultrafilters. It follows from local-global compatibility that the action of $R^{\square, \psi_{\pp}}_{\rhobar}$ on 
  \[\Hom_{\OO\br{\GL_2(\Zp)}}^{\cont}( M(N), (\sigma_{a, b}^0)^d), \quad \Hom_{\OO\br{\OO_{D_{\pp}}^{\times}}}^{\cont}( M'(N), ((\sigma'_{a, b})^0)^d)\]
factors through the quotient $R^{\square, \psi_{\pp}}_{\rhobar}(\sigma_{a,b})$. Lemma \ref{pst_quotient} implies that the same holds for the modules $M_{\infty}(\sigma_{a,b})$, $M_{\infty}'(\sigma'_{a,b})$. 
\end{remar}

\begin{lem}\label{Sch00} $\Sch^0(M_{\infty})=0$.
\end{lem}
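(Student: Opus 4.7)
The plan is to reduce the vanishing on $M_{\infty}$ to vanishing at the finite levels $M(\mathfrak a,J,N)$ by exploiting two properties of $\Sch^0$: it commutes with projective limits (noted just after the definition of $\Sch^i$) and it is right exact on $\dualcat_G(\OO)$. Since $M_{\infty}=\varprojlim_{\mathfrak a,J} M(\mathfrak a,J,\infty)$, the first property reduces the statement to $\Sch^0(M(\mathfrak a,J,\infty))=0$ for every pair of open ideals $\mathfrak a\subset\OO\br{z_1,\dots,z_{4|S|-1}}$ and $J\subset\OO_{L,\infty}$.

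First I would recall that $\Sch^0(M(N))=0$: this is already established inside the proof of Theorem \ref{the_patch} via Lemma \ref{non-Eis} together with \cite[Proposition 4.7]{scholze}, using additionally that $\Sch^0$ commutes with completion against the formal variables $\OO\br{z_1,\dots,z_{4|S|-1}}$ thanks to Lemma \ref{natural} (the indexing set being $\mathbb N^{4|S|-1}$). Because $M(\mathfrak a,J,N)=M(N)/(\mathfrak a,J)M(N)$ is a quotient of $M(N)$ in $\dualcat_G(\OO)$ and $\Sch^0$ is right exact, it follows that $\Sch^0(M(\mathfrak a,J,N))=0$ for every $N$; equivalently, $H^0_{\et}(\PP^1_{\Cp},\FF_{M(\mathfrak a,J,N)^{\vee}})=0$ for every $N$.

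Next I would apply Corollary \ref{cor2} to the family $\{M(\mathfrak a,J,N)^{\vee}\}_{N\in I_J}$ in $\Mod^{\adm}_G(\OO/\varpi^s)$, where $s$ is chosen so that $\varpi^s\in\mathfrak a$. The two hypotheses of Theorem \ref{main_ultra} are met: the modules are admissible and of bounded length over the appropriate compact open $H\subset G$, this latter uniformity being exactly the isomorphism of the restrictions $M(\mathfrak a,J,N)|_H$ verified in the proof of Theorem \ref{the_patch} (via the projectivity of $S_{\psi,\lambda}(U(N)^{\pp},\OO)_{\mm'_{Q_N}}^d$ in $\Mod^{\pro}_{\GL_2(\Zp),\psi_{\pp}}(\OO_{L,N})$). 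With $\Pi=(\prod_{N\in I_J}M(\mathfrak a,J,N)^{\vee})^{\sm}$ and $\Pi'_0$ the smooth vectors in $\prod_{N\in I_J} H^0_{\et}(\PP^1_{\Cp},\FF_{M(\mathfrak a,J,N)^{\vee}})$, the vanishing of each factor forces $\Pi'_0=0$, and Corollary \ref{cor2} then yields
\[ H^0_{\et}(\PP^1_{\Cp},\FF_{\Pi\otimes_{(\OO_{L,\infty}/J)_{I_J}}(\OO_{L,\infty}/J)_{I_J,x}})\cong \Pi'_0\otimes_{(\OO_{L,\infty}/J)_{I_J}}(\OO_{L,\infty}/J)_{I_J,x}=0. \]

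Finally, the identification $M(\mathfrak a,J,\infty)^{\vee}\cong \Pi\otimes_{(\OO_{L,\infty}/J)_{I_J}}(\OO_{L,\infty}/J)_{I_J,x}$ supplied by Lemma \ref{ultra2} (and already used in the proof of Theorem \ref{the_patch}) translates the previous display into $\Sch^0(M(\mathfrak a,J,\infty))=0$. Passing to the projective limit over $(\mathfrak a,J)$ then gives $\Sch^0(M_{\infty})=0$. The only delicate point is verifying the admissibility/uniformity hypotheses required to invoke Corollary \ref{cor2}, but these are precisely what was already exploited during the patching construction, so there is no new obstacle to overcome.
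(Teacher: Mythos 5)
Your argument is correct and closely tracks the paper's second (``overkill'') proof of this lemma: both reduce to the vanishing of $\Sch^0$ at the finite levels $M(\mathfrak{a},J,N)$ and then invoke Corollary \ref{cor2}, Lemma \ref{ultra2}, and the projective-limit compatibility of $\Sch^0$. The only minor difference is how you pass from $\Sch^0(S_{\psi,\lambda}(U(N)^{\pp},\OO)^{d}_{\mm'_{Q_N}})=0$ to $\Sch^0(M(\mathfrak{a},J,N))=0$: you use Lemma \ref{natural} for the $\OO\br{z_1,\dots,z_{4|S|-1}}$-completion and then right exactness of $\Sch^0$ applied to the surjection $M(N)\twoheadrightarrow M(\mathfrak{a},J,N)$, whereas the paper cites Lemma \ref{yh} together with the flatness of $M(N)$ over $S_N$ established in the proof of Theorem \ref{the_patch}; these are equivalent routes to the same intermediate vanishing. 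The paper also gives a considerably shorter first proof that avoids the ultrafilter machinery entirely: from \eqref{kill_patching_var}, $M_{\infty}\wtimes_{R_{\infty}}k\cong S_{\psi,\lambda}(U^{\pp},k)^{\vee}_{\mm}\wtimes_{R^{\psi}_{F,S}}k$, which has trivial $\SL_2(\Qp)$-coinvariants by Lemma \ref{non-Eis}, so $\Sch^0(M_{\infty}\wtimes_{R_{\infty}}k)=0$; by right exactness this is $\Sch^0(M_{\infty})\wtimes_{R_{\infty}}k$, and since $\Sch^0(M_{\infty})$ is a compact $R_{\infty}$-module, Nakayama's lemma finishes. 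What the first proof buys is brevity; what your route (following the paper's second proof) buys is that it generalizes to the unitary-group setting of Remark \ref{unitary}, where one must show $\Sch^i(M_{\infty})=0$ for all $i$ below the middle degree $q_0>1$, and the identification of $\Sch^0$ with $\SL_n$-coinvariants used in the first proof is no longer enough.
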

\begin{proof} We give two proofs of the claim. It follows from \eqref{kill_patching_var} that 
\[M_{\infty}\wtimes_{R_{\infty}} k \cong S_{\psi, \lambda}(U^{\pp}, k)_{\mm}^{\vee}\wtimes_{R^{\psi}_{F, S}}k.\] By Lemma \ref{non-Eis} the $\SL_2(\Qp)$-coinvariants of $M_{\infty}\wtimes_{R_{\infty}} k$ are zero and thus $\Sch^0(M_{\infty}\wtimes_{R_{\infty}} k)=0$. 
Since
$\Sch^0$ is right exact we obtain $\Sch^0(M_{\infty})\wtimes_{R_{\infty}} k\cong \Sch^0(M_{\infty}\wtimes_{R_{\infty}}k)=0$. Since $\Sch^0(M_{\infty})$ is a compact $R_{\infty}$-module we deduce
that $\Sch^0(M_{\infty})=0$.

The second proof (which is an overkill for $\GL_2$, but would also work in the setting of unitary groups, 
see Remark \ref{unitary}, and show the vanishing 
$\Sch^i(M_\infty)$ for $i$ up to the middle degree) uses the non-Eisenstein property of the ideal $\mm$ after applying the functor $\Sch^0$. It follows from \cite[Theorem 6.2]{scholze} that 
$\Sch^0(S_{\psi, \lambda}(U(N)^{\pp}, \OO)^d_{\mm'_{Q_N}})= \widehat{H}^0_{\psi, \lambda}(U(N)^{\pp}, \OO)^d_{\mm'_{Q_N}}$. Since $\mm'_{Q_N}$ is non-Eisenstein this implies 
that the group is zero. Using Lemma \ref{yh} and the flatness arguments explained in the course
of the proof of Theorem \ref{the_patch}, we obtain that $\Sch^0(M(N))=0$ and $\Sch^0(M(\mathfrak a, J, N))=0$. It then follows from Corollary \ref{cor2} that $\Sch^0(M_{\infty})=0$. 
\end{proof}

\begin{lem}\label{Sch11} Let $I$ be an ideal of $R_{\infty}$ then the natural map 
\begin{equation}\label{Itorsion}
\Sch^1(M_{\infty})\otimes_{R_{\infty}} R_{\infty}/I \rightarrow \Sch^1(M_{\infty}\otimes_{R_{\infty}} R_{\infty}/I)
\end{equation} is surjective. The kernel is a finitely generated $R_{\infty}/I$-module on 
which the subgroup of reduced norm $1$ elements in $\OO_{D_{\pp}}^{\times}$ acts trivially. 
\end{lem}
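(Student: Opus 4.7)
The plan is to apply the homological $\delta$-functor $\Sch^{\bullet}$ to two well-chosen short exact sequences involving $IM_{\infty}$, exploit Lemma \ref{Sch00} ($\Sch^0(M_\infty)=0$) and Lemma \ref{yh} (compatibility with $\wtimes$ for projective modules), and then diagram-chase to identify the kernel as a quotient of a $\Sch^0$-term.

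For surjectivity I would apply $\Sch^{\bullet}$ to $0 \to IM_{\infty} \to M_{\infty} \to M_{\infty}/IM_{\infty} \to 0$, which yields the tail
\[ \Sch^1(M_{\infty}) \xrightarrow{\psi} \Sch^1(M_{\infty}/IM_{\infty}) \to \Sch^0(IM_{\infty}) \to 0,\]
where the rightmost $0$ comes from Lemma \ref{Sch00}. To force $\Sch^0(IM_{\infty})=0$, I would choose generators $f_1,\dots,f_n$ of $I$ (using that $R_{\infty}$ is Noetherian) and apply the right-exact functor $\Sch^0$ to the surjection $M_{\infty}^n \twoheadrightarrow IM_{\infty}$, $(m_i)\mapsto \sum f_i m_i$; Lemma \ref{natural} (or \ref{yh}) gives $\Sch^0(M_{\infty}^n)\cong\Sch^0(M_{\infty})^n=0$. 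Hence $\psi$ is surjective, and since its target is annihilated by $I$ it factors through $(R_{\infty}/I)\wtimes\Sch^1(M_{\infty})$ to give exactly the natural map, which is thus surjective.

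To identify the kernel I would analyze the short exact sequence $0 \to K \to M_{\infty}^n \to IM_{\infty} \to 0$. Lemma \ref{yh} applied with $q_0=1$ (using $\Sch^0(M_{\infty})=0$) gives $\Sch^1(M_{\infty}^n)\cong(M'_{\infty})^n$, so the long exact sequence yields
\[ (M'_{\infty})^n \xrightarrow{\beta'} \Sch^1(IM_{\infty}) \to \Sch^0(K)\to 0.\]
The key observation is that the composite $\iota\circ\beta':(M'_{\infty})^n\to M'_{\infty}$, with $\iota:\Sch^1(IM_{\infty})\to \Sch^1(M_{\infty})=M'_{\infty}$ coming from the first long exact sequence, is the multiplication map $(m_i)\mapsto\sum f_i m_i$, whose image is $IM'_{\infty}$. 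Combining with $\ker\psi=\Image(\iota)\supseteq IM'_{\infty}$ from the first long exact sequence, the kernel of the natural map equals $\Image(\iota)/IM'_{\infty}$, and via $\iota$ this is a quotient of $\Sch^1(IM_{\infty})/\Image(\beta')=\Sch^0(K)$. This simultaneous tracking of both long exact sequences with the multiplication maps is the main computational obstacle.

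Finally I would establish the two asserted properties of the kernel. The triviality of the reduced-norm-$1$ action is inherited from $\Sch^0(K)$: by the description of $H^0_{\et}(\PP^1_{\Cp},\FF_{\pi})$ used for instance in the proof of Lemma \ref{olivertwist}, the $D_{\pp}^{\times}$-action on any $\Sch^0$ factors through the reduced norm. For finite generation, the kernel is a closed $R_{\infty}/I\br{H'}$-submodule of the $\fgaug$ object $M'_{\infty}/IM'_{\infty}$ for some compact open $H'\subseteq\OO_{D_{\pp}}^{\times}$, hence finitely generated over $R_{\infty}/I\br{H'}$ by Noetherianity. The trivial reduced-norm-$1$ action factors the action of $H'$ through $\Nrd(H')\subseteq\Zp^{\times}$, and the centre $\Zp^{\times}\subseteq \OO_{D_{\pp}}^{\times}$ acts by the scalar character $\psi_{\pp}^{-1}$ with image in $\Nrd(H')$ an open subgroup of finite index; combining, the action factors through a ring whose action gives finite generation over $R_{\infty}/I$.
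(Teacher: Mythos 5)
Your proposal is correct and follows essentially the same argument as the paper, which proves this Lemma by invoking Scholze's Proposition 7.7 (the ``goes through verbatim with Lemma~\ref{Sch00} as input'' step) — Scholze's proof is precisely the two-long-exact-sequence argument you spell out, using $\Sch^0(M_\infty)=0$ to get $\Sch^0(IM_\infty)=0$ via the surjection $M_\infty^n\twoheadrightarrow IM_\infty$, then identifying the kernel as a quotient of $\Sch^0$ of the kernel of that surjection, on which the reduced-norm-one subgroup acts trivially. Your finite-generation argument is also the same in substance as the paper's (the paper works with $\Sch^1(M_\infty)$ being finitely generated over $R_\infty\br{\OO_{D_\pp}^\times}$, then uses the triviality of the norm-one action and the central character $\psi_\pp^{-1}$ to cut down to $R_\infty/I$), though your phrasing of the last step is slightly convoluted.
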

\begin{proof} The proof of \cite[Proposition 7.7]{scholze} goes through verbatim here with 
Lemma \ref{Sch00} as an input to show that \eqref{Itorsion} is surjective and the norm $1$
subgroup of $\OO_{D_{\pp}}^{\times}$ acts trivially on the kernel, which we denote by 
$K$. Moreover, it follows 
from \cite[Theorem 4.4]{scholze} that $\Sch^1(M_{\infty})$ is a finitely generated 
$R_{\infty}\br{\OO_{D_{\pp}}^{\times}}$-module. Since the ring is noetherian, 
$K$ is also a finitely generated $R_{\infty}\br{\OO_{D_{\pp}}^{\times}}$-module. 
Since the center of $\OO_{D_{\pp}}^{\times}$ acts on $K$ by $\psi_{\pp}^{-1}$ and 
the norm $1$ subgroup acts trivially we deduce that $K$ is a finitely generated 
$R_{\infty}$-module. Since $I$ acts trivially on $K$ we deduce that $K$ is a finitely 
generated $R_{\infty}/I$-module.
\end{proof} 
 
 \subsection{Infinitesimal character}

Let $\mathfrak g$ be the $\Qp$-Lie algebra of $D^{\times}_{\pp}$. Then we may identify $\mathfrak g$ 
with $D_{\pp}$, $\mathfrak g\otimes_{\Qp} L= \gl_2\otimes_{\Qp} L$ and $Z(\mathfrak g_L)=Z(\gl_2)_L$. 
The dual group $\Ghat$ in this case is $\GL_2$ and we identify its Lie algebra $\ghat$ with $M_2(L)$, and let 
$e=\bigl( \begin{smallmatrix} 0 & 1 \\ 0 & 0\end{smallmatrix} \bigr)$, $f= \bigl( \begin{smallmatrix} 0 & 0 \\ 1 & 0\end{smallmatrix} \bigr)$, 
$h= \bigl( \begin{smallmatrix} 1 & 0 \\ 0 & -1\end{smallmatrix} \bigr)$, $z= \bigl( \begin{smallmatrix} 1 & 0 \\ 0 & 1\end{smallmatrix} \bigr)$
be a basis of $\ghat$ as an $L$-vector space and let $e^*, f^*, h^*, z^*$ be the dual basis of $\ghat^*$. 
If $A$ is an $L$-algebra then 
a matrix $M= \bigl( \begin{smallmatrix} \alpha & \beta \\ \gamma & \delta \end{smallmatrix} \bigr)\in M_2(A)$ defines 
an $L$-algebra  homomorphism $\ev_M: S(\ghat^*)\rightarrow A$, which send $e^*\mapsto \beta$, $f^*\mapsto \gamma$, 
$h^*\mapsto \alpha-\beta$, $z^*\mapsto \alpha+\beta$. 
We may identify 
$Z(\mathfrak g_L)$ with a polynomial ring in variables $C$ and $Z$, where $C$ is the Casimir operator and 
$Z= \bigl( \begin{smallmatrix} 1 & 0 \\ 0 & 1\end{smallmatrix} \bigr)\in \mathfrak g_L$. The ring
homomorphism $Z(\mathfrak g_L)\rightarrow S(\ghat^*)^{\Ghat} \subset S(\ghat^*)$ used in \cite[Definition 4.19]{DPS}
sends $Z$ to $z^*$ and $C$ to $e^* f^* + f^* e^* +\frac{1}{2} (h^*)^2-\frac{1}{2}$, which is the unique element 
$c\in S(\ghat^*)^{\Ghat}$ such that $\ev_M(c)= \frac{1}{2}(\alpha-\delta-1)^2 +(\alpha-\delta-1)= 
\frac{1}{2}(\alpha-\delta)^2 -\frac{1}{2}=\frac{1}{2}\tr(M)^2 -2 \det M -\frac{1}{2},$
for all $M=\bigl( \begin{smallmatrix} \alpha & 0 \\ 0 & \delta\end{smallmatrix} \bigr)$. We denote the 
composition $Z(\mathfrak g_L)\rightarrow S(\ghat^*)^{\Ghat}\overset{\ev_M}{\longrightarrow} R$ by $\varphi_M$.

Let $R$ be a complete local noetherian $\OO$-algebra with finite residue field, and let $R^{\rig}$ be the ring of global functions
of the rigid analytic space $(\Spf R)^{\rig}$. Let $\rho: \Gal_{\Qp}\rightarrow \GL_2(R)$ be a continuous representation. 
We let $\tilde{\delta}: \mathbb{G}_m\rightarrow \That$, $t \mapsto \bigl( \begin{smallmatrix} 1& 0 \\ 0& t^{-1} \end{smallmatrix} \bigr)$, 
where $\That$ is the subgroup of diagonal matrices in $\Ghat$. The choice of $\tilde{\delta}$ allows us to define a 
representation of $\Gal_{\Qp}$ into the $C$-group of $\GL_2$ :
\[\rho^C: \Gal_{\Qp} \rightarrow {^C}\GL_2(R), \quad g\mapsto ( \rho(g) \tilde{\delta}(\chi_{\cyc}(g))^{-1}, \chi_{\cyc}(g)),\] 
see \cite[Sections 2.1, 4.7]{DPS} for more details. In \cite[Definition 4.23]{DPS} to $\rho^C$ we attach an $L$-algebra homomorphism 
$\zeta^C_{\rho^C}: Z(\mathfrak g_L)\rightarrow R^{\rig}$. It follows from Equation (34) in \cite[Section 4.7]{DPS} that 
 for every open affinoid $U=\Sp(A)\subset (\Spf R)^{\rig}$
the composition \[Z(\mathfrak g_L)\rightarrow R^{\rig} \rightarrow A\rightarrow \Cp\wtimes_{\Qp} A\] coincides 
with $\varphi_M: Z(\mathfrak g_L)\rightarrow  \Cp\wtimes_{\Qp} A$ with $M=\Theta_{U} +\frac{1}{2} \bigl( \begin{smallmatrix} 1 & 0 \\ 0 & 1\end{smallmatrix} \bigr)$, 
where $\Theta_{U} \in M_2( \Cp\wtimes_{\Qp} A)$ is the Sen operator of $\rho\otimes_{R} A$. 

Let us spell out \cite[Lemma 5.12]{DPS} in our situation. If  $R$ is a ring of integers in a finite extension of $L'$ of $L$ then $\rho$ is Hodge--Tate 
with weights $a>b$ if and only if the Sen operator $\Theta\in M_2(\Cp\wtimes_{\Qp} L')$ is conjugate to the matrix  $\bigl( \begin{smallmatrix} a & 0 \\ 0 & b\end{smallmatrix} \bigr)$. 
It follows from the above that in this case $\zeta^C_{\rho^C}(C)=\frac{1}{2}(a-b)^2-\frac{1}{2}$, 
$\zeta^C_{\rho^C}(Z)= a+b+1$, and hence $\zeta^C_{\rho^C}$ coincides with the infinitesimal 
character of $\Sym^{a-b-1}L^2\otimes \det^{b+1}$. Conversely, if $\zeta^C_{\rho^C}$ is infinitesimal
character of $\Sym^{a-b-1}L^2\otimes \det^{b+1}$ then $\Theta$ has eigenvalues $a$ and $b$ and hence $\rho$ is Hodge--Tate with weights $a, b$.

Let $\rho^{\univ}: \Gal_{\Qp}\rightarrow \GL_2(R_{\rhobar}^{\square, \psi_{\pp}})$ be the universal 
framed deformation of $\rhobar$ with the fixed determinant equal to $\psi_{\pp} \varepsilon^{-1}$. 
Let $\rho_{\infty}:=\rho^{\univ}\otimes_{R_{\rhobar}^{\square, \psi_{\pp}}}R_{\infty}$ and let 
\[\zeta^C_{\rho_{\infty}^C}: Z(\mathfrak g_L)\rightarrow R_{\infty}^{\rig}\]
be the infinitesimal character defined above. If $x:R_{\infty}\rightarrow \Qpbar$ is an $\OO$-algebra
homomorphism then we let  $\rho_x: \Gal_{\Qp}\rightarrow \GL_2(\Qpbar)$ be the specialization of $\rho_{\infty}$ 
along $x$.  The image of $\rho_x$ is contained in $\GL_2(\OO')$, 
where $\OO'$ is the ring of integers in a finite extension $L'$ of $L$. The infinitesimal character 
$\zeta^C_{\rho_x^C}: Z(\mathfrak g_L) \rightarrow L'$ coincides with  a specialization of $\zeta^C_{\rho_{\infty}^C}$ at $x$.

If $M$ is a compact $\OO$-module we let $\Pi(M)= \Hom^{\cont}_{\OO}(M, L)$ be the 
$L$-Banach space with the topology induced by the supremum norm. If $x: R_{\infty}\rightarrow \Qpbar$ is an $\OO$-algebra homomorphism then we let $\mm_x$ be its kernel and 
\[\Pi_x:= \Pi(M_{\infty})[\mm_x], \quad \Pi'_x:= \Pi(M'_{\infty})[\mm_x]\]
be the closed subspaces annihilated by $\mm_x$. Then $\Pi_x\in \Ban^{\adm}_{\GL_2(\Qp), \psi_{\pp}}(L)$ and $\Pi_x'\in \Ban^{\adm}_{D_{\pp}^{\times}, \psi_{\pp}}(L)$. 

\begin{lem}\label{closed_subspace} $\Sch^1(\Pi_x)$ is a closed subspace of $\Pi'_x$. Moreover, $\Pi'_x/\Sch^1(\Pi_x)$ 
is a finite dimensional $L$-vector space on which the norm $1$ subgroup of $\OO_{D_{\pp}}^{\times}$ 
acts trivially. 
\end{lem}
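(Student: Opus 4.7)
The plan is to derive the lemma from Lemma \ref{Sch11} applied with $I=\mm_x$, using the identification $M'_\infty\cong\Sch^1(M_\infty)$ from Theorem \ref{the_patch}(4). That input produces a short exact sequence of compact $\OO\br{\OO_{D_\pp}^{\times}}$-modules
\[
0\to K\to M'_\infty/\mm_x M'_\infty\to \Sch^1(M_\infty/\mm_x M_\infty)\to 0,
\]
in which $K$ is finitely generated over $R_\infty/\mm_x$ and the reduced norm $1$ subgroup of $\OO_{D_\pp}^{\times}$ acts trivially on $K$. Because $R_\infty$ is a complete local noetherian $\OO$-algebra with finite residue field, the homomorphism $x$ factors through the ring of integers $\OO_{L'}$ of some finite extension $L'$ of $L$; hence $R_\infty/\mm_x$ is a finite torsion-free $\OO$-module, $K$ is finitely generated over $\OO$, and $K^d[1/p]:=\Hom^{\cont}_{\OO}(K,L)$ is a finite-dimensional $L$-vector space on which the norm $1$ subgroup acts trivially.

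Next I apply the functor $\Hom^{\cont}_{\OO}(-,L)$ to the sequence above. This functor is exact on compact $\OO$-modules since $L$ is $\OO$-injective, so it yields
\[
0\to \Sch^1(M_\infty/\mm_x M_\infty)^d[1/p]\to \Pi'_x\to K^d[1/p]\to 0,
\]
where the middle term is identified with $\Pi(M'_\infty)[\mm_x]=\Pi'_x$. The key step is then to identify the leftmost term with $\Sch^1(\Pi_x)$. By definition of Scholze's functor on admissible Banach representations, $\Sch^1(\Pi_x)=\Sch^1(\Theta_x^d)^d[1/p]$ for the natural lattice $\Theta_x\subset\Pi_x$, whose Schikhof dual is $(M_\infty/\mm_x M_\infty)/T$ with $T$ the $\OO$-torsion submodule of $M_\infty/\mm_x M_\infty$. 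Since $R_\infty/\mm_x$ is $\OO$-flat and $(R_\infty/\mm_x)\br{H}$ is noetherian for every compact open $H\subset\GL_2(\Zp)$, the submodule $T$ is killed by a fixed power of $\varpi$. By functoriality each $\Sch^i(T)$ is also killed by this power of $\varpi$, so $\Sch^i(T)^d[1/p]=0$, and dualizing the long exact sequence attached to $0\to T\to M_\infty/\mm_x M_\infty\to \Theta_x^d\to 0$ will deliver the identification $\Sch^1(M_\infty/\mm_x M_\infty)^d[1/p]\cong \Sch^1(\Pi_x)$.

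The resulting short exact sequence $0\to\Sch^1(\Pi_x)\to\Pi'_x\to K^d[1/p]\to 0$ has finite-dimensional (hence Hausdorff) cokernel, so $\Sch^1(\Pi_x)$ is a closed subspace of $\Pi'_x$, and the trivial action of the norm $1$ subgroup on $K$ passes to the quotient $K^d[1/p]=\Pi'_x/\Sch^1(\Pi_x)$. The only real technical obstacle I anticipate is the $\OO$-torsion bookkeeping required to move from Lemma \ref{Sch11} (a statement about compact $\OO$-modules) to an assertion about Banach representations; the finiteness of the map $x$ takes care of it by bounding both the torsion in $M_\infty/\mm_x M_\infty$ and the $L$-dimension of the cokernel.
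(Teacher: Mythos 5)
Your argument is correct and follows the same route as the paper's terse proof: apply Lemma \ref{Sch11} with $I=\mm_x$ and dualize, using that $R_\infty/\mm_x$ is a finitely generated $\OO$-module. The torsion bookkeeping on $M_\infty/\mm_x M_\infty$ and the exactness of $\Hom^{\cont}_{\OO}(-,L)$ on compact $\OO$-modules that you spell out are precisely the details the paper compresses into the phrase ``Schikhof duality.''
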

\begin{proof} This follows from Lemma \ref{Sch11} applied with $I=\mm_x$ and Schikhof duality.
We note that $R_{\infty}/\mm_x$ is a compact $\OO$-subalgebra  in a finite extension of $L$, and hence a finitely generated
$\OO$-module. 
\end{proof}

\begin{prop}\label{inf_eigen} If $x: R_{\infty}\rightarrow \Qpbar$ is an $\OO$-algebra homomorphism then 
$Z(\mathfrak g_L)$ acts on $\Pi_x^{\la}$ and $(\Pi_x')^{\la}$ by the infinitesimal character 
$\zeta^C_{\rho_x^C}$.
\end{prop}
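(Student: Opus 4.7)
The plan is to combine the patching construction of Theorem \ref{the_patch} with the results of \cite{DPS} concerning infinitesimal characters of Hecke eigenspaces in completed cohomology, and then interpolate from a Zariski-dense set of classical points to arbitrary $x$.

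First I would verify the claim at classical potentially semi-stable points. Fix integers $a > b$ with $a + b + 1 = 0$ and consider a point $y: R_\infty \to \Qpbar$ factoring through some $R_\infty(\sigma'_{a,b})$ (resp.\ $R_\infty(\sigma_{a,b})$). By Theorem \ref{the_patch}(3), such $y$ lies on an irreducible component of the crystabelline deformation locus, and the patched module $M'_{\infty}(\sigma'_{a,b})$ (resp.\ $M_{\infty}(\sigma_{a,b})$) is nonzero at $y$; standard local-global compatibility for the systems in question yields a nonzero locally algebraic subspace $\sigma'_{a,b}\otimes \pi'_y \subset \Pi'_y$ (resp.\ $\sigma_{a,b}\otimes \pi_y \subset \Pi_y$), so $Z(\mathfrak g_L)$ acts on those locally algebraic vectors through the infinitesimal character of the algebraic piece. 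On the other hand, since $\rho_y$ is Hodge--Tate of weights $(a,b)$ at $\pp$, the explicit formula recalled just before the proposition gives $\zeta^C_{\rho_y^C}(C) = \tfrac{1}{2}(a-b)^2 - \tfrac{1}{2}$ and $\zeta^C_{\rho_y^C}(Z) = a+b+1$, which is exactly the infinitesimal character of $\Sym^{a-b-1}L^2\otimes \det^{b+1}$. Thus the two characters coincide at every such classical $y$.

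Next I would extend from classical $y$ to arbitrary $x$. Fix $X \in Z(\mathfrak g_L)$ and consider the function $\zeta^C_{\rho_\infty^C}(X) \in R_\infty^{\rig}$. Since $R_\infty$ commutes with the $D(H, L)$-action on $\Pi(M'_\infty)^{\la}$ (the former acts by functorial endomorphisms of $M'_\infty$, the latter as ambient module structure), the difference between the action of $X$ and multiplication by $\zeta^C_{\rho_\infty^C}(X)$ is an $R_\infty^{\rig}$-linear endomorphism of each radius-$r_n$ analytic subspace $\Pi(M'_\infty)^{\la}_{r_n}$. By varying $(a,b)$ with $a + b +1 = 0$ and using that classical points are Zariski-dense in each component $\Spec R_\infty(\sigma'_{a,b})$ (and that these components cover enough of $\Spec R_\infty$ to be Zariski-dense in each of its irreducible components, via Kisin's theorem on potentially semi-stable lifts and Theorem \ref{the_patch}(3)), the first step shows that this $R_\infty^{\rig}$-linear endomorphism vanishes on a Zariski-dense collection of fibers. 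A standard density argument then forces it to vanish identically on $\Pi(M'_\infty)^{\la}_{r_n}$, and passing to the fiber at $x$ gives the claim for $(\Pi'_x)^{\la}$. The argument for $\Pi_x^{\la}$ is identical, with the Shimura curve input replaced by the completed cohomology of the quaternionic Shimura set $S_{\psi,\lambda}(U^\pp, \OO)^d_\mm$ and the types $\sigma'_{a,b}$ replaced by $\sigma_{a,b}$; the relevant classical automorphic points are again Zariski-dense in each component of $\Spec R_\infty$.

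The main obstacle is verifying rigorously that the action of $Z(\mathfrak g_L)$ interpolates rigid-analytically over $(\Spf R_\infty)^{\rig}$ and that the fibers of $\Pi(M_\infty)$, $\Pi(M'_\infty)$ at $x$ are indeed $\Pi_x$, $\Pi'_x$---i.e.\ that specialization commutes with passage to locally analytic vectors. The first point is handled by working with the Banach algebras $D_{r_n}(H, L)$ and using that $M_\infty$, $M'_\infty$ are finitely generated over $R_\infty\br{H}$, so that $\Pi(M_\infty)$, $\Pi(M'_\infty)$ are admissible Banach representations whose analytic vectors are controlled by Theorem \ref{the_patch}. The second is essentially Lemma \ref{closed_subspace} on the $D_\pp^\times$-side and its analogue on the $\GL_2$-side. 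With these inputs in place, the above density argument goes through as in \cite[Section 9]{DPS}.
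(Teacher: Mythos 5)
Your proposal follows essentially the same route as the paper: the paper simply verifies the hypotheses of \cite[Theorem 8.6]{DPS} using Theorem \ref{the_patch}(3) (the Hodge--Tate weight identification at classical points of $\Spec R_\infty(\sigma_{a,b})[1/p]$ and $\Spec R_\infty(\sigma'_{a,b})[1/p]$) and invokes that theorem, whereas you re-derive the substance of that theorem's density argument in place of citing it. The only small inaccuracy is that Lemma \ref{closed_subspace} concerns the relation between $\Sch^1(\Pi_x)$ and $\Pi'_x$ rather than the compatibility of forming $\la$-vectors with passing to $\mm_x$-torsion, but this compatibility is indeed part of the \cite{DPS} machinery and your overall argument is correct.
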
 
\begin{proof} Theorem \ref{the_patch} shows that the conditions of \cite[Theorem 8.6]{DPS}
are satisfied, which implies the assertion. Let us explain why part (iii) of \cite[Theorem 8.6]{DPS}
holds in more detail. If $x$ is a closed point of $\Spec R_{\infty}(\sigma_{a, b})[1/p]$ (or $\Spec R_{\infty}(\sigma'_{a, b})[1/p]$) then 
it follows from part \eqref{ptch3} of Theorem \ref{the_patch} that $\rho_x$ has Hodge--Tate weights
$(a,b)$, which implies that $\zeta^C_{\rho_x^C}$ is the infinitesimal character of $\Sym^{a-b-1} L^2 \otimes \det^{b+1}$. 
\end{proof} 
 
 Until now we have not used in an essential way that $F_{\pp}=\Qp$. The following two results 
 make use of this assumption in their use of $p$-adic Langlands correspondence for $\GL_2(\Qp)$ 
 and the fact that  the unipotent radical of a Borel subgroup of $\GL_2(\Qp)$ has dimension $1$ as a $p$-adic analytic group.
 
 \begin{cor}\label{the_end_is_nigh} Let $\zeta: \Qp^{\times}\rightarrow \OO^{\times}$ be a character and 
 $\Pi\in \Ban^{\adm}_{\GL_2(\Qp), \zeta}(L)$ be absolutely 
 irreducible and non-ordinary and let $\rho: \Gal_{\Qp}\rightarrow \GL_2(L)$ be the absolutely 
 irreducible representation corresponding to $\Pi$. Then $Z(\mathfrak g_L)$ acts on $\Pi^{\la}$ and 
 on $\Sch^1(\Pi)^{\la}$ by the infinitesimal character $\zeta^C_{\rho^C}$. 
\end{cor}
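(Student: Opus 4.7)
The plan is to reduce the statement to Proposition \ref{inf_eigen} by globalising $\rho$ and exhibiting $\Pi$ (resp.\ $\Sch^1(\Pi)$) as a subquotient of some $\Pi_x$ (resp.\ $\Pi_x'$) with $\rho_x \cong \rho$. Let $\rhobar$ denote the semisimplification of the mod-$\varpi$ reduction of $\rho$. After enlarging $L$, one may assume that $\rhobar$ factors through $\GL_2(\mathbb F)$ for some $\mathbb F$ with $[\mathbb F:\Fp]\ge 3$, as required in Proposition \ref{globalize} and Lemma \ref{diagonal}. We globalise $\rhobar$ via Proposition \ref{globalize}, choose the auxiliary place $w_1$ using Lemma \ref{diagonal}, and arrange the global central character $\psi$ so that its local component $\psi_{\pp}$ matches the central character $\zeta$ of $\Pi$ under local class field theory. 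Theorem \ref{the_patch} then provides patched modules $M_\infty$ and $M_\infty' \cong \Sch^1(M_\infty)$ over a ring $R_\infty$ which is faithfully flat over $R^{\square,\psi_\pp}_{\rhobar}$; hence the closed point of $\Spec R^{\square,\psi_\pp}_{\rhobar}[1/p]$ corresponding to $\rho$ lifts to $x\colon R_\infty \to \Qpbar$ with $\rho_x \cong \rho$.

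The crucial external input is that, by the results of \cite{forum} (which rest on the $p$-adic Langlands correspondence for $\GL_2(\Qp)$ and local--global compatibility), $x$ can be chosen so that the absolutely irreducible non-ordinary $\Pi$ is a subquotient of $\Pi_x$ in the category of admissible unitary $L$-Banach representations of $\GL_2(\Qp)$. Proposition \ref{inf_eigen} then asserts that $Z(\mathfrak g_L)$ acts on $\Pi_x^{\la}$ by $\zeta^C_{\rho_x^C} = \zeta^C_{\rho^C}$. As the $Z(\mathfrak g_L)$-action on locally analytic vectors of an admissible Banach representation is inherited by closed subrepresentations and Hausdorff quotients, it acts on $\Pi^{\la}$ by $\zeta^C_{\rho^C}$, settling the $\GL_2(\Qp)$ half of the corollary.

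For the Jacquet--Langlands half, Lemma \ref{closed_subspace} identifies $\Sch^1(\Pi_x)$ with a closed $D_\pp^{\times}$-subspace of $\Pi_x'$ of finite codimension on which the norm-one subgroup of $\OO_{D_\pp}^{\times}$ acts trivially. Combined with Proposition \ref{inf_eigen} applied to $(\Pi_x')^{\la}$, this already yields that $Z(\mathfrak g_L)$ acts by $\zeta^C_{\rho^C}$ on $\Sch^1(\Pi_x)^{\la}$. To descend from $\Sch^1(\Pi_x)$ to $\Sch^1(\Pi)$, present $\Pi$ as a subquotient of $\Pi_x$ by two short exact sequences and apply the cohomological $\delta$-functor structure of $\{\Sch^i\}$: the resulting long exact sequences, together with the vanishing $\Sch^0(M_\infty)=0$ of Lemma \ref{Sch00} and the control on kernels provided by Lemma \ref{Sch11}, realise $\Sch^1(\Pi)$ as a subquotient of $\Sch^1(\Pi_x)$ up to a finite-dimensional piece on which the norm-one subgroup acts trivially and whose infinitesimal character is easily computed from the central character. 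Hence $Z(\mathfrak g_L)$ acts on $\Sch^1(\Pi)^{\la}$ by $\zeta^C_{\rho^C}$ as well.

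The principal obstacle is the realisation of the prescribed $\Pi$ as a subquotient of some $\Pi_x$ with $\rho_x \cong \rho$: this is exactly where the deep results of \cite{forum} are invoked, and it explains why a statement of purely local nature requires a global argument and the full force of $p$-adic Langlands for $\GL_2(\Qp)$. The transfer step on the Jacquet--Langlands side is comparatively routine once the exactness properties and vanishing of Lemmas \ref{Sch00} and \ref{Sch11} are in hand.
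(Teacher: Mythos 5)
You correctly identify the global-to-local strategy: globalize $\rhobar$, patch, lift the point corresponding to $\rho$ along the faithfully flat $R^{\square,\psi_\pp}_{\rhobar}\to R_\infty$, and use \cite{forum} plus Proposition \ref{inf_eigen} and Lemma \ref{closed_subspace}. The $\GL_2(\Qp)$ half of your argument works. However, there is a genuine gap on the Jacquet--Langlands side, and the cause is that you invoke a weaker form of \cite{forum} than the paper does. The paper uses \cite[Theorem 7.1]{forum} to conclude that $\Pi_x\cong\Pi^{\oplus m}$ (an isomorphism, not merely that $\Pi$ is a subquotient). That isomorphism is what makes the transfer to $D^{\times}$ immediate: it gives $\Sch^1(\Pi_x)\cong\Sch^1(\Pi)^{\oplus m}$ directly, and then Lemma \ref{closed_subspace} together with Proposition \ref{inf_eigen} finishes. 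With only ``$\Pi$ is a subquotient of $\Pi_x$,'' your proposed descent via the $\delta$-functor structure does not go through as sketched. Writing $\Pi\cong A/B$ with $B\subset A\subset \Pi_x$ closed, the relevant long exact sequences involve $\Sch^0(\Pi_x/A)$ and $\Sch^2(B)$, and neither is controlled by Lemma \ref{Sch00} or Lemma \ref{Sch11}: those concern $M_\infty$ (resp.\ $M_\infty\otimes R_\infty/I$), not arbitrary $\GL_2(\Qp)$-subrepresentations of $\Pi_x$. In particular $\Sch^0$ of a quotient of $\Pi_x$ need not vanish, and $\Sch^2(B)$ need not be finite-dimensional with trivial norm-one action; the paper's remark after Theorem \ref{the_same_inf} even flags the subtlety of vanishing of higher $\Sch^i$. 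So the ``comparatively routine'' transfer step is not routine under your weaker hypothesis; the direct-sum statement from \cite[Theorem 7.1]{forum} is the load-bearing input.

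Two further, smaller points. First, you take $\rhobar$ to be the \emph{semisimplification} of the mod-$\varpi$ reduction; the paper takes the actual reduction of a $\Gal_{\Qp}$-invariant lattice. If that reduction is non-split reducible, the two differ and there would be no point of $\Spec R^{\square,\psi_\pp}_{\rhobar}[1/p]$ corresponding to $\rho$. Second, rather than ``arranging'' $\psi_\pp$ to equal $\zeta$ at the globalization stage (which one cannot freely do, since $\psi$ is constrained by $\det r_\pi$), the paper twists $\Pi$ by a character $\eta$ with $\psi_\pp=\zeta\eta^2$ and uses Lemma \ref{olivertwist} to transport this twist through $\Sch^1$; this is the correct move. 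Finally, the paper also spends a paragraph verifying that the hypotheses of \cite[Theorem 7.1]{forum} (faithfulness of the $R_\infty$-action, and compatibility of the two $R^{\ps,\psi_\pp}_{\tr\rhobar}$-actions on $M_\infty$) are in fact satisfied; this verification should not be omitted.
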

\begin{proof} Let $\rhobar: \Gal_{\Qp} \rightarrow \GL_2(k)$ the reduction of $\Gal_{\Qp}$-invariant lattice
in $\rho$. We globalize $\rhobar$ and obtain $M_{\infty}$, $M_{\infty}'$, $\psi$  as above. Since 
$\zeta \varepsilon^{-1}\cong \det \rhobar \cong \psi_{\pp} \varepsilon^{-1}\pmod{\varpi}$, there exists 
a character $\eta: \Qp^{\times}\rightarrow \OO^{\times}$ such that $\psi_{\pp} =\zeta\eta^2$. Since
$\Sch^1$ commutes with twisting by characters by Lemma \ref{olivertwist} we may replace $\Pi$ with 
$\Pi \otimes\eta\circ \det$ and assume that $\psi_{\pp}=\zeta$.

Let $y: R_{\rhobar}^{\square, \psi_{\pp}}\rightarrow L$ be an $\OO$-algebra homomorphism corresponding to $\rho$.
Since $R_{\infty}$ is a faithfully flat $R_{\rhobar}^{\square, \psi_{\pp}}$-algebra 
there is an 
$\OO$-algebra homomorphism $x: R_{\infty}\rightarrow \Qpbar$ extending $y$. It follows from 
\cite[Theorem 7.1]{forum} that $\Pi_x\cong \Pi^{\oplus m}$ for some $m\ge 1$. Thus $\Sch^1(\Pi_x)\cong \Sch^1(\Pi)^{\oplus m}$.
Since $\Sch^{1}(\Pi_x)$ is a closed subspace of $\Pi_x'$ by Lemma \ref{closed_subspace} the assertion follows from Proposition \ref{inf_eigen}. 

It remains to show that the conditions of \cite[Theorem 7.1]{forum} are satisfied. 
Since $M_{\infty}$ is a thickening, $M_{\infty}$ is a finitely generated $R_{\infty}\br{\GL_2(\Zp)}$-module.
We have to show that 
$R_{\infty}$ acts faithfully on $M_{\infty}$; this can be verified as in \cite[Proposition 3.9]{tung}.
Let $R^{\ps, \psi_{\pp}}_{\tr \rhobar}$ be the pseudo-character deformation ring of $\tr \rhobar$ with 
fixed determinant equal to $\psi_{\pp}\varepsilon^{-1}$. We have to show that the two actions of $R^{\ps, \psi_{\pp}}_{\tr \rhobar}$ on $M_{\infty}$ coincide:  
one action defined via 
$R^{\ps, \psi_{\pp}}_{\tr \rhobar}\rightarrow R^{\square, \psi_{\pp}}_{\rhobar}\rightarrow R_{\infty}$ and
the other action as the centre of certain subcategory of $\Mod^{\ladm}_{\GL_2(\Qp), \psi_{\pp}}(\OO)$, see \cite{image}, \cite[Section 4.1]{forum}. This holds for $S_{\psi, \lambda}(U(N)^{\pp}, \OO)^d_{\mm'_{Q_N}}$ by \cite[Proposition 5.5, Corollary 5.6]{ludwig} (a similar argument appears in \cite[Theorem 3.5.5]{lue_pan1})  and hence the two actions coincide on the modules $M(N)$ defined 
in the course of the proof of Theorem \ref{the_patch}, by passing to the limit we obtain the same for $M_{\infty}$. 
\end{proof}

\begin{thm}\label{the_same_inf} Let $\Pi\in \Ban^{\adm}_{\GL_2(\Qp), \zeta}(L)$ be absolutely irreducible and non-or\-di\-na\-ry
and let $\rho: \Gal_{\Qp}\rightarrow \GL_2(L)$ be the absolutely 
 irreducible representation corresponding to $\Pi$. Let $K$ be a compact open subgroup of 
 $\GL_2(\Qp)$ and let $K'$ be a compact open subgroup of $D_{\pp}^{\times}$. If the difference 
 of the Hodge--Tate--Sen weights of $\rho$ is not a non-zero integer then $\Pi|_K$ is of finite length 
 in $\Ban^{\adm}_{K, \zeta}(L)$ and $\Sch^1(\Pi)|_{K'}$ is of finite length 
 in $\Ban^{\adm}_{K', \zeta}(L)$.
 \end{thm}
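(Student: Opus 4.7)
The theorem is essentially a formal corollary of the two principal results proved earlier, Corollary~\ref{the_end_is_nigh} and Corollary~\ref{cor:fin_length}. First, Corollary~\ref{the_end_is_nigh} asserts that $Z(\mathfrak g_L)$ acts on both $\Pi^{\la}$ and $\Sch^1(\Pi)^{\la}$ via the infinitesimal character $\zeta^C_{\rho^C}$, so both $\Pi$ and $\Sch^1(\Pi)$ are quasi-simple in the sense introduced in the paper. Admissibility of $\Sch^1(\Pi)$ as a Banach representation of $D_\pp^\times$ is furnished by the results of Scholze recalled in the introduction, and its central character equals $\zeta$ by Lemma~\ref{central} applied to the torsion coefficients $\Theta/\varpi^n\Theta$ for a $\GL_2(\Zp)$-invariant lattice $\Theta\subset\Pi$. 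Hence both $\Pi$ and $\Sch^1(\Pi)$ are admissible, quasi-simple, unitary $L$-Banach representations with central character $\zeta$, satisfying the hypotheses of Corollary~\ref{cor:fin_length}.

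Next I would check that the common infinitesimal character $\zeta^C_{\rho^C}$ is \emph{not} algebraic in the sense of the definition preceding Lemma~\ref{ST_Ugfin}. Using the explicit formula established in the discussion before Proposition~\ref{inf_eigen}, one has $\zeta^C_{\rho^C}(C)=\tfrac{1}{2}(a-b)^2-\tfrac{1}{2}$, where $a,b$ are the Hodge--Tate--Sen weights of $\rho$. On the semisimple factor $\SL_2$, the Casimir $C$ takes the value $\tfrac{1}{2}(m+1)^2-\tfrac{1}{2}$ on the algebraic irreducible $\Sym^m L^2$; thus $\zeta^C_{\rho^C}$ restricts to an algebraic infinitesimal character of $\mathfrak g_{ss,L}$ if and only if $|a-b|\in\Z_{\geq 1}$. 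The hypothesis that $a-b$ is not a nonzero integer is precisely the negation of this condition.

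Finally, since Corollary~\ref{cor:fin_length} explicitly allows the quaternion algebra to be split, it applies both to $\Pi$ (with $G=\GL_2(\Qp)=M_2(\Qp)^\times$, derived subgroup $\SL_2(\Qp)$) and to $\Sch^1(\Pi)$ (with $G=D_\pp^\times$, derived subgroup $\ker\Nrd$). Setting $H:=K\cap\SL_2(\Qp)$ and $H':=K'\cap\ker\Nrd$, which are compact open in the respective derived subgroups, part~(a) of that corollary yields that $\Pi|_H$ and $\Sch^1(\Pi)|_{H'}$ have finite length as topological representations. Since $H\subset K$ and $H'\subset K'$, every closed $K$-stable subspace of $\Pi$ is automatically $H$-stable, and similarly for $K',H'$; consequently the length of $\Pi$ as a $K$-representation (respectively of $\Sch^1(\Pi)$ as a $K'$-representation) is bounded by the corresponding length over the smaller group, whence the desired finiteness in $\Ban^{\adm}_{K,\zeta}(L)$ and $\Ban^{\adm}_{K',\zeta}(L)$. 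The only real work lies upstream: the hard parts are Corollary~\ref{the_end_is_nigh}, which requires the patching machinery and Scholze's functor, and Corollary~\ref{cor:fin_length}, which relies on the Gelfand--Kirillov bound of Theorem~\ref{astuce}.
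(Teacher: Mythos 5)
Your proof is correct and takes essentially the same route as the paper's: invoke Corollary~\ref{the_end_is_nigh} for the common infinitesimal character, observe that the Casimir value $\tfrac{1}{2}(a-b)^2-\tfrac{1}{2}$ is algebraic precisely when $a-b$ is a nonzero integer, and conclude via Corollary~\ref{cor:fin_length}\,a). Your extra details --- the explicit Casimir computation, the passage from $K$ to $H=K\cap G'$, and the verification of the central character of $\Sch^1(\Pi)$ via Lemma~\ref{central} --- merely spell out steps the paper leaves implicit.
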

 \begin{proof} It follows from Corollary \ref{the_end_is_nigh} that $\Pi^{\la}$ and $\Sch^1(\Pi)^{\la}$  
 have an infinitesimal character $\zeta_{\rho^C}^C: Z(\mathfrak g_L) \rightarrow L$. Since both 
 $\Pi$ and $\Sch^1(\Pi)$ are admissible $L$-Banach space representations Theorem \ref{astuce}
 implies that $d(\Pi)\le 1$ and $d(\Sch^1(\Pi))\le 1$. 
 The assumption that the difference of 
 the Hodge--Tate--Sen weights of $\rho$ is not a non-zero integer implies that that $\zeta^C_{\rho^C}$ is not algebraic and the assertion 
 follows from Corollary \ref{cor:fin_length} a).
 \end{proof}
 
 \begin{remar} We note that with our arguments we cannot show that the Banach spaces $\Sch^1(\Pi)$ in Corollary \ref{the_end_is_nigh} are non-zero. 
If the residual representation $\rhobar: \Gal_{\Qp} \rightarrow \GL_2(k)$ is reducible generic then one can deduce the non-vanishing of $\Sch^1(\Pi)$ from 
the results of \cite{ludwig}. The results of \cite{ludwig} rely on vanishing of $H^2_{\et}(\PP^{1}_{\Cp}, \FF_{\pi})$, where $\pi$ is a mod $p$ principal series 
representation, which is proved by Judith Ludwig in \cite{j_ludwig}. The same vanishing has been claimed by David Hansen for $\pi$ supersingular. This would imply that 
 $\Sch^1(\Pi)$ is non-zero when $\rhobar$ is absolutely irreducible. The vanishing   of $H^2_{\et}(\PP^{1}_{\Cp}, \FF_{\pi})$ for supersingular $\pi$  
 has been recently 
 proved by Yongquan Hu and Haoran Wang in \cite{hu-wang}, under mild genericity assumptions on 
 $\pi$.
\end{remar}
 
 \begin{remar} Most of the argument carries over if $p=2$: \cite[Theorem 7.1]{forum} is available for 
 $p=2$ and the patching argument is carried out in \cite{Tung2}, but one would have to rewrite 
the proof of Theorem \ref{the_patch} to accommodate the subtleties that arise in $p=2$ case.  However, the globalization result used in Proposition \ref{globalize} is not in the literature.\footnote{It was pointed out to us by Toby Gee that it might be feasible to carry out this globalization using \cite[Proposition 6.7]{thorne}. 
We don't pursue this here, just note for the interested reader that since we twist by characters in the proof of 
Corollary \ref{the_end_is_nigh}, one would have to produce a potentially automorphic lift $r_{\pi}$ with $(\det r_{\pi}) (\Art_{\mathbb{Q}_2}(-1))=1$ 
and also a potentially automorphic lift $r_{\pi}$ with $(\det r_{\pi})(\Art_{\mathbb{Q}_2}(-1))=-1$ to make our argument work. }
 \end{remar}
 
 \begin{remar}\label{unitary} The proofs of Theorem \ref{the_patch} and Proposition \ref{inf_eigen} carry over to 
 the setting of unitary groups using the recent results of Caraiani--Scholze \cite{CS} and Kegang Liu \cite{kegang_liu} under the assumption that $p$ does not divide $2n$. If  $\rhobar: \Gal_E\rightarrow \GL_n(\Fpbar)$ is a representation, where $E$ is a finite extension 
 of $\Qp$, then we may globalize it following \cite[Section 2.1]{6auth}, and we may arrange that the resulting  global Galois representation $\rbar$ will contain $\GL_n(\mathbb F_{p^m})$ for a given $m$. Using this and Chebotarev density Theorem one may show that this representation is decomposed generic  in the sense of \cite{CS}, see \cite[Remark 1.4]{CS}. It follows from the main Theorem of \cite{CS} that the cohomology localized at the maximal ideal $\mm$ in the Hecke algebra corresponding 
 to $\rbar$ vanishes except in the middle degree $q_0$, which implies that the $q_0$-th completed 
 cohomology with $L/\OO$ -coefficients localized at $\mm$ is injective in  $\Mod^{\sm}_K(\OO)$, where
 $K$ is a maximal compact open subgroup of $G(\Qp)$, with $G$ as in \cite{CS}. The group 
 $\widehat{H}^{q_0}(U^p, L/\OO)_{\mm}$ is the analog of $\widehat{H}^1_{\psi}(U^p, L/\OO)_{\mm}$ considered in this paper, except that when working with unitary groups we don't have to 
 fix a central character. Now Kegang Liu \cite[Theorem 1.1]{kegang_liu} has extended 
 Scholze's \cite[Theorem 1.3]{scholze} to the setting of unitary groups. Using this result one may 
 obtain an analog of \eqref{sch1} in the setting of unitary groups. One may then rewrite the patching 
 argument in \cite{6auth} using ultrafilters and obtain the analog of Theorem \ref{the_patch} using the same proof. We note that since the centre does not cause problems in the unitary group setting, \cite[Footnote 7]{scholze} applies here, so that the proof is slightly easier. Once we have the patched 
 modules we are in the situation of \cite[Section 9.11]{DPS} for $\GL_n(E)$ and the patched version 
 of \cite[Section 9.10]{DPS} for the division algebra over $E$ with invariant $1/n$. The proof of Proposition \ref{inf_eigen} carries over verbatim. 
 
 We note that  if $M_{\infty}$ is flat over $R_{\infty}$
 then it follows from Lemma \ref{yh} together with the second proof of Lemma \ref{Sch00} that 
 $\Sch^{q_0}(\Pi_x)\cong \Pi'_x$, and thus $\Sch^{q_0}(\Pi_x)^{\la}$ has the same infinitesimal 
 character as $(\Pi_x)^{\la}$. If we do not know that $M_{\infty}$ is flat over $R_{\infty}$ and $q_0>1$
 then the relationship between $\Sch^{q_0}(\Pi_x)$ and $\Pi_x'$ is not as straightforward as described 
 by Lemma \ref{closed_subspace} and we cannot control the action of the centre of the universal 
 enveloping algebra on $\Sch^{q_0}(\Pi_x)^{\la}$. 
  \end{remar}

\end{document}